\documentclass{amsart}
\usepackage{newtxtext, newtxmath}

\usepackage{geometry}
\geometry{
	paperwidth=180mm,
	paperheight=260mm,
	textheight=50.5pc,
	textwidth=33.5pc,
	centering,
}

\usepackage{latexsym, bm}
\usepackage{amsmath, amssymb, amsfonts, amsthm, amscd}
\usepackage{cases}
\usepackage{enumerate, paralist}
\usepackage{appendix}

\usepackage[colorlinks=true]{hyperref}
\hypersetup{
	linkcolor=blue,
	anchorcolor=blue,
	citecolor=blue,
	filecolor=blue,
	urlcolor=blue,
	menucolor=blue,
}

%%\bibliography
\usepackage[msc-links, alphabetic, abbrev, nobysame]{amsrefs}

%theorem
\usepackage{aliascnt}

\theoremstyle{theorem}
\newtheorem{thm}{Theorem}[section]

\newaliascnt{corollary}{thm}
\newtheorem{cor}[corollary]{Corollary}
\aliascntresetthe{corollary}

\newaliascnt{lemma}{thm}
\newtheorem{lem}[lemma]{Lemma}
\aliascntresetthe{lemma}

\newaliascnt{sublemma}{thm}
\newtheorem{slem}[sublemma]{Sublemma}
\aliascntresetthe{sublemma}

\theoremstyle{definition}
\newaliascnt{definition}{thm}
\newtheorem{defi}[definition]{Definition}
\aliascntresetthe{definition}

\newaliascnt{example}{thm}
\newtheorem{exa}[example]{Example}
\aliascntresetthe{example}

\newaliascnt{remark}{thm}
\newtheorem{rmk}[remark]{Remark}
\aliascntresetthe{remark}

\theoremstyle{theorem}
\newtheorem{thmA}{Theorem}

\allowdisplaybreaks

\numberwithin{equation}{section}

\begin{document}

\title[Regularity]{Regularity properties of some perturbations of non-densely defined operators with applications}

\author{Deliang Chen}
\address{Department of Mathematics, Shanghai Jiao Tong University, Shanghai 200240, People's Republic of China}
\email{chernde@sjtu.edu.cn}
\thanks{Part of this work was done at East China Normal University. The author would like to thank Shigui Ruan, Ping Bi and Dongmei Xiao for their useful discussions and encouragement. The author is grateful to the referee(s) for useful comments and suggestions and particularly pointing out lemma \ref{lem:presentation} and a mistake in theorem \ref{thm:final}, which improved significantly the presentation of the original manuscript.}

\subjclass[2010]{Primary 47A55, 34D10; Secondary 34K12, 47N20, 47D62}

\keywords{regularity, perturbation, non-densely defined operators, critical growth bound, essential growth bound, integrated semigroup, age-structured population model}

\begin{abstract}
This paper is to study some conditions on semigroups, generated by some class of non-densely defined operators in the closure of its domain, in order that certain bounded perturbations preserve some regularity properties of the semigroup such as norm continuity, compactness, differentiability and analyticity. Furthermore, we study the critical and essential growth bound of the semigroup under bounded perturbations. The main results generalize the corresponding results in the case of Hille-Yosida operators. As an illustration, we apply the main results to study the asymptotic behaviors of a class of age-structured population models in $ L^p $ spaces ($ 1 \leq p < \infty $).
\end{abstract}

\maketitle

\setcounter{tocdepth}{2}

\section{Introduction}

The main goal of this paper is to study the preservation of the regularity properties of some class of non-densely defined operators under bounded perturbations. Let $A: D(A) \subset X \rightarrow X$ be a linear operator on some Banach Space $X$ and $B$ a perturbing linear operator. Assume that $A$ has some good regularity properties. Under which conditions can $A + B$ keep the properties of $A$? When $A$ is the generator of a $C_0$ semigroup $T_{A}$, or equivalently $A$ is a Hille--Yosida operator and densely defined, i.e., $\overline{D(A)} = X$, many classes of operator $B$ allow $A+B$ to generate a $C_0$ semigroup $T_{A+B}$, e.g., $B$ is a bounded operator, Desch--Schappacher perturbation, or Miyadera--Voigt perturbation; see \cite{EN00}. If $T_A$ has higher regularity properties, such as immediate/eventual norm continuity, immediate/eventual compactness, immediate/eventual differentiability and analyticity, then one may ask naturally that under what kinds of $B$, these properties can be preserved by $T_{A+B}$. When $B$ is a bounded operator, Nagel and Piazzera \cite{NP98} gave a unified treatment of the problem and found additional conditions assuring the permanence of these regularities. In particular, immediate norm continuity, immediate compactness and analyticity are stable under bounded perturbation \cite{NP98, EN00}. See also \cite{BP01, Pia99} for the case when $B$ is some class of Miyadera--Voigt perturbation.  However, the differentiability is not in this way, see \cite{Ren95} for a counterexample. Pazy \cite{Paz68} gave a condition assuring the permanence of differentiability under bounded perturbation, and Iley \cite{Ile07} pointed out that this condition is also necessary. M{\'a}trai \cite{Mat08a} showed that immediate norm continuity is preserved under Desch--Schappacher perturbation and Miyadera--Voigt perturbation.

However, in many applications, the operator $A$ may be not densely defined or even not a Hille--Yosida operator (see, e.g., \cite{DPS87, PS02, MR07, DMP10}); see also \autoref{model}. Let $A$ be a Hille--Yosida operator \cite{DPS87, ABHN11}, $X_0 := \overline{D(A)}$, $A_0 := A_{X_0}$, where $A_{X_0}$ denotes the part of $A$ in $X_0$, i.e.,
\[
A_{X_0}x = Ax,~~ x \in D(A_{X_0}) := \{x \in D(A): Ax \in X_0\}.
\]
It is well known that $A_0$ generates a $C_0$ semigroup $T_{A_0}$ in $X_0$, and $A+B$ is also a Hille--Yosida operator if $B \in \mathcal{L}(X_0, X)$ \cite{KH89}. A natural question may be asked: Are the regularities of $T_{(A + B)_0}$ the same as $T_{A_0}$? B{\'a}tkai, Maniar and Rhandi \cite{BMR02} dealt with this problem by using extrapolation theory and obtained similar results as in \cite{NP98}.

Magal and Ruan \cite{MR07} studied a more general class of non-densely defined operators which in this paper are called \emph{MR operators} and \emph{quasi Hille--Yosida operators} (see \autoref{def:MR} and \autoref{def:pHY}). These operators turn out to be important for the study of certain abstract Cauchy problems, such as age-structured population models, parabolic differential equations and delay equations (see, e.g., \cite{PS02, MR07, DMP10, MR18, Che18g}). Let $A$ be an MR operator (resp. quasi Hille--Yosida operator) and $T_{A_0}$ the $C_0$ semigroup generated by $A_0$ in $X_0$. It was shown in \cite{MR07, Thi08} that $A$ is stable under the bounded perturbation, that is $A+B$ is still an MR operator (resp. quasi Hille--Yosida operator) for all $B \in \mathcal{L}(X_0, X)$. We are interested in that under which conditions $T_{(A+B)_0}$, generated by $(A+B)_0$ in $X_0$, can preserve the regularity properties of $T_{A_0}$. The first part of the paper addresses the problem and obtains analogous and generalized results as in \cite{NP98, BMR02}, i.e., \emph{norm continuity}, \emph{compactness}, \emph{differentiability} and \emph{analyticity} (see \autoref{regper}). We use the method developed in \cite{NP98, BMR02} and the integrated semigroups theory.

The second part of the paper is to study the stability of the \emph{critical growth bound} and \emph{essential growth bound} (see \autoref{def:crit} and \autoref{def:ess}) of a $C_0$ semigroup generated by the part of an MR operator in the closure of its domain under certain bounded perturbations.
Critical spectrum was introduced independently by Nagel and Poland \cite{NP00} and Blake \cite{Bla01}. The authors used this notion to obtain the partial spectral mapping theorem, which could characterize the stability of $C_0$ semigroups very well, see \cite{NP00} for details. Brendle, Nagel and Poland \cite{BNP00} studied the stability of critical growth bound of a $C_0$ semigroup under some class of Miyadera--Voigt perturbation. In particular, they obtained, under appropriate assumptions, a partial spectrum mapping theorem for the perturbed semigroup. Similar result was also obtained by Boulite, Hadd and Maniar \cite{BHM05} for the Hille--Yosida operators. The stability of the essential growth bound were considered by many authors, e.g., Voigt \cite{Voi80} and Andreu, Mart\'{i}nez and Maz\'{o}n \cite{AMM91} (the generators of $C_0$ semigroups), Thieme \cite{Thi97} (Hille--Yosida operators), Ducrot, Liu and Magal \cite{DLM08} (quasi Hille--Yosida operators). Such results have wide applications, e.g., to study the stability of equilibriums, the existence of center manifolds and Hopf bifurcation, see \cite{EN00, BNP00, ABHN11, MR09, MR09a}. See also \cite{Sbi07, Bre01} for an approach based on the resolvent characterization in Hilbert spaces and \cite[Section 2 and 3]{MS16} for some new partial spectral mapping theorems in an abstract framework. We consider the perturbation of the critical and essential growth bound in the case of MR operators and give a unified treatment of the two problems (see \autoref{criess}). Our proof is close to \cite{BNP00}.

Some simple version of our main results in \autoref{regper} and \autoref{criess} may be summarized below; see those sections for more detailed results and see \autoref{pre} for definitions and notations.
For a linear operator $ A: D(A) \subset X \to X $, let $X_0 = \overline{D(A)}$, $A_0 := A_{X_0}$. $ T_{A_0} $ denotes the $ C_0 $ semigroup (if it exists) generated by $ A_0 $.

\begin{thmA}[norm continuity and compactness]
	Let $A$ be an MR operator (see \autoref{def:MR}), $L \in \mathcal{L}(X_0, X)$.
	\begin{enumerate}[(a)]
		\item Assume $LT_{A_0}$ is norm continuous on $(0, \infty)$. Then, $T_{A_0}$ is eventually (resp. immediately) norm continuous if and only if $T_{(A+L)_0}$ is eventually (resp. immediately) norm continuous.
		\item Assume $LT_{A_0}$ is norm continuous and compact on $(0, \infty)$. Then, $T_{A_0}$ is eventually (resp. immediately) compact if and only if $T_{(A+L)_0}$ is eventually (resp. immediately) compact.
		\item Suppose $A$ is a quasi Hille--Yosida operator (see \autoref{def:pHY}) and $LT_{A_0}$ is compact on $(0, \infty)$. Then, $T_{A_0}$ is eventually norm continuous (resp. eventually compact) if and only if $T_{(A+L)_0}$ is eventually norm continuous (resp. eventually compact); the result also holds when ``eventually'' is replaced by ``immediately''.
	\end{enumerate}
\end{thmA}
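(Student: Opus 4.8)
The plan is to reduce all three statements to a single assertion: that the \emph{correction operator} $R(t) := T_{(A+L)_0}(t) - T_{A_0}(t)$ is norm continuous (resp. compact) on all of $(0,\infty)$ under the stated hypotheses. Once this is known, both implications of each ``if and only if'' follow at once, since $T_{(A+L)_0}(t) = T_{A_0}(t) + R(t)$ and $T_{A_0}(t) = T_{(A+L)_0}(t) - R(t)$: a local norm-discontinuity (resp. non-compactness) of one semigroup at some $t_0$ forces the same defect on the other, and this respects both the ``eventually'' and ``immediately'' quantifiers precisely because $R$ will be shown to be norm continuous (resp. compact) on the \emph{entire} half-line $(0,\infty)$. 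The symmetry that drives the equivalence is thus built into the correction term, and I never need to invoke the regularity of $T_{A_0}$ itself to control $R$.

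The starting point is the variation-of-constants formula furnished by the integrated-semigroup theory for MR operators, using that $A+L$ is again an MR operator for $L \in \mathcal{L}(X_0,X)$. Writing $U = T_{A_0}$ and $V = T_{(A+L)_0}$, one has $V(t) = U(t) + (S_A \diamond (L\,V(\cdot)))(t)$, where $S_A$ is the integrated semigroup of $A$ and $\diamond$ denotes the convolution $(S_A \diamond f)(t) = \frac{d}{dt}\int_0^t S_A(t-s)\, f(s)\,ds$; because $L$ maps $X_0$ into $X$ rather than into $X_0$, this $\diamond$-product, and not an ordinary strong integral against $U$, is exactly what the non-dense domain forces upon us. Iterating produces the Dyson--Phillips series $V = \sum_{n\ge 0} V_n$ with $V_0 = U$ and $V_{n+1} = S_A \diamond (L\,V_n(\cdot))$, so that $R = \sum_{n\ge 1} V_n$, the series converging in operator norm uniformly on compact $t$-intervals by the standard factorial estimate on the $\diamond$-iterates.

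The technical heart is a regularity lemma for the $\diamond$-product: if $g:(0,\infty)\to\mathcal{L}(X_0,X)$ is norm continuous and suitably bounded near $0$, then $t\mapsto (S_A\diamond g)(t)\in\mathcal{L}(X_0)$ is norm continuous on $(0,\infty)$; and if in addition $g$ is compact-valued, then $(S_A\diamond g)(t)$ is compact for each $t>0$. Granting this, part (a) is an induction: the first iterate $V_1 = S_A\diamond(L\,U(\cdot))$ is norm continuous because its input $L\,U = L\,T_{A_0}$ is norm continuous by hypothesis; inductively $V_n$ is norm continuous and $X_0$-valued, hence $L\,V_n$ is again norm continuous and the lemma gives norm continuity of $V_{n+1}$, after which uniform convergence transfers norm continuity to $R$. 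Part (b) runs identically but now propagates \emph{both} norm continuity and compactness along the iterates, which is why both hypotheses on $LT_{A_0}$ are needed: the norm continuity makes the relevant integrands norm continuous (so their Riemann sums converge in norm), while the compactness makes each Riemann term compact, whence each $V_n(t)$, and then $R(t)$, is a norm limit of compact operators. Part (c) follows the same scheme, the point being that for a quasi Hille--Yosida operator the finer structure of $S_A$ (parallel to the Hille--Yosida treatment of \cite{BMR02}) upgrades the $\diamond$-product so that compactness of $LT_{A_0}$ alone already yields the norm continuity of the iterates, allowing the separate norm-continuity hypothesis to be dropped.

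The main obstacle I anticipate is proving the $\diamond$-regularity lemma in the genuinely non-densely-defined setting. Two sources of trouble must be handled: the diagonal $s=t$ in the convolution, where the semigroup degenerates to the identity and destroys norm continuity unless it is the \emph{input} $L\,U(s)$ that carries the regularity, so the estimate must be organized to place norm continuity on $g$ rather than on $U$; and the behavior as $s\to 0^+$, where $LT_{A_0}(s)$ is only assumed bounded, requiring a dominated-convergence argument to push the estimate through the integral. For the compactness statement the delicate point is that a Bochner integral of compact-operator-valued functions need not be compact unless the integrand is norm continuous, so it is the combination of norm continuity and compactness, supplied directly in (b) and by the quasi Hille--Yosida structure in (c), that rescues the argument. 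Establishing the $\diamond$-estimates that make all of this rigorous, rather than the bookkeeping of the Dyson--Phillips series, is where the real work lies.
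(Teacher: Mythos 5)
Your parts (a) and (b) are correct and essentially identical to the paper's own route: your iterates $V_n$ are the paper's $S_n = \mathcal{B}^n(T_{A_0})$, your postulated $\diamond$-regularity lemma is exactly \autoref{lem:NC} (b) and \autoref{lem:CP} (b), and the locally uniform convergence of $\sum_{n \ge 1} S_n = T_{(A+L)_0} - T_{A_0}$ (the paper's \autoref{lem:fix}, \autoref{lem:regfix} and \autoref{corr:sr}) makes the correction term norm continuous, resp. norm continuous and compact, on all of $(0,\infty)$, which settles both implications and both quantifiers at once.

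Part (c), however, contains a genuine gap. Your scheme requires the \emph{first} iterate $V_1 = S_A \diamond (LT_{A_0})$ to be norm continuous (resp. compact) on $(0,\infty)$, but in (c) the input $LT_{A_0}$ is only assumed compact, and -- as you yourself observe -- compactness without norm continuity does not pass through the $\diamond$-product. The quasi Hille--Yosida structure does not repair $V_1$: what it actually yields (this is \cite[Proposition 4.8]{DLM08}, resting on the fact that $x^*S_A(\cdot)x$ has locally bounded $p'$-variation) is that $LS_A \diamond LT_{A_0}$, i.e.\ the first iterate composed \emph{once more} with $L$, is norm continuous; hence it is the second iterate $S_2 = S_A \diamond (LS_A \diamond LT_{A_0})$ that becomes norm continuous and compact (\autoref{corr:s2}), while $S_1$ may remain irregular. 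Since $T_{(A+L)_0} - T_{A_0} = S_1 + \sum_{n\ge 2}S_n$, the correction term need not be regular on $(0,\infty)$, and your guiding principle that you ``never need to invoke the regularity of $T_{A_0}$ itself to control $R$'' fails precisely here. The paper instead decomposes $T_{(A+L)_0} = S_0 + S_1 + R_2$ and controls $S_1$ by the eventual regularity of $T_{A_0}$ through the convolution lemma: if $T_{A_0}$ is norm continuous on $(\alpha,\infty)$, then $S_1$ is norm continuous only on $(2\alpha,\infty)$, so the eventual threshold degrades from $\alpha$ to $2\alpha$ (\autoref{thm:perreg}) -- an effect your reduction cannot see. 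Moreover, for the converse implication in (c) one must separately verify, via the fixed-point equation $LT_{(A+L)_0} = LT_{A_0} + LS_A \diamond LT_{(A+L)_0}$, that $LT_{(A+L)_0}$ is again compact on $(0,\infty)$, so that the whole argument can be repeated with $A$ and $A+L$ interchanged; your proposal omits this symmetry step. (The ``immediately'' clause of (c) survives regardless, since the paper proves the immediate equivalences for arbitrary MR operators and bounded $L$ with no hypothesis on $LT_{A_0}$ at all; it is the ``eventually'' clause that your argument as written does not reach.)
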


The following result can be proved very simply if one uses the corresponding characterization of the resolvent.
\begin{thmA}[differentiability and analyticity]
	\begin{enumerate}[(a)]
		\item Let $A$ be a Hille--Yosida operator. Then, $T_{(A+L)_0}$ is eventually differentiable  for all $L \in \mathcal{L}(X_0, X)$ if and only if $A_0$ satisfies the \emph{Pazy-Iley condition} (i.e., the condition of \autoref{thm:PI} in (b) or (c)).
		\item If $A$ is a $p$-quasi Hille--Yosida operator (see \autoref{def:pHY}) and $A_0$ is a Crandall--Pazy operator satisfying
		\[
		\|R(iy, A_0)\|_{\mathcal{L}(X_0)} = O(|y|^{-\beta}),~ |y| \rightarrow \infty, ~ \beta > 1- \frac{1}{p},
		\]
		then $(A+L)_0$ is still a Crandall--Pazy operator (see, e.g., \cite{Ile07, CP69}) for any $L \in \mathcal{L}(X_0, X)$.
		\item Let $A$ be an MR operator (see \autoref{def:MR}). Then, $T_{A_0}$ is analytic if and only if for any $L \in \mathcal{L}(X_0,X)$, $T_{(A+L)_0}$ is analytic. In particular, if $A$ is an almost sectorial operator (see \autoref{def:almost}), so is $A+L$ for any $L \in \mathcal{L}(X_0,X)$.
	\end{enumerate}
\end{thmA}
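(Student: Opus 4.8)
The plan is to treat the three parts by a single mechanism: convert the relevant regularity of $T_{A_0}$ into a growth/decay estimate for $R(\,\cdot\,,A_0)$ on a suitable region $\Omega\subset\mathbb{C}$, and then transport that estimate to $R(\,\cdot\,,(A+L)_0)$ through a perturbation identity. Solving $(\lambda-(A+L))u=x$ for $x\in X_0$ (so $u=(\lambda-A)^{-1}x+(\lambda-A)^{-1}Lu$) shows that, whenever $\lambda\in\rho(A)$ and $\|(\lambda-A)^{-1}L\|_{\mathcal{L}(X_0)}<1$,
\begin{equation}\label{eq:pertid}
R(\lambda,(A+L)_0)=\bigl(I-(\lambda-A)^{-1}L\bigr)^{-1}R(\lambda,A_0),
\end{equation}
since $(\lambda-A)^{-1}\colon X\to X_0$ and $L\colon X_0\to X$ make $(\lambda-A)^{-1}L$ an operator on $X_0$. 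Everything is then governed by the single correction operator $K(\lambda):=(\lambda-A)^{-1}L\in\mathcal{L}(X_0)$, with $\|K(\lambda)\|_{\mathcal{L}(X_0)}\le\|(\lambda-A)^{-1}\|_{\mathcal{L}(X,X_0)}\,\|L\|$, so the task reduces to showing that the decay of $\|(\lambda-A)^{-1}\|_{\mathcal{L}(X,X_0)}$ on $\Omega$ is strong enough to make $I-K(\lambda)$ boundedly invertible and to preserve the target estimate on $\Omega$.

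For part (c) this is immediate, which is what the preceding remark alludes to. Analyticity of $T_{A_0}$ is equivalent to a sectorial bound $\|R(\lambda,A_0)\|_{\mathcal{L}(X_0)}\le M/|\lambda|$ on a region $\Omega$ containing a sector $\{|\arg(\lambda-\omega)|<\pi/2+\delta\}$, and for an MR operator the same region yields $\|(\lambda-A)^{-1}\|_{\mathcal{L}(X,X_0)}\to0$ as $|\lambda|\to\infty$. Hence $\|K(\lambda)\|_{\mathcal{L}(X_0)}<1/2$ for large $|\lambda|$, the Neumann series for $(I-K(\lambda))^{-1}$ converges with norm $\le2$, and \eqref{eq:pertid} gives $\|R(\lambda,(A+L)_0)\|_{\mathcal{L}(X_0)}\le2M/|\lambda|$ on a slightly narrowed sector; this is exactly the sectorial criterion for analyticity of $T_{(A+L)_0}$, and the almost-sectorial case is identical with the opening angle kept fixed. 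The converse follows by symmetry, writing $A=(A+L)+(-L)$ with $-L\in\mathcal{L}(X_0,X)$ and using that $A+L$ is again an MR operator.

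For part (b) the identity \eqref{eq:pertid} is examined along the imaginary axis. The Crandall--Pazy hypothesis is the bound $\|R(iy,A_0)\|_{\mathcal{L}(X_0)}=O(|y|^{-\beta})$, and the $p$-quasi Hille--Yosida structure of $A$ supplies a companion decay estimate for $\|(iy-A)^{-1}\|_{\mathcal{L}(X,X_0)}$. What must be checked is that $\|K(iy)\|_{\mathcal{L}(X_0)}\to0$ and that, through \eqref{eq:pertid}, the perturbed resolvent still obeys $\|R(iy,(A+L)_0)\|_{\mathcal{L}(X_0)}=O(|y|^{-\beta})$, i.e. $(A+L)_0$ is again a Crandall--Pazy operator. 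The standing hypothesis $\beta>1-\tfrac1p$ is precisely the compatibility condition forcing the correction term to decay strictly faster than the resolvent, so that the order $\beta$ of the growth is transported without loss.

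Part (a) carries the genuine content. Here $A$ is Hille--Yosida, so $A_0$ generates a $C_0$ semigroup and $\|(\lambda-A)^{-1}\|_{\mathcal{L}(X)}\le M/(\operatorname{Re}\lambda-\omega)$ for $\operatorname{Re}\lambda>\omega$. For sufficiency I would follow Pazy: assuming the Pazy--Iley condition (the polynomial resolvent bound of \autoref{thm:PI} on a logarithmic region $\Omega=\{\operatorname{Re}\lambda\ge a-b\ln|\operatorname{Im}\lambda|\}$), I would combine it with the Hille--Yosida bound on $(\lambda-A)^{-1}$ to control $K(\lambda)$ on $\Omega$ and then read off from \eqref{eq:pertid} the same type of bound for $R(\lambda,(A+L)_0)$, which is the resolvent criterion for eventual differentiability of $T_{(A+L)_0}$. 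For necessity I would argue in the style of Iley: if the Pazy--Iley condition fails, construct an explicit (rank-one suffices) perturbation $L\in\mathcal{L}(X_0,X)$ for which $R(\lambda,(A+L)_0)$ violates every admissible resolvent bound, so that $T_{(A+L)_0}$ is not eventually differentiable. I expect this counterexample construction, together with verifying that the reduction to $A_0$ survives the fact that $L$ maps into $X$ rather than into $X_0$, to be the main obstacle; the analyticity and Crandall--Pazy parts, by contrast, are essentially bookkeeping on the single estimate for $\|(\lambda-A)^{-1}\|_{\mathcal{L}(X,X_0)}$.
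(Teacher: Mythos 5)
Your skeleton is the paper's: the perturbation identity $R(\lambda,(A+L)_0)=(I-R(\lambda,A)L)^{-1}R(\lambda,A_0)$ is exactly \eqref{equ:gj1}, and the plan of feeding resolvent characterizations (Pazy/Iley, Crandall--Pazy, sectorial) through it is the right one. The genuine gap is the one step that carries all the content: making $\|R(\lambda,A)L\|_{\mathcal{L}(X_0)}<1$ at \emph{complex} $\lambda$ in the relevant regions. The MR, $p$-quasi Hille--Yosida and Hille--Yosida hypotheses only give decay of $\|R(\mu,A)\|_{\mathcal{L}(X)}$ along the \emph{real} axis (\autoref{lem:MR} (b), \autoref{lem:pHY} (c)); they say nothing directly about $\|(\lambda-A)^{-1}\|$ on a sector, on the imaginary axis, or on a logarithmic region $D_{\beta,c}$. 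Yet you assert precisely this (``for an MR operator the same region yields $\|(\lambda-A)^{-1}\|\to 0$'', ``the $p$-quasi Hille--Yosida structure supplies a companion decay estimate for $\|(iy-A)^{-1}\|$''), and in part (a) you propose to control $K(\lambda)$ on $D_{\beta,c}$ by the half-plane bound $\|R(\lambda,A)\|\le M/(\mathrm{Re}\,\lambda-\omega)$, which is vacuous exactly where it is needed: $D_{\beta,c}$ contains points with $\mathrm{Re}\,\lambda\le\omega$ (indeed $\mathrm{Re}\,\lambda\to-\infty$ logarithmically). The missing device, used identically in all three of the paper's proofs, is the resolvent identity with a real anchor point $\mu$ (e.g.\ $\mu=\omega+|\lambda|$):
\[
R(\lambda,A)=(\mu-\lambda)R(\lambda,A_0)R(\mu,A)+R(\mu,A),
\]
valid since $R(\mu,A)X\subset D(A)\subset X_0$ and $R(\lambda,A)|_{X_0}=R(\lambda,A_0)$ (\autoref{partop}). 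This converts real-axis decay of the full resolvent plus complex-region decay of the \emph{part} resolvent $R(\lambda,A_0)$ (supplied by the Pazy--Iley condition, the Crandall--Pazy bound, or analyticity) into the needed complex-region decay of $R(\lambda,A)$; in part (b) it yields $\|R(iy,A)\|=O(|y|^{-\alpha})$ with $\alpha=\min\{\beta+1/p-1,\,1/p\}$, which is where $\beta>1-\tfrac1p$ actually enters. Without this (or an equivalent) transfer step, none of your three parts closes.

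Separately, in part (a) you have the division of labour backwards. Necessity needs no counterexample construction: if $T_{(A+L)_0}$ is eventually differentiable for all $L\in\mathcal{L}(X_0,X)$, then in particular for all $L\in\mathcal{L}(X_0)\subset\mathcal{L}(X_0,X)$, and for such $L$ one checks $(A+L)_0=A_0+L$; \autoref{thm:PI}, quoted as a black box, then gives the Pazy--Iley condition for $A_0$ at once. This is how the paper disposes of necessity (``only need to consider the sufficiency''), and it is also why your worry about ``$L$ mapping into $X$ rather than $X_0$'' is moot there --- one simply restricts to perturbations into $X_0$. Rebuilding Iley's necessity argument, let alone with a rank-one $L$, is unnecessary and speculative. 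The real work is sufficiency for general $L:X_0\to X$, where the resolvent identity above makes \eqref{equ:gj1} applicable on $D_{\beta',c'}$, giving $\|R(\lambda,(A+L)_0)\|\le 2\|R(\lambda,A_0)\|$ there, after which Pazy's criterion (\autoref{thm:pz}) concludes.
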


Let $ \omega_{\mathrm{crit}}(T_{A_0}) $ and $ \omega_{\mathrm{ess}}(T_{A_0}) $ denote the \emph{critical growth bound} and the \emph{essential growth bound} of $ T_{A_0} $ respectively (see \autoref{def:crit} and \autoref{def:ess}).
\begin{thmA}\label{thm:C}
	\begin{enumerate}[(a)]
		\item Let $A$ be an MR operator (see \autoref{def:MR}) and $L \in \mathcal{L}(X_0,X)$.
		If $LT_{A_0}$ is norm continuous on $(0,\infty)$, then $\omega_{\mathrm{crit}}(T_{(A+L)_0}) = \omega_{\mathrm{crit}}(T_{A_0})$.
		If $LT_{A_0}$ is norm continuous and compact on $(0,\infty)$, then $\omega_{\mathrm{ess}}(T_{(A+L)_0}) = \omega_{\mathrm{ess}}(T_{A_0})$.

		\item If $A$ is a quasi Hille--Yosida operator (see \autoref{def:pHY}) and $LT_{A_0}$ is compact on $(0,\infty)$, then $\omega_{\mathrm{ess}}(T_{(A+L)_0}) = \omega_{\mathrm{ess}}(T_{A_0})$, $\omega_{\mathrm{crit}}(T_{(A+L)_0}) = \omega_{\mathrm{crit}}(T_{A_0})$.
	\end{enumerate}
\end{thmA}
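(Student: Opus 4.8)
The plan is to control $T_{(A+L)_0}$ through its Dyson--Phillips expansion and then to read off the growth bounds from the appropriate quotient, in the spirit of \cite{BNP00}. Using the integrated semigroup $S_A$ attached to the MR operator $A$ and the variation-of-constants formula, one writes $T_{(A+L)_0}=\sum_{n\ge0}U_n$ with $U_0=T_{A_0}$ and $U_{n+1}(t)=\frac{d}{dt}\int_0^tS_A(t-s)\,LU_n(s)\,ds$, the series converging in operator norm uniformly on compact time intervals. The reduction principle is that the essential growth bound is unchanged by compact summands (being a Calkin-quotient norm) while the critical growth bound is unchanged by norm-continuous summands (adding such a family does not alter the local norm-oscillation defining the critical norm); hence $\omega_{\mathrm{ess}}$ resp.\ $\omega_{\mathrm{crit}}$ of $T_{(A+L)_0}$ and of $T_{A_0}$ agree as soon as the tail $R:=\sum_{n\ge1}U_n=T_{(A+L)_0}-T_{A_0}$ lies, for every $t>0$, in the relevant class. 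Both halves of the theorem thereby reduce to a regularity assertion about the terms $U_n$ with $n\ge1$.

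For part~(a) I would run the regularity-propagation induction used in \autoref{regper}: if $g$ is norm continuous on $(0,\infty)$, then so is $t\mapsto\frac{d}{dt}\int_0^tS_A(t-s)g(s)\,ds$. Taking $g=LT_{A_0}$, norm continuous by hypothesis, makes $U_1$ norm continuous; since $LU_n$ is norm continuous whenever $U_n$ is, every $U_n$ ($n\ge1$), and hence $R$, is norm continuous, which yields the critical-bound equality. For the essential bound one assumes in addition that $LT_{A_0}$ is compact: approximating $\int_0^tS_A(t-s)g(s)\,ds$ by Riemann sums, each summand $S_A(t-s_i)\,g(s_i)\,\Delta s_i$ is compact, while norm continuity of $g$ forces operator-norm convergence of the sums, so the convolution is compact; the induction then propagates compactness through the $U_n$ and $R(t)$ is compact for all $t>0$.

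For part~(b) only compactness of $LT_{A_0}$ is assumed, so the norm continuity required by the critical bound must be produced rather than assumed. Here the quasi Hille--Yosida structure is decisive: the convolution can be realized so that the $C_0$ semigroup $T_{A_0}$ acts on the compact factor, and for any compact operator $K$ the map $\sigma\mapsto T_{A_0}(\sigma)K$ is norm continuous, since strong convergence is uniform on the relatively compact set $\overline{K(\text{unit ball})}$. This built-in smoothing turns a compact, merely strongly continuous integrand into a norm-continuous and still compact convolution, so that $R(t)$ is simultaneously compact and norm continuous for every $t>0$; the reduction principle then delivers the $\omega_{\mathrm{ess}}$ and $\omega_{\mathrm{crit}}$ equalities at once.

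The main obstacle is the single-convolution regularity lemma for $t\mapsto\frac{d}{dt}\int_0^tS_A(t-s)g(s)\,ds$, specifically showing that the differentiation does not destroy norm continuity in part~(a) and that compactness survives the non-smooth kernel $S_A$. I expect to handle it as in \autoref{regper}: after the substitution $s\mapsto t-s$ the time variation is carried by $g$, and the local boundedness together with the Lipschitz/continuity estimates on $S_A$ provided by the MR (resp.\ quasi Hille--Yosida) hypotheses control the modulus of continuity uniformly, so that the ideal membership passes through the norm-convergent Dyson--Phillips series to the tail $R$.
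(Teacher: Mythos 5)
Your part~(a) is essentially sound and close to the paper's own route (\autoref{corr:s1} combined with \autoref{thm:main1}(c)): under those hypotheses the full tail $R_1=T_{(A+L)_0}-T_{A_0}$ really is norm continuous (resp.\ compact and norm continuous) on $(0,\infty)$, and one can conclude in the Calkin algebra, resp.\ in $l^\infty(X_0)/l^\infty_{T_{A_0}}(X_0)$. Two points need tightening, both fixable. First, your ``reduction principle'' for the critical bound is false as a general principle: a family that is norm continuous in $t$ need not induce the zero element of the quotient (take $D(t)=\phi(t)B$ with $\phi$ scalar continuous and $B$ a fixed bounded operator with $\|(T_{A_0}(h)-I)B\|\not\to 0$). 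What one actually needs is $\|(T_{A_0}(h)-I)R_1(t)\|\to 0$ as $h\to 0^+$, and here this follows from the convolution identity \eqref{equ:sf2}, $R_1(t+h)-R_1(t)=(T_{A_0}(h)-I)R_1(t)+(S_A\diamond LT_{(A+L)_0}(t+\cdot))(h)$, together with the MR estimate $\|(S_A\diamond\,\cdot\,)(h)\|\le\delta(h)\to 0$; this is exactly \autoref{lem:sv}. Second, before the two critical bounds are even comparable you must check that both semigroups act on the \emph{same} quotient, i.e.\ $l^\infty_{T_{(A+L)_0}}(X_0)=l^\infty_{T_{A_0}}(X_0)$ (\autoref{lem:eq}), which again uses $\|T_{(A+L)_0}(h)-T_{A_0}(h)\|\le M\delta(h)$. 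With these repairs, and noting that norm continuity of $S_A\diamond g$ makes the difference quotients of $S_A* g$ converge in operator norm (so your Riemann-sum compactness argument does transfer to the derivative), part~(a) goes through.

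Part~(b) has a genuine gap, in two respects. (i) The ``built-in smoothing'' you invoke does not exist: the only ways to put $T_{A_0}$ in front of the compact factor are \eqref{equ:sf1} and \eqref{equ:sf3}, and both are \emph{strong} limits ($\mu R(\mu,A_0)\to I$ only strongly; the $h$-limit in \eqref{equ:sf3} is a norm limit precisely when $S_A\diamond f$ is \emph{already} known to be norm continuous, which is what you are trying to prove). Norm continuity and compactness do not pass through strong limits, so the fact that $\sigma\mapsto T_{A_0}(\sigma)K$ is norm continuous for compact $K$ cannot be brought to bear. Tellingly, your argument never actually uses the quasi Hille--Yosida hypothesis---\eqref{equ:sf1} and \eqref{equ:sf3} hold for every MR operator---so if it were valid it would prove (b) for all MR operators, which neither the paper nor \cite{DLM08} asserts. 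Where quasi Hille--Yosida genuinely enters is \cite[Proposition 4.8]{DLM08}: compactness of $LT_{A_0}$ forces \emph{norm continuity} of $LS_A\diamond LT_{A_0}$, proved by a duality/variation argument ($x^*S_A(\cdot)x\in W^{1,p'}_{\mathrm{loc}}$, cf.\ \autoref{rmk:cc}); this yields regularity of $S_2$, not of $S_1$ (\autoref{corr:s2}). (ii) Consequently your reduction target is unattainable: in case (b) the first-order term $S_1$ need not be compact or norm continuous, so the full tail $T_{(A+L)_0}-T_{A_0}$ need not lie in the relevant class, and the simple ``same coset'' argument cannot run. One must instead work in the quotient with the finite sum $\overline{T}_{(A+L)_0}(t)=\overline{S}_0(t)+\overline{S}_1(t)$ (resp.\ $\widehat{T}_{(A+L)_0}(t)=\widehat{S}_0(t)+\widehat{S}_1(t)$) and show that the surviving non-regular term $S_1$ still satisfies $\|\overline{S}_1(t)\|\le M_1e^{\gamma t}$ for \emph{every} $\gamma>\omega_{\mathrm{ess}}(T_{A_0})$ (and similarly for $\widehat{S}_1$ with $\omega_{\mathrm{crit}}$). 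This is \autoref{lem:sexponent}, which rests on the Dyson--Phillips-type identity $S_n(t+s)=\sum_{k}S_k(t)S_{n-k}(s)$ (\autoref{lem:srelation}) and the elementary growth lemma (\autoref{lem:exponent}); equality, rather than a one-sided inequality, is then obtained by exchanging the roles of $A$ and $A+L$ (\autoref{lem:permm}, \autoref{thm:main2}). This machinery for controlling the lower-order, non-compact terms is the idea missing from your proposal.
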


\begin{rmk}
	\begin{enumerate}[(a)]
		\item The above results are known at least for the Hille--Yosida operators; see, e.g., \cite{BMR02, BHM05, Thi97}.

		\item \autoref{thm:C} (b) is basically due to \cite{DLM08} but the result is strengthened as $\omega_{\mathrm{ess}}(T_{(A+L)_0}) = \omega_{\mathrm{ess}}(T_{A_0})$; also our proof in some sense simplifies \cite{DLM08}.
	\end{enumerate}
\end{rmk}

As an illustration, in the third part of this paper (see \autoref{model}), we apply the main results in \autoref{regper} and \autoref{criess} to study the asymptotic behaviors of a class of age-structured population models in $ L^p $ ($ 1 \leq p < \infty $). This problem was investigated extensively by many authors, see, e.g., \cite{Web85, Web08, Thi91, Thi98, Rha98, BHM05, MR09a} etc in the $ L^1 $ case. It seems that the $ L^p $ ($ p > 1 $) case was first investigated in \cite{MR07}. As a motivation, in control theory (and approximation theory), age-structured population models can be considered as boundary control systems and in this case the state space is usually taken as $ L^p $ ($ p > 1 $); see, e.g., \cite{CZ95}. Basically, in order to give the asymptotic behaviors of the age-structured population model \eqref{equ:age} in $ L^p $ ($ p > 1 $), the results given in \autoref{regper} and \autoref{criess} are necessary. Concrete examples are given in \autoref{exa:model} to verify different cases in the main result (\autoref{thm:age}).

The paper is organized as follows. In section 2, we recall some definitions and results about integrated semigroups, some classes of non-densely defined operators (with a detailed summary of their basic properties), critical spectrum and essential spectrum. In section 3, we consider the regularities of $S_A \diamond V$ (see section 2 for the symbol's meaning). In section 4, we deal with the regularity properties of the perturbed semigroups generated by the part of MR operators in their closure domains. In section 5, we study the perturbation of critical and essential growth bound. Section 6 contains an application of our results to a class of age-structured population models in $ L^p $. In the final section, we give a relatively bounded perturbation for MR operators associated with their perturbed regularities, and some comments that how the results could be applied to (nonlinear) differential equations.

\section{Preliminaries}\label{pre}

In this section, we recall some definitions and results about what we need in the following, such as integrated semigroups, some classes of non-densely defined operators, the critical spectrum and essential spectrum of $ C_0 $ semigroups.

\subsection{Integrated semigroup}
The integrated semigroup was introduced by W. Arendt \cite{Are87a}. The systematic treatment based on techniques of Laplace transforms was given in \cite{ABHN11}.

Let $X$ and $Z$ be Banach spaces. Denote by $\mathcal{L}(X, Z)$ the space of all bounded linear operators from $X$ into $Z$ and by $\mathcal{L}(X)$ the $\mathcal{L}(X, X)$. Let $A:~ D(A) \subset X \rightarrow X$ be a linear operator and assume $\rho(A) \neq \emptyset$. If $Z \hookrightarrow X$ (i.e., $Z \subset X$ and the imbedding is continuous), $A_Z$ denotes the part of $A$ in $Z$, i.e.,
\[
A_{Z}x = Ax,~~ x \in D(A_{Z}) := \{x \in D(A) \cap Z: Ax \in Z\}.
\]
By the closed graph theorem, $A_Z$ is a closed operator in $Z$ since $A$ is closed. Here is the relationship between $A$ and $A_Z$, see \cite[Proposition B.8, Lemma 3.10.2]{ABHN11}.
\begin{lem}\label{partop}
	Let $Z \hookrightarrow X$.
	\begin{enumerate}[(a)]
		\item If $\mu \in \rho(A)$ and $R(\mu, A)Z \subset Z$, then $D(A_Z) = R(\mu, A)Z$, and $R(\lambda, A)|_Z = R(\lambda, A_Z), ~ \forall \lambda \in \rho(A)$.
		\item If $D(A) \subset Z$, then $R(\lambda, A)Z \subset Z, ~\forall \lambda \in \rho(A)$ and $\rho(A) = \rho(A_Z)$.
	\end{enumerate}
\end{lem}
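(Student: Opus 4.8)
The plan is to treat the two parts separately, reducing everything to manipulations with a single resolvent $R(\mu,A)$ and the resolvent identity, and exploiting that $A_Z$ is already known to be closed in $Z$.

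\emph{Part (a).} First I would pin down the domain. For $D(A_Z)\subseteq R(\mu,A)Z$, take $x\in D(A_Z)$; then $x\in Z$ and $Ax\in Z$, so $(\mu-A)x\in Z$ and $x=R(\mu,A)(\mu-A)x\in R(\mu,A)Z$. Conversely, if $x=R(\mu,A)z$ with $z\in Z$, then $x\in D(A)$, the hypothesis $R(\mu,A)Z\subseteq Z$ gives $x\in Z$, and $Ax=\mu x-z\in Z$, so $x\in D(A_Z)$; hence $D(A_Z)=R(\mu,A)Z$. Next I would establish $\mu\in\rho(A_Z)$ with $R(\mu,A_Z)=R(\mu,A)|_Z$: injectivity of $\mu-A_Z$ is inherited from $\mu-A$, surjectivity onto $Z$ is exactly the computation just made (each $z\in Z$ has preimage $R(\mu,A)z\in D(A_Z)$), and boundedness of the inverse on the Banach space $Z$ follows from the closed graph theorem since $A_Z$ is closed.

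The genuinely delicate point is propagating these facts from the single point $\mu$ to every $\lambda\in\rho(A)$. For this I would invoke the Neumann expansion
\[
R(\lambda,A)=\sum_{n\ge0}(\mu-\lambda)^n R(\mu,A)^{n+1},
\]
valid for small $|\lambda-\mu|$. Because $R(\mu,A)|_Z\in\mathcal{L}(Z)$ by the previous step, the same series converges in $\mathcal{L}(Z)$, and continuity of the embedding $Z\hookrightarrow X$ forces its $\mathcal{L}(Z)$-sum to coincide on $Z$ with $R(\lambda,A)$; thus $R(\lambda,A)Z\subseteq Z$ and $R(\lambda,A)|_Z\in\mathcal{L}(Z)$, whence the bijectivity argument above gives $\lambda\in\rho(A_Z)$ and $R(\lambda,A)|_Z=R(\lambda,A_Z)$ for all $\lambda$ near $\mu$. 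The set of such $\lambda$ is therefore open in $\rho(A)$, and a connectedness/continuation argument along $\rho(A)$ extends the conclusion to the whole resolvent set. I expect this passage from local to global invariance of $Z$ to be the main obstacle, since it is precisely the place where the lack of closedness of $Z$ in $X$ would bite.

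\emph{Part (b).} When $D(A)\subseteq Z$ the invariance is automatic: $R(\lambda,A)X=D(A)\subseteq Z$ for every $\lambda\in\rho(A)$, so in particular $R(\lambda,A)Z\subseteq Z$. To obtain $\rho(A)=\rho(A_Z)$ I would fix $\mu\in\rho(A)$ and reduce the problem to a spectral comparison of the bounded operator $T:=R(\mu,A)$ and its restriction $T_Z:=T|_Z$ to the $T$-invariant subspace $Z\supseteq\operatorname{ran}T=D(A)$, via the factorization $\lambda-A=(\mu-A)\bigl(I-(\mu-\lambda)T\bigr)$ together with its $A_Z$-analogue; this turns the claim into the assertion that, for $\nu\neq0$, one has $\nu\in\rho(T)$ if and only if $\nu\in\rho(T_Z)$. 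For the forward implication I would write $(\nu-T)^{-1}=\nu^{-1}\bigl(I+T(\nu-T)^{-1}\bigr)$ and use $\operatorname{ran}T\subseteq Z$ to see that $Z$ is invariant under $(\nu-T)^{-1}$, giving $\nu\in\rho(T_Z)$. For the converse I would solve $(\nu-T)x=y$ explicitly by setting $x=\nu^{-1}(y+u)$ with $u:=(\nu-T_Z)^{-1}(Ty)$, which is well posed because $Ty\in\operatorname{ran}T\subseteq Z$; injectivity of $\nu-T$ follows from $Tx=\nu x\Rightarrow x\in Z$, and boundedness of all inverses from the open mapping theorem. Combined with $\mu\in\rho(A)\cap\rho(A_Z)$, this equivalence yields $\rho(A)=\rho(A_Z)$, and the same uniqueness shows $R(\lambda,A)|_Z=R(\lambda,A_Z)$ throughout. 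Here the bounded-operator reduction neatly sidesteps the $\rho(A_Z)\subseteq\rho(A)$ direction, which is the only nontrivial inclusion.
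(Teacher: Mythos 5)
Your part (b) is correct and self-contained: reducing $\rho(A)=\rho(A_Z)$ to the statement that the bounded operator $T:=R(\mu,A)$ and its restriction $T_Z:=T|_Z$ have the same nonzero resolvent set, via the factorization $\lambda-A=\bigl(I-(\mu-\lambda)T\bigr)(\mu-A)$ and the hypothesis $\operatorname{ran}T=D(A)\subset Z$, is sound, and both implications of that bounded-operator equivalence are proved correctly. The pointwise assertions in part (a) are also fine: $D(A_Z)=R(\mu,A)Z$, $\mu\in\rho(A_Z)$, $R(\mu,A_Z)=R(\mu,A)|_Z$, and, via the Neumann series in $\mathcal{L}(Z)$, the same for all $\lambda$ in a neighbourhood of $\mu$. (For comparison: the paper gives no proof of this lemma; it cites Arendt--Batty--Hieber--Neubrander, so for (b) your argument is a legitimate substitute.)

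The gap is exactly where you flagged it, and it cannot be repaired: the local-to-global step in part (a) fails, because the statement ``$R(\lambda,A)|_Z=R(\lambda,A_Z)$ for \emph{all} $\lambda\in\rho(A)$'' is false under the hypotheses of (a). Your Neumann series shows that $\Omega:=\{\lambda\in\rho(A):R(\lambda,A)Z\subset Z\}$ is open, but a continuation argument would also require $\Omega$ to be relatively closed in $\rho(A)$ (which needs $Z$ closed in $X$, not assumed), and in any case $\rho(A)$ need not be connected, and on other components the conclusion genuinely fails. Concretely, take $X=\ell^2(\mathbb{Z})$, $A$ the bilateral right shift $(Ax)_n=x_{n-1}$, and $Z=\{x\in X:\ x_n=0\ \text{for}\ n<0\}$. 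Then $\sigma(A)$ is the unit circle, and for $\mu=2$ one has $R(2,A)=\sum_{n\geq 0}2^{-(n+1)}A^n$, so $R(2,A)Z\subset Z$ and the hypotheses of (a) hold; but $A_Z$ is the unilateral shift, whose spectrum is the closed unit disc, so $0\in\rho(A)$ while $0\notin\rho(A_Z)$ (indeed $R(0,A)=-A^{-1}$ sends $e_0$ to $-e_{-1}\notin Z$). Hence no proof can close your gap; the correct version of (a) --- the one actually stated in the cited source --- identifies $R(\lambda,A)|_Z$ with $R(\lambda,A_Z)$ only for those $\lambda\in\rho(A)$ satisfying $R(\lambda,A)Z\subset Z$, which is precisely what your pointwise argument establishes. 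Note that everywhere the paper invokes this lemma it takes $Z=X_0\supset D(A)$, so the invariance holds for every $\lambda$ by (b), and your proof does cover all of those applications.
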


\begin{defi}[integrated semigroup \cite{ABHN11}]
	Let $A$ be an operator on a Banach space $X$. We call $A$ the generator of (once non-degenerate) integrated semigroup if there exist $\omega \geq 0$ and a strongly continuous function $S: [0, \infty) \rightarrow \mathcal{L}(X)$ satisfying $\|\int_0^t S(s)~\mathrm{d}s \| \leq Me^{\omega t} (t \geq 0)$ for some constant $ M > 0 $ such that $(\omega, \infty) \subset \rho(A)$ and
	\begin{equation}\label{equ:int}
		R(\lambda, A)x =  \lambda \int_0^\infty e^{-\lambda s}S(s)x~\mathrm{d}s, ~\forall \lambda > \omega, ~ \forall x \in X.
	\end{equation}
	In this case, $S$ is called the (once non-degenerate) \emph{integrated semigroup} generated by $A$.
\end{defi}

The equivalent definition is the following, see \cite[Proposition 3.2.4]{ABHN11}.
\begin{lem}\label{def:equ}
	Let $S: [0, \infty) \rightarrow \mathcal{L}(X)$ be a strongly continuous function satisfying $\|\int_0^t S(s)~\mathrm{d}s \| \leq Me^{\omega t} (t \geq 0)$, for some $M,~ \omega\geq 0$. Then, the following assertions are equivalent.
	\begin{enumerate}[(a)]
		\item There exists an operator $A$ such that $(\omega, \infty) \subset \rho(A)$, and \eqref{equ:int} holds.
		\item For $s, ~t \geq 0$,
		\begin{equation}\label{equ:int:11}
			S(t)S(s) = \int_0^t(S(r+s)-S(r))~\mathrm{d}r,
		\end{equation}
		and $S(t)x = 0$ for all $t \geq 0$ implies $x=0$.
	\end{enumerate}
\end{lem}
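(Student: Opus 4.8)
The plan is to reduce the whole equivalence to the theory of vector-valued Laplace transforms and pseudo-resolvents. Writing $V(t) = \int_0^t S(s)\,ds$, the hypothesis $\|V(t)\| \le Me^{\omega t}$ guarantees that $\hat V(\lambda) = \int_0^\infty e^{-\lambda s}V(s)\,ds$ converges absolutely for $\lambda > \omega$; an integration by parts (using $V' = S$ in the strong sense, valid since $S$ is strongly continuous) then shows that the improper integral $\hat S(\lambda)x := \int_0^\infty e^{-\lambda s}S(s)x\,ds$ exists and equals $\lambda\hat V(\lambda)x$. With this notation, condition \eqref{equ:int} is exactly the statement $R(\lambda, A) = U(\lambda)$, where $U(\lambda) := \lambda\hat S(\lambda)$ for $\lambda > \omega$. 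So everything comes down to characterizing when the family $\{U(\lambda)\}_{\lambda > \omega}$ is the resolvent of some operator $A$.

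The core computation I would carry out is the double Laplace transform, in the two independent variables $t$ and $s$, of the functional equation \eqref{equ:int:11}. Using Fubini's theorem (justified by the exponential bound) together with the substitution $u = r+s$, the transform of the left-hand side $S(t)S(s)$ is $\hat S(\lambda)\hat S(\mu)$, while the transform of the right-hand side is $\tfrac{1}{\lambda}\bigl[\tfrac{\hat S(\mu)-\hat S(\lambda)}{\lambda-\mu} - \tfrac1\mu \hat S(\lambda)\bigr]$. Since both $(t,s) \mapsto S(t)S(s)$ and $(t,s)\mapsto \int_0^t(S(r+s)-S(r))\,dr$ are continuous, the uniqueness theorem for Laplace transforms shows that \eqref{equ:int:11} holds for all $s,t\ge 0$ if and only if these two transforms coincide. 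A short algebraic manipulation (multiplying through by $\lambda\mu$ and collecting terms) then shows that this identity is precisely the resolvent identity $U(\lambda)U(\mu) = \tfrac{U(\mu)-U(\lambda)}{\lambda-\mu}$ for the family $U$. Thus \eqref{equ:int:11} is equivalent to $U$ being a pseudo-resolvent.

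Next I would treat the non-degeneracy condition. Since the null space of a pseudo-resolvent is independent of $\lambda$ (an immediate consequence of the resolvent identity), and since $U(\lambda)x = 0$ means $\hat S(\lambda)x = 0$, the uniqueness theorem for Laplace transforms gives that $U(\lambda)x = 0$ for some (equivalently all) $\lambda>\omega$ if and only if $S(s)x = 0$ for all $s\ge 0$. Hence the non-degeneracy assumption ``$S(t)x=0\ \forall t \Rightarrow x=0$'' is exactly the injectivity of $U(\lambda)$, i.e.\ the triviality of the common null space of the pseudo-resolvent.

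Putting the pieces together yields both implications. For (a)$\Rightarrow$(b): if such an $A$ exists then $U(\lambda) = R(\lambda,A)$ automatically satisfies the resolvent identity and is injective, so by the two equivalences above \eqref{equ:int:11} holds and $S$ is non-degenerate. For (b)$\Rightarrow$(a): if \eqref{equ:int:11} holds and $S$ is non-degenerate, then $U$ is an injective pseudo-resolvent on $(\omega,\infty)$, and the standard pseudo-resolvent theorem (set $A := \lambda I - U(\lambda)^{-1}$ on $D(A) := \operatorname{ran} U(\lambda)$, which is $\lambda$-independent and closed) produces a unique operator $A$ with $(\omega,\infty)\subset\rho(A)$ and $R(\lambda,A) = U(\lambda) = \lambda\hat S(\lambda)$, which is \eqref{equ:int}. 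The step I expect to be most delicate is the double-transform computation and its rigorous justification: because the bound is assumed on $V$ rather than on $S$ itself, $\hat S$ is only defined through the integrated object $V$, so the interchange of integrals and the passage to the limit in the improper integral should be verified via $V$ rather than na\"ively on $S$.
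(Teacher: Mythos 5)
Your proposal is correct, and it coincides with the proof the paper is implicitly relying on: the paper gives no argument of its own for this lemma, deferring to \cite{ABHN11} (Proposition 3.2.4), whose standard proof is exactly your reduction — identify $U(\lambda) = \lambda \int_0^\infty e^{-\lambda s}S(s)\,\mathrm{d}s$, show via the double Laplace transform and the uniqueness theorem that \eqref{equ:int:11} is equivalent to the pseudo-resolvent identity for $U$, match non-degeneracy with injectivity (using $\lambda$-independence of the kernel of a pseudo-resolvent), and conclude with the theorem that an injective pseudo-resolvent is the resolvent of a unique operator. The only delicate point is the one you yourself flag: since only $V(t)=\int_0^t S(s)\,\mathrm{d}s$ is assumed exponentially bounded, the improper transforms and the interchanges must be justified by integration by parts against $V$ (not by naive Fubini on $S$), which is how the cited source proceeds.
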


For some basic properties of integrated semigroups, see \cite[Section 3.2]{ABHN11}. From now on, we set
\[
X_0 := \overline{D(A)}, ~A_0 := A_{X_0}.
\]
Note that, by \autoref{partop}, $\rho(A) = \rho(A_0)$.
\begin{lem}\label{partInt}
	If $A_0$ generates a $C_0$ semigroup $T_{A_0}$ in $X_0$, then $A$ generates an integrated semigroup $S_A$ in $X$. Furthermore, the following hold.
	\begin{enumerate}[(a)]
		\item $T_{A_0}(t)x = \dfrac{d}{dt}S_A(t)x,~\forall x \in X_0$;
		\item $S_A$ can be represented as the following,
		\begin{align}
		S_A(t)x & = (\mu - A_0) \int_0^tT_{A_0}(s)R(\mu, A)x~\mathrm{d}s  \notag \\
		& = \mu \int_0^tT_{A_0}(s)R(\mu, A)x~\mathrm{d}s - T_{A_0}(t)R(\mu, A)x + R(\mu, A)x, ~\forall x \in X,\label{equ:biaodashi}
		\end{align}
		for all $\mu \in \rho(A)$.
		\item $S_A$ is exponentially bounded, i.e., $\|S_A(t)\| \leq M e^{\omega t}, ~ \forall t \geq 0$, for some constants $M \geq 0$ and $\omega \in \mathbb{R}$;
		\item $T_{A_0}(t)S_{A}(s) = S_{A}(t+s) - S_A(t)$, for all $t,s \geq 0$;
		\item $T_{A_0}(\cdot)x$ is continuously differentiable on $[t_0,\infty)$ for any $x \in D(A)$ if and only if $S_A(\cdot)x$ is continuously differentiable on $[t_0,\infty)$ for any $x \in X$, where $t_0 \geq 0$.
	\end{enumerate}
\end{lem}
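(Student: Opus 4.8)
The plan is to guess the integrated semigroup explicitly from the formula in (b), verify the defining identity \eqref{equ:int}, and then read off (a), (c), (d), (e) from that same formula. Fix $\mu\in\rho(A)=\rho(A_0)$ (equality by \autoref{partop}) and set, for $x\in X$,
\[
S_A(t)x := \mu\int_0^t T_{A_0}(s)R(\mu,A)x\,\mathrm{d}s - T_{A_0}(t)R(\mu,A)x + R(\mu,A)x .
\]
Since $R(\mu,A)$ maps $X$ into $D(A)\subset X_0$ and $T_{A_0}$ is a $C_0$ semigroup with $\|T_{A_0}(t)\|\le M_0 e^{\omega_0 t}$, each summand is strongly continuous in $t$ and exponentially bounded; estimating the three terms separately gives (c) at once, and hence the bound on $\int_0^t S_A(s)\,\mathrm{d}s$ demanded by the definition (with $\omega\ge\max(\omega_0,0)$, noting $(\omega_0,\infty)\subset\rho(A_0)=\rho(A)$). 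The first equality in (b) follows from the standard identity $A_0\int_0^t T_{A_0}(s)y\,\mathrm{d}s = T_{A_0}(t)y - y$ applied to $y=R(\mu,A)x\in X_0$.

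The core step is to check \eqref{equ:int}, i.e.\ that $\lambda\int_0^\infty e^{-\lambda s}S_A(s)x\,\mathrm{d}s = R(\lambda,A)x$ for large $\lambda$. I would compute the Laplace transform termwise: Fubini turns the transform of the primitive $\int_0^t T_{A_0}(s)y\,\mathrm{d}s$ into $\lambda^{-1}R(\lambda,A_0)y$, the transform of $T_{A_0}(\cdot)y$ is $R(\lambda,A_0)y$, and the transform of the constant term is $\lambda^{-1}y$. Collecting the contributions gives $(\mu-\lambda)R(\lambda,A_0)R(\mu,A)x + R(\mu,A)x$; since $R(\mu,A)x\in X_0$ I may replace $R(\lambda,A_0)R(\mu,A)x$ by $R(\lambda,A)R(\mu,A)x$ (using $R(\lambda,A_0)=R(\lambda,A)|_{X_0}$ from \autoref{partop}), and the resolvent identity collapses everything to $R(\lambda,A)x$. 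This shows $A$ generates $S_A$; injectivity of the Laplace transform then gives that $S_A$ is independent of the chosen $\mu$, so (b) holds for every $\mu\in\rho(A)$.

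For (a), when $x\in X_0$ the element $z:=R(\mu,A_0)x$ lies in $D(A_0)$, because $A_0 z=\mu z-x\in X_0$; differentiating the formula and using $\tfrac{d}{dt}T_{A_0}(t)z=T_{A_0}(t)A_0 z$ yields $\tfrac{d}{dt}S_A(t)x=T_{A_0}(t)(\mu-A_0)z=T_{A_0}(t)x$. Part (d) is a direct computation: applying $T_{A_0}(t)$ to the formula and using the semigroup law together with the substitution $r\mapsto t+r$ in the integral reproduces exactly $S_A(t+s)x-S_A(t)x$. For (e) I would observe that in the formula the integral term is always $C^1$ (its derivative $\mu T_{A_0}(t)R(\mu,A)x$ is continuous) and the last term is constant, so $S_A(\cdot)x$ is continuously differentiable on $[t_0,\infty)$ if and only if $T_{A_0}(\cdot)R(\mu,A)x$ is; since $R(\mu,A)$ is a bijection of $X$ onto $D(A)$, letting $x$ range over $X$ makes $R(\mu,A)x$ range over all of $D(A)$, which is precisely the asserted equivalence.

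The only genuine subtlety, and the step I would treat most carefully, is the distinction between $D(A)$ and $D(A_0)$ underlying (a): for general $x\in X$ one only has $R(\mu,A)x\in D(A)$, and $AR(\mu,A)x=\mu R(\mu,A)x-x$ need not lie in $X_0$, so $T_{A_0}(\cdot)R(\mu,A)x$ need not be differentiable — which is exactly why $S_A$ is merely an integrated semigroup rather than a $C_0$ semigroup, and why the differentiation in (a) is available only on $X_0$. The remaining risks are purely technical (justifying Fubini and the termwise Laplace transforms via the exponential bounds) and do not affect the structure of the argument.
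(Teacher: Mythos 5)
Your proposal is correct and follows essentially the same route as the paper: the paper's proof also reduces everything to (b), verifying that the explicit formula satisfies the Laplace-transform definition of an integrated semigroup, and then reads off the remaining parts from that formula (the paper cites \autoref{def:equ} where you compute (d) directly, but this is the same computation in substance). Your treatment of the $D(A)$ versus $D(A_0)$ distinction in (a), and the Fubini/resolvent-identity verification of \eqref{equ:int}, correctly fill in the details the paper leaves implicit.
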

\begin{proof}
	It suffices to show (b), but this directly follows from the definition of integrated semigroup. Others follow from (b) and \autoref{def:equ}.
\end{proof}

\subsection{Non-densely defined operator}\label{nddo}
Here, we discuss a general class of non-densely defined operators (i.e., $\overline{D(A)} \neq X$), developed by Magal and Ruan \cite{MR07}. Since the importance of the operators and for the sake of our reference, we call them MR operator and $p$-quasi Hille--Yosida operator below.
\begin{defi}[MR operator \cite{MR07}]\label{def:MR}
	We call a closed linear operator $A$ an \textbf{MR operator}, if the following two conditions hold.
	\begin{enumerate}[(a)]
		\item $A_0$ generates a $C_0$ semigroup $T_{A_0}$ in $X_0$, i.e., $A_0$ is a Hille--Yosida operator in $X_0$, and $\overline{D(A_0)} = X_0$. Then, $A$ generates an integrated semigroup $S_A$ in $X$.
		\item There exists an increasing function $\delta: ~[0,\infty) \rightarrow [0,\infty)$ such that $\delta(t) \rightarrow 0$, as $t \rightarrow 0$, and for any $f \in C^1(0,\tau; X), ~\tau > 0$,
		\begin{equation}\label{equ:guji}
			\left\|\frac{d}{dt} \int_0^tS_A(t-s)f(s)~\mathrm{d}s\right\| \leq \delta(t)\sup\limits_{s\in [0,t]}\|f(s)\|, ~\forall t \in [0,\tau].
		\end{equation}
	\end{enumerate}
\end{defi}
In the following, we set
\[
(S_A * f)(t) := \int_0^tS_A(t-s)f(s)~\mathrm{d}s, ~~(S_A \diamond f)(t) := \frac{d}{dt}(S_A * f)(t).
\]

We collect some basic properties about MR operators basically due to \cite{MR07, MR09}; we give a direct proof for the sake of readers.
\begin{lem}\label{lem:MR}
	Let $A$ be an MR operator.
	\begin{enumerate}[(a)]
		\item \label{MRaa} $S_A$ is norm continuous on $[0,\infty)$ and $\|S_A(t)\| \leq \delta(t)$.
		\item $\|R(\lambda, A)\| \rightarrow 0$, as $\lambda \rightarrow \infty, \lambda \in \mathbb{R}$; particularly $ S_A(t)x = \lim_{\mu \to \infty}\int_0^tT_{A_0}(s)\mu R(\mu, A)x~\mathrm{d}s $.
		\item For $\forall f \in C(0,\tau; X)$, $(S_A*f)(t) \in D(A)$, $ t \mapsto (S_A*f)(t) $ is differentiable, $(S_A \diamond f)(t)\in X_0$, $t \mapsto (S_A \diamond f)(t)$ is continuous and \eqref{equ:guji} also holds. Furthermore, the following equations hold.
		\begin{align}
			(S_A \diamond f)(t) & = \lim_{\mu \rightarrow \infty} \int_0^tT_{A_0}(t-s) \mu R(\mu, A)f(s)~\mathrm{d}s \label{equ:sf1}\\
			& = T_{A_0}(t-s)(S_A \diamond f)(s) + (S_A \diamond f(s + \cdot))(t-s) \label{equ:sf2}\\
			& = \lim_{h \rightarrow 0} \frac{1}{h} \int_0^tT_{A_0}(t-s)S(h)f(s)~\mathrm{d}s. \label{equ:sf3}
		\end{align}
	\end{enumerate}
\end{lem}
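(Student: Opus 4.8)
The plan is to dispatch (a) and (b) quickly and to concentrate on (c), which carries the real content. For (a) I apply the defining estimate \eqref{equ:guji} to the constant function $f\equiv x$: a change of variables gives $(S_A*f)(t)=\int_0^t S_A(r)x\,\mathrm{d}r$, and since $S_A$ is strongly continuous this has derivative $S_A(t)x$, so $(S_A\diamond f)(t)=S_A(t)x$ and \eqref{equ:guji} yields $\|S_A(t)\|\le\delta(t)$. Norm continuity then follows from \autoref{partInt}(d): both $S_A(t+h)-S_A(t)=T_{A_0}(t)S_A(h)$ and $S_A(t)-S_A(t-h)=T_{A_0}(t-h)S_A(h)$ are bounded by $\sup_{0\le\sigma\le t}\|T_{A_0}(\sigma)\|\,\delta(h)\to0$. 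For (b) I use \eqref{equ:int}, splitting $\lambda\int_0^\infty e^{-\lambda s}S_A(s)x\,\mathrm{d}s$ at a small $\tau$: on $[0,\tau]$ I estimate $\|S_A(s)\|\le\delta(\tau)$ by (a), and on $[\tau,\infty)$ I use exponential boundedness (\autoref{partInt}(c)); sending $\tau\to0$ after $\lambda\to\infty$ gives $\|R(\lambda,A)\|\to0$. The ``particularly'' clause is immediate from \eqref{equ:biaodashi}: solving it for $\int_0^t T_{A_0}(s)\mu R(\mu,A)x\,\mathrm{d}s$ and letting $\mu\to\infty$ annihilates the terms $T_{A_0}(t)R(\mu,A)x$ and $R(\mu,A)x$.

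For (c) I first settle the $C^1$ case. For $f\in C^1$, differentiability of $(S_A*f)$ and \eqref{equ:guji} are built into \autoref{def:MR}; inserting \eqref{equ:biaodashi} and differentiating term by term, the semigroup-convolution term being treated by the standard $C^1$-rule $\frac{\mathrm{d}}{\mathrm{d}t}\int_0^t T_{A_0}(t-s)g(s)\,\mathrm{d}s=T_{A_0}(t)g(0)+\int_0^t T_{A_0}(t-s)g'(s)\,\mathrm{d}s$, produces
\[ (S_A\diamond f)(t)=\int_0^t T_{A_0}(t-s)\mu R(\mu,A)f(s)\,\mathrm{d}s-T_{A_0}(t)R(\mu,A)f(0)-\int_0^t T_{A_0}(t-s)R(\mu,A)f'(s)\,\mathrm{d}s+R(\mu,A)f(t). \]
Letting $\mu\to\infty$ and using $\|R(\mu,A)\|\to0$ from (b) gives \eqref{equ:sf1} for $C^1$ data; continuity of $(S_A\diamond f)$ is visible because every term above is continuous in $t$. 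Membership $(S_A*f)(t)\in D(A)$ I obtain by integrating $\int_0^t S_A(r)f(t-r)\,\mathrm{d}r$ by parts and invoking the integrated-semigroup identity $\int_0^\tau S_A(r)x\,\mathrm{d}r\in D(A)$, $A\int_0^\tau S_A(r)x\,\mathrm{d}r=S_A(\tau)x-\tau x$; closedness of $A$ then gives $A(S_A*f)(t)=(S_A\diamond f)(t)-\int_0^t f(s)\,\mathrm{d}s$.

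To reach $f\in C(0,\tau;X)$ I approximate uniformly by $C^1$ functions $f_n$. By \eqref{equ:guji} the sequence $(S_A\diamond f_n)$ is uniformly Cauchy, so $(S_A*f_n)$ and their derivatives converge uniformly; this yields differentiability of $(S_A*f)$, the extension of \eqref{equ:guji}, continuity and $X_0$-membership of $(S_A\diamond f)$ (a uniform limit of continuous $X_0$-valued maps, using $S_A(t)X\subset X_0$, which is read off \eqref{equ:biaodashi}), and, through closedness of $A$, $(S_A*f)(t)\in D(A)$. Then \eqref{equ:sf2} follows from \eqref{equ:sf1} by splitting $\int_0^t=\int_0^s+\int_s^t$, factoring $T_{A_0}(t-s)$ out of the first piece by the semigroup law and recognizing the second piece as $(S_A\diamond f(s+\cdot))(t-s)$; and \eqref{equ:sf3} follows from \autoref{partInt}(d), since $T_{A_0}(t-s)S_A(h)=S_A(t-s+h)-S_A(t-s)$ turns $\frac1h\int_0^t T_{A_0}(t-s)S_A(h)f(s)\,\mathrm{d}s$ into the difference quotient $\frac1h[(S_A*f)(t+h)-(S_A*f)(t)]$ minus a boundary term of size $O(\delta(h))$ by (a).

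The main obstacle is \eqref{equ:sf1} for merely continuous $f$, where the naive passage through the $C^1$ identity fails: it would require interchanging $\lim_n$ and $\lim_\mu$, and for a general MR operator $\|\mu R(\mu,A)\|$ need not remain bounded (only $\|R(\mu,A)\|\to0$ is available from (b), and $\|\mu R(\mu,A)\|$ may genuinely blow up, e.g.\ when $\delta(r)\sim\sqrt r$). Instead I prove \eqref{equ:sf1} directly: writing $\mu R(\mu,A)=\mu^2\int_0^\infty e^{-\mu r}S_A(r)\,\mathrm{d}r$ from \eqref{equ:int}, Fubini together with \autoref{partInt}(d) recasts $\int_0^t T_{A_0}(t-s)\mu R(\mu,A)f(s)\,\mathrm{d}s$ in terms of $(S_A*f)(t+r)-(S_A*f)(t)$ plus a remainder; an integration by parts and the approximate-identity limit $\mu\int_0^\infty e^{-\mu r}\psi(r)\,\mathrm{d}r\to\psi(0^+)$, applied to the (now known to be) continuous function $\psi=(S_A\diamond f)(t+\cdot)$, produce $(S_A\diamond f)(t)$, while the remainder is controlled by $\|S_A(\rho)\|\le\delta(\rho)$ and the exponential bound and vanishes as $\mu\to\infty$.
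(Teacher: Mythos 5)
Your proposal is correct, and for parts (a), (b), and the identities \eqref{equ:sf2}, \eqref{equ:sf3} it essentially coincides with the paper's proof (in (b) the paper splits the Laplace integral at $\varepsilon=\lambda^{-1/2}$ and integrates by parts, while you split at a fixed $\tau$ and use the exponential bound of $S_A$ from \autoref{partInt}(c); both work). The genuine divergence is at \eqref{equ:sf1}, and your diagnosis of the difficulty is exactly right: a density argument from the $C^1$ case would need $\sup_\mu\|\mu R(\mu,A)\|<\infty$ on $X$, which fails for general MR operators. The paper, however, does not repair this by a direct computation as you do; it sidesteps it entirely by moving the resolvent from $X$ to $X_0$. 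Since $R(\mu,A)f(s)\in D(A)\subset X_0$ and the diamond convolution of an $X_0$-valued function reduces to the ordinary semigroup convolution, one has the exact identity
\begin{equation*}
\mu R(\mu,A_0)(S_A\diamond f)(t)=(S_A\diamond \mu R(\mu,A)f)(t)=\int_0^t T_{A_0}(t-s)\,\mu R(\mu,A)f(s)~\mathrm{d}s ,
\end{equation*}
and then \eqref{equ:sf1} follows in one line because $(S_A\diamond f)(t)\in X_0$ and $\mu R(\mu,A_0)\to I$ strongly on $X_0$ (equiboundedness of $\mu R(\mu,A_0)$ is free, $A_0$ being a Hille--Yosida operator on $X_0$ --- precisely the boundedness that is unavailable for $\mu R(\mu,A)$ on $X$). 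Your substitute --- the $C^1$ identity obtained from \eqref{equ:biaodashi}, followed by the direct argument via $\mu R(\mu,A)=\mu^2\int_0^\infty e^{-\mu r}S_A(r)~\mathrm{d}r$, Fubini, \autoref{partInt}(d), integration by parts against the continuous function $(S_A\diamond f)(t+\cdot)$, and the approximate-identity limit --- is sound; the remainder $\mu^2\int e^{-\mu r}(S_A*f(t+\cdot))(r)~\mathrm{d}r$ is indeed killed by $\|S_A(\rho)\|\le\delta(\rho)$ (split at $\epsilon$, let $\mu\to\infty$, then $\epsilon\to 0$), and the tail $r>\tau-t$ by the exponential bounds. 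What your route buys is self-containedness: you also prove the first statement of (c) (differentiability of $S_A*f$, continuity and $X_0$-valuedness of $S_A\diamond f$, the extension of \eqref{equ:guji}) by $C^1$-approximation and closedness of $A$, whereas the paper simply cites \cite{MR07, Thi08} for that part. What the paper's route buys is brevity and a structural explanation: \eqref{equ:sf1} is nothing but the strong convergence $\mu R(\mu,A_0)\to I$ on $X_0$ read through the convolution, and that observation is worth retaining.
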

\begin{proof}
	(a) To show $\|S_A(t)\| \leq \delta(t)$, it suffices to take $ f(s) = x $ in \eqref{equ:guji}. Then, by \autoref{partInt} (d), we know $S_A$ is norm continuous on $[0,\infty)$.

	(b) Note that for $ \lambda > 0 $ and $ \varepsilon = \lambda^{-1/2} $, we have
	\begin{align*}
		\|R(\lambda, A)\| & = \| \lambda \int_{0}^{\infty} e^{-\lambda s}S_{A}(s) ~\mathrm{d}s \| \\
		& \leq \lambda \| \int_{0}^{\varepsilon} e^{-\lambda s}S_{A}(s) ~\mathrm{d}s \| + \lambda \| \int_{\varepsilon}^{\infty} e^{-\lambda s}S_{A}(s) ~\mathrm{d}s \| \\
		& \leq \delta(\varepsilon) + \lambda \| \int_{\varepsilon}^{\infty} e^{-\lambda s} \int_{0}^{s} S_{A}(r) ~\mathrm{d}r ~\mathrm{d} s \| + \lambda \| e^{-\lambda\varepsilon} \int_{0}^{\varepsilon} S_{A}(r) ~\mathrm{d}r \|\\
		& \quad \to 0, ~\text{as}~ \lambda \to \infty.
	\end{align*}
	The representation of $ S_{A} $ now immediately follows from \eqref{equ:biaodashi}.

	(c) For the first statement, see \cite{MR07, Thi08}. We only consider the equations \eqref{equ:sf1} \eqref{equ:sf2} \eqref{equ:sf3}. Since
	\[
	R(\mu, A_0) (S_A \diamond f)(t) = (S_A \diamond R(\mu, A) f)(t) = \int_{0}^{t} T_{A_0} (t - s) R(\mu, A) f(s) ~\mathrm{d} s,
	\]
	we have \eqref{equ:sf1} holds; here for $ x \in X_0 $, $ \mu R(\mu, A_0)x \to x $ as $ \mu \to \infty $. \eqref{equ:sf2} follows from \eqref{equ:sf1} and the property of the convolution operation. Let us consider \eqref{equ:sf3}.
	\begin{align*}
		(S_A \diamond f)(t) & = \lim_{h \rightarrow 0} \frac{1}{h} ((S_A*f)(t+h) - (S_A*f)(t)) \\
		& = \lim_{h \rightarrow 0} \frac{1}{h} [(S_A(h+\cdot)*f)(t) - (S_A*f)(t) + (S_A*f(t+\cdot))(h)] \\
		& = \lim_{h \rightarrow 0} \frac{1}{h} [(S_A(h+\cdot) - S_A(\cdot))*f](t) \quad \text{(by (a))} \\
		& = \lim_{h \rightarrow 0} \frac{1}{h} (T_{A_0}(\cdot)S(h)*f)(t). \quad \text{(by \autoref{partInt} (d))}
	\end{align*}
	This completes the proof.
\end{proof}

Equation \eqref{equ:sf2} will be frequently used in our proof.
Now we turn to other important class of MR operators.
\begin{defi}[$p$-quasi Hille--Yosida operator \cite{MR07}]\label{def:pHY}
	We call a closed linear operator $A$ a \textbf{$p$-quasi Hille--Yosida operator} ($p \geq 1$), if the following two conditions hold.
	\begin{enumerate}[(a)]
		\item $A_0$ generates a $C_0$ semigroup $T_{A_0}$ in $X_0$. Then, $A$ generates an integrated semigroup $S_A$ in $X$.
		\item There exist $\widehat{M} \geq 0, ~\widehat{\omega} \in \mathbb{R}$ such that for any $f \in C^1(0,\tau; X)$, $\tau > 0$
		\begin{equation}\label{equ:pHY}
			\left\| (S_A \diamond f)(t) \right\| \leq \widehat{M} \left\|e^{\widehat{\omega}(t-\cdot)}f(\cdot) \right\|_{L^p(0,t;X)}, ~\forall t \in [0,\tau].
		\end{equation}
	\end{enumerate}
\end{defi}
For $p$-quasi Hille--Yosida operator, if we don't emphasize the $p$, we also call it \emph{quasi Hille--Yosida operator}.
\begin{lem}\label{lem:pHY}Let $A$ be a $p$-quasi Hille--Yosida operator.
	\begin{enumerate}[(a)]
		\item 1-quasi Hille--Yosida operators are Hille--Yosida operators.
		\item For all $f \in L^p(0,\tau;X)$, $(S_A*f)(t) \in D(A)$, $ t \mapsto (S_A*f)(t) $ is differentiable, $(S_A \diamond f)(t)\in X_0$, $t \mapsto (S_A \diamond f)(t)$ is continuous and \eqref{equ:pHY} also holds.
		\item There is $M \geq 0$ such that
		\[
		\|R(\lambda, A)\| \leq \frac{M}{(\lambda-\widehat{\omega})^{\frac{1}{p}}}, ~~\forall \lambda > \widehat{\omega}.
		\]
	\end{enumerate}
\end{lem}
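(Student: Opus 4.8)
The plan is to treat the three assertions in the order (b), (c), (a), since the $L^p$-extension in (b) is what lets us apply the defining estimate \eqref{equ:pHY} to the test functions used later, while the Hille--Yosida conclusion in (a) is cleanest once the regularity of $S_A$ is in hand. Throughout I will use that a $p$-quasi Hille--Yosida operator is in particular an MR operator: bounding $\|f(s)\|$ by its supremum inside \eqref{equ:pHY} gives $\|(S_A\diamond f)(t)\|\le \widehat M\bigl(\int_0^t e^{p\widehat\omega u}\,\mathrm{d}u\bigr)^{1/p}\sup_{[0,t]}\|f\|$, so \eqref{equ:guji} holds with $\delta(t)=\widehat M\bigl(\int_0^t e^{p\widehat\omega u}\,\mathrm{d}u\bigr)^{1/p}\to 0$ as $t\to 0$; hence \autoref{lem:MR} and \autoref{partInt} are available, and by \autoref{lem:MR}(a) this same $\delta$ controls $\|S_A(t)\|$. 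I may also assume $\widehat\omega\ge\omega_0$, the growth bound of $T_{A_0}$, since enlarging $\widehat\omega$ only weakens \eqref{equ:pHY}.

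For (b) I would argue by density. Since $1\le p<\infty$, $C^1(0,\tau;X)$ is dense in $L^p(0,\tau;X)$; given $f\in L^p$ pick $f_n\in C^1$ with $f_n\to f$ in $L^p$. Applying \eqref{equ:pHY} to $f_n-f_m$ shows that $t\mapsto(S_A\diamond f_n)(t)$ is uniformly Cauchy on $[0,\tau]$, so it converges uniformly to a continuous $X_0$-valued function (as $X_0$ is closed) which I define to be $(S_A\diamond f)(t)$, and \eqref{equ:pHY} passes to the limit. Norm continuity of $S_A$ (\autoref{lem:MR}(a)) gives $(S_A*f_n)(t)\to(S_A*f)(t)$ uniformly. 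Using the integrated-solution identity $A(S_A*f_n)(t)=(S_A\diamond f_n)(t)-\int_0^t f_n(s)\,\mathrm{d}s$, valid for the continuous $f_n$ by \autoref{lem:MR}(c), all three terms converge as $n\to\infty$ (the last because $f_n\to f$ in $L^1$ on the finite interval); since $A$ is closed, $(S_A*f)(t)\in D(A)$. Finally, passing to the limit in $(S_A*f_n)(t)=\int_0^t(S_A\diamond f_n)(s)\,\mathrm{d}s$ shows $t\mapsto(S_A*f)(t)$ is differentiable with derivative $(S_A\diamond f)(t)$.

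For (c) the key is to avoid the lossy bound one gets by inserting $\|S_A(t)\|\le\delta(t)$ into \eqref{equ:biaodashi}, and instead to test \eqref{equ:pHY} against the exponential $f(s)=e^{\lambda s}x$. I would first establish, for $\lambda\in\rho(A)$, the exact representation $(S_A\diamond f)(t)=e^{\lambda t}R(\lambda,A)x-T_{A_0}(t)R(\lambda,A)x$; this follows from \eqref{equ:sf1} (or by checking that the right-hand side is the integrated solution of $u'=Au+e^{\lambda t}x$, $u(0)=0$, using $AR(\lambda,A)=\lambda R(\lambda,A)-I$). Applying \eqref{equ:pHY} to this $f$ and computing the weight, $\int_0^t e^{p\widehat\omega(t-s)}e^{p\lambda s}\,\mathrm{d}s\le e^{p\lambda t}/\bigl(p(\lambda-\widehat\omega)\bigr)$, gives $\|(S_A\diamond f)(t)\|\le \widehat M\|x\|\,e^{\lambda t}\bigl(p(\lambda-\widehat\omega)\bigr)^{-1/p}$. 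On the other hand the representation yields $\|(S_A\diamond f)(t)\|\ge e^{\lambda t}\|R(\lambda,A)x\|-M_0 e^{\omega_0 t}\|R(\lambda,A)x\|$; dividing by $e^{\lambda t}$ and letting $t\to\infty$ (here $\lambda>\widehat\omega\ge\omega_0$) makes the transient term vanish and leaves $\|R(\lambda,A)x\|\le \widehat M\,p^{-1/p}(\lambda-\widehat\omega)^{-1/p}\|x\|$, which is (c) with $M=\widehat M\,p^{-1/p}$.

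For (a), with $p=1$ the formula for $\delta$ above gives $\|S_A(h)\|\le\widehat M\int_0^h e^{\widehat\omega u}\,\mathrm{d}u$. Combining this with the cocycle relation $T_{A_0}(t)S_A(h)=S_A(t+h)-S_A(t)$ of \autoref{partInt}(d) and $\|T_{A_0}(t)\|\le M_0 e^{\omega_0 t}$ yields, with $\omega'=\max\{\omega_0,\widehat\omega,0\}$, the Lipschitz bound $\|S_A(t+h)-S_A(t)\|\le M_0\widehat M\,e^{\omega_0 t}\int_0^h e^{\widehat\omega u}\,\mathrm{d}u\le M_0\widehat M\int_t^{t+h}e^{\omega' r}\,\mathrm{d}r$. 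Thus $S_A$ is an exponentially Lipschitz once-integrated semigroup, and by the characterization of Hille--Yosida operators through such semigroups (\cite{KH89}, \cite[Section 3.3]{ABHN11}) $A$ is a Hille--Yosida operator. I expect the main obstacle to be precisely this last point: naively iterating the $L^1$-estimate only gives $\|(\lambda-\widehat\omega)^nR(\lambda,A)^n\|\le\widehat M^n$, which is not uniform in $n$, so the uniform resolvent-power bound must be extracted from the Lipschitz regularity of $S_A$ rather than by composition; similarly in (c) the sharp exponent $1/p$ for \emph{all} $\lambda>\widehat\omega$ forces the exponential-test-function argument rather than a direct estimate of the resolvent integral, which degrades near $\lambda=\widehat\omega$.
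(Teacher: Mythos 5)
Your argument is correct in parts (a) and (b) and takes a genuinely different route from the paper, whose proof of this lemma is essentially a triple citation: (a) is referred to \cite[Section 4]{Thi08}, (b) to \cite{MR07, Thi08}, and (c) is deduced from the duality estimate of \cite[Theorem 4.7 (iii) and Remark 4.8]{MR07} (the bound involving the densities $\chi_{x^*}$) by a one-line H\"older argument at $n=1$. Your (b) is a sound density-and-closedness argument; the only loose end is that the identity $A(S_A*f_n)(t)=(S_A\diamond f_n)(t)-\int_0^t f_n(s)\,\mathrm{d}s$ is not literally stated in \autoref{lem:MR}(c), but it is the integrated-solution identity established in the references the paper cites for that lemma, so the appeal is fair. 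Your (a) is also correct: the exponentially weighted Lipschitz bound $\|S_A(t+h)-S_A(t)\|=\|T_{A_0}(t)S_A(h)\|\le M_0\widehat M\int_t^{t+h}e^{\omega' r}\,\mathrm{d}r$ together with the Kellerman--Hieber characterization of Hille--Yosida operators by Lipschitz continuous integrated semigroups (\cite{KH89}, \cite[Section 3.3]{ABHN11}) is exactly the right mechanism, and your remark that iterating the $n=1$ resolvent estimate only yields the non-uniform bound $\widehat M^n$ correctly identifies why this detour is needed. In (c), the exponential test function $f(s)=e^{\lambda s}x$ recovers the sharp exponent $1/p$ with an explicit constant, matching what the paper extracts from H\"older; what the paper's heavier machinery buys instead is control of the resolvent on the \emph{whole} half-line $\lambda>\widehat\omega$, which is precisely where your proof falls short, as follows.

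The genuine gap is in (c): the constant $\widehat\omega$ in the conclusion is the one fixed in \autoref{def:pHY}, so the reduction ``WLOG $\widehat\omega\ge\omega_0$'' is not free. After enlarging $\widehat\omega$ you prove the estimate only for $\lambda>\max\{\widehat\omega,\omega_0\}$ and with $\max\{\widehat\omega,\omega_0\}$ in the denominator, and for $\lambda\in(\widehat\omega,\omega_0]$ (if nonempty) your argument does not even show $\lambda\in\rho(A)$, which the statement implicitly asserts. The gap closes only because \eqref{equ:pHY} in fact forces $\omega_0=\omega(T_{A_0})<\widehat\omega$, and that requires an argument you do not give. One in the spirit of your proof: for $y\in D(A_0)$ take $f(s)=T_{A_0}(s)y$; then $(S_A\diamond f)(t)=tT_{A_0}(t)y$ (by \eqref{equ:sf1} and commutation of $R(\mu,A_0)$ with the semigroup), so \eqref{equ:pHY} reads $t\,h(t)\le\widehat M\|h\|_{L^p(0,t)}$ for $h(t):=e^{-\widehat\omega t}\|T_{A_0}(t)y\|$. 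Setting $H(t):=\int_0^t h^p$, this is $H'(t)\le\widehat M^p t^{-p}H(t)$, and for $p>1$ Gronwall on $[t_0,\infty)$ shows $H$ is bounded with $\|h\|_{L^p(0,\infty)}\le C\|y\|$; by density and Fatou this extends to all $y\in X_0$, and the Datko--Pazy theorem (see, e.g., \cite{Paz83}) applied to $e^{-\widehat\omega t}T_{A_0}(t)$ gives $\omega_0<\widehat\omega$. (For $p=1$, test instead with a smooth approximate identity $\phi_\epsilon(\cdot)y$, $y\in X_0$, which gives $\|T_{A_0}(t)y\|\le\widehat M e^{\widehat\omega t}\|y\|$ directly.) With this supplement --- or with the Laplace--Stieltjes/pseudo-resolvent continuation underlying \cite[Theorem 4.7]{MR07} --- your proof of (c) is complete; as written, it only establishes the claimed bound for large $\lambda$.
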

\begin{proof}
	(a) See \cite[Section 4]{Thi08} for more general results.

	(b) See \cite{MR07, Thi08}.

	(c) This is a corollary of \cite[Theorem 4.7 (iii) and Remark 4.8]{MR07}. Indeed, by
	\[
	\|x^*\circ[\lambda-(A-\widehat{\omega})]^{-n}\|_{X^*} \leq \frac{1}{(n-1)!} \int_0^\infty s^{n-1}e^{-\lambda s}\chi_{x^*}(s)~\mathrm{d}s , ~~\lambda >0,
	\]
	where $x^* \in (X_0)^*$, $\sup\limits_{x^* \in (X_0)^*, |x^*| \leq 1}\|\chi_{x^*}\|_{L^p(0,\infty;\mathbb{R})} < \infty$, $n \in \mathbb{N}$, and by the H\"{o}lder inequality for the case $n=1$, we obtain the result. The proof is complete.
\end{proof}

\begin{rmk}\label{rmk:cc}
	A beautiful characterization of $ A $ which is an MR operator or a $ p $-quasi Hille--Yosida operator by using the regularity of the integrated semigroup $ S_A $ was given by Thieme \cite{Thi08}. We state here for the convenience of readers. Suppose for the operator $ A: D(A) \subset X \to X $, $A_0$ generates a $C_0$ semigroup $T_{A_0}$ in $X_0$. Let $ S_A $ be the integrated semigroup generated by $ S_A $.
	\begin{enumerate}[(a)]
		\item $ A $ is an MR operator if and only if $ S_A $ is of \emph{bounded  semi-variation} on $ [0, b] $ for some $ b > 0 $ with its semi-variation $ V_{\infty} (S_A; 0, b) \to 0 $ as $ b \to 0 $. Here we mention that if $ T_{A_0} $ is norm continuous on $ (0, b) $, then $ V_{\infty} (S_A; 0, b) \to 0 $ as $ b \to 0 $. Note also that $ S_A $ is of bounded  semi-variation on $ [0, b] $ if and only if $ x^*S_A $ is of \emph{bounded variation} for all $ x^* \in (X_0)^* $ (see, e.g., \cite[Theorem 2.12]{Mon15}).
		\item $ A $ is a $ p $-quasi Hille--Yosida operator if and only if $ S_A $ is of \emph{bounded semi-$ p $-variation} on $ [0, b] $ for some $ b > 0 $, and if and only if $ x^*S_A $ is \emph{bounded $ p' $-variation} on $ [0, b] $ for all $ x^* \in (X_0)^* $ where $ 1/{p'} + 1/p = 1 $. We notice that for the semi-$ p $-variation $ V_{p} (S_A; 0, b) $, one has $ V_{\infty} (S_A; 0, b) \leq b^{1/p} V_{p} (S_A; 0, b) $.
	\end{enumerate}
	For the notions of (semi-) ($ p $-) variation, see \cite{Thi08, Mon15} for details. Additional remark should be made: for a function $ f: [a,b] \to X $ with $ X $ having Radon-Nikodym property, then $ f $ is of bounded $ p $-variation if and only if $ f \in W^{1,p}((a,b), X) $.
	Also note that for MR operator $ A $, $ (S_A \diamond f)(t) $ can be written in the convolution form by using the Stieltjes-type integral, i.e., (see \cite[Theorem 3.2]{Thi08})
	\[
	(S_A \diamond f)(t) = \int_{0}^{t} \mathrm{d}S_{A}(s) f(s - t).
	\]
\end{rmk}

Quasi Hille--Yosida operators are related to almost sectorial operators.
\begin{defi}[$\frac{1}{p}$-almost sectorial operator \cite{PS02, DMP10}]\label{def:almost}
	We call a closed linear operator $A$ a \textbf{$\frac{1}{p}$-almost sectorial operator} ($p \geq 1$), if there exist $\widehat{M} > 0$, $\widehat{\omega} \in \mathbb{R}$, $\theta \in (\frac{\pi}{2}, \pi)$ such that
	\[
	\|R(\lambda, A)\| \leq \frac{\widehat{M}}{|\lambda - \widehat{\omega}|^{p}},
	\]
	for all $\lambda \in \{\lambda \in \mathbb{C}:~\lambda \neq \widehat{\omega}, ~|\arg(\lambda - \widehat{\omega})| < \theta\}$.
\end{defi}
\begin{lem}\label{lem:almost}
	\begin{enumerate}[(a)]
		\item If $A$ is a $\frac{1}{p}$-almost sectorial operator, then $A_0$ is a generator of an analytic $C_0$ semigroup, and $A$ is a $\widehat{p}$-quasi Hille--Yosida operator for any $\widehat{p} > p$.
		\item If $A$ is a $p$-quasi Hille--Yosida operator and $A_0$ is a generator of an analytic $C_0$ semigroup, then $A$ is a $\frac{1}{p}$-almost sectorial operator.
		\item Let $A_0$ be a generator of an analytic $C_0$ semigroup. Then, $A$ is a Hille--Yosida operator if and only if $A$ is $1$-almost sectorial operator (i.e., the generator of a holomorphic semigroup, see \cite[Definition 3.7.1]{ABHN11}).
	\end{enumerate}
\end{lem}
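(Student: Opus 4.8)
My plan is to treat all three parts as translations between two presentations of the resolvent: the sectorial bound of \autoref{def:almost}, valid on a sector of half-angle $\theta>\pi/2$ and of order $|\lambda-\widehat\omega|^{-1/p}$, and the real-axis bound of \autoref{lem:pHY}(c). Throughout I would first shift so that $\widehat\omega=0$, and use that $R(\mu,A)X\subset D(A)\subset X_0$, so by \autoref{partop}(a) one has $R(\lambda,A)|_{X_0}=R(\lambda,A_0)$ and $\rho(A)=\rho(A_0)$; thus spectral information passes freely between $A$ and its part $A_0$.

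For (b) I would argue purely with resolvents. Analyticity of $T_{A_0}$ means the sector of half-angle $>\pi/2$ lies in $\rho(A_0)=\rho(A)$ and there $\|R(\lambda,A_0)\|_{\mathcal L(X_0)}\lesssim|\lambda|^{-1}$, while \autoref{lem:pHY}(c) gives $\|R(\mu,A)\|_{\mathcal L(X)}\lesssim\mu^{-1/p}$ for real $\mu>0$. The bridge is the resolvent identity
\[
R(\lambda,A)=R(\mu,A)+(\mu-\lambda)\,R(\lambda,A_0)R(\mu,A),
\]
legitimate because $R(\mu,A)$ already lands in $X_0$, so $R(\lambda,A)R(\mu,A)=R(\lambda,A_0)R(\mu,A)$. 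For $\lambda$ in the sector I choose the real point $\mu\sim|\lambda|$; then $|\mu-\lambda|\lesssim|\lambda|$ and $|\lambda-\omega'|\sim|\lambda|$, so both terms are $O(|\lambda|^{-1/p})$, which is the estimate of \autoref{def:almost}. The only care is the sector geometry when optimizing $\mu$.

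For (c) I would reduce to the case $p=1$. Since by \autoref{lem:pHY}(a) the Hille--Yosida operators are exactly the $1$-quasi Hille--Yosida operators, the implication Hille--Yosida $\Rightarrow$ $1$-almost sectorial (given $A_0$ analytic) is precisely (b) with $p=1$. For the converse, a $1$-almost sectorial $A$ has $\|R(\lambda,A)\|\lesssim|\lambda|^{-1}$ on a sector of half-angle $>\pi/2$, so the classical Dunford integral produces a holomorphic semigroup $T$ with $\sup_{t>0}\|T(t)\|<\infty$ (after the shift); differentiating the Laplace representation gives $R(\lambda,A)^n=\frac{1}{(n-1)!}\int_0^\infty t^{n-1}e^{-\lambda t}T(t)\,\mathrm{d}t$, which integrates to $\|R(\lambda,A)^n\|\leq M(\lambda-\omega)^{-n}$, i.e.\ $A$ is a Hille--Yosida operator.

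The substance is in (a), and the main obstacle is showing that the part $A_0$ of a $\frac1p$-almost sectorial operator generates an \emph{analytic $C_0$} semigroup: on $X_0$ the available bound $\|R(\lambda,A_0)\|\lesssim|\lambda|^{-1/p}$ is weaker than the sectorial threshold $|\lambda|^{-1}$, so the classical generation criterion does not apply directly. My plan is the Dunford construction $K(t)=\frac{1}{2\pi i}\int_\Gamma e^{\lambda t}R(\lambda,A)\,\mathrm{d}\lambda$ over the sector boundary $\Gamma$; the scaling $\lambda=\sigma/t$ with $\|R(\lambda,A)\|\lesssim|\lambda|^{-1/p}$ and $1/p<1$ gives convergence, $\|K(t)\|\lesssim t^{1/p-1}$, and a holomorphic extension of $K$ in $t$, while the resolvent identity yields the semigroup law. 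One checks $K(t)X\subset X_0$ and that $K(\cdot)|_{X_0}$, set to $I$ at $t=0$, is strongly continuous on the dense set $R(\mu,A)X_0$ (where the factor $(\mu-\lambda)^{-1}$ restores integrability), hence an analytic $C_0$ semigroup with generator $A_0$; this is the almost-sectorial generation theorem of \cite{PS02, DMP10}, which I would cite for the delicate strong-continuity estimate rather than redo. Granting this, the integrated semigroup is $S_A(t)=\int_0^t K(s)\,\mathrm{d}s$ (identified through \eqref{equ:int} by uniqueness of Laplace transforms), so by subadditivity of $x\mapsto x^{1/p}$ one gets $\|S_A(t+h)-S_A(t)\|\leq\int_t^{t+h}\|K(s)\|\,\mathrm{d}s\lesssim h^{1/p}$, i.e.\ $S_A$ is $\tfrac1p$-Hölder. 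A $\tfrac1p$-Hölder function has bounded semi-$\widehat p$-variation for every $\widehat p>p$ (the loss being intrinsic to Hölder-versus-$p$-variation), so \autoref{rmk:cc}(b) yields that $A$ is a $\widehat p$-quasi Hille--Yosida operator for each $\widehat p>p$, completing (a).
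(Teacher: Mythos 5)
Your parts (b) and (c) are correct, and they do genuinely more than the paper, which disposes of both by citation (for (b): \cite[Proposition 3.3]{DMP10} combined with \autoref{lem:pHY}\,(c); for (c): \cite[Theorem 3.7.11, Example 3.5.9 c)]{ABHN11}): your resolvent-identity argument $R(\lambda,A)=R(\mu,A)+(\mu-\lambda)R(\lambda,A_0)R(\mu,A)$ with $\mu\sim|\lambda|$ is exactly the mechanism the paper itself uses later in \autoref{thm:alm}, so it is a sound self-contained replacement. One small imprecision: for the direction Hille--Yosida $\Rightarrow$ $1$-almost sectorial you invoke \autoref{lem:pHY}\,(a), but that lemma states only that $1$-quasi Hille--Yosida operators are Hille--Yosida; the converse you actually need is the classical Da Prato--Sinestrari estimate, which is true but not what is cited. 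For part (a), both you and the paper delegate the analytic generation of $T_{A_0}$ to \cite{PS02, DMP10}, which is fine.

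The genuine gap is the last step of (a). The claim that a $\tfrac1p$-H\"older function has bounded semi-$\widehat{p}$-variation for every $\widehat{p}>p$ is false. Thieme's semi-$\widehat{p}$-variation is a Riesz-type (interval-length--normalized) variation, not a Wiener-type one: by \autoref{rmk:cc}\,(b) it amounts to $x^*S_A$ having bounded $\widehat{p}'$-variation ($1/\widehat{p}+1/\widehat{p}'=1$), and by the Riesz theorem (cf.\ the Radon--Nikodym remark in \autoref{rmk:cc}) a scalar function of bounded $\widehat{p}'$-variation lies in $W^{1,\widehat{p}'}$, in particular is absolutely continuous. H\"older continuity gives bounded \emph{Wiener} $(1/\alpha)$-variation, but it gives no absolute continuity at all: the Weierstrass function $\sum_n 2^{-n/p}\cos(2^n x)$ is $\tfrac1p$-H\"older yet nowhere differentiable, hence of unbounded Riesz $\widehat{p}'$-variation for every $\widehat{p}'>1$. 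So the implication you rely on fails, and passing from the derivative bound on $K$ to the H\"older bound on $S_A$ discards precisely the information you need. The repair is immediate from what you already established: since $S_A(t)=\int_0^t K(s)\,\mathrm{d}s$ with $\|K(s)\|\le Cs^{1/p-1}$, and $s^{1/p-1}\in L^{\widehat{p}'}(0,b)$ exactly when $(1-1/p)\widehat{p}'<1$, i.e.\ $\widehat{p}>p$, the function $S_A$ is norm absolutely continuous with derivative in $L^{\widehat{p}'}$; then for any partition $0=t_0<\dots<t_n=b$ and any $x_j\in X$,
\[
\Bigl\|\sum_{j}\bigl(S_A(t_j)-S_A(t_{j-1})\bigr)x_j\Bigr\|
=\Bigl\|\sum_{j}\int_{t_{j-1}}^{t_j}K(s)x_j\,\mathrm{d}s\Bigr\|
\le \|K\|_{L^{\widehat{p}'}(0,b)}\Bigl(\sum_{j}\|x_j\|^{\widehat{p}}\,(t_j-t_{j-1})\Bigr)^{1/\widehat{p}},
\]
by H\"older's inequality applied to the step function equal to $\|x_j\|$ on $(t_{j-1},t_j]$, which is bounded semi-$\widehat{p}$-variation. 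Alternatively, bypass semi-variation entirely: $(S_A\diamond f)(t)=\int_0^t K(t-s)f(s)\,\mathrm{d}s$, so $\|(S_A\diamond f)(t)\|\le \|K\|_{L^{\widehat{p}'}(0,t)}\|f\|_{L^{\widehat{p}}(0,t;X)}$, which verifies \eqref{equ:pHY} of \autoref{def:pHY} directly. With either repair your part (a) becomes correct, and is essentially the proof of \cite[Theorem 3.11]{DMP10} that the paper cites.
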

\begin{proof}
	(a) That $A_0$ is a generator of an analytic $C_0$ semigroup was shown in \cite[Proposition 3.12, Theorem 3.13]{PS02}, and that $A$ is a $\widehat{p}$-quasi Hille--Yosida operator for any $\widehat{p} > p$ was proved in \cite[Theorem 3.11]{DMP10}.

	(b) Combine \cite[Proposition 3.3]{DMP10} and \autoref{lem:pHY} (c).

	(c) See \cite[Theorem 3.7.11 and Example 3.5.9 c)]{ABHN11}.
\end{proof}
Because of the above lemma, we may interpret almost sectorial operators as an analytic version of quasi Hille--Yosida operators.
We refer to \cite{MR07, MR09, MR09a} for more results and examples on MR operators (quasi Hille--Yosida operators), particularly in applications to abstract Cauchy problems, and to \cite{PS02, DMP10, CDN08} on almost sectorial operators; see also \autoref{model} and \autoref{comments}.

\begin{rmk}
	For a generator $ A $ of an integrated semigroup $ S_A $, unlike the case of Hille--Yosida operator (where $ A $ is a Hille--Yosida operator if and only if $ S_A $ is locally Lipschitz, and if and only if $ S_A $ is of locally bounded semi-$ 1 $-variation) and $\frac{1}{p}$-almost sectorial operator, even $ S_A $ is of bounded semi-$ p $-variation ($ p > 1 $) or semi-variation, in general, $ A_{\overline{D(A)}} $ might not generate a $ C_0 $ semigroup; see, e.g., \cite[Example 5.2 and 5.3]{Thi08} where the operators given in that paper are densely defined.
\end{rmk}

Assume that $A$ is an MR operator. Denote by $C_s([0,\infty), \mathcal{L}(X_0, X))$ the space of strongly continuous map from $[0,\infty)$ into $\mathcal{L}(X_0, X)$). Set
\[
(S_A \diamond V)(t)x := (S_A \diamond V(\cdot)x)(t), ~~\forall x \in X_0,
\]
where $V \in C_s([0,\infty), \mathcal{L}(X_0, X))$. Note that for any $t \geq 0$, $(S_A \diamond V)(t) \in \mathcal{L}(X_0, X)$, and $t \mapsto (S_A \diamond V)(t)$ is norm continuous at zero (see \autoref{lem:MR} \eqref{MRaa}). Set
\[
\mathcal{B}(W) := S_A \diamond LW,~~ W \in C_s([0,\infty), \mathcal{L}(X_0, X)),
\]
where $L \in\mathcal{L}(X_0, X)$.
\begin{lem}\label{lem:fix}
	Suppose that $A$ is an MR operator. Then, for any $V \in C_s([0,\infty), \mathcal{L}(X_0, X))$, the equation
	\begin{equation}\label{equ:fix}
		W = V + \mathcal{B}(W)
	\end{equation}
	has a unique solution $W \in C_s([0,\infty), \mathcal{L}(X_0, X))$. Furthermore,
	\[
	W = \sum\limits_{n=0}^{\infty}\mathcal{B}^n(V),
	\]
	where the series is uniformly convergent on any finite interval in the uniform operator topology.
\end{lem}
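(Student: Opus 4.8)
The plan is to read \eqref{equ:fix} as a linear Volterra fixed-point problem and to prove that the Volterra operator $\mathcal{B}$ is \emph{quasinilpotent} on $C_s([0,\tau],\mathcal{L}(X_0,X))$ (sup norm) for every $\tau>0$; once this is established, existence, uniqueness and the Neumann representation all follow at once. The basic building block comes directly from \autoref{def:MR}(b) together with $\|S_A(t)\|\le\delta(t)$ (\autoref{lem:MR}): applying \eqref{equ:guji} to $f(\cdot)=LW(\cdot)x$ and taking the supremum over $\|x\|\le1$ gives, for every $W\in C_s$ and $t\ge0$,
\[
\|\mathcal{B}(W)(t)\|_{\mathcal{L}(X_0,X)}\le \delta(t)\,\|L\|\,\sup_{s\in[0,t]}\|W(s)\|_{\mathcal{L}(X_0,X)}.
\]
I also record that $\mathcal{B}$ maps $C_s$ into $C_s$, which is the continuity statement in \autoref{lem:MR}(c). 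Since $\delta(t)\to0$ as $t\to0$, I first fix a small $h>0$ with $q:=\delta(h)\|L\|<1$; on $[0,h]$ the displayed estimate makes $\mathcal{B}$ a strict contraction, which already yields local existence, uniqueness and a geometrically convergent Neumann series. The difficulty is that this crude bound has contraction factor $\delta(t)\|L\|$, which exceeds $1$ once $t$ is large, so it cannot by itself deliver convergence on a long interval.

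To upgrade the local statement to all of $[0,\tau]$, the key tool is the translation identity \eqref{equ:sf2}. Writing $g_n:=\mathcal{B}^n(V)$ and applying \eqref{equ:sf2} with $f=Lg_n(\cdot)x$ and base point $s$ gives
\[
g_{n+1}(t)=T_{A_0}(t-s)\,g_{n+1}(s)+\bigl(S_A\diamond Lg_n(s+\cdot)\bigr)(t-s),
\]
where the second term is controlled by $\delta(t-s)\|L\|\sup_{r\in[s,t]}\|g_n(r)\|$. I then partition $[0,\tau]$ into intervals $I_j=[jh,(j+1)h]$ of the fixed small length $h$ chosen above, set $c_n^{(j)}:=\sup_{I_j}\|g_n\|$ and $K:=\sup_{[0,\tau]}\|T_{A_0}\|$, and unroll the identity across the grid points (using $g_{n+1}(0)=0$) to reach the recursion
\[
c_{n+1}^{(j)}\le q\sum_{i=0}^{j}K^{\,j-i}\,c_n^{(i)},\qquad 0\le j\le m-1.
\]
Equivalently, the vector $(c_n^{(0)},\dots,c_n^{(m-1)})$ is dominated entrywise by $\mathcal{M}^n$ applied to $(c_0^{(0)},\dots,c_0^{(m-1)})$, where $\mathcal{M}$ is the lower-triangular Toeplitz matrix with entries $\mathcal{M}_{ji}=q\,K^{\,j-i}$ for $i\le j$.

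The whole argument now hinges on the observation that $\mathcal{M}$ is lower triangular with constant diagonal $q<1$, so its spectral radius equals $q$ \emph{regardless} of the (possibly large) subdiagonal entries; hence $\|\mathcal{M}^n\|\to0$ geometrically and $\sum_n c_n^{(j)}<\infty$ for each $j$. This is exactly the quasinilpotency I want: it shows $\sum_{n\ge0}\mathcal{B}^n(V)$ converges uniformly in operator norm on $[0,\tau]$, and since $\tau$ is arbitrary the series defines some $W$, with strong (indeed norm) continuity passing to the uniform limit, so $W\in C_s([0,\infty),\mathcal{L}(X_0,X))$. The partial sums $S_N=\sum_{n=0}^{N}\mathcal{B}^n(V)$ satisfy $S_N=V+\mathcal{B}(S_{N-1})$, and letting $N\to\infty$ with $\mathcal{B}$ bounded gives $W=V+\mathcal{B}(W)$. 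For uniqueness I subtract two solutions to obtain $U=\mathcal{B}(U)=\mathcal{B}^n(U)$ for all $n$, and the same triangular estimate applied to $U$ forces $\|\mathcal{B}^n(U)\|_{[0,\tau]}\to0$, hence $U\equiv0$ on every $[0,\tau]$. I expect the main obstacle to be precisely the passage from local contraction to the global series: organizing \eqref{equ:sf2} into the triangular recursion and recognizing that it is triangularity, not any smallness of the off-diagonal terms, that makes $\rho(\mathcal{M})<1$.
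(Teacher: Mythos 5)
Your proof is correct, and its global step takes a genuinely different route from the paper's. The two arguments share the same ingredients --- the short-interval contraction estimate $\|\mathcal{B}(W)(t)\|\le\delta(t)\|L\|\sup_{s\in[0,t]}\|W(s)\|$ (valid for all of $C_s$ by \autoref{lem:MR}~(c), which is where the extension of \eqref{equ:guji} to merely continuous $f$ lives, not by \autoref{def:MR}~(b) alone), and the translation identity \eqref{equ:sf2} to get past that interval --- but they deploy \eqref{equ:sf2} differently. The paper is constructive: having solved the equation on $[0,\delta_0]$ by contraction, it solves a \emph{fresh} fixed-point problem on each subsequent interval, with inhomogeneity $T_{A_0}(\cdot)(S_A\diamond LW)(\delta_0)+V(\cdot+\delta_0)$, uses \eqref{equ:sf2} to check that the glued function solves the original equation, and gets uniqueness because each piece is forced; the Neumann series and its uniform convergence are exhibited only on the first interval $[0,\delta_0]$, the global series statement being left implicit in the construction. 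You instead prove global summability of $\sum_n\mathcal{B}^n(V)$ head-on: \eqref{equ:sf2} gives the two-index recursion $c^{(j)}_{n+1}\le K\,c^{(j-1)}_{n+1}+q\,c^{(j)}_n$, which (using $g_{n+1}(0)=0$) unrolls to entrywise domination of the vector of interval sups by powers of a nonnegative lower-triangular matrix with constant diagonal $q<1$, whose spectral radius is therefore $q$ no matter how large the off-diagonal entries $qK^{j-i}$ are; existence, uniqueness and the series representation then all drop out of the resulting summability. What your route buys: it establishes the ``uniformly convergent on any finite interval'' clause of the lemma in full with an explicit geometric-type rate, and it isolates a reusable quantitative fact (the restriction of $\mathcal{B}$ to $C([0,\tau],\mathcal{L}(X_0,X))$ has spectral radius at most $q$ for every admissible $h$, hence zero, i.e.\ genuine quasinilpotency). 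What the paper's route buys: brevity, and a restart construction that makes the linearity of $x\mapsto W(t)x$ and the cocycle structure of the solution transparent without bookkeeping over a grid. One small slip to correct: your parenthetical claim that norm continuity passes to the limit is neither needed nor justified ($V$ is only strongly continuous, so $W$ need not be norm continuous); strong continuity of the uniform limit is all the lemma requires, and that part of your argument stands.
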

\begin{proof}
	The result was already stated in the proof of \cite[Theroem 3.1]{MR07}. For the sake of readers, we give a detailed proof here. Since $\delta(t) \rightarrow 0,~ t \rightarrow 0$, there exists $\delta_0 > 0$ such that $\delta(t)\|L\| \leq r < 1$ for $t \in [0,\delta_0]$. Fix $x \in X_0$. Consider the operator
	\[
	(\widetilde{B}f)(t) = (S_A \diamond Lf)(t), ~~f \in C([0, \delta_0], X),
	\]
	which is contractive and $(\widetilde{B}V(\cdot)x)(t) = (\mathcal{B}V)(t)x$. Thus, we have a unique $W(\cdot)x \in C([0,\delta_0], X)$ such that
	\[
	\widetilde{B}W(\cdot)x + V(\cdot)x = W(\cdot)x,
	\]
	and
	\begin{equation}\label{jssss}
		W(t)x = \sum\limits_{n=0}^{\infty}(\widetilde{B}^nV(\cdot)x)(t) = \sum\limits_{n=0}^{\infty}\mathcal{B}^n(V)(t)x.
	\end{equation}
	Obviously, by the uniqueness, $ x \mapsto W(t)x $ is linear for each $ t $.
	In addition, there is $\widetilde{M} > 0$ such that
	\[
	\|(\widetilde{B}^mV(\cdot)x)(t) - (\widetilde{B}^nV(\cdot)x)(t)\| \leq \frac{r^n}{1-r}\sup_{t\in[0,\delta_0]}\|(S_A \diamond LV)(t)x - V(t)x\| \leq \frac{r^n}{1-r}\widetilde{M}\|x\|,
	\]
	whenever $m \geq n$, which implies \eqref{jssss} is uniformly convergent on $[0,\delta]$ in the uniform operator topology. Therefore $W \in C_s([0,\delta_0], \mathcal{L}(X_0, X))$.
	Next, there is $\widetilde{W}(\cdot)$ satisfying
	\[
	(S_A \diamond L\widetilde{W})(s) + T_{A_0}(s)(S_A \diamond LW)(\delta_0) + V(s+\delta_0) = \widetilde{W}(s), ~ s \in [0, \delta_0].
	\]
	(That is we take $T_{A_0}(\cdot)(S_A \diamond LW)(\delta_0) + V(\cdot+\delta_0)$ as the initial function.) Let
	\[
	W(t) = \widetilde{W}(t - \delta_0), ~t \in [\delta_0, 2\delta_0].
	\]
	Then,
	\[
	W(t) = (S_A \diamond LW(\delta_0+\cdot))(t-\delta_0) + T_{A_0}(t-\delta_0)(S_A \diamond LW)(\delta_0) + V(t) = (\mathcal{B}W)(t) + V(t).
	\]
	So $W(\cdot)$ is well defined on $[\delta_0, 2\delta_0]$ and satisfies \eqref{equ:fix}. Since $W(\cdot)$ is uniquely constructed in this way, this completes the proof.
\end{proof}

It was shown in \cite{MR07, Thi08} that MR operators (quasi Hille--Yosida operators) are stable under the bounded perturbation.
\begin{thm}[\cite{MR07, Thi08}]\label{thm:mrper}
	Let $A$ be an MR operator (resp. $p$-quasi Hille--Yosida operator). For any $L \in \mathcal{L}(X_0, X)$, $A+L$ is still an MR operator (resp. $p$-quasi Hille--Yosida operator), and the following fixed equations hold.
	\begin{align*}
		T_{(A+L)_0} & = T_{A_0} + S_A \diamond LT_{(A+L)_0} \\
		& = T_{A_0} + S_A \diamond LT_{A_0} + S_A \diamond LS_A \diamond LT_{(A+L)_0} \\
		& = \cdots, \\
		S_{A+L} & = S_A + S_A \diamond LS_{A + L} = \cdots.
	\end{align*}
\end{thm}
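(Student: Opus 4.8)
The plan is to construct the candidate perturbed semigroup as the unique fixed point furnished by \autoref{lem:fix}, prove by hand that it is a $C_0$ semigroup, identify its generator and the associated integrated semigroup through Laplace transforms, and finally transport the defining estimates from $A$ to $A+L$. First I would apply \autoref{lem:fix} with $V=T_{A_0}$ (regarded as an element of $C_s([0,\infty),\mathcal{L}(X_0,X))$ via $X_0\hookrightarrow X$) to get the unique $W$ solving $W=T_{A_0}+S_A\diamond LW=T_{A_0}+\mathcal{B}(W)$, together with the Neumann series $W=\sum_{n\ge 0}\mathcal{B}^n(T_{A_0})$. Substituting the renewal equation into itself produces exactly the displayed chain of identities for $T_{(A+L)_0}$, once $W$ is identified with $T_{(A+L)_0}$. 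Because $(S_A\diamond g)(t)\in X_0$ by \autoref{lem:MR}(c), every $\mathcal{B}^n(T_{A_0})(t)$ maps $X_0$ into $X_0$, so $W(t)\in\mathcal{L}(X_0)$ and the equation is self-consistent.

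The first substantive step is the semigroup law. Fixing $s\ge 0$, I would show that $U(t):=W(t+s)$ and $\widetilde U(t):=W(t)W(s)$ both solve $Z=T_{A_0}(\cdot)W(s)+\mathcal{B}(Z)$. For $U$ this comes from the semigroup property of $T_{A_0}$ and the shift identity \eqref{equ:sf2}: evaluating it at $t+s$ with $g=LW(\cdot)x$ splits $(S_A\diamond g)(t+s)$ as $T_{A_0}(t)(S_A\diamond g)(s)+(S_A\diamond g(s+\cdot))(t)$, and collecting terms gives $U=T_{A_0}(\cdot)W(s)+\mathcal{B}(U)$. For $\widetilde U$ it follows by applying $W(s)$ on the right and using $(S_A\diamond LW)(t)W(s)x=(S_A\diamond LW(\cdot)W(s)x)(t)$. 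Uniqueness in \autoref{lem:fix} (with $V=T_{A_0}(\cdot)W(s)$) then forces $U=\widetilde U$. Since $W(0)=I$ (the $\diamond$-term vanishes at $0$ by \autoref{lem:MR}(a)) and $W$ is strongly continuous, $W$ is a $C_0$ semigroup.

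Next I would identify the generator by Laplace transforms. From $\|R(\lambda,A)\|\to 0$ (\autoref{lem:MR}(b)), the operator $I-R(\lambda,A)L$ is invertible on $X_0$ for large real $\lambda$, giving the resolvent identity $R(\lambda,A+L)=R(\lambda,A)+R(\lambda,A)LR(\lambda,A+L)$. Using $\widehat{S_A}(\lambda)=\lambda^{-1}R(\lambda,A)$ and $\widehat{S_A\diamond g}(\lambda)=R(\lambda,A)\hat g(\lambda)$ (valid since $(S_A*g)(0)=0$), transforming $W=T_{A_0}+S_A\diamond LW$ yields $\widehat W(\lambda)=R(\lambda,A)+R(\lambda,A)L\widehat W(\lambda)$ on $X_0$, hence $\widehat W(\lambda)=R(\lambda,(A+L)_0)$. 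By uniqueness of Laplace transforms the generator of $W$ is $(A+L)_0$, so $W=T_{(A+L)_0}$; as $\overline{D(A+L)}=\overline{D(A)}=X_0$, \autoref{partInt} shows $A+L$ generates an integrated semigroup $S_{A+L}$, and the same transform computation gives $S_{A+L}=S_A+S_A\diamond LS_{A+L}$ and its iterated chain.

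It remains to verify the defining estimates for $A+L$. From the integrated-semigroup identity I would obtain the variation-of-constants relation $S_{A+L}\diamond f=S_A\diamond f+S_A\diamond L(S_{A+L}\diamond f)$; writing $u=S_{A+L}\diamond f$ and $v=S_A\diamond f$ this is $u=v+S_A\diamond Lu$. Applying the MR estimate \eqref{equ:guji} for $A$ and taking $t$ small with $\delta(t)\|L\|\le r<1$ gives $\sup_{[0,t]}\|u\|\le(1-r)^{-1}\delta(t)\sup_{[0,t]}\|f\|$, i.e.\ \eqref{equ:guji} for $A+L$ with $\delta'(t)=(1-r)^{-1}\delta(t)\to 0$, so $A+L$ is an MR operator. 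In the $p$-quasi Hille--Yosida case the same relation with \eqref{equ:pHY} produces the integral inequality $\|u(t)\|\le\|v(t)\|+\widehat M\|L\|\,\|e^{\widehat\omega(t-\cdot)}u\|_{L^p(0,t)}$, and a generalized Gronwall inequality returns a bound of the form \eqref{equ:pHY}. I expect the main obstacle to lie here: justifying the associativity $(S_A\diamond LS_{A+L})\diamond f=S_A\diamond L(S_{A+L}\diamond f)$ rigorously (a Fubini/Stieltjes-convolution argument, cf.\ \autoref{rmk:cc}) and carrying out the $L^p$ Gronwall estimate in the $p$-quasi Hille--Yosida case.
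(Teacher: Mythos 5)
The paper gives no proof of \autoref{thm:mrper}: it is imported verbatim from \cite{MR07, Thi08}, so there is no internal argument to compare against. Judged on its own terms, your proposal is correct and is essentially the argument of those cited sources, assembled from the paper's own tools (\autoref{lem:fix}, \autoref{lem:MR}, \autoref{partop}, \autoref{partInt}): the fixed point $W$, the semigroup law via uniqueness applied to $V=T_{A_0}(\cdot)W(s)$, and the Laplace-transform identification $\widehat{W}(\lambda)=(I-R(\lambda,A)L)^{-1}R(\lambda,A_0)=R(\lambda,(A+L)_0)$ are all sound. Two remarks on the points you flag or gloss. First, the associativity $(S_A\diamond LS_{A+L})\diamond f=S_A\diamond L(S_{A+L}\diamond f)$ can be avoided altogether: both sides of the desired relation $u=v+S_A\diamond Lu$ (with $u=S_{A+L}\diamond f$, $v=S_A\diamond f$) have Laplace transform $R(\lambda,A+L)\widehat{f}(\lambda)$ by the resolvent identity, so the relation follows from uniqueness of Laplace transforms, with no Fubini or Stieltjes-convolution argument needed. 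Second, your contraction bound yields \eqref{equ:guji} for $A+L$ only for $t\in[0,\delta_0]$ with $\delta(t)\|L\|\le r<1$, whereas \autoref{def:MR} demands the estimate for all $t$; this is repaired routinely by the translation identity \eqref{equ:sf2} (valid for $S_{A+L}$) together with induction over intervals of length $\delta_0$, exactly as in the second half of the proof of \autoref{lem:fix}. Finally, the $L^p$ Gronwall step in the quasi Hille--Yosida case does close with constants uniform in $t$: substituting $\psi(t)=e^{-\widehat{\omega}t}\|u(t)\|$ reduces your integral inequality to $\psi(t)\le \widehat{M}\|h\|_{L^p(0,t)}+\widehat{M}\|L\|\,\|\psi\|_{L^p(0,t)}$, and integrating its $p$-th power gives \eqref{equ:pHY} for $A+L$ with a larger constant and with $\widehat{\omega}$ replaced by $\widehat{\omega}+2^{p-1}\widehat{M}^p\|L\|^p/p$.
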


Set
\begin{equation}\label{symbol}
	S_0 := T_{A_0},~ R_0 := T_{(A+L)_0},~ S_n := \mathcal{B}^n(T_{A_0}),~R_n := \mathcal{B}^n(T_{(A+L)_0}), ~n = 1, 2, \cdots.
\end{equation}
Combining with \autoref{lem:fix}, we have
\begin{align}
	S_{n+1} & = S_A \diamond LS_n \label{equ:sr1}\\
	R_k & = \sum_{n=k}^{\infty} S_n, \label{equ:sr2}\\
	R_k - S_k & = \mathcal{B} \circ R_k, \label{equ:sr3}\\
	T_{(A+L)_0} & = \sum_{n=0}^{k-1}S_n + R_k, \label{equ:sr4}\\
	& = \sum_{n=0}^{\infty}S_n, \label{equ:sr5}
\end{align}
where \eqref{equ:sr2} and \eqref{equ:sr5} are uniformly convergent on finite interval in the uniform operator topology.
The following is a result about $S_n$, which is similar as Dyson-Phillips series (\cite{EN00}).
\begin{lem} \label{lem:srelation}
	$S_n(t+s) = \sum\limits_{k=0}^{n}S_k(t)S_{n-k}(s).$
\end{lem}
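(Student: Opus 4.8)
The plan is to argue by induction on $n$, treating the claimed identity as an operator equation (recall that each $S_k(t)$ maps $X_0$ into $X_0$, since $S_0 = T_{A_0}$ does and $S_A \diamond(\cdot)$ takes values in $X_0$ by \autoref{lem:MR} (c); hence every composition $S_k(t)S_{n-k}(s)$ is well defined), which I verify by applying both sides to an arbitrary $x \in X_0$. The base case $n = 0$ reads $S_0(t+s) = S_0(t)S_0(s)$ and is simply the semigroup law for $T_{A_0} = S_0$.

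For the inductive step I assume the formula holds for $S_n$. Fix $x \in X_0$ and set $f(r) := LS_n(r)x$, a continuous $X$-valued function, so that $S_{n+1}(\tau)x = (S_A \diamond f)(\tau)$ by \eqref{equ:sr1} and the definition of $\mathcal{B}$. The key tool is the cocycle identity \eqref{equ:sf2}: splitting the time $t+s$ at $s$ gives
\[
S_{n+1}(t+s)x = T_{A_0}(t)(S_A \diamond f)(s) + (S_A \diamond f(s + \cdot))(t) = S_0(t)S_{n+1}(s)x + (S_A \diamond LS_n(s+\cdot)x)(t).
\]
Into the last term I feed the induction hypothesis in the form $S_n(s+r) = \sum_{k=0}^{n} S_k(r)S_{n-k}(s)$; since $S_{n-k}(s)x \in X_0$ is constant in $r$, linearity of $\diamond$ lets me pull it out and recognise each resulting term via $S_A \diamond LS_k = S_{k+1}$ (again \eqref{equ:sr1}):
\[
(S_A \diamond LS_n(s+\cdot)x)(t) = \sum_{k=0}^{n} (S_A \diamond LS_k)(t)\, S_{n-k}(s)x = \sum_{k=0}^{n} S_{k+1}(t)S_{n-k}(s)x.
\]
Re-indexing $j = k+1$ turns this into $\sum_{j=1}^{n+1} S_j(t)S_{(n+1)-j}(s)x$, and adding back the $j = 0$ term $S_0(t)S_{n+1}(s)x$ yields exactly $\sum_{j=0}^{n+1} S_j(t)S_{(n+1)-j}(s)x$. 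As $x$ was arbitrary, the operator identity for $S_{n+1}$ follows.

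The step needing the most care is the middle one: I must justify interchanging the $\diamond$ operation with the finite sum and pulling the fixed operator $S_{n-k}(s)$ out of the convolution variable $r$. This is legitimate because $\diamond$ is linear in its argument, the sum is finite, and $S_{n-k}(s)x$ is a constant vector with respect to $r$, so that $(S_A \diamond LS_k(\cdot)y)(t) = (S_A \diamond LS_k)(t)y$ holds for each fixed $y \in X_0$ directly from the definition $(S_A \diamond V)(t)y = (S_A \diamond V(\cdot)y)(t)$. I should also confirm the continuity needed for \eqref{equ:sf2} to apply, which holds since each $r \mapsto LS_n(r)x$ is continuous (by strong continuity of $S_n$ and boundedness of $L$). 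The only genuine bookkeeping hazard is choosing the split point in \eqref{equ:sf2} consistently (splitting at $s$, which produces the factor $T_{A_0}(t) = S_0(t)$) and matching the index shift so that the recurrence $S_{k+1} = S_A \diamond LS_k$ lines up with the summation range.
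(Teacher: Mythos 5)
Your proof is correct and is essentially the paper's own argument: induction on $n$, splitting $S_{n+1}(t+s) = (S_A \diamond LS_n(\cdot))(t+s)$ via \eqref{equ:sf2}, inserting the induction hypothesis, and using linearity of $\diamond$ together with the recurrence \eqref{equ:sr1} (i.e., $S_{k+1} = S_A \diamond LS_k$) to identify the resulting terms. The only immaterial difference is that you verify the identity pointwise on vectors $x \in X_0$, while the paper writes the same chain of equalities directly at the operator level.
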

\begin{proof}
	The case $n=0$ is clear. Consider
	\begin{align*}
		S_{n+1}(t+s) & = [S_A \diamond LS_n(\cdot)](t+s) \\
		& = T_{A_0}(t)[S_A \diamond LS_n(\cdot)](s) + [S_A \diamond LS_n(s + \cdot)](t) \quad \\
		& = T_{A_0}(t)S_{n+1}(s) + [S_A \diamond L \sum_{k=0}^{n}S_k(\cdot)S_{n-k}(s)](t) \quad \\
		& = T_{A_0}(t)S_{n+1}(s) + [\sum_{k=0}^{n}S_{k+1}(\cdot)S_{n-k}(s)](t) \\
		& = \sum_{k=0}^{n+1}S_k(t)S_{n+1-k}(s),
	\end{align*}
	the second equality being a consequence of \eqref{equ:sf2}, and the third one by induction.
\end{proof}

Using the above lemma, we obtain the following corollary which generalizes the corresponding case of $C_0$ semigroups; \autoref{cor:DP} \eqref{DPee} was also given in \cite[Theorem 3.2]{Bre01} in the context of $C_0$ semigroups.
\begin{cor}\label{cor:DP}
	\begin{enumerate}[(a)]
		\item If $S_0, S_1, \cdots, S_n$ are norm continuous (resp. compact, differentiable) at $t_0 > 0$, then $S_n(t)$ is norm continuous (resp. compact, differentiable) for all $t \geq t_0$.
		\item If $S_0, S_1, \cdots, S_n$ are compact at $t_0 > 0$, then $S_n$ is norm continuous on $(t_0, \infty)$.
		\item \label{DPcc} If $ S_n(t) $ is compact for all $ t > t_1 $ ($ \geq 0 $), then $ S_n $ is norm continuous on $ (t_1, \infty) $.
		\item \label{DPdd} If $ R_n(t) $ is compact for all $ t > t_1 $ ($ \geq 0 $), then $ R_n $ is norm continuous on $ (t_1, \infty) $.
		\item \label{DPee} $ S_n(t) $ is compact for all $ t > 0 $ if and only if $ S_n $ is norm continuous on $ (0, \infty) $ and $ (R(\lambda+ i\mu, A) L)^n R(\lambda + i \mu, A_0) $ is compact for all $ \mu \in \mathbb{R} $ and for some/all large $ \lambda > 0 $.
	\end{enumerate}
\end{cor}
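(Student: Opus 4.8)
The backbone of all five parts is the Dyson--Phillips-type product formula \autoref{lem:srelation}, i.e. $S_n(t+s)=\sum_{k=0}^n S_k(t)S_{n-k}(s)$, used together with two elementary facts: (i) if $K$ is compact and $U(\cdot)$ is strongly continuous and locally bounded, then $t\mapsto U(t)K$ is norm continuous (because $K$ sends the unit ball into a relatively compact set, on which strong convergence is uniform); and (ii) left multiplication by a fixed bounded operator preserves norm continuity, differentiability and compactness. I also use throughout that each $S_m$ maps $X_0$ into $X_0$, is strongly continuous and exponentially bounded, and satisfies $S_m(0)=0$ for $m\ge1$. For (a), for $\tau\ge t_0$ and small $h$ I would write
\[
S_n(\tau+h)=\sum_{k=0}^n S_k(\tau-t_0)\,S_{n-k}(t_0+h),
\]
the identity of \autoref{lem:srelation} with $t=\tau-t_0$, $s=t_0+h$. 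The point is that the perturbed time $t_0+h$ sits in the second factor, where $S_{n-k}$ is assumed norm continuous (resp. differentiable) at $t_0$, while the fixed operators $S_k(\tau-t_0)$ act on the left and, by (ii), preserve that regularity; letting $h\to0$ (resp. differentiating at $h=0$) termwise gives the conclusion at $\tau$. In the compactness case the same identity at $h=0$ exhibits $S_n(\tau)$ as a finite sum of (bounded)$\times$(compact) operators.

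For (b) I would reuse $S_n(t)=\sum_{k=0}^n S_k(t-t_0)S_{n-k}(t_0)$ for $t>t_0$, now reading it through (i): each $S_{n-k}(t_0)$ is compact and sits on the right, while $t\mapsto S_k(t-t_0)$ is strongly continuous on the left, so every summand, hence $S_n$, is norm continuous on $(t_0,\infty)$. Parts (c) and (d) are genuine self-improvements and call for the shift identity \eqref{equ:sf2} rather than the full product formula. For $n\ge1$, using $S_n=S_A\diamond LS_{n-1}$ and \eqref{equ:sf2}, for $t$ near a fixed $t_*>t_1$ and an auxiliary $s\in(t_1,t_*)$,
\[
S_n(t)=T_{A_0}(t-s)\,S_n(s)+\big(S_A\diamond LS_{n-1}(s+\cdot)\big)(t-s).
\]
Here $S_n(s)$ is compact by hypothesis, so by (i) the first term is norm continuous in $t$; the second is bounded by $\delta(t-s)\|L\|\sup_{[s,t]}\|S_{n-1}\|$ via \eqref{equ:guji} and \autoref{lem:MR}, hence uniformly small once $t-s$ is small, since $\delta(r)\to0$ as $r\to0$. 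Comparing $S_n(t)$ and $S_n(t')$ through a common $s$ close to $t_*$ then yields norm continuity at $t_*$; the case $n=0$ is the classical fact that a compact semigroup is immediately norm continuous. Part (d) is identical with $S\rightsquigarrow R$, using $R_n=S_A\diamond LR_{n-1}$.

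For (e) the bridge is the Laplace transform. Since $S_A\diamond f=\tfrac{d}{dt}(S_A*f)$ with $(S_A*f)(0)=0$ and $\int_0^\infty e^{-\lambda s}S_A(s)\,\mathrm ds=\lambda^{-1}R(\lambda,A)$ by \eqref{equ:int}, one gets $\widehat{S_A\diamond f}(\lambda)=R(\lambda,A)\hat f(\lambda)$, whence by induction $\widehat{S_n}(\lambda)=(R(\lambda,A)L)^nR(\lambda,A_0)$ (ranges staying in $X_0$ as $R(\lambda,A)X\subset D(A)\subset X_0$). For ``$\Rightarrow$'': if $S_n(t)$ is compact for all $t>0$, then $S_n$ is norm continuous by (c), and $\widehat{S_n}(\lambda+i\mu)=\int_0^\infty e^{-(\lambda+i\mu)t}S_n(t)\,\mathrm dt$ is a norm-convergent Bochner integral of compact operators, hence compact for every $\mu$ and every large $\lambda$. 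The converse is the crux: from norm continuity of $S_n$ together with compactness of $\widehat{S_n}(\lambda+i\mu)$ along the line, I would recover $S_n(t)$ by a regularized complex inversion (inserting a Fej\'er summability factor $\left(1-\tfrac{|\mu|}{k}\right)_{+}$ and letting $k\to\infty$); norm continuity of $S_n$ is exactly what upgrades the inversion to operator-norm convergence, so $S_n(t)$ emerges as a norm-limit of integrals of compact-operator-valued functions and is therefore compact. The equivalence of ``some'' and ``all'' large $\lambda$ follows from the resolvent identity.

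The main obstacle is this last step: forcing the inversion integral to converge in operator norm rather than merely strongly. This is precisely where immediate norm continuity is indispensable, and here I would follow Brendle \cite{Bre01} closely. Everything else reduces to the product formula \autoref{lem:srelation}, the shift identity \eqref{equ:sf2}, and the MR smallness estimate \eqref{equ:guji}, so the only real work lies in (e)'s converse.
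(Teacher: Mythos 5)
Your proposal is correct, and its skeleton is the paper's: (a) and (b) are read off from the product formula \autoref{lem:srelation}, (c)/(d) are a compactness-to-norm-continuity upgrade built on ``strong continuity composed with a fixed compact operator is norm continuous,'' and (e) is the Laplace-transform characterization via \autoref{slem:L}-type reasoning. Two local steps differ. For (c) the paper stays with the product formula, writing $S_n(t_0 \pm \delta') = \sum_{k=0}^{n} S_k(\delta \pm \delta') S_{n-k}(t_0-\delta)$ and killing the $k\geq 1$ terms by the smallness $\|S_k(\sigma)\| \leq \widetilde{M}_k\,\delta(\sigma)$ near $\sigma=0$, whereas you use the shift identity \eqref{equ:sf2} with the remainder bounded through \eqref{equ:guji} --- which is exactly the decomposition the paper reserves for (d), where no product formula for $R_n$ is available. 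The two arguments are equivalent in substance (both isolate $T_{A_0}(\cdot)$ applied to a fixed compact operator plus a uniformly small tail), and yours has the mild advantage of treating (c) and (d) by one and the same computation. For the converse half of (e), the paper invokes the complex inversion theorem \cite[Theorem 2.3.4]{ABHN11} to recover $\int_0^t e^{-\omega s}S_n(s)\,\mathrm{d}s$ as a limit in the uniform operator topology and then differentiates, using norm continuity of $S_n$; you instead recover $S_n(t)$ directly by Fej\'er (Ces\`aro) regularization of the inversion integral, with norm continuity again supplying operator-norm convergence of the approximate identity. Both are sound; the paper's route is an off-the-shelf citation, while yours needs the vector-valued Fej\'er theorem at points of norm continuity --- standard, in the spirit of \cite{Bre01}, but something you would have to prove or locate. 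One blemish: your closing claim that the ``some/all large $\lambda$'' equivalence ``follows from the resolvent identity'' is both unnecessary and hard to make rigorous for $n\geq 1$; it already follows from the round trip (compact transform for one $\lambda$ plus norm continuity $\Rightarrow$ $S_n(t)$ compact for all $t>0$ $\Rightarrow$ compact transform for all large $\lambda$), which is how the paper's \autoref{slem:L} packages it.
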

\begin{proof}
	(a) and (b) are direct consequences of \autoref{lem:srelation}.

	(c) Let $ t_0 > t_1 $. Take a constant $ M > 0 $ such that $ \sup_{t \in [0, 2t_0]}\|S_k(t)\| \leq M $, $ k = 0, 1, 2,\cdots,n $. For any small $ \varepsilon > 0 $, by \autoref{lem:MR} \eqref{MRaa}, we know there is $ 0 < \delta < t_0 $ such that $ t_0 - \delta > t_1 $ and $ \sup_{\sigma\in [0, 2\delta]}\|S_k(\sigma)\| \leq \varepsilon / M $. So
	\[
	\sup_{\sigma\in [0, 2\delta]} \sum_{k=1}^{n}\|S_k(\sigma)S_{n-k}(t_0 - \delta)\| \leq \varepsilon.
	\]
	Now for $ 0 < \delta' < \delta $, we see
	\begin{align*}
	& \|S_{n}(t_0) - S_{n}(t_0\pm\delta')\| \\
	= &~ \|(T_{A_0}(\delta) - T_{A_0}(\delta\pm\delta'))S_{n}(t_0 - \delta) + \sum_{k=1}^{n} (S_k(\delta) - S_k(\delta\pm\delta')) S_{n-k}(t_0 - \delta) \| \\
	\leq &~ \|(T_{A_0}(\delta) - T_{A_0}(\delta\pm\delta'))S_{n}(t_0 - \delta)\| + 2\varepsilon.
	\end{align*}
	If we take $ \delta' $ sufficiently small, then $ \|(T_{A_0}(\delta) - T_{A_0}(\delta\pm\delta'))S_{n}(t_0 - \delta)\| \leq \varepsilon $ as $ S_{n}(t_0 - \delta) $ is compact, and so $ \|S_{n}(t_0) - S_{n}(t_0\pm\delta')\| \leq 3\varepsilon $. This shows $ S_{n} $ is norm continuous at $ t_0 > 0 $.

	(d) This is very similar as $ S_n $ by using the following equality:
	\begin{multline*}
	\|R_{n}(t_0) - R_{n}(t_0\pm\delta')\| = \|(T_{A_0}(\delta) - T_{A_0}(\delta\pm\delta'))R_{n}(t_0 - \delta) \\
	+ (S_{A} \diamond LR_{n-1}(\cdot+t_0 - \delta)) (\delta) - (S_{A} \diamond LR_{n-1}(\cdot+t_0 - \delta)) (\delta\pm\delta') \|.
	\end{multline*}

	(e) This follows from the following \autoref{slem:L} and conclusion (c) (letting $ t_1 = 0 $), since $ (R(\lambda+ i\mu, A) L)^n R(\lambda + i \mu, A_0) = \mathcal{L} (S_n) (\lambda + i \mu) $ (the Laplace transform of $ S_n $ at $ \lambda + i \mu $).
	The proof is complete.
\end{proof}
\begin{slem}\label{slem:L}
	For norm continuous $ F: \mathbb{R}_+ \to \mathcal{L}(X_0, X) $ such that $ \|F(t)\| \leq C_1e^{\omega t} $ for all $ t > 0 $ and some constants $ C_1 \geq 1, \omega \in \mathbb{R} $, $ F(t) $ is compact for all $ t > 0 $ if and only if $ \mathcal{L}(F)(\lambda + i \mu) $ (the Laplace transform of $ F $ at $ \lambda + i \mu $) is compact for all $ \mu \in \mathbb{R} $ and some/all $ \lambda > \omega $.
\end{slem}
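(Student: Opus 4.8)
The plan is to treat the two implications separately; the forward one is routine and the converse carries all the weight. \emph{Forward direction.} Suppose $F(t)$ is compact for every $t>0$. For $\lambda>\omega$ the Bochner integral
\[
\mathcal{L}(F)(\lambda+i\mu)=\int_0^\infty e^{-(\lambda+i\mu)s}F(s)\,\mathrm{d}s
\]
converges in operator norm, since $\int_0^\infty e^{-\lambda s}\|F(s)\|\,\mathrm{d}s\le C_1/(\lambda-\omega)<\infty$. Its integrand is compact-operator valued, and the compact operators form a norm-closed subspace of $\mathcal{L}(X_0,X)$; hence the integral, being a norm-limit of Riemann sums of compact operators, is compact. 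This yields compactness of $\mathcal{L}(F)(\lambda+i\mu)$ for all $\mu\in\mathbb{R}$ and \emph{every} $\lambda>\omega$.

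\emph{Converse.} Now I assume only that $\mathcal{L}(F)(\lambda+i\mu)$ is compact for all $\mu$ and \emph{one} fixed $\lambda>\omega$, and fix $t_0>0$. The guiding identity is the complex inversion formula: $\mu\mapsto\mathcal{L}(F)(\lambda+i\mu)$ is the Fourier transform of $g(s)=e^{-\lambda s}F(s)\mathbf{1}_{s\ge0}\in L^1$, so formally $e^{-\lambda t_0}F(t_0)=\frac{1}{2\pi}\int_{\mathbb{R}}e^{i\mu t_0}\mathcal{L}(F)(\lambda+i\mu)\,\mathrm{d}\mu$. The raw inversion integral need not converge in operator norm---we are given only norm continuity of $F$, not bounded variation---so I regularize with a Gaussian approximate identity $(h_n)$ (Schwartz, $\int h_n=1$, concentrating at $0$, with $\widehat{h_n}\in L^1$) and set
\[
K_n:=\frac{e^{\lambda t_0}}{2\pi}\int_{\mathbb{R}}\widehat{h_n}(\mu)\,\mathcal{L}(F)(\lambda+i\mu)\,e^{i\mu t_0}\,\mathrm{d}\mu.
\]

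Because $\widehat{h_n}\in L^1$ while $\|\mathcal{L}(F)(\lambda+i\mu)\|\le C_1/(\lambda-\omega)$ is bounded in $\mu$, the integrand is norm-integrable, so $K_n$ is again a norm-convergent integral of compact operators, hence compact. Substituting the definition of $\mathcal{L}(F)$ and applying Fubini (legitimate by the same absolute integrability), the inner $\mu$-integral collapses by Fourier inversion to $h_n(t_0-s)$, giving
\[
K_n=\int_0^\infty e^{\lambda(t_0-s)}h_n(t_0-s)F(s)\,\mathrm{d}s=\int_{-\infty}^{t_0}e^{\lambda u}h_n(u)F(t_0-u)\,\mathrm{d}u
\]
after the change of variables $u=t_0-s$. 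As $n\to\infty$ the $h_n$ concentrate at $u=0$; since $u\mapsto e^{\lambda u}F(t_0-u)$ is norm continuous at $0$ with value $F(t_0)$ and is dominated on $(-\infty,t_0]$ by $C_1e^{\omega t_0}e^{(\lambda-\omega)u}$ (integrable against $h_n$ precisely because $\lambda>\omega$), a standard approximate-identity estimate gives $\|K_n-F(t_0)\|\to0$. The compacts being norm-closed, $F(t_0)$ is compact; as $t_0>0$ was arbitrary, $F(t)$ is compact for all $t>0$. Since the converse needed only a single $\lambda$ whereas the forward direction produced every $\lambda$, the equivalence holds in its stated generality (some/all $\lambda>\omega$).

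The only genuine obstacle is the converse, specifically the failure of the bare inversion integral to converge in operator norm under mere continuity of $F$. Mollifying by a Schwartz kernel is exactly what restores absolute norm convergence---keeping the regularized operators $K_n$ inside the ideal of compact operators---while the approximate-identity limit recovers $F(t_0)$ on the nose; the weight $e^{\lambda(t_0-s)}$ together with the growth bound $\|F(t)\|\le C_1e^{\omega t}$ is what renders the tails harmless once $\lambda>\omega$.
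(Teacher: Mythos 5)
Your proof is correct, and while your forward direction coincides with the paper's (the Laplace integral is a norm limit of compact operators), your converse takes a genuinely different route. The paper invokes the complex inversion formula for operator-valued Laplace transforms (\cite[Theorem 2.3.4]{ABHN11}) to represent the antiderivative $\int_0^t e^{-\omega s}F(s)\,\mathrm{d}s$ as a uniform-operator-topology limit of contour integrals of the compact operators $\mathcal{L}(F)(\lambda+i\mu)$, and then recovers compactness of $F(t)$ itself by differentiating this antiderivative; norm continuity of $F$ is what makes the difference quotients $\frac{1}{h}\int_t^{t+h}e^{-\omega s}F(s)\,\mathrm{d}s$ converge in operator norm. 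You instead bypass the inversion theorem entirely: you mollify the (not absolutely convergent) inversion integral by a Gaussian approximate identity, so that each regularized operator $K_n$ is an absolutely norm-convergent integral of compacts, Fubini collapses it to the convolution $\int_{-\infty}^{t_0}e^{\lambda u}h_n(u)F(t_0-u)\,\mathrm{d}u$, and the approximate-identity limit --- the point where your norm continuity of $F$ enters, and only at $t_0$ --- yields $\|K_n-F(t_0)\|\to 0$, landing on $F(t_0)$ directly rather than on its antiderivative. What the paper's route buys is brevity, since the hard analysis is outsourced to a cited theorem; what yours buys is self-containedness and a cleaner accounting of where each hypothesis is used ($\lambda>\omega$ for absolute convergence of all tails, norm continuity for the approximate-identity limit, norm-closedness of the compact operators to pass to the limit). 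Both arguments correctly handle the ``some/all'' refinement, since each converse needs only a single $\lambda>\omega$ while the forward direction produces every $\lambda>\omega$.
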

\begin{proof}
	If $ F(t) $ is compact for all $ t > 0 $, then
	\[
	\mathcal{L}(F)(\lambda + i \mu) = \int_{0}^{\infty} e^{-(\lambda + i \mu)t} F(t) ~\mathrm{d} t = \lim_{r \to \infty} \int_{0}^{r} e^{-(\lambda + i \mu)t} F(t) ~\mathrm{d} t ,
	\]
	is compact. And if $ \mathcal{L}(F)(\lambda + i \mu) $ is compact for all $ \mu \in \mathbb{R} $ and some $ \lambda > \omega $, then by the complex inversion formula of Laplace transform (see, e.g., \cite[Theorem 2.3.4]{ABHN11}), one gets
	\[
	\int_{0}^{t} e^{-\omega s} F(s) ~\mathrm{d} s = \lim_{\mu  \to \infty} \frac{1}{2\pi i} \int_{\lambda - \omega - i\mu}^{\lambda - \omega + i \mu} e^{rt} \frac{\mathcal{L}(F)(r+\omega)}{r} ~\mathrm{d} r,
	\]
	where the limit is uniform for $ t $ belonging to compact intervals and exists in the uniform operator topology. This shows that $ \int_{0}^{t} e^{-\omega s} F(s) ~\mathrm{d} s $ is compact and consequently $ F(t) $ is compact for all $ t > 0 $. The proof is complete.
\end{proof}
\noindent\emph{Problem}: for Hilbert space $ X $, is it true that if $ \|(R(\lambda+ i\mu, A) L)^n R(\lambda + i \mu, A_0)\| \to 0 $ as $ |\mu| \to \infty $, then $ S_{n-2} $ is norm continuous? (See \cite{Bre01}.) When $ LR(\lambda, A_0) = R(\lambda, A)L $, the proof is very similar.

\subsection{Critical spectrum and essential spectrum}\label{spec}
In this subsection, we recall some known results about spectral theory. For a complex Banach algebra $ Y $, $\sigma(x)$ denotes the spectrum of $x \in Y$, i.e.,
\[
\sigma(x) = \{\lambda \in \mathbb{C}:~ \lambda - x ~\text{is not invertible in $ Y $}\},
\]
and $r(x)$ denotes the spectral radius of $x$, i.e.,
\[
r(x) = \sup\{|\lambda|:~\lambda \in \sigma(x)\}.
\]

Critical spectrum was discovered by Nagel and Poland \cite{NP00}. Let $T$ be a $C_0$ semigroup on a Banach space $X$, whose generator is $B$. Set
\[
l^\infty(X) := \{ (x_n): \|(x_n)\|_\infty \triangleq \sup\limits_n \|x_n\| < \infty \}.
\]
$T$ can be naturally extended to $l^\infty(X)$ by
\[
\widetilde{T}(t)(x_n) := (T(t)x_n), ~~(x_n) \in l^\infty(X).
\]
Then, $\|\widetilde{T}(t)\| = \|T(t)\|$. Define
\begin{align*}
	l_T^\infty(X) & := \{(x_n) \in l^\infty(X): \sup\limits_n\|T(t)x_n - x_n\| \rightarrow 0, ~\text{as}~ t \rightarrow 0^+ \} \\
	& = \{(x_n) \in l^\infty(X): \widetilde{T}(t)(x_n) \rightarrow (x_n), ~\text{as}~ t \rightarrow 0^+ \} \\
	& = \{(x_n) \in l^\infty(X): \sup\limits_{n \rightarrow \infty}\|T(t)x_n - x_n\| \rightarrow 0, ~\text{as}~ t \rightarrow 0^+ \}.
\end{align*}
$l_T^\infty(X)$ is the closed space of strong continuity of $\widetilde{T}$. Note that $\widetilde{T}(t)l_T^\infty(X) \subset l_T^\infty(X)$, for each $t \geq 0 $. So $\widetilde{T}$ can induce $\widehat{T}$, defined on the quotient space $l^\infty(X) / l_T^\infty(X)$ as
\[
\widehat{T}(t)[(x_n)] := [\widetilde{T}(t)(x_n)], ~~\text{where}~[(x_n)] := (x_n) + l_T^\infty(X).
\]

In general, if $F \in \mathcal{L} (X)$, define $\widetilde{F}$ on $l^\infty(X)$ as
\[
\widetilde{F}(x_n) := (F(x_n)),
\]
with $\|\widetilde{F}\| = \|F\|$. If $\widetilde{F} (l_T^\infty(X)) \subset l_T^\infty(X)$, then $\widehat{F}$ can be defined on $l^\infty(X) / l_T^\infty(X)$ by
\[
\widehat{F}[(x_n)] := [\widetilde{F}(x_n)],
\]
with $\|\widehat{F}\| \leq \|F\|$. If $G \in \mathcal{L}(X)$ and $\widetilde{G} (l_T^\infty(X)) \subset l_T^\infty(X)$, then
\[
\widetilde{F \circ G} = \widetilde{F} \circ \widetilde{G} ,~~ \widehat{F \circ G} = \widehat{F} \circ \widehat{G}.
\]
Note that $l^\infty(X) / l_T^\infty(X)$ is a Banach algebra. Now we give the following definitions.
\begin{defi}[\cite{NP00}]\label{def:crit}
	For a $C_0$ semigroup $T$, we call
	\[
	\sigma_{\mathrm{crit}}(T(t)) := \sigma(\widehat{T}(t)),
	\]
	the critical spectrum of $T(t)$,
	\[
	r_{\mathrm{crit}}(T(t)) := r(\widehat{T}(t)),
	\]
	the critical spectral radius of $T(t)$, and
	\begin{align*}
		\omega_{\mathrm{crit}}(T) & := \inf \{ \omega \in \mathbb{R}: \exists M \geq 0,~ \|\widehat{T}(t)\| \leq Me^{\omega t}, \forall t \geq 0 \}\\
		& = \lim_{t \rightarrow \infty} \frac{1}{t} \ln \|\widehat{T}(t)\|
		= \inf_{t >0} \frac{1}{t} \ln \|\widehat{T}(t)\|
		= \frac{1}{t_0}\ln r_{\mathrm{crit}}(T(t_0)) ~ (t_0 >0),
	\end{align*}
	the critical growth bound of semigroup $T$.
\end{defi}
With these definitions, the following theorem holds.
\begin{thm}[\cite{NP00}]\label{thm:crit}
	For a $C_0$ semigroup $T$ with generator $B$, the following statements hold.
	\begin{enumerate}[(a)]
		\item $\sigma_{\mathrm{crit}}(T(t)) \subset \sigma(T(t))$.
		\item (Partial spectral mapping theorem) For each $t > 0$,
		\[
		\sigma(T(t))\backslash \{0\} = e^{t\sigma(B)} \cup \sigma_{\mathrm{crit}}(T(t)) \backslash \{0\},
		\]
		and
		\[
		\sigma(T(t)) \cap \{r \in \mathbb{C}: |r| > r_{\mathrm{crit}}(T(t))\} = e^{t\sigma(B)} \cap \{r \in \mathbb{C}: |r| > r_{\mathrm{crit}}(T(t))\}.
		\]
		\item $\omega(T) = \max\{ s(B), \omega_{\mathrm{crit}}(T) \}$,
	\end{enumerate}
	where we denote by $\omega(T)$ the growth bound of T, and $s(B)$ the spectral bound of $B$, i.e.,
	\begin{equation}\label{equ:bound}
	\omega(T) = \inf \{ \omega \in \mathbb{R}: \exists M \geq 0,~ \|T(t)\| \leq Me^{\omega t}, \forall t \geq 0 \}, ~s(B) = \sup\{\mathrm{Re}\lambda: \lambda \in \sigma(B) \}.
	\end{equation}
\end{thm}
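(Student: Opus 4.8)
The plan is to prove (a) first, then (b), and to read (c) off from (b); throughout fix $t_0>0$ and abbreviate $\widehat X:=l^\infty(X)/l^\infty_T(X)$, writing $\sigma_{\mathrm{ap}}$ for the approximate point spectrum. Part (a) is the soft half. Given $\lambda\in\rho(T(t_0))$, put $R:=(\lambda-T(t_0))^{-1}$. Since every $T(s)$ commutes with $T(t_0)$, it commutes with $R$, so $\|T(s)Rx_n-Rx_n\|\le\|R\|\,\|T(s)x_n-x_n\|$ shows that $\widetilde R$ maps $l^\infty_T(X)$ into itself; hence $\widehat R$ is defined and, by the multiplicativity $\widehat{F\circ G}=\widehat F\circ\widehat G$ recorded before \autoref{def:crit}, it inverts $\lambda-\widehat T(t_0)$. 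Thus $\lambda\in\rho(\widehat T(t_0))$, which is exactly $\sigma_{\mathrm{crit}}(T(t_0))\subset\sigma(T(t_0))$.

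For (b) the inclusion $\supseteq$ is immediate: $e^{t_0\sigma(B)}\subset\sigma(T(t_0))\setminus\{0\}$ is the spectral inclusion theorem for $C_0$ semigroups and $\sigma_{\mathrm{crit}}(T(t_0))\setminus\{0\}\subset\sigma(T(t_0))\setminus\{0\}$ is (a). The reverse inclusion is the heart, and I would argue it by contraposition: assuming $\lambda\neq0$, $\lambda\notin\sigma_{\mathrm{crit}}(T(t_0))$ and $\lambda\notin e^{t_0\sigma(B)}$, I show $\lambda\notin\sigma(T(t_0))$. Consider first $\lambda\in\sigma_{\mathrm{ap}}(T(t_0))$ and pick unit vectors $x_n$ with $T(t_0)x_n-\lambda x_n\to0$. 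Because this difference tends to $0$ in $X$, the sequence $(T(t_0)x_n-\lambda x_n)$ lies in $l^\infty_T(X)$, so $(\widehat T(t_0)-\lambda)[(x_n)]=0$ in $\widehat X$; as $\lambda\notin\sigma(\widehat T(t_0))$ this forces $[(x_n)]=0$, i.e. $(x_n)\in l^\infty_T(X)$. In other words the approximate eigenvectors are necessarily uniformly strongly continuous — this dichotomy is precisely what the quotient construction is designed to produce.

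Now write $\eta_k:=t_0^{-1}\mathrm{Log}\,\lambda+2\pi i k/t_0$, so $e^{t_0\eta_k}=\lambda$ and, by hypothesis, every $\eta_k\in\rho(B)$. Setting $y_n^{(k)}:=t_0^{-1}\int_0^{t_0}e^{-\eta_k s}T(s)x_n\,\mathrm{d}s\in D(B)$, a direct computation gives $(\eta_k-B)y_n^{(k)}=t_0^{-1}\lambda^{-1}(\lambda x_n-T(t_0)x_n)\to0$; if $\liminf_n\|y_n^{(k)}\|>0$ for some $k$ then $\eta_k\in\sigma_{\mathrm{ap}}(B)$, contradicting $\eta_k\in\rho(B)$, so $\|y_n^{(k)}\|\to0$ for all $k$. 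The vectors $y_n^{(k)}$ are exactly the Fourier coefficients on $[0,t_0]$ of the continuous, approximately $t_0$-periodic functions $g_n(s):=e^{-\eta_0 s}T(s)x_n$, and $g_n(0)=x_n$. The main obstacle is to turn ``all coefficients vanish'' into ``$x_n$ vanishes'': here one uses that $(x_n)\in l^\infty_T(X)$ makes $\{g_n\}$ uniformly equicontinuous, so a quantitative Fej\'er theorem reconstructs $g_n(0)=x_n$ from finitely many $y_n^{(k)}$ up to an error controlled uniformly by the common modulus of continuity, giving $\|x_n\|\to0$ against $\|x_n\|=1$. This Fej\'er reconstruction, where uniform strong continuity enters essentially and quantitatively, is the technical core and the very reason $\sigma_{\mathrm{crit}}$ is built from $l^\infty_T(X)$. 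A point of $\sigma(T(t_0))\setminus\sigma_{\mathrm{ap}}(T(t_0))$ lies in the residual spectrum, so $\bar\lambda\in\sigma_{\mathrm{ap}}(T(t_0)^*)$; I would cover it by the same dichotomy applied to the adjoint (sun-dual) semigroup, using $\sigma(B^*)=\sigma(B)$ and $\sigma(T(t_0)^*)=\sigma(T(t_0))$, or package both cases through the factorization $\lambda-T(t_0)=(\eta_k-B)Q_{\eta_k}=Q_{\eta_k}(\eta_k-B)$ with $Q_{\eta_k}=\int_0^{t_0}e^{(t_0-s)\eta_k}T(s)\,\mathrm{d}s$, reducing invertibility of $\lambda-T(t_0)$ to that of the averaging operators and thence to $\widehat X$; this bookkeeping is the secondary difficulty. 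The second identity in (b) then follows at once, since $|r|>r_{\mathrm{crit}}(T(t_0))$ gives $r\notin\sigma_{\mathrm{crit}}(T(t_0))$ and $r\neq0$.

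Finally, (c) is formal given (b). By the spectral radius formula $r(T(t))=e^{t\omega(T)}$ and (b), $r(T(t))=\max\{e^{t s(B)},r_{\mathrm{crit}}(T(t))\}$, using $\sup_{\mu\in\sigma(B)}e^{t\operatorname{Re}\mu}=e^{ts(B)}$ and that $0$ is irrelevant to moduli; taking $t^{-1}\ln$ and invoking $\omega_{\mathrm{crit}}(T)=t^{-1}\ln r_{\mathrm{crit}}(T(t))$ from \autoref{def:crit} yields $\omega(T)=\max\{s(B),\omega_{\mathrm{crit}}(T)\}$, with the evident conventions when $\sigma(B)=\emptyset$ or $r_{\mathrm{crit}}(T(t))=0$.
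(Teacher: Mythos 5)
First, note that the paper itself offers no proof of this theorem: it is stated as background and attributed to Nagel--Poland \cite{NP00}, so your argument has to stand on its own. Most of it does. Part (a) is correct (commutation of $(\lambda-T(t_0))^{-1}$ with the semigroup gives invariance of $l^\infty_T(X)$, and multiplicativity of the quotient construction does the rest), part (c) is a correct formal consequence of (b) via $r(T(t))=e^{t\omega(T)}$ and $r_{\mathrm{crit}}(T(t))=e^{t\omega_{\mathrm{crit}}(T)}$, and the approximate-point-spectrum core of (b) is the right argument: norm-null sequences lie in $l^\infty_T(X)$, so $\lambda\notin\sigma_{\mathrm{crit}}(T(t_0))$ forces an approximate eigenvector sequence into $l^\infty_T(X)$; this yields a common modulus of continuity for $g_n(s)=e^{-\eta_0 s}T(s)x_n$, the hypothesis $\eta_k\in\rho(B)$ kills each Fourier coefficient $y_n^{(k)}$, and Fej\'er reconstruction contradicts $\|x_n\|=1$. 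Two small repairs are needed there: the $g_n$ are only approximately periodic, so one should first pass to $\tilde g_n(s)=g_n(s)-\tfrac{s}{t_0}\bigl(g_n(t_0)-g_n(0)\bigr)$, which is exactly periodic, agrees with $g_n$ at $s=0$, has a uniformly controlled modulus of continuity, and has Fourier coefficients differing from $y_n^{(k)}$ by terms tending to $0$; and for Banach-space adjoints there is no complex conjugate, i.e.\ residual spectrum gives $\lambda\in\sigma_p(T(t_0)^*)$, not $\bar\lambda$.

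The genuine gap is your treatment of $\lambda\in\sigma(T(t_0))\setminus\sigma_{\mathrm{ap}}(T(t_0))$. ``The same dichotomy applied to the adjoint (sun-dual) semigroup'' is not available: the dichotomy requires the hypothesis $\lambda\notin\sigma_{\mathrm{crit}}$ for the semigroup being analyzed, $T^*$ is not strongly continuous (so the quotient construction defining the critical spectrum does not even apply to it), and nothing in the setup relates the critical spectrum of the sun-dual semigroup to $\sigma_{\mathrm{crit}}(T(t_0))$. The fallback via the factorization $\lambda-T(t_0)=(\eta_k-B)Q_{\eta_k}$ also fails as stated: $Q_{\eta_k}$ maps $X$ into $D(B)$, so it is never invertible in $\mathcal{L}(X)$ when $B$ is unbounded; the factorization yields the inclusion $e^{t_0\sigma(B)}\subset\sigma(T(t_0))$ but cannot convert resolvent information on $B$ and $\widehat T(t_0)$ into surjectivity of $\lambda-T(t_0)$. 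Fortunately the correct argument for this case is easier than either suggestion and needs no criticality at all: non-dense range of $\lambda - T(t_0)$ gives $0\neq x^*\in X^*$ with $T(t_0)^*x^*=\lambda x^*$; the orbit $F(s)=e^{-\eta_0 s}T(s)^*x^*$ is weak$^*$-continuous and \emph{exactly} $t_0$-periodic, so by scalar Fourier uniqueness (testing against $x\in X$) some weak$^*$-Fourier coefficient $y^{*(k)}$ is nonzero, while the identity $(\eta_k-B)\int_0^{t_0}e^{-\eta_k s}T(s)x\,\mathrm{d}s=x-\lambda^{-1}T(t_0)x$ shows that each $y^{*(k)}$ annihilates $\mathrm{ran}(\eta_k-B)$; hence $\eta_k-B$ is not surjective and $\eta_k\in\sigma(B)$, contradicting $\lambda\notin e^{t_0\sigma(B)}$. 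With this substitution your proof of (b), and hence of the whole theorem, is complete.
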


See \cite{Bla01} for a different but equivalent definition of critical spectrum.
We refer to \cite{NP00, BNP00, Sbi07} for more details and applications on critical spectrum. Next, we turn to the essential spectrum. We follow the presentation of \cite{GGK90}. Consider the Calkin algebra $\mathcal{L}(X)/ \mathcal{K}(X)$, where $\mathcal{K}(X)$ denotes the closed ideal of bounded compact linear operators on $X$. Define $\overline{T}$ on $\mathcal{L}(X)/ \mathcal{K}(X)$ by
\[
\overline{T}(t) := [T(t)] := T(t) + \mathcal{K}(X).
\]
Now we have the following definitions.
\begin{defi}\label{def:ess}
	For a $C_0$ semigroup $T$, we call
	\[
	\sigma_{\mathrm{ess}}(T(t)) := \sigma(\overline{T}(t)),
	\]
	the essential spectrum of $T(t)$,
	\[
	r_{\mathrm{ess}}(T(t)) := r(\overline{T}(t)),
	\]
	the essential spectral radius of $T(t)$ and
	\begin{align*}
		\omega_{\mathrm{ess}}(T) & := \inf \{ \omega \in \mathbb{R}: \exists M \geq 0,~ \|\overline{T}(t)\| \leq Me^{\omega t}, \forall t \geq 0 \}\\
		& = \lim_{t \rightarrow \infty} \frac{1}{t} \ln \|\overline{T}(t)\|
		= \inf_{t >0} \frac{1}{t} \ln \|\overline{T}(t)\|
		= \frac{1}{t_0}\ln r_{\mathrm{ess}}(T(t_0)) ~ (t_0 >0),
	\end{align*}
	the essential growth bound of semigroup $T$.
\end{defi}
\begin{thm}[\cite{NP00, EN00}]\label{thm:ess}
	For a $C_0$ semigroup $T$ with generator $B$, the following statements hold.
	\begin{enumerate}[(a)]
		\item $\sigma_{\mathrm{ess}}(T(t)) \subset \sigma_{\mathrm{crit}}(T(t)) \subset \sigma(T(t))$. Thus, $\omega_{\mathrm{ess}}(T) \leq \omega_{\mathrm{crit}}(T) \leq \omega(T)$, $\omega(T) = \max\{ s(B), \omega_{\mathrm{crit}}(T) \}$.
		\item $\forall \gamma > \omega_{\mathrm{ess}}(T)$, the set $\{\lambda \in \sigma(B): \mathrm{Re}\lambda \geq \gamma \}$ is finite. And if it is not empty, then its elements are the finite-order poles of $R(\cdot, B)$, in particular the point spectrum of $B$.
	\end{enumerate}
\end{thm}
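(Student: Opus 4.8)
The plan is to deduce (a) from two spectral inclusions together with the radius formula already recorded in \autoref{thm:crit}(c), and to deduce (b) from Fredholm theory outside the essential spectral radius combined with the point-spectrum spectral mapping theorem. The decisive lemma I would prove first is that every compact operator is annihilated by the critical quotient: for $K \in \mathcal{K}(X)$ and a bounded sequence $(x_n)$, the set $\overline{\{Kx_n : n \in \mathbb{N}\}}$ is compact, and a $C_0$ semigroup is strongly continuous uniformly on compact sets, so $\sup_n \|T(s)Kx_n - Kx_n\| \to 0$ as $s \to 0^+$; this says exactly $\widetilde{K}(l^\infty(X)) \subset l_T^\infty(X)$, i.e. $\widehat{K} = 0$. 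I should point out at once that this lemma fixes the orientation of the first inclusion in (a): since $\widehat{K}=0$ we have $\widehat{T(t)} = \widehat{T(t)-K}$ and $\|\widehat{T(t)-K}\| \le \|T(t)-K\|$ for every compact $K$, whence $\|\widehat{T(t)}\| \le \|\overline{T(t)}\|$; applying this to $T(nt)=T(t)^n$ and extracting $n$-th roots yields $r_{\mathrm{crit}}(T(t)) \le r_{\mathrm{ess}}(T(t))$, that is $\omega_{\mathrm{crit}}(T) \le \omega_{\mathrm{ess}}(T)$. Thus the chain must be read $\sigma_{\mathrm{crit}}(T(t)) \subseteq \sigma_{\mathrm{ess}}(T(t)) \subseteq \sigma(T(t))$ and $\omega_{\mathrm{crit}}(T) \le \omega_{\mathrm{ess}}(T) \le \omega(T)$ (for a uniformly continuous semigroup the critical quotient is trivial while the essential spectrum need not be, so the opposite inclusion genuinely fails).

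For the honest set inclusion $\sigma_{\mathrm{crit}}(T(t)) \subseteq \sigma_{\mathrm{ess}}(T(t))$ I would restrict $F \mapsto \widehat{F}$ to the commutative closed unital subalgebra $\mathcal{A} \subset \mathcal{L}(X)$ generated by $\{T(s): s \ge 0\}$. Every $F \in \mathcal{A}$ commutes with each $T(s)$, so $\widetilde{F}$ preserves $l_T^\infty(X)$ and $\widehat{\cdot}|_{\mathcal{A}}$ is a unital homomorphism; since it kills $\mathcal{A}\cap\mathcal{K}(X)$ it factors through the image $\overline{\mathcal{A}}$ of $\mathcal{A}$ in the Calkin algebra, giving a unital homomorphism $\psi$ with $\psi([T(t)]) = \widehat{T(t)}$. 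Hence $\sigma(\widehat{T(t)}) \subseteq \sigma_{\overline{\mathcal{A}}}([T(t)])$, and as the outer boundary and the spectral radius of a commutative Banach subalgebra agree with those computed in the ambient Calkin algebra, this delivers $\sigma_{\mathrm{crit}}(T(t)) \subseteq \sigma_{\mathrm{ess}}(T(t))$ at the level needed here; the remaining inclusion $\sigma_{\mathrm{ess}}(T(t)) \subseteq \sigma(T(t))$ is the standard Calkin quotient homomorphism, and the identity $\omega(T) = \max\{s(B), \omega_{\mathrm{crit}}(T)\}$ is precisely \autoref{thm:crit}(c). This finishes (a).

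For (b), fix $t_0 > 0$; by \autoref{def:ess}, $r_{\mathrm{ess}}(T(t_0)) = e^{t_0\omega_{\mathrm{ess}}(T)}$, so for $\gamma > \omega_{\mathrm{ess}}(T)$ the region $\{z : |z| \ge e^{t_0\gamma}\}$ meets $\sigma(T(t_0))$ only outside the essential spectrum. There $z \mapsto T(t_0) - z$ is Fredholm, invertible for large $|z|$, and of index $0$ on the unbounded Fredholm component, so analytic Fredholm theory shows $\sigma(T(t_0)) \cap \{z : |z| \ge e^{t_0\gamma}\}$ is a finite set $\{z_1,\dots,z_m\}$ of isolated eigenvalues of finite algebraic multiplicity. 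Using the always-valid inclusion $e^{t_0\sigma(B)} \subseteq \sigma(T(t_0))$, any $\lambda \in \sigma(B)$ with $\mathrm{Re}\,\lambda \ge \gamma$ gives $e^{t_0\lambda} = z_j$ for some $j$; on the finite-dimensional Riesz subspace of $T(t_0)$ at $z_j$ the generator restricts to a matrix whose eigenvalues are exactly the $\lambda \in \sigma(B)$ with $e^{t_0\lambda}=z_j$. This bounds the number of such $\lambda$ by the dimension of that subspace, so $\{\lambda \in \sigma(B): \mathrm{Re}\,\lambda \ge \gamma\}$ is finite, each member is an eigenvalue, and being isolated in $\sigma(B)$ with finite-dimensional spectral projection it is a finite-order pole of $R(\cdot,B)$.

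The hard part will be the passage from the clean radius estimate $r_{\mathrm{crit}} \le r_{\mathrm{ess}}$ to the set inclusion $\sigma_{\mathrm{crit}} \subseteq \sigma_{\mathrm{ess}}$: the assignment $F \mapsto \widehat{F}$ is defined only on operators whose lift preserves $l_T^\infty(X)$, so a Calkin parametrix of $\lambda - T(t)$, which in general does not commute with the semigroup, cannot be transported to the critical quotient directly — one must route through the commutative subalgebra above or through an approximate-eigenvector argument showing that a singular sequence for $\lambda - T(t)$ avoiding compact sets descends to a nonzero element of $l^\infty(X)/l_T^\infty(X)$ killed by $\lambda - \widehat{T}(t)$. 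The second delicate point is the finiteness step in (b), where the finiteness of $\{z_1,\dots,z_m\}$ and the finite-dimensionality of each Riesz subspace must be combined to exclude eigenvalues of $B$ accumulating in the half-plane $\{\lambda : \mathrm{Re}\,\lambda \ge \gamma\}$.
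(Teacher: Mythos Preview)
The paper does not prove this theorem; it is merely quoted from \cite{NP00,EN00} as background, so there is no ``paper's own proof'' to compare against.

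Your key observation is correct and important: with the definitions given in \autoref{spec}, the inclusion in part~(a) as printed is in the wrong direction. Your lemma that $\widehat{K}=0$ for every compact $K$ is valid (a $C_0$ semigroup converges to the identity uniformly on relatively compact sets), and it immediately yields $\|\widehat{T(t)}\|\le\inf_{K\in\mathcal{K}(X)}\|T(t)-K\|=\|\overline{T(t)}\|$, hence $\omega_{\mathrm{crit}}(T)\le\omega_{\mathrm{ess}}(T)$. Your counterexample of a uniformly continuous semigroup on an infinite-dimensional space (where $l_T^\infty(X)=l^\infty(X)$, so $\sigma_{\mathrm{crit}}(T(t))=\emptyset$, while $\sigma_{\mathrm{ess}}(T(t))$ is nontrivial) is decisive. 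The identity $\omega(T)=\max\{s(B),\omega_{\mathrm{crit}}(T)\}$ from \autoref{thm:crit}(c) then also gives $\omega(T)=\max\{s(B),\omega_{\mathrm{ess}}(T)\}$, so nothing downstream in the paper is affected.

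Your attempt to upgrade the growth-bound inequality to a genuine set inclusion $\sigma_{\mathrm{crit}}(T(t))\subseteq\sigma_{\mathrm{ess}}(T(t))$ via the commutative subalgebra $\mathcal{A}$ does not quite close: the homomorphism gives $\sigma_{\mathrm{crit}}(T(t))\subseteq\sigma_{\overline{\mathcal{A}}}([T(t)])$, but $\sigma_{\overline{\mathcal{A}}}([T(t)])$ can be strictly larger than $\sigma_{\mathrm{ess}}(T(t))$ since spectra in subalgebras may grow. You correctly note that only the spectral radius (equivalently the unbounded component of the resolvent set) transfers, and that is all that is needed for the growth-bound chain; you flag this honestly in your final paragraph.

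Your argument for~(b) is the standard one and is correct: analytic Fredholm theory outside the essential spectral radius gives finitely many eigenvalues $z_1,\dots,z_m$ of $T(t_0)$ with finite-dimensional Riesz projections $P_j$; each $P_j$ commutes with the whole semigroup, so $P_jX$ is $T$-invariant and finite-dimensional, the restricted generator $B_j$ is bounded with $e^{t_0\sigma(B_j)}=\{z_j\}$, and the decomposition $\sigma(B)=\sigma(B_j)\cup\sigma(B|_{\ker P_j})$ forces every $\lambda\in\sigma(B)$ with $e^{t_0\lambda}=z_j$ into $\sigma(B_j)$, which is finite.
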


For other definitions of essential spectrum, see \cite{Dei85}. Although different definitions of essential spectrum may not be coincided, all the essential spectral radiuses are equal.

\section{General arguments}

From now on, we assume that $A$ is an MR operator, $V \in C_s([0,\infty), \mathcal{L}(X_0, X))$. Then, $A_0 := A_{X_0}$ generates an $C_0$ semigroup $T_{A_0}$ in $X_0$, and $A$ generates an integrated semigroup $S_A$, where $X_0 := \overline{D(A)}$. In this section, we consider the regularity of $S_A \diamond V$, when $T_{A_0}$ or $V$ has higher regularity. Note that $t \mapsto (S_A \diamond V)(t)$ is norm continuous at zero (see \autoref{lem:MR} \eqref{MRaa}).
\begin{lem}\label{lem:NC}
	\begin{enumerate}[(a)]
		\item If $T_{A_0}$ is norm continuous on $(0,\infty)$, then $S_A \diamond V$ is norm continuous on $(0,\infty)$ (hence $[0,\infty)$). More generally, if $L \in \mathcal{L}(X_0, X)$ and $LT_{A_0}$ is norm continuous on $(0, \infty)$, so is $LS_{A} \diamond V$.
		\item If $V$ is norm continuous on $(0,\infty)$, so is $S_A \diamond V$.
		\item If $T_{A_0}$ in norm continuous on $(\alpha, \infty)$, and $V$ is norm continuous on $(\beta, \infty)$, then $S_A \diamond V$ is norm continuous on $(\alpha + \beta, \infty)$.
	\end{enumerate}
\end{lem}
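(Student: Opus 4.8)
The plan is to derive all three parts from the single functional equation \eqref{equ:sf2}, read at the operator level (apply each side to $x\in X_0$ and use $(S_A\diamond V)(t)x=(S_A\diamond V(\cdot)x)(t)$):
\[
(S_A \diamond V)(t) = T_{A_0}(t-s)(S_A \diamond V)(s) + (S_A \diamond V(s+\cdot))(t-s), \quad 0\le s\le t,
\]
together with the norm bound $\|(S_A\diamond V)(t)\|_{\mathcal{L}(X_0,X)}\le \delta(t)\sup_{r\in[0,t]}\|V(r)\|$, which follows from \eqref{equ:guji} and \autoref{lem:MR}. Since $V$ is strongly continuous it is locally bounded by the uniform boundedness principle, so all suprema below are finite, and I will repeatedly use that $\delta$ is increasing with $\delta(r)\to0$ as $r\to0$. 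The common idea is that \eqref{equ:sf2} lets me propagate a value forward by $T_{A_0}$ (which carries the regularity of the semigroup) and separates off a short convolution tail (which is uniformly small by $\delta$).

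For (a), fix $t_0>0$. Writing the displayed equation with $s=t_0-\epsilon$ for both $t=t_0$ and $t=t_0+h$ (with $0<\epsilon<t_0/2$ and $|h|<\epsilon$) and subtracting gives
\[
(S_A \diamond V)(t_0+h) - (S_A \diamond V)(t_0) = \bigl[T_{A_0}(\epsilon+h)-T_{A_0}(\epsilon)\bigr](S_A \diamond V)(t_0-\epsilon) + (S_A\diamond g)(\epsilon+h) - (S_A\diamond g)(\epsilon),
\]
where $g(\cdot)=V((t_0-\epsilon)+\cdot)$. Each tail term is bounded by $\delta(2\epsilon)\sup_{[t_0-\epsilon,\,t_0+\epsilon]}\|V\|$, so by first choosing $\epsilon$ small their total contribution is made arbitrarily small uniformly in $|h|<\epsilon$; with $\epsilon$ fixed, the bracketed term tends to $0$ as $h\to0$ because $T_{A_0}$ is norm continuous at $\epsilon>0$. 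This yields norm continuity at $t_0$. For the general statement I apply $L$ on the left throughout, so the first term becomes $[LT_{A_0}(\epsilon+h)-LT_{A_0}(\epsilon)](S_A\diamond V)(t_0-\epsilon)$, handled by norm continuity of $LT_{A_0}$, while the tails only pick up a factor $\|L\|$.

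The real work is in (b), where nothing is assumed about $T_{A_0}$ and $V$ is only strongly (not norm) continuous near $0$. Here I would split instead at $s=h$ to get
\[
(S_A \diamond V)(t+h) - (S_A \diamond V)(t) = T_{A_0}(t)(S_A \diamond V)(h) + \bigl(S_A \diamond [V(h+\cdot)-V(\cdot)]\bigr)(t);
\]
the first term is controlled by $\|T_{A_0}(t)\|\,\delta(h)\sup_{[0,h]}\|V\|\to0$. The difficulty is the second term: the naive bound $\delta(t)\sup_{[0,t]}\|V(h+\cdot)-V(\cdot)\|$ sees the region near $0$, where $V$ need not be norm continuous, so that supremum need not be small. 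I expect this to be the main obstacle, and I would resolve it by a secondary application of \eqref{equ:sf2} to $(S_A\diamond[V(h+\cdot)-V(\cdot)])(t)$, split at a small $\eta>0$: the $[0,\eta]$ contribution is bounded by $\|T_{A_0}(t-\eta)\|\,\delta(\eta)\,2\sup\|V\|$ (small by choosing $\eta$ small), while the remaining piece involves only $\sup_{[\eta,t]}\|V(h+\cdot)-V(\cdot)\|$, which tends to $0$ as $h\to0$ by uniform norm continuity of $V$ on the compact set $[\eta,t+1]\subset(0,\infty)$. The left limit is identical: splitting $(S_A\diamond V)(t_0)$ at $s=k>0$ against $(S_A\diamond V)(t_0-k)$ produces the same two-term structure.

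Finally, (c) combines the two mechanisms. At $t_0>\alpha+\beta$ I would apply \eqref{equ:sf2} at an intermediate point $s$ with $\beta<s<t_0-\alpha$ (such $s$ exists precisely because $t_0>\alpha+\beta$). Then $t\mapsto T_{A_0}(t-s)(S_A\diamond V)(s)$ is norm continuous near $t_0$ since $t-s$ stays in $(\alpha,\infty)$, and $g(\cdot):=V(s+\cdot)$ is norm continuous on $[0,\infty)$ because $s>\beta$, whence $(S_A\diamond g)(t-s)$ is norm continuous near the positive value $t_0-s$ by part (b). Thus the whole argument hinges on the $\eta$-splitting of part (b), whose role is exactly to decouple the genuinely non-norm-continuous behaviour of $V$ at the origin from the convolution.
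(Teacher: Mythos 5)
Your proposal is correct and takes essentially the same approach as the paper: every part is deduced from \eqref{equ:sf2} plus the bound $\|(S_A\diamond V)(t)\|\leq\delta(t)\sup_{r\in[0,t]}\|V(r)\|$, splitting off short convolution tails (small by $\delta$) and using norm continuity of $LT_{A_0}$ away from zero in (a), uniform norm continuity of $V$ on compact subsets of $(0,\infty)$ in (b), and the combination of both in (c). In particular, your two successive applications of \eqref{equ:sf2} in (b) (split at $h$, then at $\eta$) reproduce term by term the paper's single decomposition at the split point $t-s+\varepsilon$, so the arguments coincide.
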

\begin{proof}
	Let $t > s > 0$ in $U(t_0;\eta) = \{ r \in \mathbb{R}: |r - t_0| < \eta \}$ (a neighborhood of $t_0 > 0$), and $0 < \varepsilon < s$, where $t_0 > \eta > 0$.

	(a) By \eqref{equ:sf2}, we have
	\begin{align*}
		L(S_A \diamond V)(t) - L(S_A \diamond V)(s) & = LT_{A_0}(t-s)(S_A \diamond V)(s) + L(S_A \diamond V(s + \cdot))(t-s) - L(S_A \diamond V)(s)\\
		& = (LT_{A_0}(t-s + \varepsilon ) - LT_{A_0}(\varepsilon))(S_A \diamond V)(s-\varepsilon) + \\
		&  \qquad L(T_{A_0}(t-s) - I)(S_A \diamond V(s - \varepsilon + \cdot))(\varepsilon) + L(S_A \diamond V(s + \cdot))(t-s) \\
		& := (LT_{A_0}(t-s + \varepsilon ) - LT_{A_0}(\varepsilon))(S_A \diamond V)(s-\varepsilon) + R,
	\end{align*}
	where
	\[
	R := L(T_{A_0}(t-s) - I)(S_A \diamond V(s - \varepsilon + \cdot))(\varepsilon) + L(S_A \diamond V(s + \cdot))(t-s),
	\]
	and
	\[
	\|R\| \leq \widetilde{M}(\delta(\varepsilon) + \delta(t-s)),
	\]
	for some constant $\widetilde{M} > 0$. Let $\varepsilon$, $\eta$ be sufficiently small ($\eta$ depending on $\varepsilon$), then $\|L(S_A \diamond V)(t) - L(S_A \diamond V)(s)\|$ can be sufficiently small, which shows that $LS_A \diamond V$ is norm continuous at $t_0$.

	(b) By \eqref{equ:sf2}, we see
	\begin{align*}
		&\|(S_A \diamond V)(t) - (S_A \diamond V)(s)\| \\
		= & \|T_{A_0}(s-\varepsilon)(S_A \diamond V)(t-s+\varepsilon) + (S_A \diamond V(t-s+\varepsilon + \cdot))(s-\varepsilon) \\
		& - T_{A_0}(s-\varepsilon)(S_A \diamond V)(\varepsilon) - (S_A \diamond V(\varepsilon + \cdot))(s-\varepsilon)\| \\
		\leq & \widetilde{M}(\delta(t-s+\varepsilon) + \delta(\varepsilon)) + \widetilde{M}\sup_{r \in [0,s-\varepsilon]} \|V(t-s+\varepsilon+r)-V(\varepsilon+r)\|,
	\end{align*}
	for some constant $\widetilde{M} > 0$. Since $V$ is uniformly continuous on $[\varepsilon, 2t_0]$, the last inequality can be sufficiently small.

	(c) When $t > \alpha + \beta$, by \eqref{equ:sf2}, we get
	\begin{equation}\label{equ:general}
		(S_A \diamond V)(t) = T_{A_0}(t-\beta)(S_A \diamond V)(\beta) + (S_A \diamond V(\beta + \cdot))(t-\beta).
	\end{equation}
	The first term of the right side is norm continuous by norm continuity of $T_{A_0}$,  and the second is norm continuous by (b). The proof is complete.
\end{proof}
For $ W \in C_s([0,\infty), \mathcal{L}(X_0, X)) $, we say $ W $ is compact on $ (a,b) $ if for each $ t \in (a,b) $, $ W(t) $ is compact.
\begin{lem}\label{lem:CP}
	\begin{enumerate}[(a)]
		\item If $T_{A_0}$ is compact on $(0,\infty)$, so is $S_A \diamond V$. More generally, if $L \in \mathcal{L}(X_0, X)$ and $LT_{A_0}$ is compact on $(0,\infty)$, so is $LS_A \diamond V$.
		\item If $V$ is compact and norm continuous on $(0,\infty)$, so is $S_A \diamond V$.
		\item If $T_{A_0}$ is compact on $(\alpha, \infty)$, and $V$ is compact and norm continuous on $(\beta, \infty)$, then $S_A \diamond V$ is compact and norm continuous on $(\alpha + \beta, \infty)$.
	\end{enumerate}
\end{lem}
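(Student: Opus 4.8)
The plan is to mirror the structure of the proof of \autoref{lem:NC}, replacing ``norm continuity'' by ``compactness'' and exploiting that in each case the decomposition \eqref{equ:sf2} splits $S_A \diamond V$ into a term that carries the compactness and a remainder controlled by $\delta$. For part (a) I would fix $t > 0$ and, for $0 < \varepsilon < t$, write via \eqref{equ:sf2}
\[
(S_A \diamond V)(t) = T_{A_0}(\varepsilon)(S_A \diamond V)(t-\varepsilon) + (S_A \diamond V(t-\varepsilon+\cdot))(\varepsilon).
\]
The first summand is compact, being the composition of the compact operator $T_{A_0}(\varepsilon)$ (here $\varepsilon>0$) with the bounded operator $(S_A \diamond V)(t-\varepsilon)$, while the second has norm at most $\delta(\varepsilon)\sup_{r\in[0,t]}\|V(r)\|$ by \eqref{equ:guji}, hence tends to $0$ as $\varepsilon \to 0$ (the supremum being finite by strong continuity of $V$). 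Thus $(S_A \diamond V)(t)$ is a norm limit of compact operators and is therefore compact; the general assertion for $LT_{A_0}$ follows verbatim after composing with $L$ on the left, since $LT_{A_0}(\varepsilon)$ is then compact and the remainder is still $O(\delta(\varepsilon))$.

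For part (b) the norm continuity is already supplied by \autoref{lem:NC}(b), so only compactness must be proved, and this is the main obstacle: here $T_{A_0}$ need not be compact, so the representation \eqref{equ:sf1} expresses $(S_A \diamond V)(t)$ only as a \emph{strong} limit of the compact operators $\int_0^t T_{A_0}(t-s)\mu R(\mu,A)V(s)\,\mathrm{d}s$, which is insufficient to conclude compactness. To circumvent this I would first use \eqref{equ:sf2} with $s=\rho$ to write $(S_A \diamond V)(t) = T_{A_0}(t-\rho)(S_A \diamond V)(\rho) + (S_A \diamond V(\rho+\cdot))(t-\rho)$; the first term has norm $O(\delta(\rho))$ and vanishes as $\rho \to 0$, so it suffices to prove that $(S_A \diamond W)(\tau)$ is compact whenever $W := V(\rho+\cdot)$ is norm continuous and compact-valued on the \emph{closed} interval $[0,\tau]$ (the shift by $\rho>0$ is exactly what repairs the possible failure of these two properties at the origin). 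For this reduced claim I would invoke the Arzel\`a--Ascoli theorem: given $\|x_n\|\leq 1$, the functions $s \mapsto W(s)x_n$ are equicontinuous (since $W$ is uniformly norm continuous on the compact interval $[0,\tau]$) and pointwise relatively compact in $X$ (since each $W(s)$ is a compact operator), hence relatively compact in $C([0,\tau],X)$; applying the bounded linear map $f \mapsto (S_A \diamond f)(\tau)$, whose norm is at most $\delta(\tau)$ by \eqref{equ:guji}, shows that $\{(S_A \diamond W)(\tau)x_n\}$ has a convergent subsequence, i.e. $(S_A \diamond W)(\tau)$ is compact.

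Part (c) I would then deduce from the previous two. Norm continuity on $(\alpha+\beta,\infty)$ is \autoref{lem:NC}(c). For compactness, when $t > \alpha + \beta$ the decomposition \eqref{equ:general} gives $(S_A \diamond V)(t) = T_{A_0}(t-\beta)(S_A \diamond V)(\beta) + (S_A \diamond V(\beta+\cdot))(t-\beta)$. Since $t-\beta > \alpha$, the operator $T_{A_0}(t-\beta)$ is compact, so the first term is compact; and $V(\beta+\cdot)$ is compact and norm continuous on $(0,\infty)$, so the second term is compact by part (b). Hence $(S_A\diamond V)(t)$ is compact for every $t > \alpha+\beta$. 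The only delicate point throughout is the one flagged in part (b) — upgrading the strong convergence of the natural compact approximants to genuine compactness of the limit — which the Arzel\`a--Ascoli argument resolves once the troublesome behaviour at the origin has been shifted away.
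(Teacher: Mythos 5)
Your proposal is correct, and parts (a) and (c) coincide with the paper's own arguments: the same decomposition \eqref{equ:sf2} with the remainder estimated by $\delta(\varepsilon)$, and \eqref{equ:general} combined with part (b), respectively. The genuine divergence is in part (b). The paper does not use \eqref{equ:sf1} there --- your objection to that route (a strong limit of compact operators need not be compact) is precisely why --- but it also does not use your argument; instead it invokes \eqref{equ:sf3}: since $V$ is norm continuous on $(0,\infty)$, \autoref{lem:NC} (b) makes $S_A \diamond V$ norm continuous, and hence the limit
\[
(S_A \diamond V)(t) = \lim_{h \rightarrow 0} \frac{1}{h} \int_0^t T_{A_0}(t-s)S_A(h)V(s)~\mathrm{d}s
\]
holds in the \emph{uniform operator} topology, while each approximant is compact as an integral of a compact-operator-valued map (citing \cite[Theorem C.7]{EN00}). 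Your route replaces this with the shift $W := V(\rho+\cdot)$, which moves the possibly non-compact, non-norm-continuous behaviour at the origin out of the way, followed by the vector-valued Arzel\`a--Ascoli theorem applied to $\{s \mapsto W(s)x_n\}$ and the boundedness of the linear map $f \mapsto (S_A \diamond f)(\tau)$ on $C([0,\tau],X)$; for that boundedness you should cite \autoref{lem:MR} (c) rather than \eqref{equ:guji} itself, since the latter is stated only for $C^1$ data and it is exactly \autoref{lem:MR} (c) that extends it to continuous $f$. Both proofs are complete. The paper's is shorter given that \eqref{equ:sf3} and the compactness of such operator-valued integrals are already on the table; yours is more self-contained, since it requires neither the upgrade of \eqref{equ:sf3} to uniform-operator-topology convergence (which itself rests on \autoref{lem:NC} (b)) nor the external result on integrals of compact-operator-valued functions, but only classical Arzel\`a--Ascoli and the contraction-type estimate for $S_A \diamond {}$.
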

\begin{proof}
	(a) For $ t > \varepsilon > 0 $, by
	\[
	L(S_A \diamond V)(t) = LT_{A_0}(\varepsilon)(S_A \diamond V)(t - \varepsilon) + L(S_A \diamond V(t-\varepsilon+\cdot))(\varepsilon),
	\]
	we see $LT_{A_0}(\varepsilon)(S_A \diamond V)(t - \varepsilon)$ converges to $L(S_A \diamond V)(t)$ in the operator norm topology, as $\varepsilon \rightarrow 0^+$. Since $ LT_{A_0}(\varepsilon) $ is compact, the result follows.

	(b) Since $V$ is norm continuous on $(0,\infty)$, so is $S_A \diamond V$ (\autoref{lem:CP} (b)). Hence,
	\[
	(S_A \diamond V)(t) = \lim_{h \rightarrow 0} \frac{1}{h} \int_0^tT_{A_0}(t-s)S_A(h)V(s)~\mathrm{d}s,
	\]
	where the convergence is in the uniform operator topology; see the proof of equation \eqref{equ:sf3}. Since $V(t)$ is compact for all $t>0$, $\int_0^tT_{A_0}(t-s)S_A(h)V(s)~\mathrm{d}s$ is compact (see, e.g., \cite[Theorem C.7]{EN00}), yielding $(S_A \diamond V)(t)$ is compact.

	(c) Use \eqref{equ:general} and (b). The proof is complete.
\end{proof}

The following result is a simple relation of differentiability in strong topology and uniform operator topology; the proof is omitted.
\begin{lem}\label{lem:reldiff}
	Let $f:~ I \rightarrow \mathcal{L}(X,Y)$, where $I$ is an interval of $\mathbb{R}$.
	\begin{enumerate}[(a)]
		\item If $t \mapsto f(t)x$ is continuously differentiable for each $x \in X$, then $t \mapsto f(t)$ is norm continuous.
		\item If $t \mapsto f(t)x$ is differentiable for each $x \in X$, let $G(t)x = f'(t)x$, $\forall x \in X$. If $t \mapsto G(t)$ is norm continuous, then $t \mapsto f(t)$ is norm continuously differentiable.
	\end{enumerate}
\end{lem}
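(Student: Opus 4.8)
The plan is to reduce both parts to the vector-valued fundamental theorem of calculus (FTC) combined with the uniform boundedness principle. Throughout I would fix a compact subinterval $[a,b] \subset I$, since norm continuity and norm differentiability are local properties and it suffices to argue there.

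For (a), I would first check that $f'(r) \in \mathcal{L}(X,Y)$ for each $r$: the map $x \mapsto f'(r)x$ is linear, and it is the pointwise limit (along any sequence $h_n \to 0$) of the bounded operators $h_n^{-1}(f(r+h_n)-f(r))$, so by the Banach--Steinhaus theorem the pointwise limit of a convergent family of bounded operators is again bounded. Next, the assumption that $t \mapsto f(t)x$ is $C^1$ means $r \mapsto f'(r)x$ is continuous for each fixed $x$, whence $\sup_{r \in [a,b]}\|f'(r)x\| < \infty$; applying the uniform boundedness principle once more yields $M := \sup_{r \in [a,b]}\|f'(r)\| < \infty$. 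Then for $s,t \in [a,b]$ and $\|x\| \le 1$, the FTC for the $Y$-valued map $r \mapsto f(r)x$ gives $f(t)x - f(s)x = \int_s^t f'(r)x\,\mathrm{d}r$, so $\|f(t)x - f(s)x\| \le M|t-s|$; taking the supremum over $\|x\| \le 1$ shows $\|f(t)-f(s)\|_{\mathcal{L}(X,Y)} \le M|t-s|$. Thus $f$ is locally Lipschitz in the operator norm, in particular norm continuous.

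For (b), norm continuity of $t \mapsto G(t)$ gives continuity of $t \mapsto G(t)x$ for each $x$, so $t \mapsto f(t)x$ is strongly $C^1$ and the FTC gives $f(t+h)x - f(t)x = \int_t^{t+h} G(r)x\,\mathrm{d}r$. Subtracting $hG(t)x$ and estimating,
\[
\left\| \frac{f(t+h)x - f(t)x}{h} - G(t)x \right\| \le \sup_{|r-t| \le |h|} \|(G(r)-G(t))x\| \le \|x\| \sup_{|r-t|\le|h|}\|G(r)-G(t)\|.
\]
Taking the supremum over $\|x\| \le 1$ bounds $\bigl\| h^{-1}(f(t+h)-f(t)) - G(t) \bigr\|_{\mathcal{L}(X,Y)}$ by $\sup_{|r-t|\le|h|}\|G(r)-G(t)\|$, which tends to $0$ as $h \to 0$ by the norm continuity of $G$. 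Hence $f$ is norm differentiable with derivative $G$, and since $G$ is norm continuous, $f$ is norm continuously differentiable.

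The only genuinely nontrivial ingredient is the uniform boundedness step in (a): strong continuity of $r \mapsto f'(r)$ does not by itself supply a uniform operator-norm bound on $[a,b]$, and without Banach--Steinhaus one cannot pass from the pointwise estimate to an operator-norm estimate. Everything else is a routine application of the vector-valued FTC and the elementary inequality $\bigl\|\int g\bigr\| \le \int \|g\|$.
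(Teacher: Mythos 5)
Your proof is correct and complete: the Banach--Steinhaus argument gives the uniform bound $\sup_{r\in[a,b]}\|f'(r)\|<\infty$ needed in (a), and the vector-valued fundamental theorem of calculus yields the Lipschitz estimate in (a) and the operator-norm convergence of the difference quotients in (b). The paper itself omits the proof entirely (calling the lemma ``simple''), and your argument is exactly the standard one that omission presupposes, so there is nothing to contrast it with.
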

\begin{lem}\label{lem:diff}
	\begin{enumerate}[(a)]
		\item If $V$ is strongly continuously differentiable (resp. norm continuously differentiable) on $[0,\infty)$, and $V(0) = 0$, so is $S_A \diamond V$. Moreover, $S_A \diamond V = S_A * V'$.
		\item If $T_{A_0}$ is strongly continuously differentiable on $[\alpha, \infty)$ and $V$ is strongly continuously differentiable on $[\beta, \infty)$, then $S_A \diamond V$ is strongly continuously differentiable on $[\alpha + \beta, \infty)$.
	\end{enumerate}
\end{lem}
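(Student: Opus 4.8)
The plan is to prove (a) by reducing the $\diamond$-operation to an ordinary convolution, and then to derive (b) from (a) by localizing in time and peeling off a constant term.

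For (a) I would first establish the \emph{moreover} identity $S_A\diamond V=S_A*V'$. Substituting $r=t-s$ in $(S_A*V)(t)x=\int_0^tS_A(t-s)V(s)x\,\mathrm{d}s$ rewrites it as $\int_0^tS_A(r)V(t-r)x\,\mathrm{d}r$; since $S_A$ is norm continuous by \autoref{lem:MR}~\eqref{MRaa} and $s\mapsto V(s)x$ is continuously differentiable, the vector-valued Leibniz rule gives
\[
(S_A\diamond V)(t)x=\frac{d}{dt}(S_A*V)(t)x=S_A(t)V(0)x+\int_0^tS_A(r)V'(t-r)x\,\mathrm{d}r ,
\]
and the hypothesis $V(0)=0$ removes the boundary term, leaving $(S_A\diamond V)(t)x=(S_A*V')(t)x$. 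Since $V'\in C_s([0,\infty),\mathcal{L}(X_0,X))$, \autoref{lem:MR}(c) shows that $t\mapsto(S_A*V')(t)x=(S_A\diamond V)(t)x$ is differentiable with derivative $(S_A\diamond V')(t)x$, and that this derivative is continuous in $t$; this is precisely strong continuous differentiability. In the norm-continuous case, the pointwise derivative $G(t):=(S_A\diamond V')(t)$ is norm continuous on $(0,\infty)$ by \autoref{lem:NC}(b) and norm continuous at $0$ by \autoref{lem:MR}~\eqref{MRaa}, so \autoref{lem:reldiff}(b) upgrades the conclusion to norm continuous differentiability.

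For (b), fix $t_*>\alpha+\beta$ and pick $s_0\in[\beta,t_*-\alpha)$, which is nonempty because $t_*-\alpha>\beta$ (at the endpoint $t_*=\alpha+\beta$ take $s_0=\beta$). For $t$ near $t_*$, equation \eqref{equ:sf2} gives
\[
(S_A\diamond V)(t)=T_{A_0}(t-s_0)(S_A\diamond V)(s_0)+(S_A\diamond V(s_0+\cdot))(t-s_0).
\]
Since $(S_A\diamond V)(s_0)x\in X_0$ by \autoref{lem:MR}(c) and $t-s_0>\alpha$, the first summand is strongly continuously differentiable in $t$ because $T_{A_0}$ is so on $[\alpha,\infty)$. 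For the second summand I would split $V(s_0+\cdot)=\widetilde V+V(s_0)$ with $\widetilde V(r):=V(s_0+r)-V(s_0)$: then $\widetilde V$ is strongly continuously differentiable on $[0,\infty)$ with $\widetilde V(0)=0$, so part (a) makes $(S_A\diamond\widetilde V)(\cdot)$ strongly continuously differentiable, while the constant contributes $(S_A\diamond V(s_0))(\tau)=S_A(\tau)V(s_0)$. To differentiate $\tau\mapsto S_A(\tau)V(s_0)x$ for $\tau>\alpha$ I would invoke the representation \eqref{equ:biaodashi}, whose only delicate term is $-T_{A_0}(\tau)R(\mu,A)V(s_0)x$ with $R(\mu,A)V(s_0)x\in D(A)\subset X_0$, so strong differentiability of $T_{A_0}$ on $[\alpha,\infty)$ transfers. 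Assembling the pieces, $S_A\diamond V$ is strongly continuously differentiable at $t_*$, and as $t_*$ was arbitrary the conclusion holds on $[\alpha+\beta,\infty)$.

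I expect the second summand in (b) to be the main obstacle: in contrast with the norm-continuity and compactness analogues in \autoref{lem:NC}(c) and \autoref{lem:CP}(c), the shifted argument $V(s_0+\cdot)$ need not vanish at $0$, so part (a) cannot be applied verbatim. The constant-peeling device together with the separate treatment of $\tau\mapsto S_A(\tau)V(s_0)x$ through \eqref{equ:biaodashi} is exactly where the differentiability hypothesis on $T_{A_0}$ enters, and getting this bookkeeping (including the behaviour at the left endpoint $\alpha+\beta$) right is the crux.
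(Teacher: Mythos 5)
Your proof is correct and follows essentially the same route as the paper: part (a) is the identical Leibniz-rule argument combined with \autoref{lem:MR}~(c) and \autoref{lem:reldiff}~(b), and in part (b) your constant-peeling of $V(s_0+\cdot)$ reproduces exactly the paper's three-term decomposition $T_{A_0}(t-\beta)(S_A\diamond V)(\beta)+S_A(t-\beta)V(\beta)+\bigl(S_A*V(\beta+\cdot)'\bigr)(t-\beta)$ obtained from \eqref{equ:sf2}. The only cosmetic difference is that you differentiate $\tau\mapsto S_A(\tau)V(s_0)x$ directly from the representation \eqref{equ:biaodashi}, whereas the paper cites \autoref{partInt}~(e) for the strong continuous differentiability of $S_A$ on $[\alpha,\infty)$.
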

\begin{proof}
	(a) Since
	\[
	(S_A \diamond V)(t) = S_A(t)V(0) + \int_0^tS_A(s)V'(t-s)~\mathrm{d}s = S_A * V',
	\]
	by the assumption of $A$, it shows $S_A * V$ is strongly continuously differentiable (see \autoref{lem:MR} (c)). Thus, $S_A \diamond V$ is strongly continuously differentiable. For the case of norm continuously differentiable, by \autoref{lem:NC} (b), $S_A \diamond V'$ is norm continuous. The result follows from \autoref{lem:reldiff} (b).

	(b) For $t \geq \alpha + \beta$,
	\begin{align*}
		(S_A \diamond V)(t) & = T_{A_0}(t-\beta)(S_A \diamond V)(\beta) + (S_A \diamond V(\beta + \cdot))(t-\beta) \\
		& = T_{A_0}(t-\beta)(S_A \diamond V)(\beta) + S_A(t-\beta)V(\beta) + S_A * V(\beta + \cdot)'.
	\end{align*}
	The first term of the right side is strongly continuously differentiable. By \autoref{partInt} (e), $S_A$ is strongly continuously differentiable on $[\alpha, \infty)$. By the assumption on $A$, the third term is also strongly continuously differentiable (see \autoref{lem:MR} (c)). This completes the proof.
\end{proof}

\section{Regularity of perturbed semigroups}\label{regper}
If $A$ is an MR operator, $L \in \mathcal{L}(X_0, X)$, then $A+L$ is also an MR operator. In this section, we study the regularity properties of the perturbed semigroup $T_{(A+L)_0}$, generated by $(A+L)_0$ in $X_0$. We make some regularity assumptions similar as \cite{NP98, BMR02} to ensure $T_{(A+L)_0}$ preserves the regularity of $T_{A_0}$.
\begin{lem}\label{lem:regfix}
	Suppose that $V \in C_s([0,\infty), \mathcal{L}(X_0, X))$ has one of the following properties,
	\begin{enumerate}[(a)]
		\item norm continuity on $(0,\infty)$;
		\item norm continuity and compactness on $(0,\infty)$;
		\item $V(0) = 0$ and strongly continuous differentiability on $[0,\infty)$.
	\end{enumerate}
	Then, the solution $W$ of the equation
	\[
	W = V + \mathcal{B}(W),
	\]
	has the corresponding property.
\end{lem}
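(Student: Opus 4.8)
The plan is to exploit the representation $W = \sum_{n=0}^{\infty}\mathcal{B}^n(V)$ from \autoref{lem:fix}, where the series converges uniformly on every finite interval in the uniform operator topology. Since each of the three properties is preserved under such uniform limits—norm continuity trivially, and compactness because operator-norm limits of compact operators are again compact—it suffices to show that every summand $\mathcal{B}^n(V)$ inherits the property of $V$ and then to pass to the limit. I would set $G_n := \mathcal{B}^n(V)$ and induct on $n$, the base case $G_0 = V$ being the hypothesis.

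For (a), note that $L \in \mathcal{L}(X_0,X)$ makes $L G_{n-1}$ norm continuous on $(0,\infty)$ whenever $G_{n-1}$ is, so $G_n = S_A \diamond (L G_{n-1})$ is norm continuous on $(0,\infty)$ by \autoref{lem:NC} (b); passing to the uniform limit, $W$ is norm continuous on $(0,\infty)$. For (b) the same induction applies with \autoref{lem:CP} (b) in place of \autoref{lem:NC} (b), using that $L G_{n-1}(t)$ is compact (a bounded operator composed with a compact one) and that $t\mapsto LG_{n-1}(t)$ is norm continuous; compactness survives the operator-norm limit, so $W$ is norm continuous and compact on $(0,\infty)$. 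Note that neither case needs any regularity assumption on $T_{A_0}$.

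The delicate case is (c), because uniform convergence of the functions $G_n$ does not by itself make the limit differentiable: one must also control the derivatives. \autoref{lem:diff} (a) shows each $G_n$ is strongly continuously differentiable, and $G_n(0)=0$ for every $n$ (as $V(0)=0$ and $(S_A\diamond f)(0)=0$ for every continuous $f$ by \eqref{equ:sf1}). The identity I would rely on is a commutation rule: for any strongly $C^1$ map $U$ with $U(0)=0$, \autoref{lem:diff} (a) gives $\mathcal{B}(U) = S_A \diamond (LU) = S_A * (LU') $, so that $(\mathcal{B}(U))' = S_A \diamond (LU') = \mathcal{B}(U')$. Feeding this through the induction—each $G_{n-1}$ vanishing at $0$—yields $(\mathcal{B}^n V)' = \mathcal{B}^n(V')$ for all $n$.

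Consequently the formally differentiated series $\sum_n (\mathcal{B}^n V)' = \sum_n \mathcal{B}^n(V')$ is exactly the series furnished by \autoref{lem:fix} for the strongly continuous map $V' \in C_s([0,\infty),\mathcal{L}(X_0,X))$, and hence converges uniformly on finite intervals in the uniform operator topology. Fixing $x \in X_0$, the series $\sum_n G_n(\cdot)x$ converges to $W(\cdot)x$ while $\sum_n G_n'(\cdot)x$ converges uniformly on compact intervals, so term-by-term differentiation gives that $t \mapsto W(t)x$ is $C^1$ with $W'(t)x = \sum_n \mathcal{B}^n(V')(t)x$, continuous in $t$. Thus $W$ is strongly continuously differentiable with $W(0)=0$. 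I expect this case to be the main obstacle, its crux being the identity $(\mathcal{B}^n V)' = \mathcal{B}^n(V')$, which converts differentiation of the sum back into the uniform convergence already established in \autoref{lem:fix}.
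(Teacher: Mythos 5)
Your proposal is correct and follows essentially the same route as the paper: represent $W = \sum_{n}\mathcal{B}^n(V)$ via \autoref{lem:fix}, propagate each property through the summands using \autoref{lem:NC}, \autoref{lem:CP} and \autoref{lem:diff}, and pass to the uniform-operator-topology limit, with case (c) resting on the same key commutation identity $[\mathcal{B}^n(V)]' = \mathcal{B}^n(V')$ that the paper uses to reduce differentiation of the series to the convergence already supplied by \autoref{lem:fix}. Your write-up merely makes explicit the induction and the term-by-term differentiation argument that the paper leaves implicit.
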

\begin{proof}
	If $V$ has the property of (a) (resp. (b)), by \autoref{lem:NC} (resp. \autoref{lem:CP}), $\mathcal{B}^k(V)$ has the property of (a) (resp. (b)) for $k = 0,1,2,\cdots$. By \autoref{lem:fix}, $W = \sum\limits_{k=0}^{\infty}\mathcal{B}^k(V)$ is internally closed uniformly norm convergent. So $W$ has property of (a) (resp. (b)).

	If $V(0) = 0$ and $V$ is strongly continuously differentiable on $[0,\infty)$, then by \autoref{lem:diff} (a), $[\mathcal{B}(V)]' = \mathcal{B}(V')$, and $[\mathcal{B}^k(V)]' = \mathcal{B}^k(V')$ $(k \geq 1)$. (Note that $\mathcal{B}^k(V)(0) = 0$ for $k \geq 1$.) Since $\sum\limits_{k=0}^{\infty}\mathcal{B}^k(V)$ and $\sum\limits_{k=0}^{\infty}(\mathcal{B}^k(V))' = \sum\limits_{k=0}^{\infty}\mathcal{B}^k(V')$ are internally closed uniformly convergent, we have $W' = \sum\limits_{k=0}^{\infty}\mathcal{B}^k(V')$. That is $W' = V' + \mathcal{B}(W')$, which completes the result.
\end{proof}

Next, we consider the relation between the regularity of $S_n$, $R_n$ (see \eqref{symbol}).
\begin{cor}\label{corr:sr}
	The following are equivalent.
	\begin{enumerate}[(a)]
		\item $S_n$ has property $P$;
		\item $S_k$ has property $P$, $\forall k \geq n$;
		\item $R_n$ has property $P$;
		\item $R_k$ has property $P$, $\forall k \geq n$.
	\end{enumerate}
	The property $P$ stands for norm continuity, or compactness on $(0, \infty)$. If $n \geq 1$, $P$ also stands for strongly continuous differentiability on $[0, \infty)$.
\end{cor}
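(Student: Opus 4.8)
The plan is to establish the four assertions as equivalent by proving three linkages: (a)$\Leftrightarrow$(b), (c)$\Leftrightarrow$(d), and (a)$\Leftrightarrow$(c). The downward implications (b)$\Rightarrow$(a) and (d)$\Rightarrow$(c) are immediate, since they merely specialize the index $k$ to $n$, so all the content sits in (a)$\Rightarrow$(b), (c)$\Rightarrow$(d), and the two directions of (a)$\Leftrightarrow$(c).

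For (a)$\Rightarrow$(b) I would induct on $k \geq n$ using the recursion $S_{k+1} = \mathcal{B}(S_k) = S_A \diamond (LS_k)$ from \eqref{equ:sr1}. If $S_k$ is norm continuous on $(0,\infty)$ then so is $LS_k$, whence $S_{k+1}$ is norm continuous by \autoref{lem:NC}(b). If $S_k$ is compact on $(0,\infty)$, then by \autoref{cor:DP}(c) it is automatically norm continuous, so $LS_k$ is compact and norm continuous and \autoref{lem:CP}(b) yields compactness of $S_{k+1}$. For strongly continuous differentiability (where $n \geq 1$, so that $S_k(0) = 0$), the map $LS_k$ is strongly continuously differentiable and vanishes at $0$, so \autoref{lem:diff}(a) transfers the property to $S_{k+1}$. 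The identical induction run with $R_{k+1} = \mathcal{B}(R_k) = S_A \diamond (LR_k)$ gives (c)$\Rightarrow$(d).

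The crux is (a)$\Leftrightarrow$(c). The key observation is that by \eqref{equ:sr3} one has $R_n = S_n + \mathcal{B}(R_n)$, that is, $R_n$ is precisely the unique solution $W$ of the fixed-point equation $W = V + \mathcal{B}(W)$ of \autoref{lem:fix} with data $V = S_n$. Hence, assuming (a), if $S_n$ has property $P$ (noting $S_n(0) = 0$ when $n \geq 1$, so the differentiability hypothesis of \autoref{lem:regfix} is met), then \autoref{lem:regfix} immediately gives that $R_n = W$ has $P$, which is (c). Conversely, rewriting \eqref{equ:sr3} as $S_n = R_n - \mathcal{B}(R_n) = R_n - S_A \diamond (LR_n)$, I would verify that each clause of $P$ is stable under the map $R_n \mapsto S_A \diamond (LR_n)$ — norm continuity by \autoref{lem:NC}(b), compactness (after first invoking \autoref{cor:DP}(d) to secure norm continuity of $R_n$) by \autoref{lem:CP}(b), and differentiability by \autoref{lem:diff}(a) — and that $P$ survives the difference of two such functions, giving $S_n \in P$ and hence (c)$\Rightarrow$(a).

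Assembling these, (a)$\Leftrightarrow$(b), (c)$\Leftrightarrow$(d), and (a)$\Leftrightarrow$(c) together force all four statements to coincide. I expect the delicate point to be the compactness case: the hypotheses of both \autoref{lem:CP}(b) and \autoref{lem:regfix}(b) bundle compactness with norm continuity, and it is the automatic norm continuity supplied by \autoref{cor:DP}(c),(d) that makes the bare hypothesis ``compact on $(0,\infty)$'' strong enough to feed these lemmas. A parallel piece of book-keeping — that $\mathcal{B}$ annihilates the value at the origin, so $S_n(0) = R_n(0) = 0$ for $n \geq 1$ — is what confines the differentiability clause of $P$ to indices $n \geq 1$ and must be tracked carefully through every use of \autoref{lem:diff}(a).
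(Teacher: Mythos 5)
Your proof is correct and follows essentially the same route as the paper: the paper closes the cycle (a)$\Rightarrow$(b)$\Rightarrow$(c)$\Rightarrow$(d)$\Rightarrow$(a) using exactly the ingredients you invoke --- induction via \eqref{equ:sr1} with \autoref{lem:NC}/\autoref{lem:CP}/\autoref{lem:diff}, the fixed-point equation \eqref{equ:sr3} together with \autoref{lem:regfix}, and the automatic norm continuity of compact orbits from \autoref{cor:DP} \eqref{DPcc}, \eqref{DPdd}. Your reorganization into the three equivalences (a)$\Leftrightarrow$(b), (c)$\Leftrightarrow$(d), (a)$\Leftrightarrow$(c) is only a cosmetic difference, and your bookkeeping of the compactness and $S_n(0)=R_n(0)=0$ subtleties matches the paper's remarks.
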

\begin{proof}
	(a) $\Rightarrow$ (b) By \autoref{lem:NC} (or \autoref{lem:CP}, or \autoref{lem:diff}) and \eqref{equ:sr1}.
	(b) $\Rightarrow$ (c) By \eqref{equ:sr1} and \autoref{lem:regfix}.
	(c) $\Rightarrow$ (d) The same reason as (a) $\Rightarrow$ (b).
	(d) $\Rightarrow$ (a) By \eqref{equ:sr3} and \autoref{lem:NC} (or \autoref{lem:CP}, or \autoref{lem:diff}). Here note that if $ S_{n} $ (or $ R_{n} $) is compact on $ (0, \infty) $, then it is norm continuous on $ (0, \infty) $ by \autoref{cor:DP} \eqref{DPcc} \eqref{DPdd}.
\end{proof}

\begin{thm}
	Let $A$ be an MR operator, $L \in \mathcal{L}(X_0, X)$. Then, the following statements hold.
	\begin{enumerate}[(a)]
		\item $T_{A_0}$ is immediately norm continuous if and only if $T_{(A+L)_0}$ is immediately norm continuous.
		\item $T_{A_0}$ is immediately compact if and only if $T_{(A+L)_0}$ is immediately compact.
	\end{enumerate}
\end{thm}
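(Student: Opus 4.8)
The plan is to read this statement as exactly the case $n=0$ of \autoref{corr:sr}. Recall from \eqref{symbol} that $S_0 = T_{A_0}$ and $R_0 = T_{(A+L)_0}$, and note that ``immediately norm continuous'' (resp.\ ``immediately compact'') means precisely that the semigroup has property $P =$ norm continuity (resp.\ compactness) on $(0,\infty)$. Since \autoref{corr:sr} asserts the equivalence of ``$S_n$ has $P$'' and ``$R_n$ has $P$'' for every $n \geq 0$ whenever $P$ is norm continuity or compactness on $(0,\infty)$, specializing to $n=0$ gives at once that $T_{A_0}$ has $P$ if and only if $T_{(A+L)_0}$ has $P$, which is both (a) and (b). So my primary approach is simply to invoke \autoref{corr:sr} at $n=0$.

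If instead I wanted a self-contained argument, I would unwind the mechanism behind \autoref{corr:sr}. For the forward implication I would use the expansion $T_{(A+L)_0} = \sum_{n=0}^{\infty} S_n$ of \eqref{equ:sr5}, which converges uniformly on bounded intervals in the operator norm. Assuming $S_0 = T_{A_0}$ has $P$, I would prove by induction that every $S_n$ has $P$: by \eqref{equ:sr1} one has $S_{n+1} = S_A \diamond LS_n$, and since each $S_n(t)$ maps $X_0$ into $X_0$ the composition $LS_n$ again lies in $C_s([0,\infty),\mathcal{L}(X_0,X))$ and inherits $P$; then \autoref{lem:NC}(b) (resp.\ \autoref{lem:CP}(b)) shows $S_{n+1} = S_A \diamond (LS_n)$ has $P$. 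Because norm continuity and compactness of operator-valued functions are stable under uniform limits on compact subintervals of $(0,\infty)$, the sum $T_{(A+L)_0}$ inherits $P$. The reverse implication then follows by symmetry: by \autoref{thm:mrper} the operator $A+L$ is again an MR operator with the same closure of domain $X_0$, and perturbing it by $-L \in \mathcal{L}(X_0,X)$ recovers $A$, so the forward implication applied to the pair $(A+L,\,-L)$ gives the converse.

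I do not expect a genuine obstacle, as all the analytic content already sits in the preparatory lemmas; the remaining points are bookkeeping. First, to apply \autoref{lem:CP}(b) in the compactness case I need $LS_n$ to be both compact \emph{and} norm continuous on $(0,\infty)$, and the norm continuity of the base term $LS_0 = LT_{A_0}$ is not assumed outright but follows from the fact that an immediately compact $C_0$ semigroup is immediately norm continuous (cf.\ \autoref{cor:DP} \eqref{DPcc} with $n=0$). Second, one must confirm that $P$ passes to the uniform limit of the series: for compactness this uses that the compact operators form a closed subspace of $\mathcal{L}(X_0,X)$, and for norm continuity that a locally uniform operator-norm limit of norm-continuous functions is norm continuous. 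Both are standard, so the argument collapses, as indicated, to the single application of \autoref{corr:sr} at $n=0$.
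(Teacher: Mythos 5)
Your proposal coincides with the paper's own proof: the paper likewise obtains both (a) and (b) as the case $n=0$ of \autoref{corr:sr}, adding only the remark that an immediately compact $C_0$ semigroup is immediately norm continuous (which you handle via \autoref{cor:DP} at $n=0$, an equally valid source for that fact). The additional self-contained unwinding you sketch is just the mechanism inside \autoref{corr:sr} and \autoref{lem:regfix}, so there is no genuine difference in route and no gap.
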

\begin{proof}
	It's a direct result of \autoref{corr:sr} for the case $n=0$. Note that $C_0$ semigroup is immediately compact, then it is also immediately norm continuous (see, e.g., \cite[Lemma II.4.22]{EN00}).
\end{proof}
\begin{thm}\label{thm:perreg}
	Let $A$ be an MR operator, $L \in \mathcal{L}(X_0, X)$, $\alpha \geq 0$. Then, the following statements hold.
	\begin{enumerate}[(a)]
		\item If $T_{A_0}$ is norm continuous on $(\alpha, \infty)$, and there exists $S_n$ which is norm continuous on $(0,\infty)$, then $T_{(A+L)_0}$ is norm continuous on $(n\alpha, \infty)$.
		\item If $T_{A_0}$ is compact on $(\alpha, \infty)$, and there exists $S_n$ which is compact on $(0,\infty)$, then $T_{(A+L)_0}$ is compact (and norm continuous) on $(n\alpha, \infty)$.
		\item If $T_{A_0}$ is strongly continuously differentiable on $[\alpha, \infty)$, and there exists $S_n$ ($n \geq 1$) which is strongly continuously differentiable on $[0,\infty)$, then $T_{(A+L)_0}$ is strongly continuously differentiable on $[n\alpha, \infty)$.
	\end{enumerate}
\end{thm}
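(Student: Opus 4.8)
The plan is to split $T_{(A+L)_0}$ via the finite identity \eqref{equ:sr4}, namely $T_{(A+L)_0} = \sum_{k=0}^{n-1} S_k + R_n$, and to handle the finite head $\sum_{k=0}^{n-1} S_k$ and the tail remainder $R_n$ separately. For the tail I would appeal directly to \autoref{corr:sr}: in case (a) (resp.\ (b), (c)) the hypothesis is precisely that $S_n$ is norm continuous on $(0,\infty)$ (resp.\ compact on $(0,\infty)$; strongly continuously differentiable on $[0,\infty)$, with $n \ge 1$), so the equivalence (a)$\Leftrightarrow$(c) of \autoref{corr:sr} gives that $R_n$ enjoys the same regularity on $(0,\infty)$ (resp.\ $[0,\infty)$), in particular on $(n\alpha,\infty)$ (resp.\ $[n\alpha,\infty)$).

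For the head the crucial ingredient is the graded claim that $S_k$ has the property in question on $((k+1)\alpha,\infty)$ (resp.\ $[(k+1)\alpha,\infty)$) for every $k \ge 0$, proved by induction on $k$ through the recursion $S_{k+1} = S_A \diamond L S_k$ of \eqref{equ:sr1}. The base case $k=0$ is exactly the hypothesis on $S_0 = T_{A_0}$, regular from $\alpha$ onward. For the step I set $V := L S_k$; since $L$ is bounded, $V$ inherits from $S_k$ the relevant property (in case (b) using that a bounded operator times a compact operator is compact), and then I apply the additivity-of-degree lemmas \autoref{lem:NC}(c), \autoref{lem:CP}(c), \autoref{lem:diff}(b), each asserting that $S_A \diamond V$ is regular on $(\alpha+\beta,\infty)$ (resp.\ the closed analogue) whenever $T_{A_0}$ is regular on $(\alpha,\infty)$ and $V$ on $(\beta,\infty)$. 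Taking $\beta = (k+1)\alpha$ yields $S_{k+1}$ regular on $((k+2)\alpha,\infty)$, closing the induction.

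Assembling, for $0 \le k \le n-1$ one has $(k+1)\alpha \le n\alpha$, so each head term $S_k$ is regular on $((k+1)\alpha,\infty) \supseteq (n\alpha,\infty)$ (resp.\ on the closed interval), while $R_n$ is regular on $(0,\infty) \supseteq (n\alpha,\infty)$. A finite sum of operator families that are simultaneously norm continuous (resp.\ compact, strongly continuously differentiable) on a common interval retains that property there, so $T_{(A+L)_0}$ is norm continuous (resp.\ compact and norm continuous, strongly continuously differentiable) on $(n\alpha,\infty)$ (resp.\ $[n\alpha,\infty)$), which is the assertion.

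The argument is mostly a bookkeeping of the ``degree'' $\alpha$ accumulated at each convolution, so there is no single deep step; the point deserving care is part (b), where compactness and norm continuity must be carried jointly through the induction, because \autoref{lem:CP}(c) requires the input $V$ to be both compact and norm continuous. This also dictates the base case there: $S_0 = T_{A_0}$ must enter the induction already norm continuous, which is guaranteed by the standard fact that a compact $C_0$ semigroup is automatically norm continuous on the same interval.
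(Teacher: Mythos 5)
Your proposal is correct and takes essentially the same approach as the paper's proof: the same decomposition $T_{(A+L)_0} = \sum_{k=0}^{n-1} S_k + R_n$ from \eqref{equ:sr4}, the same inductive use of \autoref{lem:NC} (c), \autoref{lem:CP} (c) and \autoref{lem:diff} (b) on $S_{k+1} = S_A \diamond L S_k$ for the head terms, and \autoref{corr:sr} for the tail $R_n$. Your explicit degree bookkeeping (with $S_k$ regular on $((k+1)\alpha,\infty)$) and the joint propagation of compactness with norm continuity in case (b) are precisely the points the paper's compressed proof relies on (the paper also invokes \autoref{cor:DP} \eqref{DPcc} for the fact that compactness of $S_n$ on $(0,\infty)$ yields its norm continuity there).
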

\begin{proof}
	If $T_{A_0}$ is norm continuous (resp. compact (hence norm continuous), or differentiable) on $(\alpha, \infty)$, then $S_m$ is norm continuous (resp. compact and norm continuous, or strongly continuously differentiable) on $(m\alpha, \infty)$ by \autoref{lem:NC} (c) (resp. \autoref{lem:CP} (c), or \autoref{lem:diff} (b)). Combining with \autoref{corr:sr} and \eqref{equ:sr4}, we obtain the results. Here note that if $S_n$ is compact on $(0,\infty)$, then it is also norm continuous on $(0,\infty)$ by \autoref{cor:DP} \eqref{DPcc}.
\end{proof}

In applications, we need to calculate the $S_n$ to show it has higher regularity, see some examples in \cite{BNP00, BMR02}. Here we give some conditions such that $S_n$ has higher regularity. The following result can be obtained directly by \autoref{lem:NC} (a), \autoref{lem:CP} (a) and \autoref{corr:sr}.
\begin{cor}\label{corr:s1} Let $A$ be an MR operator, $L \in \mathcal{L}(X_0, X)$.
	\begin{enumerate}[(a)]
		\item If $LT_{A_0}$ is norm continuous on $(0, \infty)$, then $S_1$ and $T_{(A+L)_0} - T_{A_0}$ are norm continuous on $(0, \infty)$. Particularly, in this case, $T_{A_0}$ is eventually norm continuous if and only if $T_{(A+L)_0}$ is eventually norm continuous.
		\item If $LT_{A_0}$ is norm continuous and compact on $(0, \infty)$, then $S_1$ and $T_{(A+L)_0} - T_{A_0}$ are norm continuous and compact on $(0, \infty)$. Particularly, in this case, $T_{A_0}$ is eventually compact if and only if $T_{(A+L)_0}$ is eventually compact.
	\end{enumerate}
\end{cor}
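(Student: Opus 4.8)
The plan is to reduce both parts to a single statement about the first term $S_1$ of the Dyson–Phillips–type series \eqref{equ:sr5}, and then to let \autoref{corr:sr} propagate that regularity to the whole remainder. The two identities I would lean on are $S_1 = S_A \diamond (L T_{A_0})$, which is \eqref{equ:sr1} with $n=0$, and $T_{(A+L)_0} - T_{A_0} = R_1 = \sum_{n \geq 1} S_n$, which is \eqref{equ:sr2} (with $k=1$) together with \eqref{equ:sr5}. Thus it suffices to control $S_1$ and then read off the difference $R_1 = T_{(A+L)_0} - T_{A_0}$.

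For (a), I would first observe that $V := L T_{A_0}$ is an element of $C_s([0,\infty), \mathcal{L}(X_0,X))$ which is norm continuous on $(0,\infty)$ by hypothesis, so \autoref{lem:NC} applied to this $V$ gives that $S_1 = S_A \diamond V$ is norm continuous on $(0,\infty)$. \autoref{corr:sr} (with $n=1$ and $P$ = norm continuity) then upgrades this to norm continuity of $R_1 = T_{(A+L)_0} - T_{A_0}$ on $(0,\infty)$. The stated equivalence is immediate from the additive splitting $T_{(A+L)_0} = T_{A_0} + R_1$: since $R_1$ is norm continuous on all of $(0,\infty)$, the two semigroups differ by a globally norm-continuous family, so one is norm continuous on some $(t_1,\infty)$ if and only if the other is, i.e.\ eventual norm continuity transfers in both directions.

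Part (b) follows the identical template with compactness in place of norm continuity. Here $V = L T_{A_0}$ is both norm continuous and compact on $(0,\infty)$, so \autoref{lem:CP} gives that $S_1 = S_A \diamond V$ is norm continuous and compact on $(0,\infty)$; \autoref{corr:sr} (now with $P$ = compactness) then yields that $R_1 = T_{(A+L)_0} - T_{A_0}$ is compact (and norm continuous) on $(0,\infty)$. Writing $T_{(A+L)_0} = T_{A_0} + R_1$ once more and using that the sum (resp.\ difference) of an eventually compact family and a family compact on all of $(0,\infty)$ is again eventually compact gives the claimed equivalence.

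Since the substantive work is already packaged in \autoref{lem:NC}, \autoref{lem:CP} and especially \autoref{corr:sr}, I do not expect a genuine obstacle: the corollary is essentially the specialization to $V = L T_{A_0}$ and $n = 1$. The only point requiring a little care is that in part (b) compactness must be carried along with norm continuity throughout — this is exactly why \autoref{lem:CP} and \autoref{corr:sr} are phrased to preserve both properties at once (with compactness on $(0,\infty)$ itself forcing norm continuity via \autoref{cor:DP}), and it is what lets the additive-splitting argument close cleanly.
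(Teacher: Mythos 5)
Your proposal is correct and follows essentially the same route as the paper, whose entire proof is the remark that the corollary ``can be obtained directly by \autoref{lem:NC} (a), \autoref{lem:CP} (a) and \autoref{corr:sr}'' --- i.e.\ precisely your specialization $V = LT_{A_0}$ to get regularity of $S_1 = S_A \diamond LT_{A_0}$, then \autoref{corr:sr} with $n=1$ to pass to $R_1$, and the splitting $T_{(A+L)_0} = T_{A_0} + R_1$ for the ``eventually'' equivalences. The only cosmetic difference is that you invoke parts (b) of \autoref{lem:NC} and \autoref{lem:CP} (regularity of $V$) where the paper cites parts (a) (regularity of $LT_{A_0}$); with $V = LT_{A_0}$ either reading yields the needed norm continuity (resp.\ compactness) of $S_1$, so the substance is identical.
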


For $p$-quasi Hille--Yosida operator $ A $, the compactness of $LT_{A_0}$ can make $S_2$ be higher regularity, which is discovered by Ducrot, Liu, Magal \cite{DLM08}; the key of the proof used the following fact: $\forall x^* \in (X_0)^*, x \in X$, $x^*S_A(\cdot)x \in W_{loc}^{1,p'}(0,\infty)$ (as $ x^*S_A(\cdot)x $ is of bounded $ p' $-variation), where $1/{p'} + 1/p = 1$ (see also \autoref{rmk:cc}).
\begin{cor}\label{corr:s2}
	Let $A$ be a quasi Hille--Yosida operator. If $LT_{A_0}$ is compact on $(0, \infty)$, then $S_2$ is norm continuous and compact on $(0,\infty)$. Particularly, in this case, $T_{A_0}$ is eventually norm continuous (resp. eventually compact) if and only if $T_{(A+L)_0}$ is eventually norm continuous (resp. eventually compact).
\end{cor}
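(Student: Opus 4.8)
The plan is to reduce the whole statement to the single claim that \emph{$S_2(t)$ is compact for every $t>0$}, after which norm continuity is automatic: by \autoref{cor:DP} \eqref{DPcc}, compactness of $S_2(t)$ on $(0,\infty)$ forces $S_2$ to be norm continuous on $(0,\infty)$. To obtain compactness of $S_2=S_A\diamond(LS_1)$ I would apply \autoref{lem:CP} (b), whose hypotheses require the coefficient $V:=LS_1$ to be compact \emph{and} norm continuous on $(0,\infty)$. Compactness of $LS_1$ is free: since $LT_{A_0}$ is compact, the general form of \autoref{lem:CP} (a) with $V=LT_{A_0}$ shows $LS_1=L(S_A\diamond LT_{A_0})$ is compact on $(0,\infty)$. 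Thus the entire difficulty is concentrated in one assertion: \emph{$LS_1$ is norm continuous on $(0,\infty)$}. This does not follow from \autoref{lem:NC}, since there one would need $LT_{A_0}$ itself to be norm continuous, which mere compactness does not provide.

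For the norm continuity of $LS_1$ I would exploit the structure of a $p$-quasi Hille--Yosida operator (see \autoref{def:pHY} and \autoref{rmk:cc}): $S_A$ is of bounded semi-$p$-variation, so for every $x^*\in(X_0)^*$ and $z\in X$ the scalar function $\sigma\mapsto\langle x^*,S_A(\sigma)z\rangle$ lies in $W^{1,p'}_{\mathrm{loc}}$, with an $L^{p'}$ derivative bounded in norm by $C\|x^*\|\,\|z\|$ uniformly. Writing, for $x\in X_0$, the Stieltjes convolution of \autoref{rmk:cc} and substituting $\tau=t-\sigma$, one arrives (the diagonal representation being legitimate because $\tau\mapsto LT_{A_0}(\tau)x$ is continuous) at
\[
\langle x^*,S_1(t)x\rangle=\int_0^t \frac{d}{d\sigma}\langle x^*,S_A(\sigma)\,LT_{A_0}(\tau)x\rangle\Big|_{\sigma=t-\tau}\,d\tau .
\]
The increment $\langle x^*,(S_1(t)-S_1(t'))x\rangle$ then splits into a \emph{boundary} piece over $[t',t]$ and a \emph{main} piece in which the density is translated by $t-t'$. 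The boundary piece, together with the contribution of $\tau$ near $0$, is controlled uniformly by the short-time estimate \eqref{equ:pHY} (equivalently by $\|S_A(t)\|\to0$), and is disposed of by an $\varepsilon/3$ argument. The main piece is governed by the $L^{p'}$-continuity of translations applied to the densities.

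The obstacle, and the technical heart of the proof, is to make this translation continuity uniform over the unit balls of $X_0$ and $(X_0)^*$; this is exactly where the compactness of $LT_{A_0}$ enters. The decisive device is the factorization $LT_{A_0}(\tau)=\big(LT_{A_0}(\varepsilon)\big)T_{A_0}(\tau-\varepsilon)$ for $\tau\ge\varepsilon$, which exhibits $LT_{A_0}(\tau)$ as a \emph{fixed} compact operator composed with a locally bounded family; hence $\{LT_{A_0}(\tau)x:\tau\in[\varepsilon,a],\ \|x\|\le1\}$ is precompact in $X$. On a precompact set of test vectors the bound $C\|x^*\|\,\|z\|$ upgrades the pointwise $L^{p'}$-translation continuity of the densities to a uniform one, and I expect the remaining uniformity over $\|x^*\|\le1$ to come from the equicontinuity furnished by bounded $p'$-variation (the densities depend continuously on $z$, and $\sigma\mapsto S_A(\sigma)z$ has precompact range). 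Carrying this out is the delicate part and follows \cite{DLM08} (for $p>1$, so that $p'<\infty$; when $p=1$, $A$ is a Hille--Yosida operator by \autoref{lem:pHY} (a) and the conclusion is classical). Granting it, $LS_1$ is norm continuous, \autoref{lem:CP} (b) gives $S_2$ compact, and \autoref{cor:DP} \eqref{DPcc} gives $S_2$ norm continuous on $(0,\infty)$.

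For the ``in particular'' equivalences I would combine $T_{(A+L)_0}=T_{A_0}+S_1+R_2$ (from \eqref{equ:sr4} with $k=2$) with \autoref{corr:sr}, which propagates the norm continuity and compactness of $S_2$ to $R_2=\sum_{n\ge2}S_n$. If $T_{A_0}$ is eventually norm continuous on $(\alpha,\infty)$, then so is $LT_{A_0}$, whence $S_1=S_A\diamond LT_{A_0}$ is eventually norm continuous by \autoref{lem:NC} (c), and therefore $T_{(A+L)_0}$ is too; the compact case is identical using \autoref{lem:CP} (c). The converse follows by symmetry: $A+L$ is again a quasi Hille--Yosida operator, and $LT_{(A+L)_0}=\sum_n LS_n$ is compact on $(0,\infty)$ (each $LS_n=L(S_A\diamond LS_{n-1})$ is compact by \autoref{lem:CP} (a), and the series converges uniformly on compact intervals), so applying the result to $A+L$ with the perturbation $-L$ returns the regularity to $T_{A_0}$.
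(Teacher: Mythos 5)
Your proposal is correct and is essentially the paper's own argument: the paper likewise reduces the corollary to the compactness (via \autoref{lem:CP} (a)) and norm continuity of $LS_A\diamond LT_{A_0}$, feeds this into \autoref{lem:CP} (b) to obtain that $S_2$ is compact and norm continuous on $(0,\infty)$, and obtains the two-sided ``in particular'' statement by showing $LT_{(A+L)_0}$ is compact on $(0,\infty)$ (from the fixed equation of \autoref{thm:mrper}) and then perturbing $A+L$ back by $-L$ via \autoref{thm:perreg}. The only difference is that for the one genuinely hard step --- the norm continuity of $LS_1$, which, as you correctly observe, does not follow from \autoref{lem:NC} since compactness of $LT_{A_0}$ does not give its norm continuity --- the paper simply cites \cite[Proposition 4.8]{DLM08}, i.e., exactly the result whose semi-$p$-variation/translation proof you sketch and to which you defer the delicate uniformity details.
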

\begin{proof}
	Since $LT_{A_0}$ is compact on $(0, \infty)$, we have $LS_A \diamond LT_{A_0}$ is norm continuous (\cite[Proposition 4.8]{DLM08}) and compact (\autoref{lem:CP} (a)) on $(0, \infty)$. Thus, $S_2$ is norm continuous and compact on $(0, \infty)$ (\autoref{lem:CP} (b)). Next we show that if $LT_{A_0}$ is compact on $(0, \infty)$, so is $LT_{(A+L)_0}$. Since by \autoref{thm:mrper}, we have
	\[
	LT_{(A+L)_0} = LT_{A_0} + LS_A \diamond LT_{(A+L)_0},
	\]
	and since by \autoref{lem:CP} (a), $LS_A \diamond LT_{(A+L)_0}$ is compact on $(0, \infty)$, it yields the compactness of $LT_{(A+L)_0}$. Then, the last statement follows from \autoref{thm:perreg} and the previous argument.
\end{proof}
\noindent\emph{Problem}: is it true that if $ LS_1 $ is compact, then $ S_3 $ is compact and norm continuous?

The above results don't show that whether $T_{(A+L)_0}$ is immediately differentiable if $T_{A_0}$ is immediately differentiable. In fact, though $A$ is a generator of a $C_0$ semigroup, the result may not hold \cite{Ren95}. We need more detailed characterization of differentiability. Set
\[
D_{\beta, c} \triangleq \{\lambda \in \mathbb{C}: \mathrm{Re}\lambda \geq c - \beta \log|\mathrm{Im\lambda}|\},
\]
where $\beta >0,~ c \in \mathbb{R}$. The following deep result about the characterization of differentiability is due to Pazy and Iley, see \cite[Theorem 4.1, Theorem 4.2]{Ile07}.
\begin{thm}[Pazy-Iley]\label{thm:PI}
	Let $B$ be the generator of a $C_0$ semigroup $T_{B}$. The following are equivalent.
	\begin{enumerate}[(a)]
		\item For every $C \in \mathcal{L}(X)$, $T_{B+C}$, generated by $B+C$, is eventually strongly differentiable (resp. immediately strongly differentiable).
		\item There is some (resp. all) $\beta > 0$ , and a constant $k$, such that $D_{\beta, c} \subset \rho(B)$, $\|R(\lambda, B)\| \leq k, ~\forall \lambda \in D_{\beta, c}$, for some $c \in \mathbb{R}$.
		\item There is some (resp. all) $\beta' > 0$ , such that $D_{\beta', c'} \subset \rho(B)$,  $\|R(\lambda, B)\| \rightarrow 0$, as $\lambda \rightarrow \infty$, $\forall \lambda \in D_{\beta', c'}$, for some $c' \in \mathbb{R}$.
	\end{enumerate}
\end{thm}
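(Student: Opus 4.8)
The three conditions split naturally: the equivalence (b) $\Leftrightarrow$ (c) is a matter of complex analysis for the resolvent, while the real content is the equivalence of these resolvent bounds with the perturbation-stable differentiability in (a). My plan is to route everything through Pazy's classical criterion, which asserts that a $C_0$ semigroup with generator $A$ is eventually (resp.\ immediately) differentiable if and only if for some (resp.\ every) $\beta > 0$ there are constants $c, k'$ with $D_{\beta, c} \subset \rho(A)$ and $\|R(\lambda, A)\|$ growing at most polynomially in $|\mathrm{Im}\,\lambda|$ on $D_{\beta, c}$. The decisive point of the present theorem is that demanding differentiability after \emph{every} bounded perturbation upgrades the admissible polynomial growth to genuine boundedness, hence (b), and in fact to decay, hence (c).

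For the sufficiency direction (c) $\Rightarrow$ (a), I would fix $C \in \mathcal{L}(X)$ and use the perturbation identity $R(\lambda, B+C) = R(\lambda, B)\,(I - C R(\lambda, B))^{-1}$. Condition (c) gives $\|R(\lambda, B)\| \to 0$ as $\lambda \to \infty$ in $D_{\beta', c'}$, so $\|C R(\lambda, B)\| < 1/2$ once $|\lambda|$ is large there; then the Neumann series converges, $\lambda \in \rho(B+C)$, and $\|R(\lambda, B+C)\| \le 2\|R(\lambda, B)\| \to 0$. The complementary part of the region has $|\mathrm{Im}\,\lambda|$ bounded, so by enlarging $c'$ one forces $\mathrm{Re}\,\lambda$ past the growth bound of $T_{B+C}$, where the resolvent exists and decays automatically. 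Thus $B+C$ inherits a logarithmic region in its resolvent set with decaying (in particular polynomially bounded) resolvent, and Pazy's criterion yields eventual (resp.\ immediate) differentiability; the quantifier ``some $\beta'$'' versus ``every $\beta'$'' tracks ``eventual'' versus ``immediate'' throughout, and (b) $\Rightarrow$ (a) follows through (b) $\Rightarrow$ (c).

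The equivalence (b) $\Leftrightarrow$ (c) is the routine analytic step. For (c) $\Rightarrow$ (b), decay together with continuity of $R(\cdot, B)$ on the region yields a uniform bound after shrinking $\beta'$ slightly. For (b) $\Rightarrow$ (c) I would exploit that $\|R(\lambda, B)\| \le M/(\mathrm{Re}\,\lambda - \omega) \to 0$ in the right half-plane, and combine this with the uniform bound $k$ on $D_{\beta, c}$ via the resolvent power series $R(\lambda, B) = \sum_n (\lambda_0 - \lambda)^n R(\lambda_0, B)^{n+1}$ anchored at points $\lambda_0$ of large real part, together with a normal-families argument for the analytic map $\lambda \mapsto R(\lambda, B)$, to propagate the decay into a slightly narrower logarithmic region.

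The genuinely hard direction is the necessity (a) $\Rightarrow$ (b), which is Iley's contribution. The plan is contrapositive: assuming the resolvent is unbounded on every logarithmic region, I would extract a sequence $\lambda_n = \sigma_n + i\tau_n$ with $|\tau_n| \to \infty$, eventually lying in the prescribed region, along which $\|R(\lambda_n, B)\|$ grows faster than any power of $|\tau_n|$. Using near-maximizing vectors $x_n$ and functionals $y_n^\ast$ for $R(\lambda_n, B)$, one assembles a \emph{bounded} perturbation $C$ (a carefully weighted series of rank-one operators $x_n \otimes y_n^\ast$) so that the perturbation series for $R(\lambda_n, B+C)$ forces super-polynomial growth of $\|R(\lambda_n, B+C)\|$ in $|\tau_n|$; by Pazy's criterion this denies eventual differentiability of $T_{B+C}$, contradicting (a). The main obstacle will be precisely this construction: one must keep $C$ bounded while simultaneously guaranteeing that the chosen frequencies $\lambda_n$ remain in $\rho(B+C)$ and that the perturbed resolvent blows up there super-polynomially. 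Reconciling these competing demands---boundedness of $C$ against uncontrolled blow-up of $R(\lambda_n, B+C)$---is the delicate quantitative heart of the matter, and for the precise weights and vector choices I would follow Iley \cite{Ile07}.
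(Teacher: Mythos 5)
You should know at the outset that the paper does not prove \autoref{thm:PI} at all: it is stated as a quoted result, cited to \cite{Ile07} (Theorems 4.1 and 4.2 there), with only a remark sorting out attribution (the introduction credits Pazy \cite{Paz68} with the sufficiency of the resolvent condition and Iley with its necessity; the remark following the theorem appears to swap this, but that is beside the point). So there is no internal proof to match, and your proposal has to be judged against the cited literature; on that standard it is a partial sketch rather than a proof.

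Your block (c) $\Rightarrow$ (a) is correct: the Neumann series gives $\lambda\in\rho(B+C)$ and $\|R(\lambda,B+C)\|\leq 2\|R(\lambda,B)\|$ on a region $D_{\beta',c''}$ with $c''$ enlarged so that every point of the region lies where the decay has kicked in, and \autoref{thm:pz} then applies, with the some/all quantifier on $\beta'$ tracking eventual/immediate; this is the standard argument, and it is the same pattern the paper itself uses for MR operators via \eqref{equ:gj1}. The other two blocks have genuine gaps. For (b) $\Rightarrow$ (c), the mechanism you describe cannot work: the series $R(\lambda,B)=\sum_n(\lambda_0-\lambda)^nR(\lambda_0,B)^{n+1}$ anchored at $\lambda_0$ with $\mathrm{Re}\,\lambda_0$ large converges only on a disk of radius $1/\|R(\lambda_0,B)\|\gtrsim(\mathrm{Re}\,\lambda_0-\omega)/M$, which for $M>1$ never reaches the logarithmic region, where $\mathrm{Re}\,\lambda\approx c-\beta\log|\mathrm{Im}\,\lambda|\to-\infty$; and a normal-families argument yields only locally uniform subsequential limits, never the uniform decay along an unbounded region that (c) asserts. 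A correct argument is a two-constants (Phragm\'en--Lindel\"of) interpolation: for $\lambda$ in a narrower region $D_{\beta'',c''}$ with $\beta''<\beta/3$, the disk centered at $\lambda$ of radius comparable to $(\beta-\beta'')\log|\mathrm{Im}\,\lambda|$ lies (after harmless adjustments of constants) in $D_{\beta,c}$, where $\|R\|\leq k$, while a boundary arc of fixed angular measure lies in a half-plane where $\|R(\mu,B)\|\leq M/(\mathrm{Re}\,\mu-\omega)=O(1/\log|\mathrm{Im}\,\lambda|)$; harmonic-measure interpolation between the two bounds then forces $\|R(\lambda,B)\|\to 0$ at the center, uniformly. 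Finally, (a) $\Rightarrow$ (b) is the actual content of the theorem beyond Pazy, and you defer it wholesale to \cite{Ile07}. Your outline is the right starting point: if $\|R(\lambda,B)\|$ is huge, pick a unit vector $x$ with $y=R(\lambda,B)x$ of huge norm and a norming functional $\phi$ for $y$; then $C=\phi(\cdot)x/\|y\|$ has norm at most $1/\|y\|$ and $(B+C)y=\lambda y$, so a tiny rank-one perturbation plants an eigenvalue at $\lambda$. But the entire difficulty is assembling infinitely many such operators into a single bounded $C$ that simultaneously plants eigenvalues (or near-spectrum) inside every logarithmic region despite their mutual interference, and that quantitative construction is exactly what you leave to the citation. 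Since the paper leaves the whole theorem to the citation, this is defensible as exposition, but as a blind proof your proposal only establishes the Pazy direction.
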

\begin{rmk}
	That (a) implies (c) was proved by Pazy \cite{Paz68}. Other's equivalence was proved by Iley \cite{Ile07}. See more results on characterization of differentiability in \cite[Section 3]{BK10}.
\end{rmk}
Pazy \cite{Paz68} characterized the generators of eventually and immediately strongly differentiable semigroups as follows.
\begin{thm}[Pazy criterion of differentiability]\label{thm:pz}
	Let $B$ be the generator of the $C_0$ semigroup $T$, $\|T(t)\| \leq M e^{\omega t}$. Then, $T$ is eventually (resp. immediately) strongly differentiable if and only if there is some (resp. all) $\beta > 0$, $c \in \mathbb{R}$, $m \in \mathbb{N}$, $k > 0$, such that $D_{\beta, c} \subset \rho(B)$ and
	\[
	\|R(\lambda, B)\| \leq |\mathrm{Im} \lambda|^m,
	\]
	for all $\lambda \in D_{\beta, c}$, $\mathrm{Re} \lambda \leq \omega$. In this case, $T$ is strongly differentiable on $((m + 2)/\beta, \infty)$.
\end{thm}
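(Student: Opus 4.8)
The plan is to prove both implications through the standard correspondence between the semigroup and its resolvent, namely the Laplace transform $R(\lambda, B) = \int_0^\infty e^{-\lambda t}T(t)\,\mathrm{d}t$ (valid for $\mathrm{Re}\,\lambda > \omega$) together with the complex inversion formula. The decisive feature is the geometry of $D_{\beta, c}$: on its boundary $\mathrm{Re}\,\lambda = c - \beta\log|\mathrm{Im}\,\lambda|$ one has $|e^{\lambda t}| = e^{ct}|\mathrm{Im}\,\lambda|^{-\beta t}$, so large imaginary parts are polynomially damped with exponent $\beta t$, and differentiability will hold exactly once this damping overtakes the polynomial growth $|\mathrm{Im}\,\lambda|^m$ of the resolvent together with the extra factor $\lambda$ produced by the time derivative.

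For sufficiency, assume $D_{\beta, c} \subset \rho(B)$ and $\|R(\lambda, B)\| \le |\mathrm{Im}\,\lambda|^m$ on $D_{\beta, c} \cap \{\mathrm{Re}\,\lambda \le \omega\}$. Let $\Gamma$ be the boundary of $D_{\beta, c}$, oriented with increasing $\mathrm{Im}\,\lambda$ and closed off on the right. Starting from the inversion formula along a vertical line $\mathrm{Re}\,\lambda = \omega + 1$, I would deform the contour leftward onto $\Gamma$ by Cauchy's theorem, using $D_{\beta, c} \subset \rho(B)$ and the resolvent bound to discard the horizontal connecting segments, obtaining $T(t)x = \frac{1}{2\pi i}\int_\Gamma e^{\lambda t}R(\lambda, B)x\,\mathrm{d}\lambda$ for $t > 0$. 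Differentiating formally under the integral gives the candidate $\frac{1}{2\pi i}\int_\Gamma \lambda e^{\lambda t}R(\lambda, B)x\,\mathrm{d}\lambda$; on $\Gamma$ one has $|\lambda| \sim |\mathrm{Im}\,\lambda|$, so the integrand is $O(|\mathrm{Im}\,\lambda|^{m+1-\beta t})$, which is absolutely integrable exactly when $\beta t - (m+1) > 1$, that is $t > (m+2)/\beta$. Dominated convergence then legitimizes the differentiation and yields strong differentiability on $((m+2)/\beta, \infty)$; admitting all $\beta > 0$ drives $(m+2)/\beta$ to $0$ and gives the immediate case.

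For necessity, I would use that differentiability on $(t_0,\infty)$ forces $T(t)x \in D(B)$ with $BT(t) \in \mathcal{L}(X)$ and $\frac{\mathrm{d}}{\mathrm{d}t}T(t) = BT(t)$ there (closed graph theorem), and that the semigroup law upgrades this to $B^n T(t) = (BT(t/n))^n$ for $t > nt_0$, hence $\|B^n T(t)\| \le g(t/n)^n$ with $g(s) := \|BT(s)\|$. Fixing $\lambda_0 = (\omega+1) + i\eta$ with $|\eta|$ large, so that $\mathrm{Re}\,\lambda_0 > \omega$ and every Laplace integral converges absolutely, I would expand the resolvent in its Taylor series $R(\lambda, B) = \sum_{k\ge 0}(\lambda_0 - \lambda)^k R(\lambda_0, B)^{k+1}$, whose coefficients are $R(\lambda_0, B)^{k+1} = \frac{1}{k!}\int_0^\infty t^k e^{-\lambda_0 t}T(t)\,\mathrm{d}t$. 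On a tail $[t_1, \infty)$ with $t_1 > t_0$ the integrand is smooth in $t$, so repeated integration by parts trades the oscillation $e^{-i\eta t}$ for factors $1/\lambda_0 \sim 1/|\eta|$ at the cost of one derivative $BT$ each time, the boundary terms staying bounded precisely because $\mathrm{Re}\,\lambda_0 > \omega$. Balancing the number of integrations by parts against $|\eta|$ sharpens the coefficient bounds enough to enlarge the radius of convergence to order $\beta\log|\eta|$, which is exactly what is needed to continue $R(\cdot, B)$ to $\xi + i\eta$ with $\xi \approx \omega - \beta\log|\eta|$, i.e. into $D_{\beta, c}$, and to read off $\|R(\lambda, B)\| \le |\mathrm{Im}\,\lambda|^m$ with $m$ dictated by the blow-up rate of $g(t)$ as $t \downarrow t_0$.

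The main obstacle is this necessity direction: one must convert the pointwise smoothing bounds $\|B^nT(t)\| \le g(t/n)^n$ into a uniform resolvent estimate over the whole region, and the logarithmic profile $\mathrm{Re}\,\lambda \sim c - \beta\log|\mathrm{Im}\,\lambda|$ emerges only after the delicate optimization of the order of integration by parts against $|\mathrm{Im}\,\lambda|$ and a quantified control of $g$ near $t_0$. By contrast, sufficiency is a mechanical contour-shift-and-estimate argument; the only care required there is justifying the deformation onto $\partial D_{\beta, c}$ and the passage of $\frac{\mathrm{d}}{\mathrm{d}t}$ under the integral, both licensed by the absolute convergence established for $t > (m+2)/\beta$. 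The dichotomy ``some $\beta$ / all $\beta$'' then matches ``eventually / immediately'' through the onset time $(m+2)/\beta$.
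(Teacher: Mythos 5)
First, a remark on the comparison itself: the paper does not prove this theorem at all — it quotes it from Pazy \cite{Paz68} (see also \cite{Ile07}), so your attempt can only be measured against the classical argument. Your outline does follow Pazy's original strategy (contour deformation for sufficiency, Neumann series plus repeated integration by parts for necessity), but as it stands it has two genuine gaps.

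In the sufficiency direction, the step ``using $D_{\beta,c}\subset\rho(B)$ and the resolvent bound to discard the horizontal connecting segments'' fails. On the horizontal segment at height $\pm N$ joining the line $\mathrm{Re}\,\lambda=\omega+1$ to $\partial D_{\beta,c}$, the only available bound on the integrand is $k N^m e^{(\omega+1)t}\|x\|$, while the segment has length of order $\beta\log N$; the contribution is $O(N^m\log N)$, which diverges rather than vanishes. The deformation must instead be performed for $x$ in a dense subspace of smooth vectors, say $x\in D(B^{m+2})$, writing $R(\lambda,B)x=\sum_{i=1}^{m+2}\lambda^{-i}B^{i-1}x+\lambda^{-(m+2)}R(\lambda,B)B^{m+2}x$ so that the integrand gains the decay $O(N^{-2})$; only afterwards does one pass to general $x\in X$, using that the differentiated integral $\int_\Gamma \lambda e^{\lambda t}R(\lambda,B)\,\mathrm{d}\lambda$ converges in operator norm for $t>(m+2)/\beta$ and that $B$ is closed, whence $T(t)X\subset D(B)$ and differentiability follows. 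This density-plus-closedness step is absent from your write-up, and without it the representation $T(t)x=\frac{1}{2\pi i}\int_\Gamma e^{\lambda t}R(\lambda,B)x\,\mathrm{d}\lambda$ is never legitimately established.

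In the necessity direction — which you rightly identify as the hard part — what you give is a plan, not a proof. The entire content of Pazy's theorem lies in the quantitative steps you leave unperformed: after $n$ integrations by parts on $[t_1,\infty)$ the integrand of $R(\lambda_0,B)^{k+1}=\frac{1}{k!}\int_0^\infty t^k e^{-\lambda_0 t}T(t)\,\mathrm{d}t$ mixes derivatives of $t^k$ with derivatives of $T$ (a binomial sum, not ``one factor $1/\lambda_0$ per derivative of $BT$''), one must control $\|(BT(t/n))^n\|\le \|BT(t_1/n)\|^n M^n e^{\omega(t-t_1)}$ uniformly, choose $n$ as a function of $|\eta|$, and verify that the Neumann series around $\omega'+i\eta$ has radius of convergence at least $1+\beta\log|\eta|$ while its sum obeys a bound $k|\mathrm{Im}\,\lambda|^m$; none of this is carried out, and it is exactly where the logarithmic region and the exponent $m$ come from. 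Finally, your closing claim that ``admitting all $\beta>0$ drives $(m+2)/\beta$ to $0$'' tacitly assumes $m$ can be chosen independently of $\beta$ (as in Pazy's version, where $m=1$ works for every $\beta$); with $m=m_\beta$ allowed to grow in $\beta$, the onset times $(m_\beta+2)/\beta$ need not tend to $0$, so the immediate case requires this uniformity to be stated and proved.
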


Now the question arises: For what an MR operator $A$, is $T_{(A+L)_0}$ eventually differentiable for all $L \in \mathcal{L}(X_0, X)$? By \autoref{thm:PI}, we know $A_0$ at least satisfies the condition of \autoref{thm:PI} (b) or (c) in $X_0$ (i.e., consider the case $L \in \mathcal{L}(X_0)$).

When $\|R(\lambda, A)\|_{\mathcal{L}(X)}\|L\|_{\mathcal{L}(X_0,X)} < 1$, by \autoref{partop}, we have
\[
R(\lambda, (A+L)_0) = R(\lambda, A+L)|_{X_0} = (I - R(\lambda, A)L)^{-1}R(\lambda, A)|_{X_0} = (I - R(\lambda, A)L)^{-1}R(\lambda, A_0),
\]
and so
\begin{equation}\label{equ:gj1}
	\|R(\lambda, (A+L)_0)\|_{\mathcal{L}(X_0)} \leq  \frac{\|R(\lambda, A_0)\|_{\mathcal{L}(X_0)}}{1 - \|R(\lambda, A)\|_{\mathcal{L}(X)}\|L\|_{\mathcal{L}(X_0,X)}}.
\end{equation}

We will use this estimate to characterize the differentiability of $T_{(A+L)_0}$.
\begin{thm}
	Let $A$ be an MR operator satisfying $\limsup\limits_{\lambda \rightarrow \infty} \|\lambda R(\lambda, A)\| < \infty$ (e.g., $A$ is a Hille--Yosida operator). Then, $T_{(A+L)_0}$ is eventually strongly differentiable for all $L \in \mathcal{L}(X_0, X)$ if and only if $A_0$ satisfies the Pazy-Iley condition, i.e., the condition of \autoref{thm:PI} in (b) or (c).
\end{thm}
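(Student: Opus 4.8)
The plan is to reduce both implications to the Pazy--Iley characterization (\autoref{thm:PI}) together with the resolvent estimate \eqref{equ:gj1}; the entire content is to transfer the logarithmic resolvent condition between $A_0$ and $(A+L)_0$. Throughout I will use that $A+L$ is again an MR operator (\autoref{thm:mrper}), so that $(A+L)_0$ generates the $C_0$ semigroup $T_{(A+L)_0}$, and that $\rho(A)=\rho(A_0)$ with $R(\lambda,A)|_{X_0}=R(\lambda,A_0)$.

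For the ``only if'' direction I would first observe that for $L\in\mathcal L(X_0)\subset\mathcal L(X_0,X)$ one has $(A+L)_0=A_0+L$: indeed $D((A+L)_0)=\{x\in D(A):Ax+Lx\in X_0\}=\{x\in D(A):Ax\in X_0\}=D(A_0)$, since $Lx\in X_0$, and on this domain $(A+L)_0x=A_0x+Lx$. Hence the hypothesis that $T_{(A+L)_0}$ be eventually strongly differentiable for every $L\in\mathcal L(X_0,X)$ forces $T_{A_0+L}$ to be eventually strongly differentiable for every $L\in\mathcal L(X_0)$, which is exactly condition (a) of \autoref{thm:PI} for the generator $A_0$ on $X_0$. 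The equivalence (a)$\Leftrightarrow$(b)$\Leftrightarrow$(c) then yields the Pazy--Iley condition for $A_0$. This direction uses nothing beyond \autoref{thm:PI} and does not invoke the growth hypothesis on $\lambda R(\lambda,A)$.

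For the ``if'' direction, I assume $A_0$ satisfies the Pazy--Iley condition and use its form (c): there are $\beta',c'$ with $D_{\beta',c'}\subset\rho(A_0)=\rho(A)$ and $\|R(\lambda,A_0)\|\to0$ as $\lambda\to\infty$ in $D_{\beta',c'}$. The key step is to upgrade this to a decay estimate for $\|R(\lambda,A)\|$ on all of $X$. For this I would use the resolvent identity with a real comparison point $\mu$, exploiting that $R(\mu,A)$ maps $X$ into $D(A)\subset X_0$, so $R(\lambda,A)R(\mu,A)=R(\lambda,A_0)R(\mu,A)$ and
\[
R(\lambda,A)=R(\mu,A)+(\mu-\lambda)R(\lambda,A_0)R(\mu,A).
\]
Taking norms gives $\|R(\lambda,A)\|\le\|R(\mu,A)\|\,(1+|\mu-\lambda|\,\|R(\lambda,A_0)\|)$. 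I then choose $\mu=|\lambda|$, which is real and large for $\lambda$ large in $D_{\beta',c'}$; a direct computation gives $|\mu-\lambda|\le 2|\lambda|=2\mu$, while the growth hypothesis $\limsup_{\lambda\to\infty}\|\lambda R(\lambda,A)\|<\infty$ (along the real axis) provides $M$ with $\|R(\mu,A)\|\le M/\mu$ for large real $\mu$. Combining these yields $\|R(\lambda,A)\|\le M/|\lambda|+2M\,\|R(\lambda,A_0)\|\to0$ as $\lambda\to\infty$ in $D_{\beta',c'}$.

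Once $\|R(\lambda,A)\|\to0$ on $D_{\beta',c'}$, I would pass to a subregion $D_{\beta',c''}$ (with $c''$ large) on which $\|R(\lambda,A)\|\,\|L\|<1$; such a subregion exists because enlarging $c''$ confines $D_{\beta',c''}$ to arbitrarily large $|\lambda|$. There \eqref{equ:gj1} applies, giving $D_{\beta',c''}\subset\rho((A+L)_0)$ and $\|R(\lambda,(A+L)_0)\|\le\|R(\lambda,A_0)\|/(1-\|R(\lambda,A)\|\,\|L\|)\to0$. Thus $(A+L)_0$ itself satisfies condition (c) of \autoref{thm:PI}, and the equivalence (c)$\Rightarrow$(a) applied to $(A+L)_0$ with the zero perturbation shows $T_{(A+L)_0}$ is eventually strongly differentiable. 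I expect the main obstacle to be precisely this resolvent upgrade: controlling $\|R(\lambda,A)\|$, a norm on all of $X$, from the Pazy--Iley bound on $\|R(\lambda,A_0)\|$, a norm on $X_0$ only, over the logarithmic region, where $|\mu-\lambda|$ is forced to be comparable to $\mu$. It is exactly here that the hypothesis $\limsup\|\lambda R(\lambda,A)\|<\infty$ is indispensable, since it keeps the product $|\mu-\lambda|\,\|R(\mu,A)\|$ bounded.
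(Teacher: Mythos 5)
Your proof is correct and takes essentially the same route as the paper: the same resolvent identity with a real comparison point $\mu$ comparable to $|\lambda|$, the growth hypothesis $\limsup\|\lambda R(\lambda,A)\|<\infty$ to control $\|R(\mu,A)\|$, restriction to a subregion $D_{\beta',c''}$ of large $|\lambda|$, and the estimate \eqref{equ:gj1} to transfer the bound to $R(\lambda,(A+L)_0)$. The only cosmetic differences are that the paper takes $\mu=\omega+|\lambda|$ with a fixed large $\omega$ and concludes via the Pazy criterion (\autoref{thm:pz}) instead of your appeal to \autoref{thm:PI} (c)$\Rightarrow$(a) with the zero perturbation, while your explicit verification that $(A+L)_0=A_0+L$ for $L\in\mathcal{L}(X_0)$ simply spells out the necessity argument the paper records in the remark preceding the theorem.
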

\begin{proof}
	Only need to consider the sufficiency. Assume $L \neq 0$ and $A_0$ satisfies the condition of \autoref{thm:PI} (b) or (c). Let $\varepsilon = \frac{\|L\|^{-1}}{4}$, $\lambda \in D_{\beta, c}$, $\mu = \omega + |\lambda|$, where $\omega > 0$ is sufficiently large such that (by the assumption $\limsup\limits_{\lambda \rightarrow \infty} \|\lambda R(\lambda, A)\| < \infty$)
	\[
	\|R(\mu, A)\| \leq \frac{M}{\mu} \leq \frac{M}{\omega} < \varepsilon.
	\]
	Then,
	\begin{align*}
		\|R(\lambda, A)\| & = \|(\mu - \lambda)R(\lambda, A_0)R(\mu, A) + R(\mu, A)\| \\
		& \leq M \frac{|\mu - \lambda|}{\mu}\|R(\lambda, A_0)\| + \varepsilon \\
		& \leq M \frac{2|\lambda|+\omega}{|\lambda|+\omega}\|R(\lambda, A_0)\| + \varepsilon.
	\end{align*}
	Let $K$ be sufficiently large, $D_{\beta', c'} \subset D_{\beta, c}$, such that
	\[
	\|R(\lambda, A_0)\| \leq \frac{\varepsilon}{2M},
	\]
	for $\lambda \in D_{\beta', c'}$, $|\lambda| > K$. Hence by \eqref{equ:gj1}, we get
	\[
	\| R(\lambda, (A+L)_0) \|_{\mathcal{L}(X_0)} \leq 2\|R(\lambda, A_0) \|_{\mathcal{L}(X_0)}.
	\]
	The result now follows from \autoref{thm:pz}.
\end{proof}

Next we consider another class of operators. A $C_0$ semigroup $T$ with the generator $B$ is called the \emph{Crandall--Pazy class} if $T$ is strongly immediately differentiable and
\begin{equation}\label{equ:cp1}
	\|BT(t)\| = O(t^{-\alpha}), ~~\text{as}~ t \rightarrow 0^+,
\end{equation}
for some $\alpha \geq 1$. We also call the generator $B$ a \emph{Crandall--Pazy operator}. It was shown in \cite{CP69} that this is equivalent to
\begin{equation}\label{equ:cp2}
	\|R(iy,B)\| = O(|y|^{-\beta}), ~\text{as}~ |y| \rightarrow \infty,
\end{equation}
for some $1 \geq \beta > 0$. Indeed, \eqref{equ:cp1} implies \eqref{equ:cp2} with $\beta = \frac{1}{\alpha}$, and \eqref{equ:cp2} implies \eqref{equ:cp1} for any $\alpha > \frac{1}{\beta}$. See \cite[Theorem 5.3]{Ile07} for a new characterization of the Crandall--Pazy class.
\begin{thm}
	Let $A$ be an MR operator satisfying $\limsup\limits_{\stackrel{\mu \rightarrow \infty}{\mu > \widehat{\omega}}} \|(\mu - \widehat{\omega})^{\frac{1}{p}} R(\mu, A)\| < \infty$, where $p \geq 1, ~\widehat{\omega} \in \mathbb{R}$ (e.g., $A$ is a $p$-quasi Hille--Yosida operator, see \autoref{lem:pHY} in (c)). In addition, $A_0$ is a Crandall--Pazy operator satisfying
	\[
	\|R(iy, A_0)\|_{\mathcal{L}(X_0)} = O(|y|^{-\beta}),~ |y| \rightarrow \infty, ~ \beta > 1- \frac{1}{p}.
	\]
	Then, $(A+L)_0$ is still a Crandall--Pazy operator for any $L \in \mathcal{L}(X_0, X)$.
\end{thm}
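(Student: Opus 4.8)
The plan is to reduce everything to a resolvent decay estimate along the imaginary axis and then invoke the Crandall--Pazy characterization \eqref{equ:cp2}. First note that by \autoref{thm:mrper} the operator $A+L$ is again an MR operator, so $(A+L)_0$ generates a $C_0$ semigroup and it suffices to prove that $\|R(iy,(A+L)_0)\|_{\mathcal{L}(X_0)} = O(|y|^{-\beta})$ as $|y| \to \infty$. The estimate \eqref{equ:gj1} already reduces this to the assumed bound on $\|R(iy,A_0)\|$ together with the requirement $\|R(iy,A)\|_{\mathcal{L}(X)}\|L\| < 1$ for $|y|$ large; establishing the latter is the real content.

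To bound the \emph{full} resolvent $R(iy,A)$ I would use the resolvent identity combined with the fact that $R(\mu,A)$ maps $X$ into $D(A) \subset X_0$, on which $R(iy,A)$ agrees with $R(iy,A_0)$ (recall $\rho(A)=\rho(A_0)$ by \autoref{partop}). This gives, for real $\mu > \widehat{\omega}$,
\[
R(iy,A) = R(\mu,A) + (\mu - iy)\,R(iy,A_0)\,R(\mu,A).
\]
The key is the scale-matched choice $\mu = \widehat{\omega} + |y|$: then the hypothesis $\limsup (\mu-\widehat{\omega})^{1/p}\|R(\mu,A)\| < \infty$ yields $\|R(\mu,A)\| \lesssim |y|^{-1/p}$, while $|\mu - iy| \lesssim |y|$ and $\|R(iy,A_0)\| \lesssim |y|^{-\beta}$, so that
\[
\|R(iy,A)\| \lesssim |y|^{-1/p} + |y|^{\,1-\beta-1/p}.
\]
The assumption $\beta > 1 - \tfrac{1}{p}$ makes both exponents strictly negative, whence $\|R(iy,A)\| \to 0$; in particular $\|R(iy,A)\|\|L\| < 1$ (so $iy \in \rho(A+L)$) once $|y|$ is large, which both legitimizes \eqref{equ:gj1} and forces its denominator to tend to $1$. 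Therefore $\|R(iy,(A+L)_0)\| \leq 2\|R(iy,A_0)\| = O(|y|^{-\beta})$ for large $|y|$, and \eqref{equ:cp2} identifies $(A+L)_0$ as a Crandall--Pazy operator.

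The main obstacle, and essentially the only nontrivial point, is producing the off-the-real-axis decay of $R(iy,A)$ from two hypotheses that live on different lines: the $(\mu-\widehat{\omega})^{-1/p}$ bound on the real axis for $A$, and the $|y|^{-\beta}$ bound on the imaginary axis for $A_0$. The balancing choice $\mu \sim |y|$ is precisely what couples them through the resolvent identity, and the threshold $\beta > 1 - \tfrac1p$ is exactly what is required to overcome the growth factor $|\mu - iy| \sim |y|$ that this identity introduces. Everything else is bookkeeping, and since the decay rate $\beta$ passes unchanged from $A_0$ to $(A+L)_0$, no loss of regularity occurs.
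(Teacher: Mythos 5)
Your proposal is correct and follows essentially the same route as the paper's proof: the same resolvent identity $R(iy,A) = (\mu-iy)R(iy,A_0)R(\mu,A) + R(\mu,A)$ with the scale-matched choice $\mu = \widehat{\omega}+|y|$, the same resulting decay $\|R(iy,A)\| = O(|y|^{-\alpha})$ with $\alpha = \min\{\beta+1/p-1,\,1/p\} > 0$, and the same conclusion $\|R(iy,(A+L)_0)\| \leq 2\|R(iy,A_0)\|$ via \eqref{equ:gj1} followed by the characterization \eqref{equ:cp2}. The only difference is that you spell out two points the paper leaves implicit (that $iy \in \rho(A)$ because $\rho(A)=\rho(A_0)$, and that $A+L$ is an MR operator so $(A+L)_0$ generates a semigroup), which is a welcome clarification but not a different argument.
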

\begin{proof}
	Assume $L \neq 0$. Let $K_1 > 0$ be sufficiently large such that for all $|y| > K_1$,
	\[
	\|R(iy, A_0)\| \leq \frac{M}{|y|^\beta}, ~~\|R(\mu, A)\| \leq \frac{M}{|\mu - \widehat{\omega}|^{\frac{1}{p}}},
	\]
	where $\mu = \widehat{\omega} + |y|$. Then,
	\begin{align*}
		\|R(iy, A)\| & = \|(\mu - iy)R(iy, A_0)R(\mu, A) + R(\mu, A)\| \notag \\
		& \leq M \frac{2|y| + \widehat{\omega}}{|y|^\beta \cdot |y|^{\frac{1}{p}}} + \frac{M}{|y|^{\frac{1}{p}}} \notag \\
		& \leq \frac{\widetilde{M}}{|y|^\alpha},
	\end{align*}
	where $\alpha = \min\{\beta + 1/p -1, ~1/p\} > 0$. Let $K_2 > K_1$ be sufficiently large such that
	\[
	\|R(iy, A)\| \leq \frac{\|L\|^{-1}}{2},
	\]
	provided $|y| > K_2$. By \eqref{equ:gj1}, we have
	\[
	\| R(iy, (A+L)_0) \| \leq 2\|R(iy, A_0)\|.
	\]
	The proof is complete.
\end{proof}

Finally, we consider the analyticity.
\begin{thm}\label{thm:alm}
	Suppose that $A$ is an MR operator and $T_{A_0}$ is analytic. Then, for any $L \in \mathcal{L}(X_0,X)$, $T_{(A+L)_0}$ is analytic.
\end{thm}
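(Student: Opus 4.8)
The plan is to characterize the analyticity of $T_{(A+L)_0}$ through a sectorial resolvent estimate and to produce that estimate by perturbing the one already available for $A_0$ via \eqref{equ:gj1}. Recall that $T_{A_0}$ is analytic if and only if there are constants $\delta \in (0, \tfrac{\pi}{2}]$, $\omega \in \mathbb{R}$ and $M \geq 1$ such that the sector $\Sigma := \{\lambda \in \mathbb{C} : \lambda \neq \omega, ~ |\arg(\lambda - \omega)| < \tfrac{\pi}{2} + \delta\}$ is contained in $\rho(A_0) = \rho(A)$ and $\|R(\lambda, A_0)\|_{\mathcal{L}(X_0)} \leq M/|\lambda - \omega|$ for all $\lambda \in \Sigma$ (see, e.g., \cite[Theorem 3.7.11]{ABHN11}). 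Since $A+L$ is again an MR operator by \autoref{thm:mrper}, it suffices to exhibit a (possibly smaller) sector with vertex on the real axis on which $(A+L)_0$ admits the analogous bound.

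The crucial step is to upgrade the decay $\|R(\lambda, A)\| \to 0$ as $\lambda \to \infty$ \emph{along the real axis}, recorded in \autoref{lem:MR}(b), to a decay that is uniform on the whole sector $\Sigma$. For this I would use the identity
\[
R(\lambda, A) = R(\mu, A) + (\mu - \lambda) R(\lambda, A_0) R(\mu, A),
\]
which is valid for $\lambda \in \Sigma$ and real $\mu \in \rho(A)$ because $R(\mu, A) X \subset D(A) \subset X_0$ and $R(\lambda, A)|_{X_0} = R(\lambda, A_0)$ (this is the same identity already exploited in the preceding differentiability proofs). Choosing the real point $\mu = \omega + |\lambda - \omega|$ one has $\mu \to \infty$ as $|\lambda| \to \infty$, while $|\mu - \lambda| \leq 2|\lambda - \omega|$ forces $|\mu - \lambda|\,\|R(\lambda, A_0)\| \leq 2M$; hence $\|R(\lambda, A)\| \leq (1 + 2M)\|R(\mu, A)\|$, which tends to $0$ uniformly for $\lambda \in \Sigma$ as $|\lambda| \to \infty$ by \autoref{lem:MR}(b). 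Consequently there is $r_0 > 0$ such that $\|R(\lambda, A)\|\,\|L\| < \tfrac{1}{2}$ whenever $\lambda \in \Sigma$ with $|\lambda - \omega| \geq r_0$; such $\lambda$ then lie in $\rho(A+L) = \rho((A+L)_0)$, and \eqref{equ:gj1} yields $\|R(\lambda, (A+L)_0)\|_{\mathcal{L}(X_0)} \leq 2\|R(\lambda, A_0)\|_{\mathcal{L}(X_0)} \leq 2M/|\lambda - \omega|$.

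It remains to realize this estimate on a genuine sector. Fixing $0 < \delta' < \delta$ and translating the vertex to some $\omega' > \omega$, the shifted sector $\Sigma' := \omega' + \{z \neq 0 : |\arg z| < \tfrac{\pi}{2} + \delta'\}$ can be arranged, for $\omega'$ large, to satisfy $\Sigma' \subset \Sigma$ and $\inf_{\lambda \in \Sigma'}|\lambda - \omega| = (\omega' - \omega)\cos\delta' \geq r_0$ (the distance from $\omega$ to $\Sigma'$ grows linearly in $\omega'$). On $\Sigma'$ both requirements of the previous paragraph hold simultaneously, so $\Sigma' \subset \rho((A+L)_0)$ and $\|(\lambda - \omega')R(\lambda, (A+L)_0)\|_{\mathcal{L}(X_0)}$ is bounded, which is precisely the sectorial estimate characterizing analyticity; hence $T_{(A+L)_0}$ is analytic. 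I expect the main obstacle to be exactly the passage from real-axis resolvent decay to sectorial decay in the second paragraph: the perturbation estimate \eqref{equ:gj1} is only effective where $\|R(\lambda, A)\|$ is small, and \autoref{lem:MR}(b) a priori controls this quantity only on $\mathbb{R}$, so it is the resolvent identity with the $\lambda$-dependent choice $\mu = \omega + |\lambda - \omega|$ that does the real work, the rest being standard sectorial bookkeeping.
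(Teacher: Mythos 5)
Your proof is correct and follows essentially the same route as the paper: both hinge on the resolvent identity $R(\lambda,A)=R(\mu,A)+(\mu-\lambda)R(\lambda,A_0)R(\mu,A)$ with a real point $\mu$ comparable to $|\lambda|$, the real-axis decay of \autoref{lem:MR} (b), and the perturbed estimate \eqref{equ:gj1}. The only difference is where the bound is checked: the paper verifies $\|R(\lambda,(A+L)_0)\|\leq 2\|R(\lambda,A_0)\|$ only along the imaginary axis and invokes the vertical-line characterization of analyticity (\cite[Corollary 3.7.18]{ABHN11}), which lets it skip your final sector-shifting step, whereas you establish the estimate on a full sector and use the sectorial characterization.
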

\begin{proof}
	Assume $L \neq 0$. Since $T_{A_0}$ is analytic and $A$ is an MR operator, by \cite[Corollary 3.7.18]{ABHN11} and \autoref{lem:MR} (b), there is a constant $K > 0$, such that
	\[
	\|R(iy, A_0)\| \leq \frac{M}{|y|}, ~~\|R(\mu, A)\| \leq \varepsilon,
	\]
	provided $|y| > K$, where $\mu = |y|$, $\varepsilon = \frac{\|L\|^{-1}}{2(2M+1)}$. Hence,
	\begin{align*}
		\|R(iy, A)\| & = \|(\mu - iy)R(iy, A_0)R(\mu, A) + R(\mu, A)\| \\
		& \leq \frac{2|y|M}{|y|}\varepsilon + \varepsilon   = \|L\|^{-1}/2.
	\end{align*}
	The result follows from \eqref{equ:gj1} and \cite[Corollary 3.7.18]{ABHN11}.
\end{proof}

Combining with \autoref{lem:almost} (b) and \autoref{thm:alm}, we obtain the following perturbation of almost sectorial operators. See also \cite{DMP10} for more results on the relatively bounded perturbations of almost sectorial operators.
\begin{cor}
	If $A$ is a $p$-almost sectorial operator ($p \geq 1$), so is $A+L$ for any $L \in \mathcal{L}(X_0, X)$.
\end{cor}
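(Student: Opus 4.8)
The plan is to feed the given operator through the chain of results that the paper has already assembled---\autoref{lem:almost}, \autoref{thm:mrper} and \autoref{thm:alm}---and then to reconcile the sector exponent at the end. Let $A$ be almost sectorial in the sense of \autoref{def:almost}, so that $\|R(\lambda,A)\|\le \widehat{M}/|\lambda-\widehat{\omega}|^{p}$ on a sector $\{\lambda\neq\widehat{\omega}:|\arg(\lambda-\widehat{\omega})|<\theta\}$ with $\theta\in(\pi/2,\pi)$. By \autoref{lem:almost} (a), $A_0$ generates an analytic $C_0$ semigroup and $A$ is a $\widehat{p}$-quasi Hille--Yosida operator for every $\widehat{p}>p$; in particular $A$ is an MR operator, since the $L^{p}$ estimate \eqref{equ:pHY} yields the MR estimate \eqref{equ:guji} after an application of H\"older's inequality. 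Thus the hypotheses of \autoref{thm:alm} are met.

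Next I would push the two stable structures through the perturbation simultaneously. Since $A$ is an MR operator with $T_{A_0}$ analytic, \autoref{thm:alm} shows that $T_{(A+L)_0}$ is analytic for every $L\in\mathcal{L}(X_0,X)$; and \autoref{thm:mrper} shows that $A+L$ is again a $\widehat{p}$-quasi Hille--Yosida operator. Feeding both facts into \autoref{lem:almost} (b), which upgrades ``$\widehat{p}$-quasi Hille--Yosida with analytic $(A+L)_0$'' to a sectorial resolvent estimate, I obtain that $A+L$ is a $\tfrac{1}{\widehat{p}}$-almost sectorial operator.

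The one point that needs genuine care---and the only real obstacle---is that this returns the exponent $\widehat{p}>p$ rather than $p$, so it remains to descend from $\tfrac{1}{\widehat{p}}$- to $\tfrac{1}{p}$-almost sectoriality. I would do this by relocating the sector's vertex to the right: given $\|R(\lambda,A+L)\|\le \widehat{M}'/|\lambda-\omega_{1}|^{\widehat{p}}$ on a sector with vertex $\omega_{1}$ and half-angle $\theta_{1}\in(\pi/2,\pi)$, pick $\omega_{2}>\omega_{1}$ and $\theta_{2}\le\theta_{1}$ so that the sector with vertex $\omega_{2}$ and half-angle $\theta_{2}$ lies inside the first. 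On the smaller sector the ratio $|\lambda-\omega_{2}|^{p}/|\lambda-\omega_{1}|^{\widehat{p}}$ is bounded---near $\omega_{2}$ the numerator vanishes while the denominator stays bounded away from $0$ because $\omega_{1}<\omega_{2}$, and as $|\lambda|\to\infty$ it decays like $|\lambda|^{p-\widehat{p}}\to0$ since $\widehat{p}>p$---so $\|R(\lambda,A+L)\|\le \widehat{M}''/|\lambda-\omega_{2}|^{p}$ there, which is precisely the defining bound of \autoref{def:almost} with exponent $p$.

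Finally, I would record the cleaner alternative, which also explains why the exponent bookkeeping is really the heart of the matter: one can bypass \autoref{lem:almost} (b) altogether and estimate $R(\lambda,(A+L)_0)=(I-R(\lambda,A)L)^{-1}R(\lambda,A_0)$ directly as in \eqref{equ:gj1}. Choosing the vertex far enough to the right makes $\|R(\lambda,A)\|\,\|L\|<\tfrac12$ throughout the sector, by the decay of $\|R(\lambda,A)\|$, and then $\|R(\lambda,(A+L)_0)\|\le 2\widehat{M}/|\lambda-\widehat{\omega}|^{p}$ in one step, preserving the exponent $p$ without any descent. This is essentially the same Neumann-series computation that drives the proof of \autoref{thm:alm}.
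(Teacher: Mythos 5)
Your main route---\autoref{lem:almost}~(a) $\to$ \autoref{thm:mrper} $\to$ \autoref{thm:alm} $\to$ \autoref{lem:almost}~(b)---is exactly the paper's proof (the paper states it as a one-line combination of \autoref{lem:almost}~(b) and \autoref{thm:alm}), and you are right that this chain only returns $\tfrac{1}{\widehat{p}}$-almost sectoriality for every $\widehat{p}>p$. The genuine gap is in the step you single out as ``the only real obstacle'': the descent from $\widehat{p}$ to $p$ is invalid, because it takes the misprinted exponent in \autoref{def:almost} at face value. For the paper to be internally consistent, the defining bound must read $\|R(\lambda,A)\|\leq \widehat{M}\,|\lambda-\widehat{\omega}|^{-1/p}$, i.e.\ the decay exponent is $\tfrac1p\leq 1$: this is what \autoref{lem:pHY}~(c) produces for a $p$-quasi Hille--Yosida operator, it is what \autoref{lem:almost}~(b) converts into $\tfrac1p$-almost sectoriality, and it matches the cited definitions of Periago--Straub and Ducrot--Magal--Prevost. (A decay exponent $p>1$ on a sector is impossible whenever $\sigma(A)\neq\emptyset$, since $\|R(\lambda,A)\|\geq \operatorname{dist}(\lambda,\sigma(A))^{-1}\gtrsim |\lambda|^{-1}$.) With the correct exponents, the chain hands you $\|R(\lambda,A+L)\|\leq \widehat{M}'\,|\lambda-\omega_1|^{-1/\widehat{p}}$ with $1/\widehat{p}<1/p$, a \emph{slower} decay than the one you must prove, and the ratio you would need to bound is $|\lambda-\omega_2|^{1/p}/|\lambda-\omega_1|^{1/\widehat{p}}\sim |\lambda|^{1/p-1/\widehat{p}}\to\infty$ as $|\lambda|\to\infty$. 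No relocation of the vertex can repair this: the sign of the exponent difference is against you, so the step fails exactly where it matters.

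The good news is that your closing paragraph, which you present as an optional remark, is the actual proof---and it is sharper than the paper's own argument, which (read literally) only yields almost sectoriality with every exponent $\tfrac{1}{\widehat{p}}$, $\widehat{p}>p$. Two adjustments are needed. First, \autoref{def:almost} applied to $A+L$ requires a bound on $R(\lambda,A+L)$ in $\mathcal{L}(X)$, not merely on $R(\lambda,(A+L)_0)$ in $\mathcal{L}(X_0)$ as in \eqref{equ:gj1}; but the same Neumann series gives $R(\lambda,A+L)=(I-R(\lambda,A)L)^{-1}R(\lambda,A)$, hence $\|R(\lambda,A+L)\|\leq 2\widehat{M}\,|\lambda-\widehat{\omega}|^{-1/p}$ wherever $\|R(\lambda,A)\|\,\|L\|\leq\tfrac12$ inside the sector. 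Second, that region contains a sector $\{\lambda\neq\omega_2: |\arg(\lambda-\omega_2)|<\theta'\}$ with $\theta'\in(\tfrac{\pi}{2},\theta)$ and $\omega_2$ sufficiently large, and since the two exponents now agree, the comparison $|\lambda-\widehat{\omega}|\geq c\,|\lambda-\omega_2|$ on this subsector (the ratio tends to $\infty$ at the vertex and to $1$ at infinity, so it has a positive lower bound) converts the estimate into the defining bound with the same exponent $\tfrac1p$ and vertex $\omega_2$. This proves the corollary exactly as stated. I recommend you promote that final paragraph to the proof and delete the descent step.
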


\section{Critical and essential growth bound of perturbed semigroups}\label{criess}

We still assume $A$ is an MR operator, $L \in \mathcal{L}(X_0, X)$. In this section, we study the critical and essential growth bound of a perturbed semigroup $T_{(A+L)_0}$. In many applications, one would like to hope the following hold:
\[
\omega_{\mathrm{crit}}(T_{(A+L)_0}) \leq \omega_{\mathrm{crit}}(T_{A_0}), \quad \omega_{\mathrm{ess}}(T_{(A+L)_0}) \leq \omega_{\mathrm{ess}}(T_{A_0}),
\]
see \autoref{comments} for a short discussion. The following elementary lemma seems well known, especially for the case $j = 0$, see also the proof of \cite[Proposition 3.7]{BNP00}.
\begin{lem}\label{lem:exponent}
	Suppose $f_j: ~[0, \infty) \rightarrow [0, \infty)$, $ j = 0,1,2,\cdots $, satisfy
	\[
	f_j(t+s) \leq \sum_{k=0}^{j}f_k(t)f_{j-k}(s), ~\forall t,s > 0, ~j = 0,1,2,\cdots,
	\]
	and each $ f_j $ is bounded on any compact intervals.
	Set ($ \ln 0 := - \infty $)
	\[\label{asy}\tag{$ \divideontimes $}
	\omega := \lim_{t \rightarrow \infty} \frac{1}{t}\ln f_0(t) = \inf_{t>0}\frac{1}{t}\ln f_0(t) = \inf\{ \omega \in \mathbb{R}: \exists M \geq 0, f_0(t) \leq Me^{\omega t}, \forall t \geq 0 \}.
	\]
	Then, for any $\gamma > \omega, ~\forall j \geq 0, ~\exists M_j \geq 0$, such that $f_j(t) \leq M_je^{\gamma t}, ~\forall t \geq 0$.
\end{lem}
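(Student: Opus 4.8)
The plan is to reduce the statement to a boundedness assertion via the substitution $g_j(t) := e^{-\gamma t} f_j(t)$, and then to argue by induction on $j$. Dividing the hypothesis by $e^{\gamma(t+s)}$ shows that the $g_j$ satisfy the \emph{same} convolution-type inequality
\[
g_j(t+s) \leq \sum_{k=0}^{j} g_k(t)\, g_{j-k}(s), \qquad t,s>0,
\]
and inherit boundedness on compact intervals from the $f_j$. The whole scheme hinges on one observation about the normalization: since $\gamma>\omega$ and $\omega$ is the growth bound of $f_0$, the defining infimum supplies some $\omega'\in[\omega,\gamma)$ and a constant $M_0$ with $f_0(t)\leq M_0 e^{\omega' t}$, whence $g_0(t)\leq M_0 e^{(\omega'-\gamma)t}\to 0$ as $t\to\infty$. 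Thus $g_0$ is not merely bounded but \emph{decays}, and the conclusion $f_j(t)\leq M_j e^{\gamma t}$ becomes exactly ``$g_j$ is bounded on $[0,\infty)$''.

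The base case $j=0$ is precisely the choice of $M_0$ above. For the inductive step I assume $g_0,\dots,g_{j-1}$ are bounded (by constants $M_0,\dots,M_{j-1}$). The natural move is to isolate in the convolution inequality the two terms $k=0$ and $k=j$, which are the only ones still containing $g_j$, and to bound the remaining middle block $\sum_{k=1}^{j-1} g_k(t)g_{j-k}(s)$ by the inductive hypothesis (every index lies in $[1,j-1]$). This yields
\[
g_j(t+s) \leq g_0(t)\, g_j(s) + g_0(s)\, g_j(t) + C_j, \qquad t,s>0,
\]
with $C_j := \sum_{k=1}^{j-1} M_k M_{j-k}<\infty$. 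Here $g_j$ reappears on the right, so a crude induction cannot close; the decay of $g_0$ is what will rescue it.

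The final step exploits that decay. Since $g_0(\sigma)\to 0$, I fix $T>0$ with $g_0(\sigma)\leq \tfrac14$ for all $\sigma\geq T$, and set $B:=\sup_{[0,2T]} g_j<\infty$ (finite by boundedness on compacts). For $t>2T$ I apply the displayed inequality with the splitting $t=T+(t-T)$, where both pieces are $\geq T$, to get
\[
g_j(t) \leq \tfrac14\, g_j(t-T) + \tfrac14\, g_j(T) + C_j \leq \tfrac14\, g_j(t-T) + \tfrac14 B + C_j.
\]
Passing to the nondecreasing majorant $\Phi(t):=\sup_{0\leq \sigma\leq t} g_j(\sigma)$ and taking the supremum over $\sigma\in(2T,t]$ gives $\Phi(t)\leq \max\{\Phi(2T),\ \tfrac14\Phi(t)+\tfrac14 B+C_j\}$; solving the self-referential inequality produces the uniform bound $\Phi(t)\leq \max\{\Phi(2T),\ \tfrac43(\tfrac14 B+C_j)\}$. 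Hence $g_j$ is bounded, i.e. $f_j(t)\leq M_j e^{\gamma t}$, which completes the induction.

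I expect the main obstacle to be exactly the self-referential terms $g_0(t)g_j(s)$ and $g_j(t)g_0(s)$: the argument works only because $\gamma>\omega$ forces $g_0$ to tend to $0$, so that a constant factor strictly less than $1$ (here $\tfrac14$; any $q<1$ would serve) can absorb the copy of $g_j$ on the right-hand side. A secondary technical point is that the hypothesis is assumed only for $t,s>0$, so the splitting $t=T+(t-T)$ must keep both summands strictly positive, and one must invoke boundedness on compacts to seed the recursion on $[0,2T]$.
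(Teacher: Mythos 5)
Your proof is correct and is essentially the paper's argument: both proceed by induction on $j$, bound the middle block $\sum_{k=1}^{j-1}$ by the inductive hypothesis, and use $\gamma>\omega$ to extract a contraction factor (the paper chooses $t_0$ with $f_0(t_0)\leq\tfrac{1}{2}e^{\gamma t_0}$, you choose $T$ with $g_0(\sigma)\leq\tfrac{1}{4}$ for $\sigma\geq T$) that absorbs the single genuinely self-referential term $f_j(t-t_0)$, resp. $g_j(t-T)$. The remaining differences are presentational: the paper works unnormalized and closes the recursion by iterating it into a geometric series over blocks $[nt_0,(n+1)t_0)$, whereas you normalize by $e^{-\gamma t}$ and close it by an absorption inequality for the running supremum $\Phi$.
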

\begin{proof}
	Note that by a standard result (see, e.g., \cite[Lemma IV.2.3]{EN00}), the assumption on $ f_0 $ gives that \eqref{asy} holds.
	By the definition of $\omega$, for any $\gamma > \omega$, there is $t_0 > 0$, such that $f_0(t_0) \leq \frac{1}{2}e^{\gamma t_0}$. By induction, one gets
	\begin{align}
		f_j(t) & = f_j(t-t_0+t_0) \leq \sum_{k=0}^{j}f_k(t-t_0)f_{j-k}(t_0) \notag\\
		& \leq \frac{1}{2}e^{\gamma t_0}f_j(t-t_0) + M'_je^{\gamma t}, \label{eq:12aa}
	\end{align}
	where $t \geq t_0$, and the constant $M'_j$ doesn't depend on $t$.
	Set
	\[
	a_n(s) := f_j(nt_0 + s),~~s \in [0,t_0).
	\]
	Using \eqref{eq:12aa}, we have
	\begin{align*}
		a_n(s) & \leq \frac{1}{2}e^{\gamma t_0}a_{n-1}(s) + M'_je^{\gamma (nt_0 + s)} \\
		& \leq \frac{1}{2^2}e^{2 \gamma t_0}a_{n-2}(s) + (\frac{1}{2} + 1)M'_je^{\gamma (nt_0 + s)} \\
		& \leq \cdots \\
		& \leq \frac{1}{2^n}e^{n \gamma t_0}a_0(s) + (\frac{1}{2^{n-1}} + \frac{1}{2^{n-2}} + \cdots + 1)M'_je^{\gamma (nt_0 + s)} \\
		& \leq M_j e^{\gamma (nt_0 + s)},
	\end{align*}
	for some constant $M_j \geq 0$. This completes the proof.
\end{proof}

Since the definition of critical spectrum depends on the space $l^{\infty}_{T}(X)$, we need some preliminaries. See \cite{BNP00} for similar results. Recall the meaning of $S_A \diamond V$, $S_n$, $R_n$ (see \autoref{nddo}), $\widetilde{S}_n, \widehat{S}_n, \overline{S}_{n}$ (which are defined similarly as for a $ C_0 $ semigroup $ T $ in \autoref{spec}) and so on in \autoref{pre}.
\begin{lem}\label{lem:eq}
	$l_{T_{(A+L)_0}}^\infty(X_0) = l_{T_{A_0}}^\infty(X_0)$.
\end{lem}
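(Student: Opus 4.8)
The plan is to establish both inclusions by exploiting the fact that, near $t=0$, the two semigroups differ by an operator that is small in the \emph{uniform operator norm}, not merely strongly. By \autoref{thm:mrper} one has the identity
\[
T_{(A+L)_0}(t) - T_{A_0}(t) = (S_A \diamond L T_{(A+L)_0})(t), \qquad t \ge 0.
\]
The first step is to record that the norm of the right-hand side tends to $0$ as $t \to 0^+$. This is where the MR structure (and not merely that $A_0$ generates a $C_0$ semigroup) enters: applying the defining estimate \eqref{equ:guji}, extended to strongly continuous integrands by \autoref{lem:MR}, to $f(s) = L T_{(A+L)_0}(s)x$ gives, for each $x \in X_0$,
\[
\|(S_A \diamond L T_{(A+L)_0}(\cdot)x)(t)\| \le \delta(t)\sup_{s\in[0,t]}\|L T_{(A+L)_0}(s)x\| \le \delta(t)\,\|L\|\,M'\,\|x\|,
\]
where $M'$ bounds $\|T_{(A+L)_0}(s)\|$ on a neighbourhood of $0$. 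Taking the supremum over $\|x\|\le 1$ and using $\delta(t)\to 0$ from \autoref{def:MR}, one obtains $\|T_{(A+L)_0}(t) - T_{A_0}(t)\| \to 0$ as $t\to 0^+$. I expect this uniform (operator-norm) decay to be the crux of the whole argument: the definition of $l^\infty_T$ involves a supremum over the sequence, which converts into the operator norm, so strong decay alone would not suffice.

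Granting this, the inclusion $l_{T_{A_0}}^\infty(X_0) \subseteq l_{T_{(A+L)_0}}^\infty(X_0)$ becomes routine. For $(x_n) \in l_{T_{A_0}}^\infty(X_0)$ I would use the splitting
\[
\sup_n\|T_{(A+L)_0}(t)x_n - x_n\| \le \sup_n\|T_{A_0}(t)x_n - x_n\| + \|T_{(A+L)_0}(t) - T_{A_0}(t)\|\,\|(x_n)\|_\infty,
\]
and let $t\to 0^+$: the first term vanishes by the hypothesis that $(x_n)\in l_{T_{A_0}}^\infty(X_0)$, and the second by the previous step. Hence $(x_n)\in l_{T_{(A+L)_0}}^\infty(X_0)$.

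For the reverse inclusion I would invoke symmetry rather than repeat the computation. Since $A+L$ is again an MR operator by \autoref{thm:mrper}, and $A = (A+L) + (-L)$ with $-L \in \mathcal{L}(X_0, X)$, the same identity with the roles of $A$ and $A+L$ interchanged reads $T_{A_0}(t) - T_{(A+L)_0}(t) = (S_{A+L} \diamond (-L) T_{A_0})(t)$, whose norm tends to $0$ as $t\to 0^+$ by the estimate \eqref{equ:guji} attached to the MR operator $A+L$ (with its own modulus of continuity at the origin). Repeating the displayed splitting with $A$ and $A+L$ exchanged yields $l_{T_{(A+L)_0}}^\infty(X_0) \subseteq l_{T_{A_0}}^\infty(X_0)$, and the two inclusions give the asserted equality. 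The only point that genuinely requires the MR hypothesis, and hence deserves care, is the operator-norm smallness of the difference near zero; everything else is a direct estimate.
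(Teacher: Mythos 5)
Your proof is correct and takes essentially the same route as the paper: both rest on the single operator-norm estimate $\| T_{(A+L)_0}(h) - T_{A_0}(h) \| = \| (S_A \diamond LT_{(A+L)_0})(h) \| \leq M\delta(h) \to 0$ as $h \to 0^+$, obtained from the fixed-point identity of \autoref{thm:mrper} and the MR bound \eqref{equ:guji}, after which the equality of the two continuity spaces is immediate. The only superfluous step is your symmetry argument via $S_{A+L}$ for the reverse inclusion: since $\| T_{A_0}(t) - T_{(A+L)_0}(t) \| = \| T_{(A+L)_0}(t) - T_{A_0}(t) \|$, the one estimate already yields both inclusions.
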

\begin{proof}
	This follows from
	\[
	\| T_{(A+L)_0}(h) - T_{A_0}(h) \| = \| (S_A \diamond LT_{(A+L)_0})(h) \| \leq M\delta(h) \rightarrow 0, ~~\text{as}~ h \rightarrow 0^+.
	\]
\end{proof}
\begin{lem}\label{lem:abc}
	$\widetilde{S}_n(t) l_{T_{A_0}}^\infty(X_0) \subset l_{T_{A_0}}^\infty(X_0),~\forall t \geq 0$.
\end{lem}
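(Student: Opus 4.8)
The plan is to prove the inclusion by induction on $n$, but I would carry along a slightly stronger statement that tames the uniformity in the sequence index. Fix $(x_k)\in l_{T_{A_0}}^\infty(X_0)$ and $T>0$. The strengthened property $(Q_n)$ I would propagate is that the family $\{s\mapsto S_n(s)x_k\}_k$ is \emph{equicontinuous on $[0,T]$ uniformly in $k$}, that is
\[
\rho_n(h):=\sup_k\sup_{s\in[0,T]}\|S_n(s+h)x_k-S_n(s)x_k\|\longrightarrow 0,\quad h\to0^+.
\]
The base case $n=0$ is immediate from the hypothesis on $(x_k)$: since $S_0=T_{A_0}$ and $S_0(s+h)x_k-S_0(s)x_k=T_{A_0}(s)(T_{A_0}(h)-I)x_k$, the bound $\|T_{A_0}(s)\|\le Me^{\omega s}$ on $[0,T]$ gives $\rho_0(h)\le Me^{\omega T}\sup_k\|(T_{A_0}(h)-I)x_k\|\to0$, the last supremum tending to $0$ precisely because $(x_k)\in l_{T_{A_0}}^\infty(X_0)$.

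For the inductive step I would use $S_n=S_A\diamond LS_{n-1}$ (see \eqref{equ:sr1}) and set $g_k:=LS_{n-1}(\cdot)x_k$. By $(Q_{n-1})$ and boundedness of $L$, the family $\{g_k\}$ is uniformly bounded on $[0,T+1]$, say by $G$, and uniformly equicontinuous there with modulus $\le\|L\|\rho_{n-1}(h)$. The decisive identity comes from \eqref{equ:sf2}, splitting the outer time $s+h$ at the \emph{small} time $h$,
\[
(S_A\diamond g_k)(s+h)=T_{A_0}(s)(S_A\diamond g_k)(h)+\big(S_A\diamond g_k(h+\cdot)\big)(s).
\]
Subtracting $(S_A\diamond g_k)(s)$ and using linearity of $S_A\diamond(\cdot)$ in its argument yields
\[
S_n(s+h)x_k-S_n(s)x_k=T_{A_0}(s)(S_A\diamond g_k)(h)+\big(S_A\diamond[g_k(h+\cdot)-g_k(\cdot)]\big)(s).
\]
The point is that the time increment has now been \emph{transferred onto the input} $g_k$. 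By the estimate \eqref{equ:guji} (valid for continuous integrands, \autoref{lem:MR}) the second term is bounded by $\delta(s)\sup_{\sigma\in[0,s]}\|g_k(h+\sigma)-g_k(\sigma)\|\le\delta(T)\|L\|\rho_{n-1}(h)$, while the first is bounded by $Me^{\omega T}\|(S_A\diamond g_k)(h)\|\le Me^{\omega T}\delta(h)G$. Both bounds are uniform in $k$ and in $s\in[0,T]$ and tend to $0$, so $\rho_n(h)\to0$ and $(Q_n)$ holds.

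Finally I would deduce the actual claim $\widetilde{S}_n(t)\,l_{T_{A_0}}^\infty(X_0)\subset l_{T_{A_0}}^\infty(X_0)$. Applying \eqref{equ:sf2} once more in the form $T_{A_0}(h)S_n(t)x_k=S_n(t+h)x_k-(S_A\diamond g_k(t+\cdot))(h)$ gives
\[
(T_{A_0}(h)-I)S_n(t)x_k=\big[S_n(t+h)x_k-S_n(t)x_k\big]-\big(S_A\diamond g_k(t+\cdot)\big)(h),
\]
where the bracket tends to $0$ uniformly in $k$ by $(Q_n)$ at the fixed $t$, and the last term is $\le\delta(h)\sup_{r\in[0,h]}\|g_k(t+r)\|\le\delta(h)G\to0$ uniformly; hence $\sup_k\|(T_{A_0}(h)-I)S_n(t)x_k\|\to0$, which is exactly $\widetilde{S}_n(t)(x_k)\in l_{T_{A_0}}^\infty(X_0)$. (One may reach the same conclusion through \autoref{lem:srelation}, after noting $\|S_j(h)\|\to0$ for $j\ge1$, so that the cross terms vanish in operator norm.) The main obstacle throughout is exactly this uniformity in the sequence index $k$, the characteristic difficulty of the critical spectrum, arising because $\widetilde{T}_{A_0}$ fails to be strongly continuous on all of $l^\infty(X_0)$; this is why I carry the equicontinuous family in $(Q_n)$, and why the convolution identity above, which shifts the increment from the output onto $g_k=LS_{n-1}(\cdot)x_k$ so that the inductive hypothesis applies, is the crucial ingredient rather than a direct estimate on $S_n$.
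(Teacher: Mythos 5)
Your proof is correct, but it follows a genuinely different route from the paper's. The paper does not induct on $n$ and proves no equicontinuity statement: applying \eqref{equ:sf2} twice and then \autoref{lem:srelation} to the inner argument $S_{n-1}(h+\cdot)$, it derives the operator-norm identity
\[
T_{A_0}(h)S_n(t)-S_n(t)=S_n(t)\bigl(T_{A_0}(h)-I\bigr)+W_3(h),\qquad \|W_3(h)\|\le \widetilde{M}\delta(h),
\]
so that the $h$-increment lands on the input vectors themselves, and membership of $\widetilde{S}_n(t)(x_k)$ in $l^\infty_{T_{A_0}}(X_0)$ follows in one line from the definition of that space. You instead shift the increment onto $g_k=LS_{n-1}(\cdot)x_k$ and let the inductive hypothesis $(Q_{n-1})$ carry the uniformity in $k$. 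Both arguments pivot on \eqref{equ:sf2} and \eqref{equ:guji}. What the paper's route buys is brevity and reuse of \autoref{lem:srelation} (needed anyway for \autoref{lem:sexponent}), plus a sharper conclusion: an approximate commutation relation between $T_{A_0}(h)$ and $S_n(t)$ valid in $\mathcal{L}(X_0,X)$ independently of any sequence. What yours buys is self-containedness (no Dyson--Phillips-type relation in the main line) and the by-product that the orbits $s\mapsto S_n(s)x_k$ are equicontinuous uniformly in $k$. Two small remarks: your closing parenthetical alternative via \autoref{lem:srelation} still needs $(Q_n)$ to control $S_n(t+h)-S_n(t)$, whereas the paper's manipulation eliminates that term altogether by converting it into $S_n(t)(T_{A_0}(h)-I)$; and your final deduction formally requires $n\ge 1$, the case $n=0$ being the trivial invariance $\widetilde{T}_{A_0}(t)\,l^\infty_{T_{A_0}}(X_0)\subset l^\infty_{T_{A_0}}(X_0)$ already recorded in \autoref{spec} (the paper's proof has the same implicit restriction).
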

\begin{proof}
	Consider
	\begin{align*}
		& T_{A_0}(h)S_n(t) - S_n(t) \\
		= & T_{A_0}(h)(S_A \diamond LS_{n-1})(t) - (S_A \diamond LS_{n-1})(t) \\
		= & (S_A \diamond LS_{n-1})(t+h) - (S_A \diamond LS_{n-1})(t) - (S_A \diamond LS_{n-1}(t+\cdot))(h) \\
		= & (S_A \diamond LS_{n-1}(h+\cdot))(t) - (S_A \diamond LS_{n-1})(t) + T_{A_0}(t)(S_A \diamond LS_{n-1})(h) + W_1(h) \\
		= & (S_A \diamond L\sum_{k=0}^{n-1}S_k(\cdot)S_{n-1-k}(h))(t) - (S_A \diamond LS_{n-1})(t) + W_2(h) \\
		= & (S_A \diamond LS_{n-1}(\cdot)[T_{A_0}(h) - I])(t) + W_3(h) \\
		= & S_n(t)(T_{A_0}(h) - I) + W_3(h),
	\end{align*}
	the second and third equality being a consequence of \eqref{equ:sf2}, and the fourth one being a consequence of \autoref{lem:srelation}, where
	\begin{align*}
		W_1(h) & := -(S_A \diamond LS_{n-1}(t+\cdot))(h), \\
		W_2(h) & := W_1(h) + T_{A_0}(t)(S_A \diamond LS_{n-1})(h), \\
		W_3(h) & := W_2(h) + \sum_{k=0}^{n-2}(S_A \diamond LS_k(\cdot)S_{n-1-k}(h))(t), \\
		& =  W_2(h) + \sum_{k=0}^{n-2}S_{k+1}(t)S_{n-1-k}(h).
	\end{align*}
	Note that for $k \geq 1$, $\|S_k(h)\| \leq \widetilde{M_k} \delta(h)$, for some constant $\widetilde{M_k}>0$. Hence, $\|W_3(h)\| \leq \widetilde{M}\delta(h)$, which shows the result.
\end{proof}
\begin{lem} \label{lem:sv}
	If $(S_A \diamond V) (t)$ is (right) norm continuous at $t_0$, then $(S_A \diamond V)(t_0)l^\infty(X_0) \subset l_{T_{A_0}}^\infty(X_0)$, where $V \in C_s([0,\infty), \mathcal{L}(X_0, X))$.
\end{lem}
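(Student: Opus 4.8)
The plan is to unwind the definition of $l_{T_{A_0}}^\infty(X_0)$ and reduce the claim to a single uniform (in the sequence index) estimate. Writing $(y_n) := (S_A \diamond V)(t_0)(x_n)$ for an arbitrary $(x_n) \in l^\infty(X_0)$, I first note that $(y_n) \in l^\infty(X_0)$: each $(S_A \diamond V)(t_0)x_n \in X_0$ by \autoref{lem:MR} (c), and $\sup_n \|y_n\| \leq \|(S_A \diamond V)(t_0)\|_{\mathcal{L}(X_0,X)}\sup_n\|x_n\| < \infty$. It then remains to show that $\sup_n \|T_{A_0}(h)y_n - y_n\| \to 0$ as $h \to 0^+$, and by homogeneity it suffices to treat sequences with $\sup_n \|x_n\| \leq 1$, i.e.\ to bound $\sup_{\|x\|\leq 1}\|T_{A_0}(h)(S_A\diamond V)(t_0)x - (S_A \diamond V)(t_0)x\|$.

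The heart of the argument is the functional equation \eqref{equ:sf2}. Applying it with $f = V(\cdot)x$, $s = t_0$ and $t = t_0 + h$ gives
\[
T_{A_0}(h)(S_A \diamond V)(t_0)x = (S_A \diamond V)(t_0 + h)x - (S_A \diamond V(t_0 + \cdot)x)(h),
\]
so that
\[
T_{A_0}(h)(S_A \diamond V)(t_0)x - (S_A \diamond V)(t_0)x = \big[(S_A \diamond V)(t_0 + h) - (S_A \diamond V)(t_0)\big]x - (S_A \diamond V(t_0 + \cdot)x)(h).
\]
I would then estimate the two terms on the right separately, taking care that both bounds are uniform in $x$ with $\|x\|\leq 1$ (equivalently, in $n$).

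For the first term, right norm continuity of $(S_A \diamond V)$ at $t_0$ gives $\|(S_A \diamond V)(t_0 + h) - (S_A \diamond V)(t_0)\|_{\mathcal{L}(X_0,X)} \to 0$ as $h \to 0^+$, and this operator-norm difference bounds its contribution uniformly over the unit ball. For the second term I would invoke the MR estimate \eqref{equ:guji}, which by \autoref{lem:MR} (c) holds for the continuous function $f(s) = V(t_0 + s)x$, namely $\|(S_A \diamond f)(h)\| \leq \delta(h)\sup_{s\in[0,h]}\|f(s)\|$; this yields a bound $\delta(h)\,\sup_{s\in[0,h]}\|V(t_0 + s)\|_{\mathcal{L}(X_0,X)}\|x\|$. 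Here I would use that $\|V(\cdot)\|$ is locally bounded, a consequence of the strong continuity of $V$ together with the uniform boundedness principle on the compact interval $[0, t_0 + 1]$, so that $\sup_{s\in[0,h]}\|V(t_0+s)\| \leq C$ for all small $h$ with an $n$-independent constant $C$; the whole term is then $\leq C\delta(h) \to 0$ since $\delta(h) \to 0$ as $h \to 0^+$.

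Combining the two estimates gives $\sup_{\|x\|\leq 1}\|T_{A_0}(h)(S_A\diamond V)(t_0)x - (S_A \diamond V)(t_0)x\| \to 0$ as $h \to 0^+$, which is exactly the statement that $(S_A \diamond V)(t_0)(x_n) \in l_{T_{A_0}}^\infty(X_0)$. There is no serious obstacle here; the only point requiring attention is the uniformity in $n$, which is guaranteed because the first bound arises from a single operator-norm difference and the second from the MR estimate with the $n$-independent constant $C$. Note also that only \emph{right} continuity at $t_0$ is needed, since throughout we let $h \to 0^+$.
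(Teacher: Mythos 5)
Your proof is correct and follows essentially the same route as the paper: the paper's proof consists precisely of the identity $T_{A_0}(h)(S_A \diamond V)(t_0) - (S_A \diamond V)(t_0) = (S_A \diamond V)(t_0 + h) - (S_A \diamond V)(t_0) - (S_A \diamond V(t_0+\cdot))(h)$ obtained from \eqref{equ:sf2}, with the two-term estimate (operator-norm right continuity plus the $\delta(h)$ bound from \eqref{equ:guji}) left implicit. Your write-up merely makes explicit the uniformity over the unit ball and the local boundedness of $\|V(\cdot)\|$, both of which are routine and consistent with what the paper takes for granted.
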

\begin{proof}
	We need to show $\|T_{A_0}(h)(S_A \diamond V)(t_0) - (S_A \diamond V)(t_0)\| \rightarrow 0$, as $h \rightarrow 0^+$, which follows from
	\[
	T_{A_0}(h)(S_A \diamond V)(t_0) - (S_A \diamond V)(t_0) = (S_A \diamond V)(t_0 + h) - (S_A \diamond V)(t_0) - (S_A \diamond V(t_0+\cdot))(h).
	\]
\end{proof}

The following are main results.
\begin{lem}\label{lem:sexponent}
	\begin{enumerate}[(a)]
		\item In $\mathcal{L}(X_0) / \mathcal{K}(X_0)$, $\forall \gamma > \omega_{\mathrm{ess}}(T_{A_0}), ~\forall j \geq 0$, there is $M_j^1 \geq 0$, such that $\|\overline{S}_j(t)\| \leq M_j^1e^{\gamma t}$.
		\item In $l^\infty(X_0)/l_{T_{A_0}}^\infty(X_0)$, $\forall \gamma > \omega_{\mathrm{crit}}(T_{A_0}), ~\forall j \geq 0$, there is $ M_j^2 \geq 0$, such that $\|\widehat{S}_j(t)\| \leq M_j^2e^{\gamma t}$.
	\end{enumerate}
\end{lem}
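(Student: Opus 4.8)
The plan is to derive both statements from \autoref{lem:exponent} by feeding it the correct scalar functions. For part (a) I would take $f_j(t) := \|\overline{S}_j(t)\|_{\mathcal{L}(X_0)/\mathcal{K}(X_0)}$, and for part (b) $f_j(t) := \|\widehat{S}_j(t)\|_{l^\infty(X_0)/l_{T_{A_0}}^\infty(X_0)}$. In each case the task is to check the three hypotheses of \autoref{lem:exponent}: the subconvolution inequality $f_j(t+s) \le \sum_{k=0}^j f_k(t) f_{j-k}(s)$, local boundedness of each $f_j$, and the identification of the growth bound $\omega$ of $f_0$ with $\omega_{\mathrm{ess}}(T_{A_0})$ (resp. $\omega_{\mathrm{crit}}(T_{A_0})$). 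Note that the $j=0$ instance of the inequality is precisely the submultiplicativity of $f_0$ that \autoref{lem:exponent} needs to extract \eqref{asy}, so no separate argument is required there.

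The source of the subconvolution inequality is the Dyson--Phillips-type identity $S_n(t+s) = \sum_{k=0}^n S_k(t)S_{n-k}(s)$ of \autoref{lem:srelation}. For part (a) I would push this identity through the canonical projection onto the Calkin algebra $\mathcal{L}(X_0)/\mathcal{K}(X_0)$, which is a contractive algebra homomorphism since $\mathcal{K}(X_0)$ is a closed two-sided ideal; this gives $\overline{S}_n(t+s) = \sum_{k=0}^n \overline{S}_k(t)\,\overline{S}_{n-k}(s)$, whence the triangle inequality and submultiplicativity of the quotient norm yield the bound on $f_j$. For part (b) the same strategy applies once we know the projected operators $\widehat{S}_n(t)$ are \emph{defined} on $l^\infty(X_0)/l_{T_{A_0}}^\infty(X_0)$; this is exactly the content of \autoref{lem:abc}, which shows $\widetilde{S}_n(t)\,l_{T_{A_0}}^\infty(X_0) \subset l_{T_{A_0}}^\infty(X_0)$. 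Since each factor $S_k(t)$, $S_{n-k}(s)$ then stabilizes $l_{T_{A_0}}^\infty(X_0)$, the multiplicativity $\widehat{F\circ G} = \widehat{F}\circ\widehat{G}$ recorded in \autoref{spec} is legitimate, and projecting \autoref{lem:srelation} produces $\widehat{S}_n(t+s) = \sum_{k=0}^n \widehat{S}_k(t)\,\widehat{S}_{n-k}(s)$, giving the same inequality for $f_j$.

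Local boundedness of each $f_j$ is immediate from $f_j(t) \le \|S_j(t)\|$ together with the fact, already exploited in \autoref{cor:DP}, that $\sup_{t\in[0,T]}\|S_j(t)\| < \infty$ for every $T>0$ (the series $\sum_n \mathcal{B}^n(T_{A_0})$ converges uniformly on compacta by \autoref{lem:fix}). Finally, since $S_0 = T_{A_0}$ by \eqref{symbol}, we have $f_0(t) = \|\overline{T}_{A_0}(t)\|$ (resp. $f_0(t)=\|\widehat{T}_{A_0}(t)\|$), so the growth bound $\omega$ appearing in \eqref{asy} coincides with $\omega_{\mathrm{ess}}(T_{A_0})$ by \autoref{def:ess} (resp. $\omega_{\mathrm{crit}}(T_{A_0})$ by \autoref{def:crit}). \autoref{lem:exponent} then supplies, for every $\gamma > \omega$ and every $j \ge 0$, the constant $M_j$ with $f_j(t) \le M_j e^{\gamma t}$, which is the assertion.

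I expect the only genuine obstacle to lie in part (b): the critical quotient $l^\infty(X_0)/l_{T_{A_0}}^\infty(X_0)$ supports a well-behaved multiplication only for operators preserving the space of strong continuity, so in the absence of \autoref{lem:abc} the symbol $\widehat{S}_n(t)$ would be meaningless and the homomorphism identity unavailable. Part (a) is comparatively routine, as the Calkin projection is unconditionally multiplicative, and in both parts the remaining steps are bookkeeping once the algebraic identity has been transported into the quotient.
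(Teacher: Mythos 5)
Your proposal is correct and follows essentially the same route as the paper's own proof: project the Dyson--Phillips identity of \autoref{lem:srelation} into the Calkin algebra (for the essential case) and, via \autoref{lem:abc}, into the quotient $l^\infty(X_0)/l_{T_{A_0}}^\infty(X_0)$ (for the critical case), then apply \autoref{lem:exponent} to the resulting quotient norms. The paper states these three steps tersely; your verification of local boundedness and of the identification of the growth bound of $f_0$ with $\omega_{\mathrm{ess}}(T_{A_0})$ (resp. $\omega_{\mathrm{crit}}(T_{A_0})$) simply makes explicit the hypotheses of \autoref{lem:exponent} that the paper leaves implicit.
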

\begin{proof}
	By \autoref{lem:abc}, $\widehat{S}_j$ can be defined in $l^\infty(X_0)/l_{T_{A_0}}^\infty(X_0)$. Thus, by \autoref{lem:srelation}, we have
	\[
	\widehat{S}_j(t+s) = \sum\limits_{k=0}^{j}\widehat{S}_k(t)\widehat{S}_{j-k}(s) ~\text{in}~ l^\infty(X_0)/l_{T_{A_0}}^\infty(X_0),
	\]
	and
	\[
	\overline{S}_j(t+s) = \sum\limits_{k=0}^{j}\overline{S}_k(t)\overline{S}_{j-k}(s) ~\text{in}~ \mathcal{L}(X_0) / \mathcal{K}(X_0).
	\]
	The proof is complete by \autoref{lem:exponent}.
\end{proof}
\begin{thm}\label{thm:main1}Let positive sequence $\{t_n\}$ satisfy $t_n \rightarrow \infty$, as $n \rightarrow \infty$.
	\begin{enumerate}[(a)]
		\item If there is $k \in \mathbb{N}$ such that $R_k(t_n)$ is compact, $n = 1, 2, 3, \cdots$, then $\omega_{\mathrm{ess}}(T_{(A+L)_0}) \leq \omega_{\mathrm{ess}}(T_{A_0})$.
		\item If there is $k \in \mathbb{N}$ such that $R_k$ is norm continuous at $t_1, t_2, t_3, \cdots$, then $\omega_{\mathrm{crit}}(T_{(A+L)_0}) \leq \omega_{\mathrm{crit}}(T_{A_0})$.
		\item If $R_1$ is compact (resp. norm continuous) at $t_1, t_2, t_3, \cdots$, then $\omega_{\mathrm{ess}}(T_{(A+L)_0}) = \omega_{\mathrm{ess}}(T_{A_0})$ (resp. $\omega_{\mathrm{crit}}(T_{(A+L)_0}) = \omega_{\mathrm{crit}}(T_{A_0})$).
	\end{enumerate}
\end{thm}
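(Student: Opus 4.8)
The plan is to reduce all three parts to the subadditive growth estimates of \autoref{lem:sexponent} combined with the finite decomposition $T_{(A+L)_0} = \sum_{n=0}^{k-1} S_n + R_k$ from \eqref{equ:sr4}, passing to the Calkin algebra $\mathcal{L}(X_0)/\mathcal{K}(X_0)$ for the essential bound and to the quotient algebra $l^\infty(X_0)/l_{T_{A_0}}^\infty(X_0)$ for the critical bound. In each case the point is that the perturbation contributes only finitely many terms $\overline{S}_0,\dots,\overline{S}_{k-1}$ (resp. $\widehat{S}_0,\dots,\widehat{S}_{k-1}$), while the tail $R_k$ evaluated along $\{t_n\}$ is annihilated in the respective quotient.

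For part (a), I would first observe that compactness of $R_k(t_n)$ means $\overline{R}_k(t_n) = 0$ in the Calkin algebra, so \eqref{equ:sr4} gives $\overline{T_{(A+L)_0}}(t_n) = \sum_{j=0}^{k-1}\overline{S}_j(t_n)$. Fix $\gamma > \omega_{\mathrm{ess}}(T_{A_0})$; by \autoref{lem:sexponent}(a) each $\|\overline{S}_j(t)\| \le M_j^1 e^{\gamma t}$, hence $\|\overline{T_{(A+L)_0}}(t_n)\| \le C e^{\gamma t_n}$ with $C = \sum_{j=0}^{k-1} M_j^1$. Using $\omega_{\mathrm{ess}}(T_{(A+L)_0}) = \inf_{t>0} \tfrac1t \ln\|\overline{T_{(A+L)_0}}(t)\| \le \tfrac{1}{t_n}\ln\|\overline{T_{(A+L)_0}}(t_n)\| \le \tfrac{\ln C}{t_n} + \gamma$ and letting $n \to \infty$ (so $t_n \to \infty$) yields $\omega_{\mathrm{ess}}(T_{(A+L)_0}) \le \gamma$; since $\gamma$ was arbitrary, $\omega_{\mathrm{ess}}(T_{(A+L)_0}) \le \omega_{\mathrm{ess}}(T_{A_0})$. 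Part (b) runs identically in the critical quotient. Here I would first invoke \autoref{lem:eq}, which identifies the space of strong continuity for both semigroups so that both critical spectra live in the single algebra $l^\infty(X_0)/l_{T_{A_0}}^\infty(X_0)$, and \autoref{lem:abc}, which makes each $\widehat{S}_j$ well-defined there. Writing $R_k = S_A \diamond (L R_{k-1})$ (valid for $k \ge 1$), norm continuity of $R_k$ at $t_n$ together with \autoref{lem:sv} gives $R_k(t_n) l^\infty(X_0) \subset l_{T_{A_0}}^\infty(X_0)$, i.e. $\widehat{R}_k(t_n) = 0$. Then \eqref{equ:sr4} yields $\widehat{T_{(A+L)_0}}(t_n) = \sum_{j=0}^{k-1} \widehat{S}_j(t_n)$, \autoref{lem:sexponent}(b) produces $\|\widehat{T_{(A+L)_0}}(t_n)\| \le C' e^{\gamma t_n}$ for any $\gamma > \omega_{\mathrm{crit}}(T_{A_0})$, and the same limiting argument gives $\omega_{\mathrm{crit}}(T_{(A+L)_0}) \le \omega_{\mathrm{crit}}(T_{A_0})$.

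For part (c), I would exploit the symmetry of bounded perturbation: since $A = (A+L) + (-L)$ with $-L \in \mathcal{L}(X_0, X)$ and $A+L$ is again an MR operator, one may treat $A+L$ as the base operator perturbed by $-L$ to recover $A$. The first-order remainder of this reversed perturbation is $R_1' = T_{A_0} - T_{(A+L)_0} = -R_1$ (by \eqref{equ:sr4} with $k=1$ applied in each direction), which is compact (resp. norm continuous) at $t_1, t_2, \dots$ precisely when $R_1$ is. Applying part (a) (resp. part (b)) with the roles of $A$ and $A+L$ interchanged and $k=1$ then gives $\omega_{\mathrm{ess}}(T_{A_0}) \le \omega_{\mathrm{ess}}(T_{(A+L)_0})$ (resp. the critical analogue), and combining with the forward inequality yields the equalities.

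The bulk of the difficulty has already been discharged in the preparatory lemmas, so what remains is mostly bookkeeping. The one genuinely delicate point is deducing $\widehat{R}_k(t_n) = 0$ from \emph{mere} norm continuity of $R_k$ at $t_n$ in part (b): this is exactly where the convolution structure $R_k = S_A \diamond (L R_{k-1})$ is indispensable, since \autoref{lem:sv} converts right norm continuity of a $S_A \diamond V$ term into the containment $R_k(t_n) l^\infty(X_0) \subset l_{T_{A_0}}^\infty(X_0)$. I would also take care to verify the symmetric bookkeeping in part (c) — namely that $R_1' = -R_1$ and that the reversed construction indeed satisfies the hypotheses of (a)/(b) — so that the two-sided estimates can be invoked without circularity.
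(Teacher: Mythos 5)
Your proposal is correct, and parts (a) and (b) follow the paper's own proof essentially verbatim: annihilate $R_k(t_n)$ in the Calkin algebra (resp., via \autoref{lem:eq} and \autoref{lem:sv}, in $l^\infty(X_0)/l_{T_{A_0}}^\infty(X_0)$), reduce $\overline{T}_{(A+L)_0}(t_n)$ (resp. $\widehat{T}_{(A+L)_0}(t_n)$) to the finite sum $\sum_{j=0}^{k-1}\overline{S}_j(t_n)$ by \eqref{equ:sr4}, and invoke \autoref{lem:sexponent}; your use of the infimum characterization of the growth bound instead of the paper's limit characterization is an immaterial variant, and your identification of where \autoref{lem:sv} is needed (writing $R_k = S_A \diamond LR_{k-1}$, $k\geq 1$) is precisely the paper's step. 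Part (c) is where you genuinely diverge. The paper argues directly: compactness of $R_1(t_n)$ (resp. norm continuity plus \autoref{lem:sv}) gives $\overline{T}_{(A+L)_0}(t_n) = \overline{T}_{A_0}(t_n)$ (resp. $\widehat{T}_{(A+L)_0}(t_n) = \widehat{T}_{A_0}(t_n)$), and since the essential (resp. critical) growth bound equals $\lim_{t\to\infty}\frac{1}{t}\ln\|\cdot\|$, evaluating along $t_n$ yields the equality at once, with no second application of (a)/(b). You instead reverse the perturbation, writing $A = (A+L)+(-L)$, check that the reversed first remainder is $R_1' = T_{A_0} - T_{(A+L)_0} = -R_1$, and apply (a)/(b) to the pair $(A+L,-L)$; this is legitimate because $A+L$ is again an MR operator (\autoref{thm:mrper}) with $\overline{D(A+L)} = X_0$, so \autoref{lem:sexponent}, \autoref{lem:eq} and \autoref{lem:sv} all apply with $A+L$ as the base operator, and \autoref{lem:eq} guarantees both critical bounds are computed in the same quotient algebra. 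Your route is slightly longer, but it is exactly the mechanism the paper itself reserves for \autoref{thm:main2} (via \autoref{lem:permm}, ``apply \autoref{thm:main1} twice''), so nothing is lost; the paper's version of (c) simply short-circuits that symmetry argument.
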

\begin{proof}
	(a) Since $R_k(t_n)$ is compact, in $\mathcal{L}(X_0) / \mathcal{K}(X_0)$, $\overline{R}_k(t_n) = 0$, for $n=1,2,3, \cdots$. Hence,
	\[
	\overline{T}_{(A+L)_0}(t_n) = \sum_{j=0}^{k-1}\overline{S}_j(t_n),
	\]
	for $n=1,2,3, \cdots$. By \autoref{lem:sexponent}, we have
	\[
	\omega_{\mathrm{ess}}(T_{(A+L)_0}) = \lim_{n \rightarrow \infty} \frac{1}{t_n}\ln \|\overline{T}_{(A+L)_0}(t_n)\| \leq \omega_{\mathrm{ess}}(T_{A_0}),
	\]
	which completes the proof of (a).

	(b) Note that due to \autoref{lem:eq}, $l^\infty(X_0)/l_{T_{(A+L)_0}}^\infty(X_0) = l^\infty(X_0)/l_{T_{A_0}}^\infty(X_0)$. Thus, by \autoref{lem:sv}, $\widehat{R}_k(t_n) = 0$, for $n=1,2,3,\cdots$. The following proof is similar as (a).

	(c) In this case $\overline{T}_{(A+L)_0}(t_n) = \overline{T}_{A_0}(t_n)$ (resp. $\widehat{T}_{(A+L)_0}(t_n) = \widehat{T}_{A_0}(t_n)$), for $n=1,2,3, \cdots$, which shows the result.
\end{proof}

In many cases, since we don't know the explicit expression of $T_{(A+L)_0}$, calculating $R_k$ is hard. In the following, we will give more elaborate result than \autoref{thm:main1}. Consider $T_{A_0}$ as the perturbation of $T_{(A+L)_0}$ under $-L$. Set
\[
\mathcal{D}(W) := -S_{A+L} \diamond LW,~W \in C_s([0,\infty), \mathcal{L}(X_0, X)).
\]
\begin{lem}\label{lem:permm}
	The following statements are equivalent, where $P$ stands for norm continuity, or compactness on $(0,\infty)$.
	\begin{enumerate}[(a)]
		\item $\mathcal{B}^k(T_{(A+L)_0}) = R_k$ has property $P$ ($\Leftrightarrow$ $\mathcal{B}^k(T_{A_0}) = S_k$ has property $P$).
		\item $\mathcal{D}^k(T_{(A+L)_0})$ has property $P$ ($\Leftrightarrow$ $\mathcal{D}^k(T_{A_0})$ has property $P$).
	\end{enumerate}
\end{lem}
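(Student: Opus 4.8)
The plan is to exploit the symmetry between the pair $(A,L)$ and the pair $(A+L,-L)$, combined with two applications of \autoref{corr:sr}. Since $A+L$ is again an MR operator (\autoref{thm:mrper}) and $-L\in\mathcal{L}(X_0,X)$, every statement proved for $\mathcal{B}=S_A\diamond L(\cdot)$ and the family $\{S_n\}$ has a mirror statement for $\mathcal{D}=-S_{A+L}\diamond L(\cdot)$ and the families $\widetilde{S}_n:=\mathcal{D}^n(T_{(A+L)_0})$, $\widetilde{R}_n:=\mathcal{D}^n(T_{A_0})$. Applying \autoref{corr:sr} to $(A,L)$ shows that $R_k=\mathcal{B}^k(T_{(A+L)_0})$ has property $P$ iff $S_k=\mathcal{B}^k(T_{A_0})$ has $P$ iff $S_m$ has $P$ for all $m\geq k$; this is the parenthetical equivalence in (a). Applying \autoref{corr:sr} to the MR operator $A+L$ and perturbation $-L$ gives the parenthetical equivalence in (b) and, moreover, that $\widetilde{R}_k=\mathcal{D}^k(T_{A_0})$ has $P$ iff $\widetilde{R}_m$ has $P$ for all $m\geq k$. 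It therefore remains only to prove that $S_k$ has $P$ if and only if $\widetilde{R}_k$ has $P$.

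The link between the two families comes from the perturbation identity $S_{A+L}=S_A+S_A\diamond L S_{A+L}$ of \autoref{thm:mrper}. Diamonding both sides against $LW$ and reassociating the iterated convolution (which, via the Stieltjes representation $(S_A\diamond f)(t)=\int_0^t\mathrm{d}S_A(s)f(t-s)$ recorded in \autoref{rmk:cc}, is just Fubini's theorem), one gets $S_{A+L}\diamond LW=\mathcal{B}(W)+\mathcal{B}(S_{A+L}\diamond LW)$, i.e. the operator identity $-\mathcal{D}=\sum_{i\geq 1}\mathcal{B}^{i}$ on $C_s([0,\infty),\mathcal{L}(X_0,X))$, the series converging uniformly on compact intervals exactly as in \autoref{lem:fix}. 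Raising this to the $k$-th power and collecting terms (the coefficient of $\mathcal{B}^m$ in $(\sum_{i\geq1}\mathcal{B}^i)^k$ being $\binom{m-1}{k-1}$) yields, for every $k\geq1$, the two symmetric expansions
\[
\widetilde{R}_k=\mathcal{D}^k(T_{A_0})=(-1)^k\sum_{m\geq k}\binom{m-1}{k-1}S_m,
\qquad
S_k=\mathcal{B}^k(T_{A_0})=(-1)^k\sum_{m\geq k}\binom{m-1}{k-1}\widetilde{R}_m,
\]
the second being the mirror of the first under the $(A,L)\leftrightarrow(A+L,-L)$ symmetry, and both converging uniformly on compact intervals by the geometric decay $\|S_m(t)\|=O(r^m)$ on small intervals from \autoref{lem:fix}.

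With these expansions the equivalence is immediate. If $S_k$ has $P$, then by \autoref{corr:sr} every $S_m$ with $m\geq k$ has $P$; since uniform-on-compacts limits preserve norm continuity and a norm-convergent series of compact operators is compact, the first expansion forces $\widetilde{R}_k$ to have $P$, which is (b). Conversely, if $\widetilde{R}_k$ has $P$, then the mirror of \autoref{corr:sr} gives $P$ for every $\widetilde{R}_m$, $m\geq k$, and the second expansion forces $S_k$ to have $P$, which is (a); here one uses that ``compactness on $(0,\infty)$'' already entails norm continuity there by \autoref{cor:DP}, so no separate continuity check is needed. The only genuinely technical point—and the one I expect to be the main obstacle—is the reassociation of the iterated $\diamond$-convolution that turns $S_{A+L}=S_A+S_A\diamond LS_{A+L}$ into $-\mathcal{D}=\sum_{i\geq1}\mathcal{B}^i$, together with the uniform convergence of the binomial-weighted series; both are dispatched by the Stieltjes-convolution (Fubini) form of $\diamond$ and the translation identity \eqref{equ:sf2} used to pass from small intervals to arbitrary compact ones.
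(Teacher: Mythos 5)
Your proof is correct in substance, but it follows a genuinely different route from the paper's. Both arguments start from the same perturbation identity $S_{A+L} = S_A + S_A \diamond L S_{A+L}$, rewritten as $\mathcal{D} = -\mathcal{B} + \mathcal{B}\mathcal{D}$, and both use \autoref{corr:sr}, the symmetry $(A,L)\leftrightarrow(A+L,-L)$, and the remark that compactness on $(0,\infty)$ already entails norm continuity there (\autoref{cor:DP}). The paper, however, never inverts that identity: it runs a finite induction over $l=0,1,\dots,k$, observing that $X_l := \mathcal{B}^{k-l}\mathcal{D}^l(T_{(A+L)_0})$ solves the fixed-point equation $X_l = -X_{l-1} + \mathcal{B}(X_l)$, so that \autoref{lem:regfix} transfers property $P$ from $X_{l-1}$ to $X_l$; the case $l=k$ is (b), and the converse is the mirrored argument. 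You instead sum the Neumann series $-\mathcal{D}=\sum_{i\ge 1}\mathcal{B}^i$, raise it to the $k$-th power, and read off the binomially weighted expansion of $\mathcal{D}^k(T_{A_0})$ in the $S_m$ together with its mirror. Your route produces explicit formulas linking the two Dyson--Phillips-type families, which has some independent interest; its cost is a convergence analysis that the paper's argument entirely avoids: the bound $\|S_m(t)\|=O(r^m)$ from \autoref{lem:fix} is valid only on an initial interval $[0,\delta_0]$, and on $[0,N\delta_0]$ one must invoke \autoref{lem:srelation} to get $\|S_m(t)\|\le C_N\, p_N(m)\, r^m$ with $p_N$ a polynomial of degree $N-1$ --- still enough to dominate the weight $\binom{m-1}{k-1}$, but this estimate, which you only gesture at via \eqref{equ:sf2}, is the one step you would have to write out in full, whereas the paper delegates all convergence to the already-proven \autoref{lem:regfix}. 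Finally, note that both proofs (yours at the Fubini/reassociation step, the paper's when it rearranges $\mathcal{B}^{k-l}\mathcal{D}^{l-1}\mathcal{D}$ into $-\mathcal{B}^{k-(l-1)}\mathcal{D}^{l-1}+\mathcal{B}\mathcal{B}^{k-l}\mathcal{D}^{l}$) tacitly use associativity of the $\diamond$ products and the resulting commutativity $\mathcal{B}\mathcal{D}=\mathcal{D}\mathcal{B}$, which comes from the two-sided perturbation identities; neither text makes this explicit.
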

\begin{proof}
	Here note that if $ S_{n} $ (or $ R_{n} $) is compact on $ (0, \infty) $, then it is norm continuous on $ (0, \infty) $ by \autoref{cor:DP} \eqref{DPcc} (or \eqref{DPdd}). So if $ P $ stands for compactness on $(0,\infty)$, then it stands for norm continuity and compactness on $(0,\infty)$.
	It suffices to consider one direction, e.g., (a) $\Rightarrow$ (b). Since
	\[
	S_{A+L} = S_A + S_A \diamond LS_{A + L},
	\]
	we have
	\[
	\mathcal{D} = -\mathcal{B} + \mathcal{B}\mathcal{D}.
	\]
	We prove that $\mathcal{B}^{k-l}\mathcal{D}^l(T_{(A+L)_0})$ has property $P$ by induction, where $l=0,1,2,\cdots,k$. The case $l=0$ is (a). Since
	\[
	\mathcal{B}^{k-l}\mathcal{D}^l = \mathcal{B}^{k-l}\mathcal{D}^{l-1}\mathcal{D} = -\mathcal{B}^{k-(l-1)}\mathcal{D}^{l-1} + \mathcal{B}\mathcal{B}^{k-l}\mathcal{D}^l,
	\]
	and by induction, $\mathcal{B}^{k-(l-1)}\mathcal{D}^{l-1}(T_{(A+L)_0})$ has property $P$, it yields $\mathcal{B}^{k-l}\mathcal{D}^l(T_{(A+L)_0})$ has property $P$ by \autoref{lem:regfix}. Note that the equivalences in the brackets are \autoref{corr:sr}. This completes the proof.
\end{proof}
\begin{thm}\label{thm:main2}
	\begin{enumerate}[(a)]
		\item If there is $S_k$ (or $R_k$) such that it is compact on $(0,\infty)$, then $\omega_{\mathrm{ess}}(T_{(A+L)_0}) = \omega_{\mathrm{ess}}(T_{A_0})$.
		\item If there is $S_k$ (or $R_k$) such that it is norm continuous on $(0,\infty)$, then $\omega_{\mathrm{crit}}(T_{(A+L)_0}) = \omega_{\mathrm{crit}}(T_{A_0})$.
	\end{enumerate}
\end{thm}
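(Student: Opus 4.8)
The plan is to prove each identity by establishing the two opposite inequalities separately, exploiting the symmetry between perturbing $A$ by $L$ and perturbing $A+L$ by $-L$. I treat (a) in detail; part (b) is word-for-word the same with \emph{compactness} replaced by \emph{norm continuity}, $\omega_{\mathrm{ess}}$ by $\omega_{\mathrm{crit}}$, the Calkin quotient $\overline{(\cdot)}$ by the critical quotient $\widehat{(\cdot)}$, and \autoref{lem:sexponent}~(a) by \autoref{lem:sexponent}~(b).

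First I would normalize the hypothesis: whether compactness on $(0,\infty)$ is assumed for $S_k$ or for $R_k$, \autoref{corr:sr} shows the two are equivalent, so I may assume \emph{both} $S_k$ and $R_k$ are compact on $(0,\infty)$. Fixing any sequence $t_n \to \infty$ (e.g.\ $t_n = n$), the operators $R_k(t_n)$ are then compact, and \autoref{thm:main1}~(a) immediately gives
\[
\omega_{\mathrm{ess}}(T_{(A+L)_0}) \leq \omega_{\mathrm{ess}}(T_{A_0}).
\]

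For the reverse inequality I would view $T_{A_0}$ as the semigroup produced by perturbing the MR operator $A+L$ (again an MR operator by \autoref{thm:mrper}) with $-L \in \mathcal{L}(X_0,X)$. Under this reversal $\mathcal{B}$ is replaced by $\mathcal{D}(W) = -S_{A+L}\diamond LW$, the roles of $T_{A_0}$ and $T_{(A+L)_0}$ are interchanged, and $\mathcal{D}^k(T_{A_0})$ plays precisely the role that $R_k$ plays in the original setting (it is the reversed form of $R_k$, and $T_{A_0} = \sum_{j=0}^{k-1}\mathcal{D}^j(T_{(A+L)_0}) + \mathcal{D}^k(T_{A_0})$ is the reversed form of \eqref{equ:sr4}). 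Since \autoref{lem:srelation}, \autoref{lem:sexponent} and \autoref{thm:main1} are all formulated for an arbitrary MR operator together with a bounded perturbation, they apply verbatim to the pair $(A+L, -L)$. Hence \autoref{thm:main1}~(a) in this reversed instance, fed with the operators $\mathcal{D}^k(T_{A_0})(t_n)$, will yield
\[
\omega_{\mathrm{ess}}(T_{A_0}) \leq \omega_{\mathrm{ess}}(T_{(A+L)_0}),
\]
and combining the two inequalities proves (a).

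The hard part is ensuring that the regularity hypothesis survives the reversal, i.e.\ that the operators $\mathcal{D}^k(T_{A_0})(t_n)$ fed into the reversed \autoref{thm:main1}~(a) are actually compact. This is exactly what \autoref{lem:permm} delivers: the compactness of $R_k$ on $(0,\infty)$ is equivalent to that of $\mathcal{D}^k(T_{A_0})$ on $(0,\infty)$. Everything else is the routine machinery already assembled for \autoref{thm:main1} --- the quotient multiplicativity of \autoref{lem:srelation} and the exponential estimate of \autoref{lem:sexponent}. So once \autoref{lem:permm} is invoked to transfer regularity across the symmetry, the proof closes with no further computation.
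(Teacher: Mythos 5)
Your proposal is correct and is essentially the paper's own proof, which reads in full: ``Use \autoref{lem:permm} and apply \autoref{thm:main1} twice.'' You have simply spelled out the same two applications of \autoref{thm:main1} --- once for the pair $(A, L)$ to get $\omega_{\mathrm{ess}}(T_{(A+L)_0}) \leq \omega_{\mathrm{ess}}(T_{A_0})$, and once for the reversed pair $(A+L, -L)$, with \autoref{lem:permm} transferring the compactness (resp.\ norm continuity) hypothesis to $\mathcal{D}^k(T_{A_0})$ --- exactly as the paper intends.
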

\begin{proof}
	Use \autoref{lem:permm} and apply \autoref{thm:main1} twice.
\end{proof}

Using \autoref{corr:s1} and \autoref{thm:main2}, we have the following corollary.
\begin{cor}\label{cor:lt}Let $A$ be an MR operator, $L \in \mathcal{L}(X_0, X)$.
	\begin{enumerate}[(a)]
		\item If $LT_{A_0}$ is norm continuous on $(0,\infty)$, then $\omega_{\mathrm{crit}}(T_{(A+L)_0}) = \omega_{\mathrm{crit}}(T_{A_0})$.
		\item If $LT_{A_0}$ is norm continuous and compact on $(0,\infty)$, then $\omega_{\mathrm{ess}}(T_{(A+L)_0}) = \omega_{\mathrm{ess}}(T_{A_0})$.
	\end{enumerate}
\end{cor}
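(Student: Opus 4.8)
The plan is to reduce both assertions to the general growth-bound stability result \autoref{thm:main2}, whose hypothesis asks only for the existence of \emph{some} index $k$ for which $S_k$ (equivalently $R_k$) has the relevant regularity on $(0,\infty)$. The strategy is therefore to produce such an index explicitly, and the natural candidate here is $k=1$: the first-order term $S_1 = S_A \diamond L T_{A_0}$ is built directly out of the data $LT_{A_0}$, so its regularity can be read off from the regularity assumed on $LT_{A_0}$.

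For part (a), I would argue as follows. The assumption that $LT_{A_0}$ is norm continuous on $(0,\infty)$ feeds into \autoref{corr:s1} (a), which already records that $S_1$ is then norm continuous on $(0,\infty)$. This is precisely the hypothesis of \autoref{thm:main2} (b) taken with $k=1$, so that theorem immediately delivers $\omega_{\mathrm{crit}}(T_{(A+L)_0}) = \omega_{\mathrm{crit}}(T_{A_0})$. For part (b) the reasoning is strictly parallel but one notch stronger: under the assumption that $LT_{A_0}$ is norm continuous \emph{and} compact on $(0,\infty)$, \autoref{corr:s1} (b) gives that $S_1$ is norm continuous and compact on $(0,\infty)$; in particular $S_1$ is compact on $(0,\infty)$, which is exactly the hypothesis of \autoref{thm:main2} (a) with $k=1$, and that theorem then yields $\omega_{\mathrm{ess}}(T_{(A+L)_0}) = \omega_{\mathrm{ess}}(T_{A_0})$.

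At the level of this statement there is essentially no obstacle: the substantive work has already been carried out in the two results being cited, and all that remains is the bookkeeping check that the regularity conclusions of \autoref{corr:s1} line up with the hypotheses of \autoref{thm:main2} under the single index $k=1$. The genuine difficulty lives upstream and is assumed here — on one side, the convolution regularity lemmas \autoref{lem:NC} and \autoref{lem:CP} that propagate norm continuity (resp. compactness) of $LT_{A_0}$ through $S_A \diamond (\cdot)$ to $S_1$, and on the other side the exponential-estimate machinery of \autoref{lem:exponent} and \autoref{lem:sexponent}, which transfers the sub-multiplicative bound $S_j(t+s)\le \sum_k S_k(t)S_{j-k}(s)$ into the quotient algebras $\mathcal{L}(X_0)/\mathcal{K}(X_0)$ and $l^\infty(X_0)/l^\infty_{T_{A_0}}(X_0)$ underlying \autoref{thm:main2}. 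Given those, the corollary is a one-line specialization.
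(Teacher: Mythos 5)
Your proof is correct and follows exactly the paper's own route: the paper likewise obtains the corollary by combining \autoref{corr:s1} (giving the regularity of $S_1$ from that of $LT_{A_0}$) with \autoref{thm:main2} applied at the index $k=1$.
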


Using \autoref{corr:s2} and \autoref{thm:main2}, we have the following corollary due to \cite{DLM08}.
\begin{cor}[{\cite[Theorem 1.2]{DLM08}}]\label{cor:lmr}
	Suppose that $A$ is a quasi Hille--Yosida operator and $LT_{A_0}$ is compact on $(0,\infty)$, then $\omega_{\mathrm{ess}}(T_{(A+L)_0}) = \omega_{\mathrm{ess}}(T_{A_0})$.
\end{cor}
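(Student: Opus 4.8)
The plan is to obtain the result as a direct combination of the compactness-propagation result for the Dyson--Phillips-type terms and the growth-bound stability theorem. First I would observe that the two hypotheses---$A$ being a quasi Hille--Yosida operator and $LT_{A_0}$ being compact on $(0,\infty)$---are exactly those of \autoref{corr:s2}, which yields that the second term $S_2 = \mathcal{B}^2(T_{A_0})$ in the series \eqref{equ:sr5} is norm continuous and compact on $(0,\infty)$. By \autoref{corr:sr}, the same property then transfers to $R_2$.

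With $S_2$ (equivalently $R_2$) compact on all of $(0,\infty)$, I would then apply \autoref{thm:main2} (a) with $k=2$, which asserts precisely that the existence of some compact $S_k$ forces $\omega_{\mathrm{ess}}(T_{(A+L)_0}) = \omega_{\mathrm{ess}}(T_{A_0})$. This delivers the conclusion immediately.

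It is worth recalling why \autoref{thm:main2} (a) produces the full equality rather than a single inequality, since that is where the substance of the statement sits. One inequality comes from truncating the series as $\overline{T}_{(A+L)_0}(t) = \overline{S}_0(t) + \overline{S}_1(t)$ in the Calkin algebra $\mathcal{L}(X_0)/\mathcal{K}(X_0)$ (because $\overline{R}_2(t) = 0$), and then controlling the exponential growth of each $\overline{S}_j$ by $\omega_{\mathrm{ess}}(T_{A_0})$ through the subadditive estimate of \autoref{lem:exponent}, as packaged in \autoref{lem:sexponent}. The reverse inequality is obtained from the symmetry built into \autoref{lem:permm}: one views $T_{A_0}$ as the perturbation of $T_{(A+L)_0}$ by $-L$, so that the operator $\mathcal{D}$ inherits property $P$ from $\mathcal{B}$, and runs the same truncation argument a second time.

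The proof of the corollary itself is therefore routine once the preparatory machinery is in place. The genuine difficulty is hidden upstream, inside \autoref{corr:s2}: there one must upgrade mere compactness of $LT_{A_0}$ to norm continuity of $LS_A \diamond LT_{A_0}$ on $(0,\infty)$, and it is here that the quasi Hille--Yosida structure is indispensable. Concretely, this rests on the fact that $x^*S_A(\cdot)x$ lies in $W^{1,p'}_{\mathrm{loc}}$ for every $x^* \in (X_0)^*$ (see \autoref{rmk:cc}), so that convolution of $S_A$ against the compact, norm-continuous family $LT_{A_0}$ produces a norm-continuous operator family. This regularity step is the part I would expect to be the main obstacle were one to attempt the statement without the preparatory lemmas.
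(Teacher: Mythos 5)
Your proposal is correct and follows exactly the paper's own route: the corollary is stated there as an immediate consequence of \autoref{corr:s2} (compactness and norm continuity of $S_2$) combined with \autoref{thm:main2} (a), whose proof indeed runs \autoref{thm:main1} twice via \autoref{lem:permm} to get equality rather than one inequality. Your added remarks correctly identify where the real work lies (the regularity upgrade inside \autoref{corr:s2}), so nothing is missing.
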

\begin{rmk}
	It was only shown in \cite[Theorem 1.2]{DLM08} that $\omega_{\mathrm{ess}}(T_{(A+L)_0}) \leq \omega_{\mathrm{ess}}(T_{A_0})$. However, using \cite[Theorem 1.2]{DLM08}, it also yields  $\omega_{\mathrm{ess}}(T_{(A+L)_0}) = \omega_{\mathrm{ess}}(T_{A_0})$, because $LT_{(A+L)_0}$ is also compact on $(0,\infty)$, see the proof in \autoref{corr:s2}.
\end{rmk}

\section{Age-structured population models in $ L^p $ spaces}\label{model}

Consider the following age-structured population model in $ L^p $ (see \cite{Web85, Web08, Thi91, Thi98, Rha98, BHM05, MR09a}):
\begin{equation}\label{equ:age}
\begin{cases}
(\partial_t + \partial_a)u(t, a) = A(a)u(t, a), ~t > 0, a \in (0, c),\\
u(t, 0) = \int_{0}^{c} C(a) u(t, a) ~\mathrm{d} a, ~ t > 0,\\
u(0, a) = u_0(a), ~ a \in (0, c), ~ u_0 \in L^p((0,c), E),
\end{cases}
\end{equation}
where $ 1 \leq p < \infty $, $ c \in (0, \infty] $ and $ E $ is a Banach space ($ u(t, a) \in E $); for all $ a \in (0,c) $, $ C(a): E \to E $ are bounded linear operators and $ A(a) : E \to E $ are closed linear operators (the detailed assumptions are given in \autoref{sub:ass}).
The approach given in this section to the study of model \eqref{equ:age} goes back to Thieme \cite{Thi91} as early as 1991. It seems that this model in $ L^p((0, c), E) $ ($ p > 1 $) was first investigated by Magal and Ruan in \cite{MR07} (see also \cite{Thi08}). We notice that, in control theory (and approximation theory), age-structured population models can be considered as boundary control systems and in this case the state space is often taken as $ L^p((0, c), E) $ ($ p > 1 $); see, e.g., \cite{CZ95}. In addition, when $ c = \infty $, since $ L^{p}((0,\infty), E) $ does not include in $ L^{1}((0,\infty), E) $, this gives in some sense more solutions of \eqref{equ:age} by taking initial data $ u_0 \in \bigcup_{1 \leq p < \infty} L^{p}((0,\infty), E) $. Finally, we mention that the geometric property of $ L^2((0, c), E) $ is usually better than $ L^1((0, c), E) $.

\subsection{evolution family: mostly review}
Before giving some naturally standard assumptions on \eqref{equ:age}, we first recall some backgrounds about evolution family.
We say $ \{U(a, s)\}_{0 \leq s \leq a < c} $ is an exponentially bounded linear evolution family (or for short \emph{evolution family}) if it satisfies the following:
\begin{enumerate}[(1)]
	\item $ U(a, s) \in \mathcal{L}(E, E) $, $ U(a, s) = U(a, \tau) U(\tau, s) $ and $ U(a, a) = I $ for all $ 0 \leq s \leq \tau \leq a < c $;
	\item $ (a, s, x) \mapsto U(a, s)x $ is continuous in $ 0 \leq s \leq a < c $ and $ x \in E $;
	\item there exist constants $ C \geq 1 $ and $ \omega \in \mathbb{R} $ such that
	\[
	|U(a,s)| \leq C e^{\omega (a - s)}, ~ 0 \leq s \leq a < c.
	\]
\end{enumerate}

Define the Howland semigroup $ T_0 $ on $ L^p((0, c), E) $ (with respect to $ \{U(a, s)\}_{0 \leq s \leq a < c} $) as
\begin{equation}\label{equ:how}
(T_0(t) \varphi) (a) = \begin{cases}
U(a, a -t)\varphi(a - t), &~ a \geq t,\\
0, &~ a \in [0, t],
\end{cases}
\end{equation}
which is a $ C_0 $ semigroup, and let $ \mathcal{B}_0 $ denote its generator.
Note that $ D(\mathcal{B}_0) \subset C([0,c), E) $. In fact, a simple computation shows that if $ \lambda \in \rho(\mathcal{B}_0) $, then
\[
(R(\lambda, \mathcal{B}_0) f )(a) = \int_{0}^{a} e^{\lambda(a - s)} U(a, s) f(s) ~\mathrm{d} s, ~ f \in L^p((0, c), E).
\]
\begin{proof}
	Let $ R(\lambda, \mathcal{B}_0) f = \varphi $.
	Note that we have
	\[
	e^{-\lambda t}T_0(t) \varphi - \varphi = (\lambda - \mathcal{B}_0) \int_{0}^{t} e^{-\lambda s} T_0(s) \varphi ~\mathrm{d} s = \int_{0}^{t} e^{-\lambda s} T_0(s) f ~\mathrm{d} s.
	\]
	For $ t \geq a $, we see $ (e^{-\lambda t}T_0(t) \varphi - \varphi) (a) = - \varphi (a) $ and
	\[
	\int_{0}^{t} (e^{-\lambda s} T_0(s) f) (a) ~\mathrm{d} s = \int_{0}^{a} e^{-\lambda s} U(a, a - s) f(a - s) ~\mathrm{d} s = - \int_{0}^{a} e^{\lambda(a - s)} U(a, s) f(s) ~\mathrm{d} s,
	\]
	completing the proof.
\end{proof}

\begin{thm}[See {\cite[Section 3.3]{CL99}}]\label{thm:evo}
	$ \omega(T_0) = s(\mathcal{B}_0) = \omega(U) $, where $ \omega(U) $ is the growth bound of $ \{U(a,s)\} $, i.e.,
	\[
	\omega(U) : = \inf \{ \omega: ~\text{there is $ M(\omega) \geq 1 $ such that}~ |U(a,s)| \leq M(\omega)e^{\omega(a - s)} ~\text{for all}~ a \geq s \}.
	\]
\end{thm}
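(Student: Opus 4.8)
The plan is to establish the two equalities through the cyclic chain
\[
s(\mathcal B_0)\ \le\ \omega(T_0)\ =\ \omega(U)\ \le\ s(\mathcal B_0),
\]
in which the middle equality and the last inequality do the real work, while $s(\mathcal B_0)\le\omega(T_0)$ is the spectral-bound/growth-bound inequality valid for every $C_0$ semigroup (cf. \autoref{thm:crit} (c)). Once these are in place all three quantities coincide, so I would isolate the genuinely hard inequality $\omega(U)\le s(\mathcal B_0)$ and dispatch the rest by elementary $L^p$ estimates.

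First I would prove $\omega(T_0)=\omega(U)$ directly from the definition \eqref{equ:how}. For $\omega(T_0)\le\omega(U)$, fix $\omega>\omega(U)$ and $C\ge1$ with $|U(a,s)|\le Ce^{\omega(a-s)}$; then for $\varphi\in L^p$,
\[
\|T_0(t)\varphi\|_p^p=\int_t^c|U(a,a-t)\varphi(a-t)|^p\,\mathrm{d}a\le C^pe^{p\omega t}\|\varphi\|_p^p,
\]
so $\|T_0(t)\|\le Ce^{\omega t}$ and $\omega(T_0)\le\omega(U)$. For the reverse, I would test against the concentrated functions $\varphi_\varepsilon=\chi_{[s,s+\varepsilon]}\otimes x$ with $|x|=1$: from \eqref{equ:how} one has $\varepsilon^{-1}\|T_0(t)\varphi_\varepsilon\|_p^p=\varepsilon^{-1}\int_{s+t}^{s+t+\varepsilon}|U(a,a-t)x|^p\,\mathrm{d}a\to|U(s+t,s)x|^p$ as $\varepsilon\to0^+$ by strong continuity of $U$, while $\|T_0(t)\varphi_\varepsilon\|_p\le\|T_0(t)\|\,\varepsilon^{1/p}$; letting $\varepsilon\to0$ yields $|U(s+t,s)x|\le\|T_0(t)\|$ for all $s,t,x$, hence $\omega(U)\le\omega(T_0)$.

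The remaining inequality $\omega(U)\le s(\mathcal B_0)$ rests on a symmetry peculiar to Howland (evolution) semigroups. For $\beta\in\mathbb R$ let $M_\beta$ be the surjective isometry $(M_\beta f)(a)=e^{i\beta a}f(a)$ on $L^p((0,c),E)$. A one-line computation from \eqref{equ:how} gives $M_\beta T_0(t)M_{-\beta}=e^{i\beta t}T_0(t)$, so at the level of generators $M_\beta\mathcal B_0 M_{-\beta}=\mathcal B_0+i\beta$. Consequently $\sigma(\mathcal B_0)=\sigma(\mathcal B_0)+i\beta$ for every $\beta$, i.e. $\sigma(\mathcal B_0)$ is a union of vertical lines, and $\|R(\lambda+i\beta,\mathcal B_0)\|=\|R(\lambda,\mathcal B_0)\|$, i.e. the resolvent norm is constant on vertical lines. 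In particular, for any $\mu>s(\mathcal B_0)$ the resolvent is uniformly bounded on the whole line $\{\operatorname{Re}\lambda=\mu\}$.

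The hard part — and the step I expect to be the main obstacle — is to convert this uniform resolvent bound on a vertical line into the exponential estimate $\|T_0(t)\|\le Me^{\mu t}$ (equivalently $\omega(U)\le\mu$), after which $\omega(U)\le s(\mathcal B_0)$ follows by letting $\mu\downarrow s(\mathcal B_0)$. In a Hilbert space this is exactly Gearhart--Pr\"uss, but on a general $L^p((0,\infty),E)$ that theorem is unavailable, and naive substitutes fail: concentrating test functions does not survive the $L^p$ normalization for $p>1$, and the iterated Volterra kernels $\tfrac{(a-s)^{n-1}}{(n-1)!}U(a,s)$ of $R(\mu,\mathcal B_0)^n$ carry $n!$ factors that destroy any pointwise control of $U$. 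The correct route exploits the explicit translation-plus-multiplication structure of $T_0$ through a transference/inverse-Laplace argument, which is precisely the content of \cite[Section 3.3]{CL99}; I would invoke it here. (When $c<\infty$ the statement is trivial, since $T_0(t)=0$ for $t\ge c$ forces all three quantities to equal $-\infty$.)
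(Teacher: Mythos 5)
Your proposal is correct and ultimately rests on the same source as the paper: Theorem \ref{thm:evo} is a recalled result for which the paper offers no proof beyond the citation to \cite[Section 3.3]{CL99}, and your argument likewise delegates the one genuinely hard inequality $\omega(T_0)\le s(\mathcal{B}_0)$ to that reference. Your elementary verifications of the remaining pieces --- $s(\mathcal{B}_0)\le\omega(T_0)$ as the general spectral-bound inequality, $\omega(T_0)=\omega(U)$ via the concentrated test functions $\chi_{[s,s+\varepsilon]}\otimes x$, the triviality of the case $c<\infty$, and the vertical-line invariance $M_\beta\mathcal{B}_0M_{-\beta}=\mathcal{B}_0+i\beta$ --- are all sound and correctly isolate where the real work (carried out in \cite{CL99}) actually lies.
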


\begin{rmk}
	The sufficient and necessary conditions such that $ \mathcal{B}_0 $ is a generator of Howland semigroup \eqref{equ:how} induced by an evolution family were given by Schnaubelt \cite[Theorem 2.6]{Sch96} (see also \cite[Theorem 2.4]{RSRV00}).
\end{rmk}

Let us use the evolution family $ \{ U(a,s) \}_{0 \leq s \leq a < c} $ to study the sum $ -\frac{\mathrm{d}}{\mathrm{d} a} + A(\cdot) $ on $ L^p((0, c), E) $ with $ D(-\frac{\mathrm{d}}{\mathrm{d} a} + A(\cdot)) = D(\frac{\mathrm{d}}{\mathrm{d} a}) \cap D(A(\cdot)) $, where
\[
\frac{\mathrm{d}}{\mathrm{d} a} : f \mapsto \frac{\mathrm{d} f}{\mathrm{d} a}, ~D(\frac{\mathrm{d}}{\mathrm{d} a}) = \{ f \in W^{1,p}((0, c), E): f(0) = 0 \},
\]
and $ A(\cdot) $ is the multiplication operator, i.e., $ (A(\cdot) f(\cdot))(a) = A(a) f(a) $, $ a \in (0,c) $ with
\[
D(A(\cdot)) = \{ f \in L^p((0, c), E): f(a) \in D(A(a)) ~\text{a.e.}~ a \in (0,c), A(\cdot) f(\cdot) \in L^p((0, c), E) \}.
\]

\begin{defi}\label{def:evo}
	We say $ \{A(a)\}_{0 \leq a < c} $ generates an exponentially bounded linear evolution family $ \{U(a, s)\}_{0 \leq s \leq a < c} $ (in $ L^p $-sense) if $ \mathcal{B}_0 $ is a closure of $ -\frac{\mathrm{d}}{\mathrm{d} a} + A(\cdot) $ (with $ D(-\frac{\mathrm{d}}{\mathrm{d} a} + A(\cdot)) = D(\frac{\mathrm{d}}{\mathrm{d} a}) \cap D(A(\cdot)) $) on $ L^p((0, c), E) $.
\end{defi}

\begin{rmk}
	Consider the following abstract non-autonomous (linear) Cauchy problems:
	\[\tag{ACP} \label{acp}
	\dot{x}(a) = A(a) x(a), ~x(s) = x_s \in D(A(s)), ~ a \geq s,
	\]
	where $ \overline{D(A(s))} = X $ for all $ s \in [0,c) $. An evolution family $ \{U(a, s)\}_{0 \leq s \leq a < c} $ is said to \emph{solve the above Cauchy problem} \eqref{acp} if for each $ s \in [0,c) $, there is a dense linear space $ Y_{s} \subset D(A(s)) $ such that for each $ x_s \in Y_{s} $, $ x(a) = U(a, s) x_s $ ($ s \leq a < c $) is $ C^1 $ and satisfies \eqref{acp} point-wisely; see, e.g., \cite{Sch96} (or \cite[Definition VI.9.1]{EN00}). Now if $ \{U(a, s)\}_{0 \leq s \leq a < c} $ solves \eqref{acp}, then $ \{A(a)\}_{0 \leq a < c} $ generates $ \{U(a, s)\}_{0 \leq s \leq a < c} $ (in $ L^p $-sense); see \cite{Sch96} (or the proof of \cite[Theorem 3.12]{CL99}).
\end{rmk}

Consider the following examples.
\begin{exa}
	\begin{enumerate}[(a)]
		\item Assume $ A_0 $ is a generator of a $ C_0 $ semigroup $ T_0 $ and $ b(\cdot) \in C(\mathbb{R}, \mathbb{R}_+) $ (such that $ \inf b(\cdot) > 0 $). Let
		\[
		A(a) = b(a) A_0, ~ 0 \leq a < c.
		\]
		Then, for $ U(a,s) = T_0(\int_{s}^{a} b(r) ~\mathrm{d}r) $, $ \{U(a, s)\}_{0 \leq s \leq a < c} $ solves \eqref{acp}.
		\item Assume $ A_0 $ is a generator of a $ C_0 $ semigroup and $ L: (0,c) \mapsto L(E, E) $ is strongly continuous and $ \sup_{t \in [0,c)} |L| < \infty $. Let
		\[
		A(a) = A_0 + L(a), ~ 0 \leq a < c.
		\]
		Then, it is easy to see $ \{A(a)\}_{0 \leq a < c} $ generates an evolution family $ \{U(a, s)\}_{0 \leq s \leq a < c} $; see, e.g., the proof of \cite[Proposition 6.23]{CL99}.
	\end{enumerate}
\end{exa}
For the above examples, using the classical solutions of \eqref{acp} to give evolution family sometimes is limited; other way by using the mild solutions in the sense of \cite[Definition 3.1.1]{ABHN11} can be found in \cite[Section 3]{Che18c}. See also \cite[Section 5.6--5.7]{Paz83} for the ``parabolic'' type (in the sense of Tanabe) and \cite[Section 5.3--5.5]{Paz83} for the ``hyperbolic'' type (in sense of Kato) which in some contexts $ \{A(a)\}_{0 \leq a < c} $ can generate evolution family (see also \cite{Sch02} for a survey).

\subsection{standard assumptions and main results}\label{sub:ass}

Hereafter, we make the following assumptions.

\begin{enumerate}[\bfseries (H1)]
	\item (about $ \{A(a)\} $) Assume $ \{A(a)\}_{0 \leq a < c} $ generates an exponentially bounded linear evolution family $ \{U(a, s)\}_{0 \leq s \leq a < c} $; see \autoref{def:evo}. Let $ T_0 $ be its corresponding Howland semigroup (see \eqref{equ:how}) with the generator $ \mathcal{B}_0 $.
	\item (about $ \{C(a)\} $) Assume $ C(\cdot): (0, c) \to \mathcal{L}(E, E) $ is strongly continuous. In addition, there is a positive function $ \gamma \in L^{p'}(0,c) $ such that $ |C(a)| \leq \gamma(a) $ where $ 1/{p'} + 1/p = 1 $.
\end{enumerate}

Set $ X = E \times L^p((0, c), E) $, $ X_0 = \{0\} \times L^p((0, c), E) $, and
\begin{equation}\label{equ:boundary}
\mathcal{L}: X_0 \to X, (0,\varphi) \mapsto (L\varphi,0), ~ L\varphi = \int_{0}^{c} C(a)\varphi(a) ~\mathrm{d} a.
\end{equation}
Moreover, there is a unique closed operator $ \mathcal{A} $ with $ \overline{D(\mathcal{A})} = X_0 $ such that for all $ (y, f) \in X $ and $ \lambda \in \rho(\mathcal{B}_0) $, $ R(\lambda, \mathcal{A}) (y, f) = (0, \varphi) $, where
\[
\varphi(a) = e^{-\lambda a} U(a, 0) y + (R(\lambda, \mathcal{B}_0)f)(a),~ a \in (0, c),
\]
see \cite[Lemma 6.2]{MR07}; \cite{Thi91} contains more descriptions of $ \mathcal{A} $.
Note that by the assumption \textbf{(H2)} (on $ \{C(a)\} $), we see $ L $ and so $ \mathcal{L} $ are bounded.
Now, the solutions of the age-structured population model \eqref{equ:age} can be interpreted as the mild solutions of the following abstract Cauchy problem:
\[\label{equ:CP}\tag{CP}
\begin{cases}
\dot{U}(t) = (\mathcal{A} + \mathcal{L}) U(t),\\
U(0) = U_0 \in X_0.
\end{cases}
\]
Recall that $ U \in C([0,\tau], X) $ is called a \emph{mild solution} of \eqref{equ:CP} if $ \int_{0}^{s} U(r) ~\mathrm{d} r \in D(\mathcal{A} + \mathcal{L}) $ and
\[
U(s) = U(0) + (\mathcal{A} + \mathcal{L}) \int_{0}^{s} U(r) ~\mathrm{d} r, ~s \in [0,\tau],
\]
see, e.g., \cite[Definition 3.1.1]{ABHN11}.
\begin{defi}\label{defi:age}
	If $ U = (0, u) $ is a mild solution of \eqref{equ:CP}, then $ u $ is called a mild solution of the age-structured population model \eqref{equ:age}.
\end{defi}
Note that if $ u \in W^{1, p}([0, \tau] \times [0, c), E) $ satisfies \eqref{equ:age} a.e. (or $ u \in C^1([0, \tau] \times [0, c], E) $ with $ c < \infty $ satisfies \eqref{equ:age} point-wisely), then $ u $ is a mild solution of \eqref{equ:age}. See \autoref{lem:presentation} for a characterization of mild solutions of the model \eqref{equ:age} in terms of themselves.

By a simple computation, we see $ \rho(\mathcal{A}) = \rho(\mathcal{B}_0) $, $ \mathcal{A}_0: = \mathcal{A}_{X_0} = 0 \times \mathcal{B}_0 $ and for $ \lambda \in \rho(\mathcal{A}) $,
\begin{equation}\label{equ:re}
R(\lambda, \mathcal{A})^n (y, f) = (0,\varphi_n), ~\varphi_n (a) = \frac{1}{(n -1)!} a^{n-1} e^{-\lambda a} U(a, 0) y + (R(\lambda, \mathcal{B}_0)^n f) (a).
\end{equation}
Particularly, we obtain
\begin{lem}
	If $ p = 1 $, then $ \mathcal{A} $ is a Hille--Yosida operator and so is $ \mathcal{A} + \mathcal{L} $.
\end{lem}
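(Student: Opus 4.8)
The plan is to verify directly the Hille--Yosida resolvent estimate, namely that there are constants $M \geq 1$ and $\omega \in \mathbb{R}$ with $(\omega, \infty) \subset \rho(\mathcal{A})$ and $\|R(\lambda, \mathcal{A})^n\|_{\mathcal{L}(X)} \leq M (\lambda - \omega)^{-n}$ for all $\lambda > \omega$ and all $n \geq 1$; the explicit formula \eqref{equ:re} for $R(\lambda, \mathcal{A})^n$ reduces this to estimating two terms. Since $\rho(\mathcal{A}) = \rho(\mathcal{B}_0)$ and $\mathcal{A}_0 = 0 \times \mathcal{B}_0$ generates the Howland semigroup $T_0$, I first record that $T_0$ is exponentially bounded with the \emph{same} constants as the evolution family: from \eqref{equ:how} and condition (3) in the definition of evolution family, $|U(a, a-t)| \leq C e^{\omega t}$, so $\|T_0(t)\varphi\|_{L^1} = \int_t^c |U(a, a-t)\varphi(a-t)|\,\mathrm{d}a \leq C e^{\omega t} \|\varphi\|_{L^1}$ and thus $\|T_0(t)\|_{\mathcal{L}(L^1)} \leq C e^{\omega t}$ (consistent with \autoref{thm:evo}).

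Next, fixing $\lambda > \omega$ and $(y, f) \in X$ and writing $R(\lambda, \mathcal{A})^n (y, f) = (0, \varphi_n)$ as in \eqref{equ:re}, I would estimate the boundary term using $|U(a, 0)| \leq C e^{\omega a}$ together with the Gamma integral $\int_0^\infty a^{n-1} e^{-(\lambda - \omega) a}\,\mathrm{d}a = (n-1)!\,(\lambda - \omega)^{-n}$, which gives, crucially because $p = 1$,
\[
\frac{1}{(n-1)!}\left\| a^{n-1} e^{-\lambda a} U(a, 0) y \right\|_{L^1} \leq \frac{C |y|}{(n-1)!}\int_0^\infty a^{n-1} e^{-(\lambda - \omega) a}\,\mathrm{d}a = \frac{C |y|}{(\lambda - \omega)^n}.
\]
The factor $1/(n-1)!$ cancels exactly against the Gamma integral, which is precisely the feature that fails for $p > 1$. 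For the second term, the Hille--Yosida estimate for $T_0$, obtained from $R(\lambda, \mathcal{B}_0)^n = \frac{1}{(n-1)!}\int_0^\infty t^{n-1} e^{-\lambda t} T_0(t)\,\mathrm{d}t$ and the bound above, yields $\|R(\lambda, \mathcal{B}_0)^n f\|_{L^1} \leq C (\lambda - \omega)^{-n}\|f\|_{L^1}$. Adding the two estimates and using $\|(y, f)\|_X = |y| + \|f\|_{L^1}$ gives $\|\varphi_n\|_{L^1} \leq C (\lambda - \omega)^{-n}\|(y, f)\|_X$, hence $\|R(\lambda, \mathcal{A})^n\|_{\mathcal{L}(X)} \leq C (\lambda - \omega)^{-n}$, which proves that $\mathcal{A}$ is a Hille--Yosida operator.

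For $\mathcal{A} + \mathcal{L}$, I would invoke that bounded perturbations preserve the Hille--Yosida class: $\mathcal{L} \in \mathcal{L}(X_0, X)$ is bounded by assumption \textbf{(H2)}, a Hille--Yosida operator is a $1$-quasi Hille--Yosida operator by \autoref{lem:pHY} (a), and by \autoref{thm:mrper} the sum $\mathcal{A} + \mathcal{L}$ is again a $1$-quasi Hille--Yosida, i.e.\ Hille--Yosida, operator. The argument is essentially bookkeeping of constants; the only genuine point --- and the reason the statement is confined to $p = 1$ --- is the exact cancellation in the boundary term's $L^1$ estimate, so I expect no real obstacle beyond ensuring that the exponential bound of the Howland semigroup is the one inherited from the evolution family.
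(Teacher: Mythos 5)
Your proposal is correct and is essentially the paper's own (implicit) proof: the lemma is stated there as a direct consequence of the resolvent formula \eqref{equ:re}, the point being precisely the cancellation of $1/(n-1)!$ against the Gamma integral that you isolate (together with the standard Hille--Yosida bound for powers of $R(\lambda,\mathcal{B}_0)$ on $L^1$), and the perturbation statement follows because bounded perturbations preserve the Hille--Yosida class. The only slip is one of citation: \autoref{lem:pHY}~(a) states the implication ``$1$-quasi Hille--Yosida $\Rightarrow$ Hille--Yosida'', not its converse, so to route through \autoref{thm:mrper} you should instead invoke the equivalence recorded in the remark following \autoref{lem:almost} (Hille--Yosida $\Leftrightarrow$ locally bounded semi-$1$-variation, i.e.\ $1$-quasi Hille--Yosida), or simply cite \cite{KH89}, as the paper does in its introduction, for exactly this bounded-perturbation statement.
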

Consider the case $ p > 1 $. Since $ \mathcal{A}_0 $ ($ \cong \mathcal{B}_0 $) generates a $ C_0 $ semigroup $ (0, T_0) $, we know $ \mathcal{A} $ generates an integrated semigroup $ S_{\mathcal{A}} $ defined by (see \eqref{equ:biaodashi})
\[
S_{\mathcal{A}} (t) (y, f) = (0, U_0(t)y + \int_{0}^{t}T_0(s) f ~\mathrm{d} s),
\]
where
\[
(U_0(t)y)(a) = \begin{cases}
U(a, 0)y, & a \leq t,\\
0, & a > t.
\end{cases}
\]

For $ f_1 \in L^p((0,t_1), E) $ and $ f_2 \in L^p((0,t_1), L^p((0,c), E)) $ ($ \cong L^p((0,t_1) \times (0,c), E) $), we have
\begin{equation}\label{equ:sa}
(S_{\mathcal{A}} \diamond (f_1, f_2))(t) (a) =
\begin{cases}
(0, U(a, 0)f_1(t-a) + (T_0 * f_2)(t)(a)),  & a \leq t, \\
(0, (T_0 * f_2)(t)(a)), & a > t.
\end{cases}
\end{equation}
So, it's clear to see that there exists $ \widehat{M} \geq 1 $ independent of $ (f_1, f_2) $ and $ t_1 $ such that
\[
(\int_{0}^{c} |(S_{\mathcal{A}} \diamond (f_1, f_2))(t) (a)|^p ~\mathrm{d} a)^{1/p} \leq \widehat{M} \left\|e^{\omega(t_1-\cdot)}(f_1, f_2)(\cdot) \right\|_{L^p(0,t_1;X)}, ~\forall t \in [0,t_1].
\]
That is, we have the following; see also the proof of \cite[Theorem 6.6]{MR07} and the following of Proposition 5.6 in \cite{Thi08}.
\begin{lem}
	If $ p > 1 $, then $ \mathcal{A} $ is a $ p $-quasi Hille--Yosida operator and so is $ \mathcal{A} + \mathcal{L} $.
\end{lem}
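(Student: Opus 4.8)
The plan is to verify the two defining conditions of \autoref{def:pHY} for $\mathcal{A}$ and then to obtain the statement for $\mathcal{A}+\mathcal{L}$ for free. Condition (a) is already in hand: $\mathcal{A}_0 \cong \mathcal{B}_0$ generates the $C_0$ semigroup $(0,T_0)$, so $\mathcal{A}$ generates the integrated semigroup $S_{\mathcal{A}}$ whose action on a convolution is given explicitly by \eqref{equ:sa}. Thus the whole content is the $L^p$ estimate \eqref{equ:pHY}, and once it is established, \autoref{thm:mrper} (applied with $\mathcal{L}\in\mathcal{L}(X_0,X)$, which is bounded by \textbf{(H2)}) immediately yields that $\mathcal{A}+\mathcal{L}$ is again a $p$-quasi Hille--Yosida operator.

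To prove the estimate I would fix $t\in[0,t_1]$ and read off from \eqref{equ:sa} that the single nonzero component of $(S_{\mathcal{A}}\diamond(f_1,f_2))(t)$ is $U(\cdot,0)f_1(t-\cdot)\mathbf{1}_{\{a\le t\}}+(T_0*f_2)(t)$, and split its $L^p((0,c),E)$ norm by Minkowski's inequality into a \emph{boundary} part and an \emph{interior} part. Let $C\ge 1$, $\omega\in\mathbb{R}$ satisfy $|U(a,s)|\le Ce^{\omega(a-s)}$, and fix once and for all $\widehat\omega>\omega$. The boundary part is routine: the substitution $s=t-a$ together with $|U(a,0)|\le Ce^{\omega a}$ bounds it by $C\,\|e^{\widehat\omega(t-\cdot)}f_1\|_{L^p(0,t;E)}$, using $\widehat\omega\ge\omega$.

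The interior part $\|(T_0*f_2)(t)\|_{L^p}$ is the only delicate point, because the naive bound $\|(T_0*f_2)(t)\|_{L^p}\le\int_0^t\|T_0(t-s)\|\,\|f_2(s)\|_{L^p}\,\mathrm{d}s$ followed by H\"{o}lder produces the factor $(\int_0^t e^{\omega u}\,\mathrm{d}u)^{1/p'}$, which is not bounded uniformly in $t$ when $\omega>0$; this is exactly why $\widehat M$ could a priori depend on $t_1$. To avoid this I would work pointwise in $a$, exploiting the characteristic/shift structure of the Howland semigroup. Writing the component explicitly and substituting the time variable $\tau=\sigma+t-a$ (so that $|U(a,\sigma)|\le Ce^{\omega(t-\tau)}$ along the characteristic) gives
\[
|(T_0*f_2)(t)(a)|\le C\int_{\max(0,t-a)}^{t}e^{\omega(t-\tau)}\,|f_2(\tau)(\tau+a-t)|\,\mathrm{d}\tau .
\]
The key manoeuvre is to split $e^{\omega(t-\tau)}=e^{(\omega-\widehat\omega)(t-\tau)}\,e^{\widehat\omega(t-\tau)}$ before applying H\"{o}lder in $\tau$; since $\omega-\widehat\omega<0$, the resulting factor $\bigl(\int_0^{\infty}e^{p'(\omega-\widehat\omega)u}\,\mathrm{d}u\bigr)^{1/p'}=(p'(\widehat\omega-\omega))^{-1/p'}$ is finite and independent of $t$, $a$ and $t_1$. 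Raising to the $p$-th power, integrating in $a$, and then applying Fubini with the change of variables $b=\tau+a-t$ collapses the spatial integral to $\int_0^{c-t+\tau}|f_2(\tau)(b)|^p\,\mathrm{d}b\le\|f_2(\tau)\|_{L^p}^p$, so that $\|(T_0*f_2)(t)\|_{L^p}\le \widehat M'\,\|e^{\widehat\omega(t-\cdot)}f_2\|_{L^p(0,t;L^p)}$ with $\widehat M'$ independent of $t_1$.

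Combining the two parts through Minkowski's inequality yields \eqref{equ:pHY} with a single constant $\widehat M$ and the chosen $\widehat\omega$, verifying condition (b) and hence showing that $\mathcal{A}$ is a $p$-quasi Hille--Yosida operator; the assertion for $\mathcal{A}+\mathcal{L}$ then follows from \autoref{thm:mrper}. The main obstacle, as indicated above, is securing a constant $\widehat M$ that is genuinely uniform in $t_1$: this forces one to use the explicit shift form of $T_0$ via the change of variables and Fubini rather than the black-box semigroup norm, and to spend a strictly larger weight $\widehat\omega>\omega$ in the H\"{o}lder step so that the offending exponential factor becomes integrable on $(0,\infty)$.
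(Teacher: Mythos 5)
Your proof is correct and follows essentially the paper's own route: the paper likewise verifies condition (a) of \autoref{def:pHY} via $ \mathcal{A}_0 \cong \mathcal{B}_0 $, reads the estimate \eqref{equ:pHY} off the explicit formula \eqref{equ:sa} (deferring the actual computation to the cited proofs in \cite{MR07} and \cite{Thi08}), and obtains the assertion for $ \mathcal{A} + \mathcal{L} $ from \autoref{thm:mrper} exactly as you do, so your contribution is simply to carry out the $ L^p $ estimate in detail, including the correct observation that uniformity in $ t_1 $ requires spending a weight $ \widehat{\omega} $ strictly larger than $ \omega $. One caveat: your stated reason for working pointwise in $ a $ is not quite right --- the ``naive'' bound $ \|(T_0 * f_2)(t)\|_{L^p} \leq C \int_0^t e^{\omega(t-s)} \|f_2(s)\|_{L^p} \,\mathrm{d}s $ is also made uniform in $ t $ and $ t_1 $ by the very same splitting $ e^{\omega(t-s)} = e^{(\omega - \widehat{\omega})(t-s)} e^{\widehat{\omega}(t-s)} $ with $ \widehat{\omega} > \omega $ applied before H\"older, so the characteristic/Fubini detour, while valid, is not actually forced.
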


As $ \mathcal{A} $ is a quasi Hille--Yosida operator and thus the mild solutions of the linear equation \eqref{equ:CP} are given by
\[
U(t) = T_{\mathcal{A}_0} (t) U(0) + (S_{\mathcal{A}} \diamond \mathcal{L} U(\cdot))(t), ~U(0) \in X_0,
\]
returning to the second component of $ U = (0, u) $, by using $ T_{\mathcal{A}_0} = (0, T_{0}) $ and \eqref{equ:sa}, we obtain the following representation of the mild solutions of \eqref{equ:age} defined in \autoref{defi:age}; see also \cite[Section IV]{Thi91}.

\begin{lem}\label{lem:presentation}
	$ u $ is a mild solution of \eqref{equ:age} with $ u(0, \cdot) = u_0 \in L^p((0, c), E) $ if and only if $ u $ satisfies
	\[
	u(t, a) = \begin{cases}
	U(a, a - t)u_0(a - t), & a \geq t, \\
	U(a, 0) \int_{0}^{c} C(s) u(t - a, s) ~\mathrm{d}s, & a \in [0, t].
	\end{cases}
	\]
\end{lem}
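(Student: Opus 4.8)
The plan is to reduce the statement to a direct componentwise reading of the variation-of-constants representation displayed just before the lemma, namely that for a quasi Hille--Yosida operator $\mathcal{A}$ the mild solutions of \eqref{equ:CP} are exactly the $U \in C([0,\tau], X)$ satisfying $U(t) = T_{\mathcal{A}_0}(t)U(0) + (S_{\mathcal{A}} \diamond \mathcal{L}U(\cdot))(t)$ with $U(0) \in X_0$ (this equivalence is the content of the perturbation theory, cf.\ \autoref{thm:mrper} and the fixed-point description). By \autoref{defi:age}, $u$ is a mild solution of \eqref{equ:age} precisely when $U = (0,u)$ is a mild solution of \eqref{equ:CP}. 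Since I will show that this integral equation, read in its second coordinate, is \emph{identically} the asserted renewal formula, both implications of the ``if and only if'' drop out at once: the two conditions on $u$ are literally the same equation.

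First I would evaluate the unperturbed term. Writing $U(0) = (0, u_0)$ and using $T_{\mathcal{A}_0} = (0, T_0)$, the first summand equals $(0, T_0(t)u_0)$, and the Howland formula \eqref{equ:how} gives $(T_0(t)u_0)(a) = U(a, a-t)u_0(a-t)$ for $a \geq t$ and $0$ for $a \in [0,t]$. This reproduces the branch $a \geq t$ of the claimed formula and contributes nothing on $a \leq t$.

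Next I would evaluate the perturbation term. Because $\mathcal{L}(0,\varphi) = (L\varphi, 0)$ with $L\varphi = \int_0^c C(a)\varphi(a)\,\mathrm{d}a$, along the solution one has $\mathcal{L}U(s) = (f_1(s), 0)$ with $f_1(s) = \int_0^c C(s')u(s, s')\,\mathrm{d}s'$ and vanishing second component. Substituting $(f_1, 0)$ into \eqref{equ:sa}, the factor $f_2 = 0$ annihilates the $(T_0 * f_2)$ contributions, so the convolution collapses to $(0, U(a,0)f_1(t-a))$ for $a \leq t$ and $(0,0)$ for $a > t$. Reading the second coordinate then yields $U(a,0)\int_0^c C(s)u(t-a, s)\,\mathrm{d}s$ on $a \in [0,t]$ and nothing on $a \geq t$, i.e.\ the branch $a \in [0,t]$. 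Adding the two terms produces the full piecewise expression.

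I do not anticipate a deep obstacle, since the argument is mechanical once \eqref{equ:how} and \eqref{equ:sa} are available; the care points are (i) confirming that the integral equation characterizes mild solutions of \eqref{equ:CP} in \emph{both} directions (so the ``if'' part is legitimate and not merely ``only if''), which is exactly why I lean on the representation taken as given above, and (ii) checking the integrability needed to apply \eqref{equ:sa}, i.e.\ that $f_1(s) = Lu(s,\cdot)$ lies in $L^p((0,t_1), E)$ on each finite interval, which holds because $u$ is continuous into $L^p((0,c),E)$ and $L$ is bounded by \textbf{(H2)}. The one genuinely error-prone spot is the bookkeeping of the shifts (the argument $a-t$ of $u_0$ in the first branch versus $t-a$ in the second, and the base points of $U$), which I would verify line by line against \eqref{equ:how} and \eqref{equ:sa}.
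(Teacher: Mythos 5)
Your proposal is correct and follows essentially the same route as the paper: the paper's own justification is exactly the paragraph preceding the lemma, which reads the variation-of-constants identity $U(t) = T_{\mathcal{A}_0}(t)U(0) + (S_{\mathcal{A}} \diamond \mathcal{L}U(\cdot))(t)$ in its second component using $T_{\mathcal{A}_0} = (0, T_0)$, \eqref{equ:how} and \eqref{equ:sa}, just as you do. Your added care points (the two-sided equivalence via the fixed-point/perturbation theory of \autoref{thm:mrper}, and the local $L^p$-integrability of $s \mapsto Lu(s,\cdot)$) are sound and only make explicit what the paper leaves implicit.
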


\begin{rmk}\label{rmk:noHY}
	If there are a function $ h: (0, \infty) \mapsto \mathbb{R}_+ $ and $ y \in E $ such that $ |U(a, 0)y| \geq h(a) $ and
	\[
	\limsup_{\lambda \to \infty} \lambda^p \int_{0}^{\infty} e^{-\lambda p a} h^p(a) da > 0,
	\]
	then from \eqref{equ:re}, we know $ \mathcal{A} $ is not a Hille--Yosida operator when $ p > 1 $ and $ c = \infty $. For instance, $ U(a, 0) = I $ for all $ a \in (0,\infty) $.
\end{rmk}

Next, let us compute $ \mathcal{L}S_{\mathcal{A}}\diamond \mathcal{L}(0, T_0) = \mathcal{L}S_{\mathcal{A}}\diamond (LT_0, 0) $. Write $ \mathcal{L}S_{\mathcal{A}}\diamond (LT_0, 0) = (F, 0) $, then by \eqref{equ:sa}, we get
\[
F(t) = \begin{cases}
\int_{0}^{t} C(t-a) U(t-a, 0) LT_0(a) ~\mathrm{d}a, & t \leq c,\\
\int_{t-c}^{t} C(t-a) U(t-a, 0) LT_0(a) ~\mathrm{d}a, & t > c.
\end{cases}
\]

\begin{lem}\label{lem:pre}
	\begin{enumerate}[(a)]
		\item Suppose $ U(\cdot, 0) $ is norm continuous on $ (0, c) $, and $ C(\cdot) $ is norm measurable on $ (0, c) $ if $ p > 1 $ and is (essentially) norm continuous if $ p = 1 $. Then, $ F $ (i.e., $ \mathcal{L}S_{\mathcal{A}}\diamond (LT_0, 0) $) is norm continuous on $ (0, \infty) $.
		\item If $ U(\cdot, 0) $ is compact on $ (0, c) $, then $ F $ (i.e., $ \mathcal{L}S_{\mathcal{A}}\diamond (LT_0, 0) $) is norm continuous and compact on $ (0, \infty) $.
	\end{enumerate}
\end{lem}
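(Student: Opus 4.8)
The plan is to work directly from the explicit formula for $F$ displayed above, reading it as a truncated operator-valued convolution. Writing $\widetilde{K}(b) := C(b)U(b,0)$ for $b \in (0,c)$ and $\widetilde{K}(b) := 0$ for $b \geq c$, an element of $\mathcal{L}(E)$, and keeping $LT_0(a) \in \mathcal{L}(X_0, E)$, one checks that in both cases $t \leq c$ and $t > c$ the formula collapses to the single convolution
\[
F(t) = \int_0^t \widetilde{K}(t-a)\, LT_0(a)\,\mathrm{d}a, \qquad t > 0.
\]
Two a priori bounds are needed. First, from the Howland formula for $T_0$ and \textbf{(H2)}, Hölder's inequality gives $\|LT_0(a)\|_{\mathcal{L}(X_0,E)} \leq C e^{\omega a}\|\gamma\|_{L^{p'}}$, so $a \mapsto LT_0(a)$ is strongly continuous and \emph{uniformly bounded on compact $t$-intervals}. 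Second, $\|\widetilde{K}(b)\|_{\mathcal{L}(E)} \leq C \gamma(b) e^{\omega b}$, which lies in $L^1_{\mathrm{loc}}$ (by Hölder, since $\gamma \in L^{p'}$); moreover under the hypothesis of (a) the map $\widetilde{K}$ is \emph{norm measurable} — for $p>1$ as a product of the norm-measurable $C(\cdot)$ and the norm-continuous $U(\cdot,0)$, for $p=1$ being directly norm continuous — so that $\widetilde{K} \in L^1_{\mathrm{loc}}((0,\infty), \mathcal{L}(E))$ in the Bochner sense.

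For (a) I would then prove norm continuity of $F$ by the classical fact that convolution with an $L^1$ kernel is norm continuous. Splitting
\[
F(t+h) - F(t) = \int_0^t \big(\widetilde{K}(t+h-a) - \widetilde{K}(t-a)\big) LT_0(a)\,\mathrm{d}a + \int_t^{t+h} \widetilde{K}(t+h-a) LT_0(a)\,\mathrm{d}a,
\]
the second term is bounded in operator norm by $\sup_{[0,t+1]}\|LT_0\| \int_0^{|h|}\|\widetilde{K}(b)\|\,\mathrm{d}b \to 0$, while the first is bounded by $\sup_{[0,t]}\|LT_0\| \cdot \|\widetilde{K}(\cdot + h) - \widetilde{K}\|_{L^1(0,t)}$, which tends to $0$ by continuity of translation in $L^1((0,\infty), \mathcal{L}(E))$. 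The same estimate for $h < 0$ gives norm continuity on $(0,\infty)$.

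For (b) the first point is that compactness of $U(\cdot,0)$ forces its norm continuity: writing $U(a,0) = U(a,s)U(s,0)$ with $U(s,0)$ compact and using that the uniformly bounded, strongly continuous family $U(\cdot,s)$ converges uniformly on the compact set $\overline{U(s,0)B_E}$, one gets $\|U(a,0) - U(a_0,0)\| \to 0$. Hence $\widetilde{K} = C(\cdot)U(\cdot,0)$ is norm continuous on $(0,c)$ — the compactness of $U(\cdot,0)$ lets one pass the merely strongly continuous $C(\cdot)$ of \textbf{(H2)} to a norm estimate, again via uniform convergence on a compact image — so $\widetilde{K} \in L^1_{\mathrm{loc}}$ and part (a) already yields norm continuity of $F$. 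For compactness of each $F(t)$ I would split $F(t) = F_\varepsilon(t) + R_\varepsilon(t)$ at $a = t-\varepsilon$; since $\|R_\varepsilon(t)\| \leq \sup\|LT_0\| \int_0^\varepsilon \|\widetilde{K}(b)\|\,\mathrm{d}b \to 0$, it suffices to show each $F_\varepsilon(t) = \int_0^{t-\varepsilon} \widetilde{K}(t-a) LT_0(a)\,\mathrm{d}a$ is compact. Factor $F_\varepsilon(t) = \Psi_\varepsilon \circ \Phi_\varepsilon$, where $\Phi_\varepsilon \colon X_0 \to C([0,t-\varepsilon], E)$, $\varphi \mapsto LT_0(\cdot)\varphi$, is bounded, and $\Psi_\varepsilon \colon g \mapsto \int_0^{t-\varepsilon} \widetilde{K}(t-a) g(a)\,\mathrm{d}a$. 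On $[0,t-\varepsilon]$ the argument $t-a$ stays in $[\varepsilon, t]$, where $\widetilde{K}$ is uniformly norm continuous; approximating it there in the sup-norm by $\mathcal{L}(E)$-valued step functions $\widetilde{K}_P$ (constant on the pieces $I_j$ of a partition) gives operators $\Psi_\varepsilon^P g = \int_0^{t-\varepsilon} \widetilde{K}_P(t-a) g(a)\,\mathrm{d}a = \sum_j \widetilde{K}(t-a_j) \int_{I_j} g$, each compact because $\widetilde{K}(t-a_j)$ is compact and $g \mapsto \int_{I_j} g$ is bounded into $E$; since $\|\Psi_\varepsilon - \Psi_\varepsilon^P\| \leq (t-\varepsilon)\sup\|\widetilde{K} - \widetilde{K}_P\| \to 0$, $\Psi_\varepsilon$ is compact, hence so is $F_\varepsilon(t)$. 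Letting $\varepsilon \to 0$ exhibits $F(t)$ as a norm limit of compact operators.

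The main obstacle — and the reason the two parts cannot be collapsed into one Riemann-sum computation — is the endpoint singularity of the kernel at $b = 0$: since $U(b,0) \to I$ as $b \to 0^+$ while each $U(b,0)$ is compact, $\widetilde{K}$ is not norm continuous up to $0$ (when $\dim E = \infty$), and simultaneously $LT_0$ is only strongly, not norm, continuous. Norm continuity of $F$ is saved by treating $F$ as a genuine Bochner convolution and invoking $L^1$-continuity of translation (which needs only $\widetilde{K} \in L^1_{\mathrm{loc}}$), while compactness is saved by peeling off the short, norm-small tail near the diagonal before approximating the remaining regular part by compact step-kernel operators.
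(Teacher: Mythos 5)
Your proposal is correct in substance, and although it rests on the same three pillars as the paper's proof --- domination by $\gamma$ via H\"older, continuity of translation for Bochner-integrable functions, and the fact that composing a strongly convergent family with a fixed compact operator gives norm convergence --- it is organized differently enough to be worth comparing. For (a), the paper first truncates at $a=t-\epsilon$ (the error being controlled by $\int_0^\epsilon\gamma$), splits $F_\epsilon(t+h)-F_\epsilon(t)$ into a term $R_1$ carrying differences of $C$ and a term $R_2$ carrying differences of $U$, and kills $R_1$ by translation continuity of $C$ itself in $L^{p'}$ (essential norm continuity when $p'=\infty$), H\"oldered against the $L^p$-bound of $LT_0$. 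You instead keep the full convolution, absorb $C(\cdot)U(\cdot,0)$ into a single kernel $\widetilde K\in L^1_{\mathrm{loc}}((0,\infty),\mathcal{L}(E))$, and use translation continuity in $L^1$ against the sup-bound of $LT_0$: no truncation is needed in (a), and it becomes transparent that norm measurability of $C$ (for $p>1$) enters only to make $\widetilde K$ Bochner measurable. For (b), the paper keeps the limit under the integral sign and invokes dominated convergence, the pointwise norm convergence $\{C(t+h-a)-C(t-a)\}U(t-a,0)\to0$ being supplied by compactness; you use exactly the same mechanism but pointwise, to conclude that $\widetilde K$ is norm continuous on $(0,c)$, after which (a) applies verbatim --- a clean reduction. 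Your derivation of norm continuity of $U(\cdot,0)$ from compactness (factoring through a fixed $U(s,0)$) is the paper's idea, and in fact handles left and right continuity simultaneously, which the paper dismisses as ``similarly''. Finally, for compactness of $F_\varepsilon(t)$ you reprove by step-kernel approximation the standard fact that the paper simply cites as \cite[Theorem C.7]{EN00}.

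One step needs a small patch. Your compactness argument asserts that $\widetilde K$ is uniformly norm continuous on $[\varepsilon,t]$; this is guaranteed only when $t<c$. If $c<\infty$ and $t\ge c$, the kernel is cut off to $0$ at $b=c$ while $\|\widetilde K(b)\|$, bounded only by $\gamma(b)\sup\|U\|$ with $\gamma\in L^{p'}(0,c)$, need have no limit (and for $p>1$ may even blow up) as $b\to c^-$; so uniform step-function approximation on $[\varepsilon,t]$ can fail. The remedy is the same absolute-continuity truncation you already use at $b=0$: discard also $\{a:\,t-a\in(c-\eta,c)\}$, whose contribution is at most $\sup\|LT_0\|\int_{c-\eta}^{c}\gamma(b)e^{\omega b}\,\mathrm{d}b\to0$ as $\eta\to0$, and approximate the kernel on $[\varepsilon,c-\eta]$ only. (The paper sidesteps this by declaring ``we only consider the case $t<c$''; note that your part (a), by contrast, genuinely covers all $t>0$ as stated.)
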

\begin{proof}
	We only consider the case $ t < c $. Set
	\[
	F_{\epsilon}(t) = \int_{0}^{t-\epsilon} C(t-a) U(t-a, 0) LT_0(a) ~\mathrm{d}a.
	\]
	Note that by the condition on $ \{C(a)\} $ and the boundedness of $ \{U(t-a, 0) LT_0(a)\}_{0 \leq a \leq t} $, we see $ F_{\epsilon}(t) \to F(t) $ as $ \epsilon \to 0 $ in the uniform operator topology. So it suffices to consider $ F_{\epsilon}(t) $. Let $ 0 < |h| < \epsilon $ (with $ \epsilon $) be small and let $ p' $ satisfy $ 1/{p'} + 1/p = 1 $ ($ 1 < p' \leq \infty $). In the following, denote by $ \widetilde{C} > 0 $ the universal constant independent of $ h $ which might be different line by line.
	\begin{align*}
	& |F_{\epsilon}(t+h) - F_{\epsilon}(t)| \\
	= &~ |\int_{0}^{t + h -\epsilon} C(t + h - a) U(t + h - a, 0) LT_0(a) ~\mathrm{d}a - \int_{0}^{t -\epsilon} C(t - a) U(t - a, 0) LT_0(a) ~\mathrm{d}a| \\
	= & ~ |\int_{t - \epsilon}^{t + h -\epsilon} C(t + h - a) U(t + h - a, 0) LT_0(a) ~\mathrm{d}a \\
	& \quad + \int_{0}^{t -\epsilon} \{C(t + h - a) U(t + h - a, 0) - C(t - a) U(t - a, 0)\} LT_0(a) ~\mathrm{d}a |\\
	& \leq \widetilde{C} h + R_1 + R_2,
	\end{align*}
	where
	\begin{align*}
	R_1 & = \int_{0}^{t -\epsilon} |\{C(t + h - a) - C(t - a)\} U(t - a, 0) LT_0(a)| ~\mathrm{d}a, \\
	& \leq \left(\int_{0}^{t -\epsilon} |\{C(t + h - a) - C(t - a)\}U(t - a, 0)|^{p'} ~\mathrm{d}a\right)^{1/{p'}} \left(\int_{0}^{t -\epsilon} |LT_0(a)|^p ~\mathrm{d}a\right)^{1/p} \\
	& \leq \widetilde{C} \left(\int_{0}^{t -\epsilon} |\{C(t + h - a) - C(t - a)\}U(t - a, 0)|^{p'} ~\mathrm{d}a\right)^{1/{p'}},
	\end{align*}
	and
	\begin{align*}
	R_2 & = \int_{0}^{t -\epsilon} |C(t + h - a) \{ U(t + h - a, 0) - U(t - a, 0)\} LT_0(a)| ~\mathrm{d}a \\
	& \leq \left(\int_{0}^{t -\epsilon} |C(t + h - a)|^{p'} ~\mathrm{d} a\right)^{1/{p'}} \left(\int_{0}^{t -\epsilon} |\{ U(t + h - a, 0) - U(t - a, 0)\} LT_0(a)|^{p} ~\mathrm{d}a\right)^{1/p} \\
	& \leq \widetilde{C} \left(\int_{0}^{t -\epsilon} |U(t + h - a, 0) - U(t - a, 0)|^{p} ~\mathrm{d}a\right)^{1/p}.
	\end{align*}

	To prove (a), by the condition on $ \{U(a, 0)\} $, we know $ R_2 \to 0 $ as $ h \to 0 $, and by the condition on $ \{C(a)\} $, we get
	\[
	R_1 \leq \widetilde{C} \left(\int_{0}^{t -\epsilon} |C(t + h - a) - C(t - a)|^{p'} ~\mathrm{d}a\right)^{1/{p'}} \to 0, ~\text{as}~ \to 0,
	\]
	since $ C(\cdot): (0,c) \to \mathcal{L}(E, E) $ is Bochner $ p' $-integrable (in the uniform operator topology) if $ p' < \infty $ and is (essentially) norm continuous if $ p' = \infty $. This shows that $ F_{\epsilon} $ is norm continuous at $ t $.

	To prove (b), note first that if $ U(\cdot, 0) $ is compact on $ (0, c) $, then it is also norm continuous on $ (0, c) $. Indeed, if $ s > 0 $ and $ a > 0 $, then
	\[
	U(a+s, 0) - U(a,0) = (U(a+s, a) - I)U(a,0) \to 0,
	\]
	as $ s \to 0^+ $ in the uniform operator topology due to the compactness of $ U(a,0) $; the left norm continuity can be considered similarly. In particular, $ R_2 \to 0 $ as $ h \to 0 $. Since $ \{|C(a)|\}_{\epsilon \leq a \leq 2t} $ is bounded (due to the strong continuity of $ C(\cdot) $ on $ (0,c) $) and $ U(\cdot, 0) $ is compact on $ [\epsilon, t] $, by the Lebesgue dominated convergence theorem (see, e.g., \cite[Theorem 1.1.8]{ABHN11}), we get
	\[
	\lim_{h \to 0}R_1 \leq \widetilde{C} \left(\int_{0}^{t -\epsilon} \lim_{h \to 0} |\{C(t + h - a) - C(t - a)\}U(t - a, 0)|^{p'} ~\mathrm{d}a\right)^{1/{p'}} = 0.
	\]
	Thus, $ F_{\epsilon} $ is norm continuous at $ t $. The compactness of $ F_{\epsilon}(t) $ follows from \cite[Theorem C.7]{EN00} since $ U(\cdot, 0) $ is compact on $ [\epsilon, t] $. The proof is complete.
\end{proof}

Now we can give some asymptotic behaviors of the age-structured population model \eqref{equ:age}; see \autoref{thm:evo} for the characterization of $ \omega(T_0) $.

\begin{thm}\label{thm:age}
	Let one of the following conditions hold:
	\begin{enumerate}[(a)]
		\item $ U(\cdot, 0) $ is norm continuous on $ (0, c) $, and $ C(\cdot) $ is norm measurable on $ (0, c) $ if $ p > 1 $ and is (essentially) norm continuous if $ p = 1 $;
		\item $ U(\cdot, 0) $ is compact on $ (0, c) $;
		\item $ L $ (see \eqref{equ:boundary}) is compact.
	\end{enumerate}
	Let $ T_{(\mathcal{A}+\mathcal{L})_0} $ be the $ C_0 $ semigroup generated by the part of $ \mathcal{A}+\mathcal{L} $ in $ L^p((0,c), E) $. Then, the following statements hold.
	\begin{enumerate}[(1)]
		\item If $ c < \infty $ and condition (b) or (c) holds, then $ T_{(\mathcal{A}+\mathcal{L})_0} $ is eventually compact (and so eventually norm continuous). Particularly, $ \omega (T_{(\mathcal{A}+\mathcal{L})_0}) = s(T_{(\mathcal{A}+\mathcal{L})_0}) $ (see \eqref{equ:bound}).
		\item If condition (b) or (c) holds, then $ \omega_{\mathrm{ess}}(T_{(\mathcal{A}+\mathcal{L})_0}) = \omega_{\mathrm{ess}}(T_{0}) $. Particularly, if $ \omega_{\mathrm{ess}}(T_{0}) < 0 $, then $ T_{(\mathcal{A}+\mathcal{L})_0} $ is quasi-compact (see \cite[Section V.3]{EN00} for more consequences of this).
		\item $ \omega_{\mathrm{crit}}(T_{(\mathcal{A}+\mathcal{L})_0}) = \omega_{\mathrm{crit}}(T_{0}) $.
	\end{enumerate}
\end{thm}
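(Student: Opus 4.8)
The plan is to reduce all three assertions to the regularity of the single second-order term $S_2$ of the Dyson--Phillips-type expansion \eqref{symbol} for the pair $(\mathcal{A},\mathcal{L})$, and then feed this into \autoref{thm:main2} and \autoref{thm:perreg}. Recall that $\mathcal{A}$ is a $p$-quasi Hille--Yosida operator (hence an MR operator), that $T_{\mathcal{A}_0}=(0,T_0)$, and that $\mathcal{L}$ is bounded. With $S_0=T_{\mathcal{A}_0}$ and $S_1=S_{\mathcal{A}}\diamond\mathcal{L}T_{\mathcal{A}_0}$, the crucial observation is that $S_2=S_{\mathcal{A}}\diamond\mathcal{L}S_1$ and that $\mathcal{L}S_1=\mathcal{L}S_{\mathcal{A}}\diamond\mathcal{L}(0,T_0)=(F,0)$ is exactly the object whose regularity is controlled by \autoref{lem:pre}. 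One cannot work with $\mathcal{L}T_{\mathcal{A}_0}$ directly, since the Howland semigroup $T_0$ is of shift type and $\mathcal{L}T_{\mathcal{A}_0}$ is in general not norm continuous; it is only after one convolution with $S_{\mathcal{A}}$, which replaces the kernel $U(a,a-t)$ by the regularizable kernel $U(t-a,0)$, that norm continuity and compactness emerge.

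First I would establish the regularity of $S_2$ on $(0,\infty)$ case by case. Under hypothesis (a), \autoref{lem:pre} (a) gives that $\mathcal{L}S_1=(F,0)$ is norm continuous on $(0,\infty)$, so $S_2=S_{\mathcal{A}}\diamond\mathcal{L}S_1$ is norm continuous on $(0,\infty)$ by \autoref{lem:NC} (b). Under hypothesis (b), \autoref{lem:pre} (b) gives $\mathcal{L}S_1$ norm continuous and compact on $(0,\infty)$, whence $S_2$ is norm continuous and compact on $(0,\infty)$ by \autoref{lem:CP} (b). Under hypothesis (c), $\mathcal{L}$ is compact, so $\mathcal{L}T_{\mathcal{A}_0}(t)=\mathcal{L}\circ T_{\mathcal{A}_0}(t)$ is compact for every $t>0$; since $\mathcal{A}$ is a quasi Hille--Yosida operator, \autoref{corr:s2} yields directly that $S_2$ is norm continuous and compact on $(0,\infty)$.

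With the regularity of $S_2$ in hand, assertions (2) and (3) are immediate. In all three cases $S_2$ is norm continuous on $(0,\infty)$, so \autoref{thm:main2} (b) gives $\omega_{\mathrm{crit}}(T_{(\mathcal{A}+\mathcal{L})_0})=\omega_{\mathrm{crit}}(T_0)$, which is (3). Under (b) or (c) the term $S_2$ is moreover compact on $(0,\infty)$, so \autoref{thm:main2} (a) gives $\omega_{\mathrm{ess}}(T_{(\mathcal{A}+\mathcal{L})_0})=\omega_{\mathrm{ess}}(T_0)$, which is (2); the quasi-compactness when $\omega_{\mathrm{ess}}(T_0)<0$ then follows from $\omega_{\mathrm{ess}}(T_{(\mathcal{A}+\mathcal{L})_0})<0$ together with the standard characterization in \cite[Section V.3]{EN00}.

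For (1), the extra input is that $T_0$ is nilpotent when $c<\infty$: from \eqref{equ:how}, for $t\geq c$ and $a\in[0,c)$ one has $a<t$, hence $(T_0(t)\varphi)(a)=0$, i.e.\ $T_0(t)=0$ for $t\geq c$. Thus $T_{\mathcal{A}_0}=(0,T_0)$ is compact (being zero) on $(c,\infty)$. Under (b) or (c), $S_2$ is compact on $(0,\infty)$, so \autoref{thm:perreg} (b) (with $n=2$, $\alpha=c$) shows that $T_{(\mathcal{A}+\mathcal{L})_0}$ is compact on $(2c,\infty)$, hence eventually compact and in particular eventually norm continuous. For an eventually norm continuous $C_0$ semigroup one has $\omega_{\mathrm{crit}}=-\infty$, so \autoref{thm:crit} (c) gives $\omega(T_{(\mathcal{A}+\mathcal{L})_0})=s((\mathcal{A}+\mathcal{L})_0)$, completing (1). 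The main obstacle of the whole argument is concentrated in \autoref{lem:pre}: recognizing that the right object to regularize is $\mathcal{L}S_1$ rather than $\mathcal{L}T_{\mathcal{A}_0}$, and carrying out the H\"older estimate on the convolution kernel using the $L^{p'}$-bound on $C(\cdot)$ and the norm continuity (resp.\ compactness) of $U(\cdot,0)$; the remaining steps are bookkeeping through the perturbation series.
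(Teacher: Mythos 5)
Your proposal is correct and follows essentially the same route as the paper's own proof: regularity of $S_2 = S_{\mathcal{A}}\diamond \mathcal{L}S_{\mathcal{A}}\diamond (LT_0,0)$ via \autoref{lem:pre} combined with \autoref{lem:NC} (b) / \autoref{lem:CP} (b) in cases (a), (b), via \autoref{corr:s2} in case (c), then \autoref{thm:main2} for assertions (2)--(3) and \autoref{thm:perreg} (b) together with the nilpotency of $T_0$ ($T_0(t)=0$ for $t\geq c$) for assertion (1). Your additional derivation of $\omega(T_{(\mathcal{A}+\mathcal{L})_0}) = s((\mathcal{A}+\mathcal{L})_0)$ through $\omega_{\mathrm{crit}} = -\infty$ and \autoref{thm:crit} (c) merely fills in, correctly, a step the paper delegates to standard facts about eventually norm continuous semigroups.
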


\begin{proof}
	If condition (a) (resp. (b)) holds, then by \autoref{lem:pre} and \autoref{lem:NC} (b) (resp. \autoref{lem:CP} (b)), we know $ S_2 \triangleq S_{\mathcal{A}}\diamond \mathcal{L}S_{\mathcal{A}}\diamond (LT_0, 0) $ is norm continuous (resp. compact) on $ (0, \infty) $. If condition (c) holds (i.e., $ \mathcal{L} $ is compact), then by \autoref{corr:s2}, we have $ S_2 $ is compact on $ (0, \infty) $.

	To prove (1), as $ c < \infty $, we have for $ t > c $, $ T_0(t) = 0 $; so particularly $ T_0 $ is eventually compact. Conclusion (1) now follows \autoref{thm:perreg} (b).
	Finally, conclusions (2) and (3) are direct consequence of \autoref{thm:main2}.
\end{proof}

\begin{exa}\label{exa:model}
	In order to verify \autoref{thm:age}, consider the following examples which might be \emph{not} as general as possible.
	\begin{asparaenum}[\bfseries (a)]
		\item (See \cite[Section 5]{MR09a} and \cite{Web85}.) Let $ E = \mathbb{R}^n $. Then, $ C(a) $ and $ A(a) $ can be considered as matrices. Assume $ a \mapsto A(a) $ is continuous on $ [0,c) $ and $ a \mapsto C(a) $ is continuous on $ (0,c) $ with with $ \sup_{0<a<c}\{|A(a)|\} < \infty $ and $ |C(\cdot)| \in L^{p'}(0, c) $ ($ 1/{p'} + 1/p = 1 $), then assumptions \textbf{(H1) (H2)} hold. Note also that in this case $ L $ is compact. Now all the cases in \autoref{thm:age} are fulfilled. A very special case is $ A(a) \equiv - \mu I $ where $ \mu > 0 $; now $ U(t,s) = e^{-\mu (t - s)} I $ and so by \autoref{rmk:noHY} in this case $ \mathcal{A} $ is not a Hille--Yosida operator if $ p > 1 $ and $ c = \infty $; in addition, $ \omega_{\mathrm{ess}}(T_{(\mathcal{A}+\mathcal{L})_0}) < 0 $ as $ \omega(T_0) = \omega(U) \leq - \mu < 0 $.

		\item \label{exbb} (See \cite[Section 5]{BHM05}.)
		Consider the following model:
		\begin{equation*}
		\begin{cases}
		(\partial_t + \partial_a)u(t, a, s) = -\mu(a)u(t, a, s) + k(a) \Delta_s u(t, a, s), ~t > 0, a \in (0, c), s \in \mathbb{R}^n, \\
		u(t, 0, s) = \int_{0}^{c} \beta(a,s) u(t, a, s) ~\mathrm{d} a, ~ t > 0, s \in \mathbb{R}^n, \\
		u(0, a, s) = u_0(a, s), ~ a \in (0, c), s \in \mathbb{R}^n, ~ u_0 \in L^p((0,c), L^q(\mathbb{R}^n)).
		\end{cases}
		\end{equation*}
		Let $ E = L^q(\mathbb{R}^n) $ ($ 1 \leq q < \infty $). Take $ A(a) = -\mu (a) I + k(a) \Delta_s $ and $ C(a) = \beta (a, \cdot) $, where
		\begin{enumerate}[(i)]
			\item $ \Delta_s $ denotes the Laplace operator on $ L^q(\mathbb{R}^n) $ (i.e., $ \Delta f = \sum_{j = 1}^{n} D^2_{j} f $);
			\item $ \mu(\cdot) \in L^{\infty}((0,c), \mathbb{R}_+) $, $ k(\cdot) \in C_{b}([0,c), \mathbb{R}_+) $ (with $ \inf_{a>0} k(a) > 0 $); and
			\item $ \beta(\cdot, \cdot) \in C((0,c), L^{\infty}(\mathbb{R}^n)) \cap L^{p'}((0,c), L^{\infty}(\mathbb{R}^n)) $ ($ 1/{p'} + 1/p = 1 $).
		\end{enumerate}
		Note that the evolution family generated by $ \{A(a)\} $ is $ U(t,s) = e^{-\int_{s}^{t}\mu(r)~\mathrm{d}r} G(\int_{s}^{t} k(r) ~\mathrm{d}r) $ ($ t \geq s $), where $ G(\cdot) $ is the Gaussian semigroup generated by $ \Delta_s $, i.e.,
		\[
		(G(t)f)(x) : = (4\pi t)^{-n/2} \int_{\mathbb{R}^n} f(x - y) e^{-|y|^2/(4t)} ~\mathrm{d} y, ~ f \in L^q(\mathbb{R}^n).
		\]
		Clearly, $ U(t, s) $ is norm continuous on $ 0\leq s < t $ but $ U(a, 0) $ is not compact. The condition on $ \beta(\cdot, \cdot) $ implies $ C(\cdot) $ is norm continuous (but in general it is not uniformly norm continuous and also in this case $ L $ may be not compact). Therefore, \autoref{thm:age} (a) holds; however the result given in \cite[Section 5]{BHM05} cannot be applied (even for $ p = 1 $). Particularly, we have
		\[
		\omega_{\mathrm{crit}}(T_{(\mathcal{A}+\mathcal{L})_0}) = \omega_{\mathrm{crit}}(T_{0}) \leq \omega(U) \leq -|\mu(\cdot)|_{L^{\infty}}.
		\]

		Next, we show in this case $ \mathcal{A} $ usually is not a Hille--Yosida operator if $ p > 1 $ and $ c = \infty $. For simplicity, let $ n = 1 $, $ \mu(\cdot) \equiv 1 $ and $ k(\cdot) \equiv 1 $. Take $ f_1(x) = e^{-|x|^2} $. Then, $ f_1 \in L^p(\mathbb{R}) $ and for $ x > 0 $,
		\begin{align*}
		(U(a,0)f_1)(x) = e^{-a} (4\pi a)^{-1/2} \int_{-\infty}^{\infty} e^{-|x-y|^2} e^{-|y|^2/(4a)} ~\mathrm{d} y \geq c_0 e^{-a} e^{-x^2} (1+4a)^{-1/2},
		\end{align*}
		where $ c_0 > 0 $ is a constant. So
		\begin{align*}
		|(U(a,0)f_1)(\cdot)|_{L^q} = \left(\int_{-\infty}^{\infty} ((U(a,0)f_1)(x))^p ~\mathrm{d}x \right)^{1/q}\geq c_1 e^{-a} (1+4a)^{-1/2} : = h(a),
		\end{align*}
		where $ c_1 > 0 $ is a constant, and in particular, $ \limsup_{\lambda \to \infty} \lambda^p \int_{0}^{\infty} e^{-\lambda p a} h^p(a) da = \infty $ (in fact $ \lim_{\lambda \to \infty} \lambda \int_{0}^{\infty} e^{-(\lambda + 1) p a} (1+4a)^{-p/2} da = 1/p $). Hence, by \autoref{rmk:noHY}, in this case $ \mathcal{A} $ is not a Hille--Yosida operator.

		\item (See \cite{Rha98} and \cite[Section 5]{Thi98} in $ L^1 $ case.) Let $ E = L^q(\Omega) $ ($ 1 \leq q < \infty $) where $ \Omega $ is a bounded domain of $ \mathbb{R}^n $ with smooth boundary $ \partial \Omega $. Let $ A(\cdot) $ (with suitable boundary condition e.g., Dirichlet, Neumann, Robin, etc) and $ C(\cdot) $ as be in \eqref{exbb} with $ \mathbb{R}^n $ in (i) (ii) (iii) replaced by $ \Omega $. In this context, $ \{A(a)\} $ also generates exponentially bounded linear evolution family $ \{U(a, s)\}_{0 \leq s \leq a < c} $ (see, e.g., \cite{Paz83}) and the embedding theorem gives that $ U(t, s) $ is compact on $ 0\leq s < t < c $ and particularly \autoref{thm:age} (b) holds. More generally, $ \{A(a)\} $ can be (uniformly strongly) elliptic differential operators with suitable boundary condition (see, e.g., \cite[Section 7.6]{Paz83}). Due to the positive setting in \cite[Section 5]{Thi98}, $ \{C(a)\} $ can be unbounded which \autoref{thm:age} cannot cover; the proof given here is different from \cite{Thi98, Rha98}.

		\item (See \cite[Section 6]{BHM05} in $ L^1 $ case.) Consider the following model:
		\begin{equation*}
		\begin{cases}
		(\partial_t + \partial_a)u(t, a, s) = -\mu(a)u(t, a, s) + k(a) q(s) u(t, a, s), ~t > 0, a \in (0, c), s \in \Omega, \\
		u(t, 0, s) = \int_{0}^{c} \beta(a,s) u(t, a, s) ~\mathrm{d} a, ~ t > 0, s \in \Omega, \\
		u(0, a, s) = u_0(a, s), ~ a \in (0, c), s \in \Omega, ~ u_0 \in L^p((0,c), L^q(\Omega)).
		\end{cases}
		\end{equation*}
		Let $ E = L^q(\Omega) $ ($ 1 \leq q < \infty $) be endowed with a $ \sigma $-finite (Borel) measure where $ \Omega $ is a domain of $ \mathbb{R}^n $. Take $ A(a) = -\mu (a) I + k(a) q(\cdot) $ and $ C(a) = \beta (a, \cdot) $ where $ \mu(\cdot), k(\cdot), \beta(\cdot, \cdot) $ are the same as \eqref{exbb} with $ \mathbb{R}^n $ in (i), (ii), (iii) replaced by $ \Omega $. The measurable function $ q(\cdot): \Omega \to \mathbb{C} $ satisfies the following:
		\begin{enumerate}[({a}1)]
			\item $ \sup\{ \mathrm{Re}\lambda : \lambda \in q(\Omega) \} < \infty $ and
			\item for any $ d \in \mathbb{R} $, $ \overline{q(\Omega)} \cap \{ \lambda \in \mathbb{C}:\mathrm{Re}\lambda \geq d \} $ is bounded.
		\end{enumerate}
		Under this circumstance, the evolution family generated by $ \{A(a)\} $ is $ U(t,s) = e^{-\int_{s}^{t}\mu(r) - k(r)q(\cdot)~\mathrm{d}r} $ ($ t \geq s $). Note that by (a2), $ U(t,s) $ is norm continuous on $ 0\leq s < t < c $ (see, e.g., \cite[p. 121]{EN00}) but in general is not compact. So \autoref{thm:age} (a) holds.

		\item (See \cite[Section 1.4]{Web08} in $ L^1 $ case.)
		Consider the following model:
		\begin{equation*}
		\begin{cases}
		(\partial_t + \partial_a + k(a)\partial_s)u(t, a, s) = -\mu(a)u(t, a, s), ~t > 0, a \in (0, c), s \in (0,s_2), \\
		u(t, 0, s) = \int_{0}^{c} \int_{0}^{s_2} \beta(a,\hat{s},s) u(t, a,\hat{s}) ~\mathrm{d}\hat{s} ~\mathrm{d} a, ~ t > 0, s \in (0,s_2), \\
		u(0, a, s) = u_0(a, s), ~ a \in (0, c), s \in (0,s_2), ~ u_0 \in L^p((0,c), L^q(0, s_2)), \\
		u(t,a,0) = 0, ~t > 0, a \in (0, c).
		\end{cases}
		\end{equation*}
		Let $ E = L^q(0, s_2) $ ($ 1 \leq q < \infty $) where $ 0 < s_2 < \infty $. Let $ p', q' $ satisfy $ 1/{p'} + 1/p = 1 $ and $ 1/{q'} + 1/q = 1 $.
		The measurable function $ \beta(\cdot, \cdot, \cdot) $ satisfies
		\[\tag{b1}
		\beta(\cdot, \cdot, \cdot) \in C( (0,c), L^{(q, q')}((0,s_2)^2) ) \cap L^{P}( (0,c) \times (0,s_2)^2 ) , ~P = (q, q', p'),
		\]
		where $ L^{P}( (0,c) \times (0,s_2)^2 ) $ (similarly for $ L^{(q, q')}((0,s_2)^2) $) is the mixed-norm Lebesgue space (see, e.g., \cite{BP61}) defined by
		\[
		\begin{split}
		L^{P}( (0,c) \times (0,s_2)^2 )
		:= \{ f:(0,c) \times(0,s_2)^2 \to \mathbb{R} ~\text{is measurable}~:\\ \left(\int_{0}^{c} \left( \int_{0}^{s_2} \left(\int_{0}^{s_2} |f(a,\hat{s}, s)|^{q} ~\mathrm{d} s \right)^{q'/q} \mathrm{d} \hat{s}  \right)^{p'/{q'}}\mathrm{d}a \right)^{1/{p'}} < \infty \}.
		\end{split}
		\]
		In addition, if one of $ p,q $ equals $ 1 $, then we assume
		\[\tag{b2}
		\begin{cases}
		\lim_{h \to 0^+}\sup\limits_{(a,\hat{s}) \in (0,c) \times(0,s_2) } \int_{0}^{s_2 - h} |\beta(a, \hat{s}, s+h) - \beta(a, \hat{s}, s)| ~\mathrm{d}s = 0, ~ \text{if}~ p = q = 1,  \\
		\lim_{h \to 0^+} \int_{0}^{s_2} \left(\sup\limits_{\hat{s} \in (0,s_2) } \int_{0}^{s_2 - h} |\beta(a, \hat{s}, s+h) - \beta(a, \hat{s}, s)| ~\mathrm{d}s \right)^{p'} ~\mathrm{d}a = 0, ~\text{if}~ p > 1, q = 1,\\
		\lim_{h \to 0^+}\sup\limits_{a \in (0,c)} \int_{0}^{s_2} \left(\int_{0}^{s_2 - h} |\beta(a, \hat{s}, s+h) - \beta(a, \hat{s}, s)|^{q} ~\mathrm{d}s\right)^{q'/q} ~\mathrm{d} \hat{s} = 0, ~ \text{if}~ p = 1, q > 1.
		\end{cases}
		\]
		(If $ c < \infty $, then a typical case such that $ \beta(\cdot, \cdot, \cdot) $ satisfies (b1) (b2) for all $ 1 \leq p, q < \infty $ is $ \beta(\cdot, \cdot, \cdot) \in C([0,c]\times[0,s_2]^2) $.)

		Take $ A(a) = - \mu(a) I - k(a) \frac{\mathrm{d}}{\mathrm{d}s} $ where $ \mu(\cdot), k(\cdot) $ are the same as \eqref{exbb} with $ \mathbb{R}^n $ in (i), (ii), (iii) replaced by $ (0, s_2) $; $ \frac{\mathrm{d}}{\mathrm{d}s} $ is the first-order differential operator on $ L^q(0,s_2) $, i.e.,
		\[
		\frac{\mathrm{d}}{\mathrm{d}s}: f \mapsto \frac{\mathrm{d}f}{\mathrm{d}s}, ~D(\frac{\mathrm{d}}{\mathrm{d}s}) = \{f \in W^{1,q}(0,s_2): f(0) = 0\}.
		\]
		Let $ C(a) = \beta(a, \cdot, \cdot) $ be defined by
		\[
		(C(a) \phi)(s) = \int_{0}^{s_2} \beta(a, \hat{s}, s) \phi(\hat{s}) ~\mathrm{d} \hat{s}, ~\phi \in L^q(0, s_2);
		\]

		By Minkowski integral inequality, we see $ C(a) $ indeed is a bounded linear operator on $ L^q(0, s_2) $. Clearly, $ \{A(a)\} $ generates the exponentially bounded linear evolution family $ \{U(a, s)\}_{0 \leq s \leq a < c} $ defined by $ U(t,s) = e^{-\int_{s}^{t}\mu(r)~\mathrm{d}r} T_{r}(\int_{s}^{t} k(a) ~\mathrm{d}a) $, where $ T_r(t) $ is the right translation on $ L^q(0,s_2) $, i.e.,
		\[
		(T_r(t) \phi)(s) : = \begin{cases}
		\phi(s - t), & s - t \geq 0,\\
		0,& s - t < 0.
		\end{cases}
		\]
		The condition on $ \beta(\cdot, \cdot, \cdot) $ implies that $ L $ is compact, and so in this case \autoref{thm:age} (c) holds and
		especially, we have
		\[
		\omega_{\mathrm{ess}}(T_{(\mathcal{A}+\mathcal{L})_0}) = \omega_{\mathrm{ess}}(T_{0}) \leq \omega(U) \leq -|\mu(\cdot)|_{L^{\infty}}.
		\]
		Notice also that $ U(a, 0) $ is not compact for all $ a > 0 $.
		\begin{proof}
			To show $ L $ is compact, one can use the classical Kolmogorov theorem on the characterization of the relatively compact subsets in $ L^q(0,s_2) $ which is standard; the details are as follows. We need to show $ \{ L\varphi: \varphi \in L^{p}((0,c), L^q(0,s_2)) ~\text{such that}~ |\varphi| \leq 1 \} $ is relatively compact in $ L^q(0,s_2) $, or equivalently,
			\[
			\lim_{h\to 0^+} \int_{0}^{s_2-h} |(L\varphi)(s+h) - (L\varphi)(s)|^{q} ~\mathrm{d} s = 0,~ ~\text{uniformly for}~ |\varphi| \leq 1.
			\]
			For $ \varphi \in L^{p}((0,c), L^q(0,s_2)) = L^{(q,p)}((0,c)\times(0,s_2)) $ such that $ |\varphi| \leq 1 $, i.e.,
			\[
			\left(\int_{0}^{c}\left(\int_{0}^{s_2} |\varphi(a,\hat{s})|^{q}~\mathrm{d} \hat{s}\right)^{p/q} ~\mathrm{d}a \right)^{1/p} \leq 1,
			\]
			using Minkowski integral inequality and H\"older inequality, we get
			\begin{align*}
			& \int_{0}^{s_2-h} |(L\varphi)(s+h) - (L\varphi)(s)|^{q} ~\mathrm{d} s \\
			= & \int_{0}^{s_2-h} \left|\int_{0}^{c}\int_{0}^{s_2} \{\beta(a, \hat{s}, s + h)  - \beta(a, \hat{s}, s)\} \varphi(a,\hat{s}) ~\mathrm{d} \hat{s} ~\mathrm{d}a\right|^{q} ~\mathrm{d} s \\
			\leq & \int_{0}^{c}\int_{0}^{s_2} \left(\int_{0}^{s_2-h} |\beta(a, \hat{s}, s + h)  - \beta(a, \hat{s}, s)|^{q} ~\mathrm{d} s\right)^{1/q} |\varphi(a,\hat{s})|~\mathrm{d} \hat{s} ~\mathrm{d}a \\
			\leq & \int_{0}^{c} \left(\int_{0}^{s_2} \left(\int_{0}^{s_2-h} |\beta(a, \hat{s}, s + h)  - \beta(a, \hat{s}, s)|^{q} ~\mathrm{d} s\right)^{q'/q} ~\mathrm{d} \hat{s}\right)^{1/q'} \cdot \left(\int_{0}^{s_2} |\varphi(a,\hat{s})|^{q}~\mathrm{d} \hat{s}\right)^{1/q} ~\mathrm{d}a \\
			\leq & \left(\int_{0}^{c} \left(\int_{0}^{s_2} \left(\int_{0}^{s_2-h} |\beta(a, \hat{s}, s + h)  - \beta(a, \hat{s}, s)|^{q} ~\mathrm{d} s\right)^{q'/q} ~\mathrm{d} \hat{s}\right)^{p'/q'} ~\mathrm{d}a\right)^{1/p'} : = \rho(h).
			\end{align*}
			Now we have $ \rho(h) \to 0 $ as $ h \to 0^+ $. Indeed, if $ p \neq 1 $ and $ q \neq 1 $, then by a standard argument we can obtain this (see, e.g., \cite[Section 10 Theorem 1]{BP61}); if one of $ p,q $ equals $ 1 $, then this is condition (b2). The proof is complete.
		\end{proof}
	\end{asparaenum}
\end{exa}

\begin{rmk}
	In \autoref{exa:model} (b), (c), (d), if $ p = 1 $ and $ \beta(\cdot, \cdot) \in BUC([0,c], L^{\infty}(\Omega)) $ ($ \Omega = \mathbb{R}^n $ or a domain of $ \mathbb{R}^n $) as \cite{Rha98, BHM05}, then \autoref{thm:C} can be applied directly.
	A more general model than \autoref{exa:model} (e) has been considered in \cite[Section 6]{Thi98}.
\end{rmk}

\section{Comments}\label{comments}
\subsection{Unbounded perturbation}
The unbounded perturbation theorem for MR operators (quasi Hille--Yosida operators) is few; see \cite[Section 2]{TV09} for certain unbounded perturbations of Hille--Yosida operators. It's still a difficult task. For the case of the generators of integrated semigroups and almost sectorial operators, we refer the readers to see \cite{ABHN11, KW03, Thi08} and the references therein. Here, we give an unbounded perturbation theorem for MR operators, which is essentially due to Arendt et al. \cite{ABHN11}.
\begin{thm}[{\cite[Theorem 3.5.7]{ABHN11}}]
	Assume $A$ is an operator on a Banach space $X$ such that $(\omega,\infty)\subset \rho(A)$, $\limsup\limits_{\lambda > \omega, \lambda \rightarrow \infty} \|R(\lambda, A)\| = 0$, and $B \in \mathcal{L}(D(A), D(A))$ ($D(A)$ is endowed with the graph norm). Then, there is $C \in \mathcal{L}(X)$ such that $A+B$ is similar to $A+C$.
\end{thm}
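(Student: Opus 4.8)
The plan is to produce the similarity transform explicitly. Fix a large $\lambda_0 > \omega$ and write $R_0 := R(\lambda_0, A)$. I would take the intertwining operator to be
\[
U := I - B R_0 \in \mathcal{L}(X),
\]
which is genuinely bounded on $X$ because $R_0$ maps $X$ into $D(A)$ and $B \in \mathcal{L}(D(A))$. The candidate perturbation is
\[
C := (\lambda_0 - A) B R_0,
\]
which lies in $\mathcal{L}(X)$: writing $X_1 := (D(A), \|\cdot\|_A)$ with the equivalent graph norm $\|y\|_A := \|(\lambda_0 - A) y\|$, the three maps $R_0 : X \to X_1$, $B : X_1 \to X_1$ and $\lambda_0 - A : X_1 \to X$ are all bounded, so their composite is. The goal is then to verify $U^{-1}(A+B) U = A + C$.

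The first step is the invertibility of $U$, and this is where I expect the main obstacle: $U$ is \emph{not} a small perturbation of the identity in $\mathcal{L}(X)$, since $\|B R_0\|_{\mathcal{L}(X)}$ need not be small and the Neumann series in $\mathcal{L}(X)$ need not converge. I would circumvent this by passing to $X_1$. With the above graph norm, $R_0 : X \to X_1$ and $\lambda_0 - A : X_1 \to X$ are mutually inverse isometric isomorphisms, and conjugating $R_0|_{X_1}$ by this isometry returns $R_0$ on $X$, so $\|R_0\|_{\mathcal{L}(X_1)} = \|R_0\|_{\mathcal{L}(X)} \to 0$ by the hypothesis $\limsup_{\lambda\to\infty}\|R(\lambda,A)\| = 0$. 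Choosing $\lambda_0$ so large that $\|R_0\|_{\mathcal{L}(X_1)}\|B\|_{\mathcal{L}(X_1)} < 1$ makes $V := I - R_0 B$ invertible in $\mathcal{L}(X_1)$ by a Neumann series. A short computation gives the factorization $U = (\lambda_0 - A)\, V\, R_0$; since the outer factors are isomorphisms between $X$ and $X_1$, invertibility of $V$ on $X_1$ yields invertibility of $U$ on $X$, with $U^{-1} = (\lambda_0 - A) V^{-1} R_0$.

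The second step is the conjugation identity, which I would extract from a single algebraic relation. For $w \in D(A)$ a direct computation gives
\[
U(\lambda_0 - A) w = (\lambda_0 - A) w - B R_0 (\lambda_0 - A) w = (\lambda_0 - A - B) w,
\]
so $U(\lambda_0 - A) = \lambda_0 - (A + B)$ on $D(A)$, hence $(A+B)w = \lambda_0 w - \lambda_0 U w + U A w$ there. Substituting $w = Ux$ for $x \in D(A)$ (note $Ux \in D(A)$), applying $U^{-1}$, and using $AUx = Ax - A B R_0 x$ together with $-\lambda_0 U x = -\lambda_0 x + \lambda_0 B R_0 x$, the terms collapse to
\[
U^{-1}(A+B) U x = A x + (\lambda_0 - A) B R_0 x = (A + C) x.
\]

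Finally, the domains match without extra work: the conjugated operator has domain $\{x : Ux \in D(A)\}$, and because $B R_0 x \in D(A)$ for every $x \in X$, the condition $Ux = x - B R_0 x \in D(A)$ is equivalent to $x \in D(A) = D(A+C)$. This establishes $U^{-1}(A+B)U = A + C$ with $C \in \mathcal{L}(X)$, i.e. $A+B$ is similar to $A+C$ (the case $B = 0$ being trivial). As indicated, the only delicate point is the invertibility of $U$, and the detour through the graph-norm space $X_1$ — where the resolvent is genuinely small — is what makes it go through.
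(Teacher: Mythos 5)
Your overall architecture coincides with the paper's (and with the proof of Theorem 3.5.7 in Arendt et al.): the intertwiner $U=I-BR_0$, the perturbation $C=(\lambda_0-A)BR_0$, the conjugation algebra, and the domain matching are all correct as you wrote them. The genuine gap is precisely in the step you flag as delicate: the invertibility of $U$. You equip $X_1=D(A)$ with the norm $\|y\|_A=\|(\lambda_0-A)y\|$, which \emph{depends on} $\lambda_0$, and then treat $\|B\|_{\mathcal{L}(X_1)}$ as if it were a fixed constant that the decay of $\|R_0\|_{\mathcal{L}(X_1)}=\|R_0\|_{\mathcal{L}(X)}$ can eventually beat. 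It is not: unwinding your own isometry, $\|B\|_{\mathcal{L}(X_1)}=\sup_{z\neq 0}\|(\lambda_0-A)BR_0z\|/\|z\|=\|C\|_{\mathcal{L}(X)}$, i.e.\ exactly the norm of the perturbation you are constructing, and this moves with $\lambda_0$. So your smallness condition reads $\|R_0\|_{\mathcal{L}(X)}\|C\|_{\mathcal{L}(X)}<1$, and the hypothesis $\|R(\lambda,A)\|\to0$ gives no control whatsoever over $\|C\|$ as $\lambda_0\to\infty$; controlling it would require a Hille--Yosida-type bound on $\lambda_0\|R(\lambda_0,A)\|$, which this theorem deliberately does not assume. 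The failure is real, not just a missing justification: on $X=\bigoplus_n\mathbb{C}^2$ ($\ell^2$-sum) with $A=\bigoplus_n(-nI+n^{3/2}N)$, $N$ the nilpotent Jordan block, one has $\|R(\lambda,A)\|\sim\lambda^{-1/2}$, and a rank-one $B\in\mathcal{L}(D(A))$ of the form $Bw=\psi((\mu_0-A)w)\,e$ (fixed $\psi\in X^*$, $e\in D(A)$, $\mu_0$ fixed) yields $\|(\lambda_0-A)BR_0\|\gtrsim\lambda_0^{3/4}$, hence $\|R_0\|_{\mathcal{L}(X_1)}\|B\|_{\mathcal{L}(X_1)}\gtrsim\lambda_0^{1/4}\to\infty$: no large $\lambda_0$ works.

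The repair is small, and it is exactly the point where the paper's proof is organized differently: the parameter defining the auxiliary bounded operator must be decoupled from the parameter providing smallness. Either (i) keep your factorization $U=(\lambda_0-A)(I-R_0B)R_0$ but put a $\lambda_0$-\emph{independent} graph norm on $X_1$, say $\|y\|+\|Ay\|$; since $R_0$ commutes with $A$ on $D(A)$, $\|R_0\|_{\mathcal{L}(X_1)}\le\|R_0\|_{\mathcal{L}(X)}\to0$ while $\|B\|_{\mathcal{L}(X_1)}$ is now an honest constant, so $I-R_0B$ is invertible on $X_1$ for $\lambda_0$ large (invertibility does not depend on which equivalent norm you use, so your isometric identification is still available afterwards); or (ii) do what the paper does: fix any $\lambda_0>\omega$, set $D:=(\lambda_0-A)BR(\lambda_0,A)\in\mathcal{L}(X)$ once and for all, introduce a second, large parameter $\lambda$, write $BR(\lambda,A)=R(\lambda_0,A)\,(\lambda_0-A)BR(\lambda,A)$, and use commutation of resolvents to get $(\lambda_0-A)BR(\lambda,A)R(\lambda_0,A)=DR(\lambda,A)$, which is small for large $\lambda$; the lemma that $I-UV$ is invertible iff $I-VU$ is invertible then gives invertibility of $I-BR(\lambda,A)$, and one takes $C=(\lambda-A)BR(\lambda,A)$, $U=I-BR(\lambda,A)$ with that large $\lambda$. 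With either fix, the rest of your argument goes through verbatim.
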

\begin{proof}
	The proof is essentially the same as \cite[Theorem 3.5.7]{ABHN11}. In fact, the condition $\sup\limits_{\lambda > \omega} \|\lambda R(\lambda, A)\| < \infty$ in \cite[Theorem 3.5.7]{ABHN11} can be weakened by $\limsup\limits_{\lambda > \omega,\lambda \rightarrow \infty} \|R(\lambda, A)\| = 0$, because we only need $I - BR(\lambda, A) $ is invertible for sufficiently large $\lambda$. We repeat the proof as follows for the sake of readers.

	Take $ \lambda_0 > \omega $. Since $ (\lambda_0 - A) B R(\lambda_0, A) \in \mathcal{L}(X, X) $, we have $ I - (\lambda_0 - A) B R(\lambda, A) R(\lambda_0, A) = I - (\lambda_0 - A) B R(\lambda_0, A)R(\lambda, A) $ is invertible for large $\lambda$ as $ \limsup\limits_{\lambda > \omega, \lambda \rightarrow \infty} \|R(\lambda, A)\| = 0 $. Now $ I - BR(\lambda, A) = I - R(\lambda_0, A) (\lambda_0 - A) B R(\lambda, A) $ is invertible (here, we use the fact: for $ U, V \in \mathcal{L}(X, X) $, $ I - UV $ is invertible if and only if $ I - VU $ is invertible).

	Take $C := (\lambda - A)BR(\lambda, A)$ and $ U := I - BR(\lambda, A) $ for sufficiently large $\lambda$. Then, as shown above $ U $ is invertible (and $ U D(A) = D(A) $). It is easy to verify that $ U (A + C) U^{-1} = A + B $ (see also the proof \cite[Theorem 3.5.7]{ABHN11}). The proof is complete.
\end{proof}

Since the condition $\limsup\limits_{\lambda > \omega, \lambda \rightarrow \infty} \|R(\lambda, A)\| = 0$ can be satisfied by MR operators (see \autoref{lem:MR} (b)), we obtain the following result.
\begin{cor}
	If $A$ is an MR operator (resp. $p$-quasi Hille--Yosida operator) and $B \in \mathcal{L}(D(A), D(A))$, so is $A+B$.
\end{cor}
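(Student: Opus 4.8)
The plan is to reduce the statement to the bounded-perturbation result \autoref{thm:mrper} by means of the preceding similarity theorem. First I would check that an MR operator (resp.\ a $p$-quasi Hille--Yosida operator) meets the hypotheses of that theorem: the spectral condition $\limsup_{\lambda>\omega,\lambda\to\infty}\|R(\lambda,A)\|=0$ holds by \autoref{lem:MR} (b) in the MR case and by \autoref{lem:pHY} (c) (which gives $\|R(\lambda,A)\|\le M(\lambda-\widehat{\omega})^{-1/p}$) in the $p$-quasi Hille--Yosida case, while $B\in\mathcal{L}(D(A),D(A))$ is assumed. The theorem then produces $C\in\mathcal{L}(X)$ and an invertible $U=I-BR(\lambda,A)\in\mathcal{L}(X)$ with $UD(A)=D(A)$ and $U(A+C)U^{-1}=A+B$. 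Since the restriction $C|_{X_0}$ lies in $\mathcal{L}(X_0,X)$ and $D(A)\subset X_0$, \autoref{thm:mrper} shows that $A+C=A+C|_{X_0}$ is again an MR operator (resp.\ $p$-quasi Hille--Yosida operator). Thus everything reduces to showing that these two classes are invariant under similarity by a bounded invertible $U$.

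The key observation is that $U$ preserves $X_0$. Indeed $R(\lambda,A)X\subset D(A)$ and $B$ maps $D(A)$ into $D(A)\subset X_0$, so $U=I-BR(\lambda,A)$ sends $X_0$ into $X_0$; combined with $UD(A)=D(A)$ and the fact that $U$ is a homeomorphism of $X$, one gets $UX_0=\overline{UD(A)}=\overline{D(A)}=X_0$, so $U|_{X_0}$ is invertible in $\mathcal{L}(X_0)$. Writing $A':=A+C$ and $A'':=A+B=UA'U^{-1}$, I would transport the whole MR structure by conjugation: from $R(\lambda,A'')=UR(\lambda,A')U^{-1}$ and the defining relation \eqref{equ:int} one reads off that $A''$ generates the integrated semigroup $S_{A''}(t)=US_{A'}(t)U^{-1}$, that $\rho(A'')=\rho(A')$, and that the part in $X_0$ satisfies $A''_0=(U|_{X_0})A'_0(U|_{X_0})^{-1}$, hence generates the $C_0$ semigroup $(U|_{X_0})T_{A'_0}(\cdot)(U|_{X_0})^{-1}$.

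It then remains to verify the defining estimate. A direct computation gives $(S_{A''}*f)(t)=U\bigl(S_{A'}*(U^{-1}f)\bigr)(t)$ and therefore, since $U$ is fixed and commutes with the time derivative, $(S_{A''}\diamond f)(t)=U\bigl(S_{A'}\diamond(U^{-1}f)\bigr)(t)$; estimating with the MR bound \eqref{equ:guji} for $A'$ yields
\[
\left\|(S_{A''}\diamond f)(t)\right\|\le \|U\|\,\|U^{-1}\|\,\delta(t)\sup_{s\in[0,t]}\|f(s)\|,
\]
so $A''$ is an MR operator with admissible function $\|U\|\,\|U^{-1}\|\,\delta$, which still tends to $0$ as $t\to 0$; the $p$-quasi Hille--Yosida estimate \eqref{equ:pHY} transforms identically, merely rescaling $\widehat{M}$ by $\|U\|\,\|U^{-1}\|$. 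The only delicate point, and the step I expect to be the main obstacle, is the similarity-invariance bookkeeping, namely confirming $UX_0=X_0$ and that the convolution identity survives conjugation so that the defining bound genuinely passes from $A'$ to $A''$; once this is in place the two cases close simultaneously.
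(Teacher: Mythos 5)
Your proof is correct and follows essentially the same route as the paper: the paper derives the corollary directly from the preceding similarity theorem (whose hypothesis $\limsup_{\lambda\to\infty}\|R(\lambda,A)\|=0$ is supplied by \autoref{lem:MR} (b)) together with \autoref{thm:mrper}, leaving the similarity-invariance of the MR and $p$-quasi Hille--Yosida classes implicit. Your verification of that invariance — $UX_0=X_0$, $S_{A+B}(t)=US_{A+C}(t)U^{-1}$, and the transported estimates \eqref{equ:guji} and \eqref{equ:pHY} with constant $\|U\|\,\|U^{-1}\|$ — is exactly the bookkeeping the paper takes for granted, and it is carried out correctly.
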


It's possible for us to extend the results in sections 3-5 for this unbounded perturbation. We take $L=(\lambda - A)BR(\lambda, A)$. Here are some examples.
\begin{cor}
	Let $A$ is an MR operator, and $B \in \mathcal{L}(D(A), D(A))$.
	\begin{enumerate}[(a)]
		\item If there exists $\lambda_0 \in \rho(A)$ such that $(\lambda_0 - A)BR(\lambda_0, A_0) T_{A_0}$ is norm continuous (resp. norm continuous and compact) on $(0,\infty)$, then $\omega_{\mathrm{crit}}(T_{(A+B)_0}) = \omega_{\mathrm{crit}}(T_{A_0})$ (resp. $\omega_{\mathrm{ess}}(T_{(A+B)_0}) = \omega_{\mathrm{ess}}(T_{A_0})$).
		\item If $A$ is also a quasi Hille--Yosida operator and there is $\lambda_0 \in \rho(A)$ such that $(\lambda_0 - A)BR(\lambda_0, A_0) T_{A_0}$ is compact on $(0,\infty)$, then $\omega_{\mathrm{ess}}(T_{(A+B)_0}) = \omega_{\mathrm{ess}}(T_{A_0})$.
	\end{enumerate}
\end{cor}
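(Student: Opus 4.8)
The plan is to reduce to the bounded-perturbation results of \autoref{criess} through the similarity $A+B\sim A+C$ supplied by the preceding theorem, and to check that both this similarity and the required regularity are compatible with the choice of spectral parameter. First I would record that, by the corollary immediately preceding this one, $A+B$ is again an MR operator (resp.\ quasi Hille--Yosida operator), so that $T_{(A+B)_0}$ is a genuine $C_0$ semigroup and $\omega_{\mathrm{crit}}$, $\omega_{\mathrm{ess}}$ make sense. For $\lambda\in\rho(A)$ set $C_\lambda:=(\lambda-A)BR(\lambda,A)\in\mathcal{L}(X)$ and $L_\lambda:=C_\lambda|_{X_0}$; since $R(\lambda,A)|_{X_0}=R(\lambda,A_0)$ by \autoref{partop}, we have $L_\lambda=(\lambda-A)BR(\lambda,A_0)\in\mathcal{L}(X_0,X)$ and $L_\lambda T_{A_0}$ is exactly the operator family appearing in the hypothesis.

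The crucial preliminary step is to show that the regularity of $L_\lambda T_{A_0}$ on $(0,\infty)$ is independent of $\lambda$. Using the resolvent identity together with $BR(\mu,A_0)T_{A_0}(t)=R(\mu,A)L_\mu T_{A_0}(t)$ (valid because $B$ maps into $D(A)$), a short computation should give
\[
L_\lambda T_{A_0}(t)=\bigl[I+(\lambda-\mu)R(\mu,A)\bigr]\,L_\mu T_{A_0}(t)\,(\mu-A_0)R(\lambda,A_0),\qquad t>0,
\]
in which $L_\mu T_{A_0}(t)$ is sandwiched between two \emph{fixed} bounded operators. Since composition with fixed bounded operators preserves both norm continuity and compactness on $(0,\infty)$, the regularity hypothesis at the given $\lambda_0$ transfers to every $\lambda\in\rho(A)$. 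This is the step I expect to be the main obstacle, as it is precisely what reconciles the existential hypothesis ``there is $\lambda_0$'' with the fact that the similarity construction $U=I-BR(\lambda,A)$, $C=(\lambda-A)BR(\lambda,A)$, $U(A+C)U^{-1}=A+B$ only works for $\lambda$ large enough to make $U$ invertible.

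Next I would fix such a large $\lambda$ and put $L:=L_\lambda=C|_{X_0}$. Because $D(A)\subset X_0$ and $Cx=Lx$ for $x\in X_0$, the parts satisfy $(A+C)_0=(A+L)_0$, so $T_{(A+C)_0}=T_{(A+L)_0}$; moreover $A+L$ is an MR (resp.\ quasi Hille--Yosida) operator by \autoref{thm:mrper}. By the previous paragraph $LT_{A_0}$ is norm continuous (resp.\ norm continuous and compact, resp.\ compact), so \autoref{cor:lt} (resp.\ \autoref{cor:lmr}) yields $\omega_{\mathrm{crit}}(T_{(A+C)_0})=\omega_{\mathrm{crit}}(T_{A_0})$ (resp.\ $\omega_{\mathrm{ess}}(T_{(A+C)_0})=\omega_{\mathrm{ess}}(T_{A_0})$).

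Finally I would transport these equalities across the similarity. Since $BR(\lambda,A)$ maps $X$ into $D(A)\subset X_0$, both $U$ and $U^{-1}=I+BR(\lambda,A)U^{-1}$ leave $X_0$ invariant, so $U_0:=U|_{X_0}$ is an isomorphism of $X_0$; taking parts in $X_0$ of $A+B=U(A+C)U^{-1}$ gives $(A+B)_0=U_0(A+C)_0U_0^{-1}$ and hence $T_{(A+B)_0}(t)=U_0T_{(A+C)_0}(t)U_0^{-1}$. Conjugation by the fixed isomorphism $U_0$ is an automorphism of the Calkin algebra $\mathcal{L}(X_0)/\mathcal{K}(X_0)$ and, via $\widetilde{U_0}\,l_{T_{(A+C)_0}}^\infty(X_0)=l_{T_{(A+B)_0}}^\infty(X_0)$, induces an isomorphism of $l^\infty(X_0)/l_{T}^\infty(X_0)$ intertwining the two induced semigroups; both operations preserve the spectral radius, so $\omega_{\mathrm{ess}}$ and $\omega_{\mathrm{crit}}$ are similarity invariants. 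Combining this with the previous step gives $\omega_{\mathrm{crit}}(T_{(A+B)_0})=\omega_{\mathrm{crit}}(T_{A_0})$ and $\omega_{\mathrm{ess}}(T_{(A+B)_0})=\omega_{\mathrm{ess}}(T_{A_0})$, as required.
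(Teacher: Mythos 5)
Your proposal is correct and takes essentially the same route as the paper: the paper's own proof consists precisely of your $\lambda$-transfer step (an equivalent resolvent-identity factorization, $(\lambda-A)BT_{A_0}R(\lambda,A_0)=[(\lambda-\lambda_0)BT_{A_0}R(\lambda_0,A_0)+(\lambda_0-A)BT_{A_0}R(\lambda_0,A_0)][I+(\lambda_0-\lambda)R(\lambda,A)]$, which likewise sandwiches the hypothesis term between fixed bounded operators), after which it invokes the similarity $A+B=U(A+C)U^{-1}$ with $L=C|_{X_0}$ and the bounded-perturbation corollaries (\autoref{cor:lt}, \autoref{cor:lmr}) implicitly. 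The extra verifications you supply --- that $U$ restricts to an isomorphism of $X_0$, that $(A+C)_0=(A+L)_0$, and that $\omega_{\mathrm{crit}}$ and $\omega_{\mathrm{ess}}$ are similarity invariants --- are exactly the details the paper leaves to the reader, and they check out.
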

\begin{proof}
	For (a) (b), it suffices to show if $(\lambda_0 - A)BR(\lambda_0, A_0)T_{A_0}$ is norm continuous (resp. compact) on $(0,\infty)$, so is $(\lambda - A)BR(\lambda, A_0)  T_{A_0}$ for all $\lambda \in \rho(A)$.
	Note that
	\[
	R(\lambda, A_0) T_{A_0} = T_{A_0} R(\lambda, A_0),
	\]
	and
	\[
	BR(\lambda_0, A) T_{A_0} = R(\lambda_0, A)(\lambda_0 - A)BR(\lambda_0, A) T_{A_0}
	\]
	has the same regularity as $(\lambda_0 - A)BR(\lambda_0, A) T_{A_0}$.
	By
	\begin{align*}
		(\lambda-A)BT_{A_0}R(\lambda, A_0) & =  (\lambda-\lambda_0+\lambda_0-A)BT_{A_0} [R(\lambda_0, A_0) + (\lambda_0 - \lambda)R(\lambda_0, A_0)R(\lambda, A_0)] \\
		& =  [ (\lambda-\lambda_0)BT_{A_0}R(\lambda_0, A_0) + (\lambda_0-A)BT_{A_0}R(\lambda_0, A_0)] [I+(\lambda_0-\lambda)R(\lambda,A)],
	\end{align*}
	we obtain the results.
\end{proof}

\subsection{Cauchy problems}
Our consideration is relevant to the following semilinear Cauchy problem:
\begin{equation}\label{cauchy}
\begin{cases}
\dot{u}(t) = Au(t) + f(u(t)), \\
u(0) = x,
\end{cases}
\end{equation}
where $A:~D(A) \subset X \rightarrow X$ is an MR operator and $f:\overline{D(A)} \rightarrow X$ is $ C^1 $ and globally Lipschitz (for simplicity in order to avoid blowup). Many differential equations such as age-structured population models, parabolic differential equations, delay equations, and Cauchy problems with boundary conditions can be reformulated as Cauchy problems. However, in many cases, the operator $A$ is not densely defined or even not a Hille--Yosida operator, see, e.g., \cite{DPS87, PS02, DMP10, MR18}.

Assume zero is an equilibrium of \eqref{cauchy} (i.e., $f(0) = 0$). Let $L := Df(0) \in \mathcal{L}(\overline{D(A)}, X)$.
Consider the linearized equation of \eqref{cauchy}, i.e.,
\begin{equation}\label{lincauchy}
\begin{cases}
\dot{u}(t) = Au(t) + Lu(t), \\
u(0) = x.
\end{cases}
\end{equation}
It was shown in \cite{MR09} that solutions of \eqref{lincauchy} can reflect the properties of solutions of \eqref{cauchy} in the neighborhood of $0$. In general, the properties of $A$ (or $T_{A_0}$, $S_A$) would be known well. What we need is the properties of $T_{(A+L)_0}$. Our results can be applied to this situation.
For example, if $T_{(A+L)_0}$ is exponentially stable (i.e., $\omega (T_{(A+L)_0}) < 0$ (see \eqref{equ:bound})), then the zero solution of \eqref{cauchy} is locally stable \cite[Proposition 7.1]{MR09}. If $\omega_{\mathrm{crit}}(T_{A_0}) < 0$ and $\omega_{\mathrm{crit}}(T_{(A+L)_0}) \leq \omega_{\mathrm{crit}}(T_{A_0})$, then $\omega_{\mathrm{crit}}(T_{A_0}) < 0$. Therefore, all we need to consider is: $s(A+L) < 0$ (see \eqref{equ:bound})? (see \autoref{thm:crit} (c), and note that $s(A+L) = s((A+L)_0)$.) That is, the spectrum of $A+L$ can reflect the stability of the zero solution.
\begin{thm}\label{thm:final}
	\begin{enumerate}[(a)]
		\item Assume the condition of \autoref{thm:main1} (a) (or \autoref{thm:main2} (a), or \autoref{cor:lt} (a)) is satisfied and $\omega_{\mathrm{crit}}(T_{A_0}) < 0$, then the zero solution of \eqref{cauchy} is locally exponentially stable, provided $s(A+L) < 0$.
		\item Assume the condition of \autoref{thm:main1} (b) (or \autoref{thm:main2} (b), or \autoref{cor:lt} (b), or \autoref{cor:lmr}) is satisfied and $\omega_{\mathrm{ess}}(T_{A_0}) < 0$. Then, the zero solution of \eqref{cauchy} is locally exponentially stable if $s(A+L) < 0$ and unstable if $s(A+L) > 0$.
	\end{enumerate}
\end{thm}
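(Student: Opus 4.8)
The plan is to reduce each assertion to determining the sign of the growth bound $\omega(T_{(A+L)_0})$ of the linearized semigroup and then to invoke the (in)stability principle for the semilinear equation \eqref{cauchy} recorded in the surrounding discussion (cf. \cite[Proposition 7.1]{MR09} and the associated unstable manifold results of \cite{MR09, MR09a}). Three ingredients drive everything: the decomposition $\omega(T_{(A+L)_0}) = \max\{s((A+L)_0), \omega_{\mathrm{crit}}(T_{(A+L)_0})\}$ from \autoref{thm:crit} (c), its essential counterpart $\omega(T_{(A+L)_0}) = \max\{s((A+L)_0), \omega_{\mathrm{ess}}(T_{(A+L)_0})\}$ extracted from \autoref{thm:ess} (b), and the identity $s(A+L) = s((A+L)_0)$ noted above. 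Throughout, the hypotheses imported from \autoref{thm:main1}, \autoref{thm:main2}, \autoref{cor:lt} and \autoref{cor:lmr} supply the perturbation estimates $\omega_{\mathrm{crit}}(T_{(A+L)_0}) \le \omega_{\mathrm{crit}}(T_{A_0})$ (for (a)) and $\omega_{\mathrm{ess}}(T_{(A+L)_0}) \le \omega_{\mathrm{ess}}(T_{A_0})$ (for (b)).

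For (a) the argument is short. The invoked hypothesis yields $\omega_{\mathrm{crit}}(T_{(A+L)_0}) \le \omega_{\mathrm{crit}}(T_{A_0}) < 0$; combining this with the standing assumption $s(A+L) = s((A+L)_0) < 0$ and \autoref{thm:crit} (c) gives $\omega(T_{(A+L)_0}) = \max\{s((A+L)_0), \omega_{\mathrm{crit}}(T_{(A+L)_0})\} < 0$, so the linearized semigroup is exponentially stable, and local exponential stability of the zero solution of \eqref{cauchy} follows from \cite[Proposition 7.1]{MR09}.

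For the stability half of (b) the hypothesis gives only $\omega_{\mathrm{ess}}(T_{(A+L)_0}) \le \omega_{\mathrm{ess}}(T_{A_0}) < 0$; since a priori merely $\omega_{\mathrm{ess}} \le \omega_{\mathrm{crit}}$ is available, the critical decomposition is insufficient and I would instead establish $\omega(T_{(A+L)_0}) \le \max\{s((A+L)_0), \omega_{\mathrm{ess}}(T_{(A+L)_0})\}$ directly. Fix $\gamma$ with $\max\{s((A+L)_0), \omega_{\mathrm{ess}}(T_{(A+L)_0})\} < \gamma < 0$; by \autoref{thm:ess} (b) the set $\{\lambda \in \sigma((A+L)_0): \mathrm{Re}\,\lambda \ge \gamma\}$ is finite, and in fact empty because $\gamma > s((A+L)_0)$. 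Since $r_{\mathrm{ess}}(T_{(A+L)_0}(t)) = e^{t\omega_{\mathrm{ess}}(T_{(A+L)_0})}$, every spectral value of $T_{(A+L)_0}(t)$ of modulus exceeding $r_{\mathrm{ess}}(T_{(A+L)_0}(t))$ is an isolated eigenvalue coming from some $\lambda \in \sigma((A+L)_0)$ with $\mathrm{Re}\,\lambda < \gamma$, so it has modulus $< e^{\gamma t}$; hence $r(T_{(A+L)_0}(t)) < e^{\gamma t}$ for all $t>0$ and $\omega(T_{(A+L)_0}) \le \gamma < 0$. Exponential stability again follows from \cite[Proposition 7.1]{MR09}.

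For the instability half of (b), assume $s(A+L) > 0$, so that $\omega_{\mathrm{ess}}(T_{(A+L)_0}) < 0 < s((A+L)_0)$. Choosing $\gamma \in (0, s((A+L)_0))$ with $\gamma > \omega_{\mathrm{ess}}(T_{(A+L)_0})$, \autoref{thm:ess} (b) produces a finite, nonempty set of spectral values of $(A+L)_0$ with real part $\ge \gamma > 0$, each a finite-order pole of the resolvent, hence an isolated eigenvalue. This furnishes a spectral gap and a finite-dimensional spectral projection onto an unstable subspace, to which the principle of linearized instability (the unstable manifold construction of \cite{MR09, MR09a}) applies, yielding instability of the zero solution. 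The main obstacle I anticipate is twofold: justifying $\omega(T_{(A+L)_0}) \le \max\{s((A+L)_0), \omega_{\mathrm{ess}}(T_{(A+L)_0})\}$ cleanly, since the paper records only the critical form $\omega = \max\{s, \omega_{\mathrm{crit}}\}$ and the essential form must be recovered from the pole structure of \autoref{thm:ess} (b) through the spectral mapping for isolated eigenvalues; and, for instability, invoking the linearized instability principle in the non-densely-defined MR setting, where one must check that the finite-dimensional spectral decomposition together with the globally Lipschitz $C^1$ nonlinearity $f$ indeed meets the hypotheses of the unstable manifold theorem.
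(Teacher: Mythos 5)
Your proposal is correct and takes essentially the same route as the paper, whose entire proof consists of the reductions you describe followed by citations: part (a) is Theorem~\ref{thm:crit} together with \cite[Proposition 7.1]{MR09}, and part (b) is Theorem~\ref{thm:ess} together with \cite[Proposition 7.1, Proposition 7.4]{MR09} --- the latter being exactly the linearized stability/instability principle for non-densely defined MR operators whose applicability you flagged as a potential obstacle. The only difference is one of detail: you derive the bound $\omega(T_{(A+L)_0}) \leq \max\{s((A+L)_0), \omega_{\mathrm{ess}}(T_{(A+L)_0})\}$ yourself from Theorem~\ref{thm:ess} (b) and the point spectral mapping theorem, whereas the paper leaves this classical fact implicit in the cited references.
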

\begin{proof}
	(a) This follows from \autoref{thm:crit} (a) and \cite[Proposition 7.1]{MR09}.

	(b) This follows from \autoref{thm:ess} (a) and \cite[Proposition 7.1, Proposition 7.4]{MR09}.
\end{proof}

The existence of the Hopf bifurcation and the center manifold (of an equilibrium) for Cauchy problems needs the condition $\omega_{\mathrm{ess}}(T_{(A+L)_0}) < 0$, see, e.g., \cite{MR09a}; this condition was replaced by the \emph{exponential dichotomy condition} instead in our paper \cite{Che18c} to give the invariant manifold theory around more general manifolds (in the sense of Hirsch, Pugh and Shub, and Fenichel).
A more concrete application of the results in \autoref{criess} and \autoref{regper} to a class of delay equations with non-dense domains, see \cite{Che18g}, which is very similar as \cite{BMR02}.

%\bibliographystyle{amsalpha}

%\bibliography{paper_cdl}

% \bib, bibdiv, biblist are defined by the amsrefs package.
\begin{bibdiv}
\begin{biblist}

\bib{ABHN11}{book}{
      author={Arendt, Wolfgang},
      author={Batty, Charles J.~K.},
      author={Hieber, Matthias},
      author={Neubrander, Frank},
       title={Vector-valued {L}aplace transforms and {C}auchy problems},
     edition={2},
      series={Monographs in Mathematics},
   publisher={Birkh\"auser/Springer Basel AG, Basel},
        date={2011},
      volume={96},
        ISBN={978-3-0348-0086-0},
         url={http://dx.doi.org/10.1007/978-3-0348-0087-7},
      review={\MR{2798103 (2012b:47109)}},
}

\bib{AMM91}{article}{
      author={Andreu, F.},
      author={Mart\'{i}nez, J.},
      author={Maz\'{o}n, J.~M.},
       title={A spectral mapping theorem for perturbed strongly continuous
  semigroups},
        date={1991},
        ISSN={0025-5831},
     journal={Math. Ann.},
      volume={291},
      number={3},
       pages={453\ndash 462},
         url={https://doi.org/10.1007/BF01445219},
      review={\MR{1133342}},
}

\bib{Are87a}{article}{
      author={Arendt, Wolfgang},
       title={Resolvent positive operators},
        date={1987},
        ISSN={0024-6115},
     journal={Proc. London Math. Soc. (3)},
      volume={54},
      number={2},
       pages={321\ndash 349},
         url={http://dx.doi.org/10.1112/plms/s3-54.2.321},
      review={\MR{872810 (88c:47074)}},
}

\bib{BHM05}{article}{
      author={Boulite, S.},
      author={Hadd, S.},
      author={Maniar, L.},
       title={Critical spectrum and stability for population equations with
  diffusion in unbounded domains},
        date={2005},
        ISSN={1531-3492},
     journal={Discrete Contin. Dyn. Syst. Ser. B},
      volume={5},
      number={2},
       pages={265\ndash 276},
         url={https://doi.org/10.3934/dcdsb.2005.5.265},
      review={\MR{2129377}},
}

\bib{BK10}{article}{
      author={Batty, Charles J.~K.},
      author={Kr\'{o}l, Sebastian},
       title={Perturbations of generators of {$C_0$}-semigroups and resolvent
  decay},
        date={2010},
        ISSN={0022-247X},
     journal={J. Math. Anal. Appl.},
      volume={367},
      number={2},
       pages={434\ndash 443},
         url={https://doi.org/10.1016/j.jmaa.2010.01.048},
      review={\MR{2607270}},
}

\bib{Bla01}{article}{
      author={Blake, Mark~D.},
       title={A spectral bound for asymptotically norm-continuous semigroups},
        date={2001},
        ISSN={0379-4024},
     journal={J. Operator Theory},
      volume={45},
      number={1},
       pages={111\ndash 130},
      review={\MR{1823064}},
}

\bib{BMR02}{article}{
      author={B\'{a}tkai, Andr\'{a}s},
      author={Maniar, Lahcen},
      author={Rhandi, Abdelaziz},
       title={Regularity properties of perturbed {H}ille-{Y}osida operators and
  retarded differential equations},
        date={2002},
        ISSN={0037-1912},
     journal={Semigroup Forum},
      volume={64},
      number={1},
       pages={55\ndash 70},
         url={https://doi.org/10.1007/s002330010068},
      review={\MR{1866316}},
}

\bib{BNP00}{article}{
      author={Brendle, Simon},
      author={Nagel, Rainer},
      author={Poland, Jan},
       title={On the spectral mapping theorem for perturbed strongly continuous
  semigroups},
        date={2000},
        ISSN={0003-889X},
     journal={Arch. Math. (Basel)},
      volume={74},
      number={5},
       pages={365\ndash 378},
         url={https://doi.org/10.1007/s000130050456},
      review={\MR{1753014}},
}

\bib{BP01}{article}{
      author={B{\'a}tkai, Andr{\'a}s},
      author={Piazzera, Susanna},
       title={Semigroups and linear partial differential equations with delay},
        date={2001},
        ISSN={0022-247X},
     journal={J. Math. Anal. Appl.},
      volume={264},
      number={1},
       pages={1\ndash 20},
         url={http://dx.doi.org/10.1006/jmaa.2001.6705},
      review={\MR{1868323 (2003h:34161)}},
}

\bib{BP61}{article}{
      author={Benedek, A.},
      author={Panzone, R.},
       title={The space {$L^{p}$}, with mixed norm},
        date={1961},
        ISSN={0012-7094},
     journal={Duke Math. J.},
      volume={28},
       pages={301\ndash 324},
         url={http://projecteuclid.org/euclid.dmj/1077469690},
      review={\MR{0126155}},
}

\bib{Bre01}{article}{
      author={Brendle, Simon},
       title={On the asymptotic behavior of perturbed strongly continuous
  semigroups},
        date={2001},
        ISSN={0025-584X},
     journal={Math. Nachr.},
      volume={226},
       pages={35\ndash 47},
  url={https://doi.org/10.1002/1522-2616(200106)226:1<35::AID-MANA35>3.3.CO;2-I},
      review={\MR{1839401}},
}

\bib{CDN08}{article}{
      author={Carvalho, Alexandre~N.},
      author={Dlotko, Tomasz},
      author={Nascimento, Marcelo J.~D.},
       title={Non-autonomous semilinear evolution equations with almost
  sectorial operators},
        date={2008},
        ISSN={1424-3199},
     journal={J. Evol. Equ.},
      volume={8},
      number={4},
       pages={631\ndash 659},
         url={http://dx.doi.org/10.1007/s00028-008-0394-3},
      review={\MR{2460932 (2010a:47081)}},
}

\bib{Che18g}{manual}{
      author={Chen, Deliang},
       title={{A} linear theory for a class of delay equations with non-dense
  domains},
        date={2018},
        note={in preparation},
}

\bib{Che18c}{article}{
      author={Chen, Deliang},
       title={{T}he exponential dichotomy and invariant manifolds for some
  classes of differential equations},
        date={2019-03},
     journal={arXiv e-prints},
      eprint={1903.08040},
}

\bib{CL99}{book}{
      author={Chicone, Carmen},
      author={Latushkin, Yuri},
       title={Evolution semigroups in dynamical systems and differential
  equations},
      series={Mathematical Surveys and Monographs},
   publisher={American Mathematical Society, Providence, RI},
        date={1999},
      volume={70},
        ISBN={0-8218-1185-1},
         url={http://dx.doi.org/10.1090/surv/070},
      review={\MR{1707332}},
}

\bib{CP69}{article}{
      author={Crandall, Michael~G.},
      author={Pazy, Amnon},
       title={On the differentiability of weak solutions of a differential
  equation in {B}anach space},
        date={1968/1969},
     journal={J. Math. Mech.},
      volume={18},
       pages={1007\ndash 1016},
      review={\MR{0242014 (39 \#3349)}},
}

\bib{CZ95}{book}{
      author={Curtain, Ruth~F.},
      author={Zwart, Hans},
       title={An introduction to infinite-dimensional linear systems theory},
      series={Texts in Applied Mathematics},
   publisher={Springer-Verlag, New York},
        date={1995},
      volume={21},
        ISBN={0-387-94475-3},
         url={https://doi.org/10.1007/978-1-4612-4224-6},
      review={\MR{1351248}},
}

\bib{Dei85}{book}{
      author={Deimling, Klaus},
       title={Nonlinear functional analysis},
   publisher={Springer-Verlag, Berlin},
        date={1985},
        ISBN={3-540-13928-1},
         url={http://dx.doi.org/10.1007/978-3-662-00547-7},
      review={\MR{787404 (86j:47001)}},
}

\bib{DLM08}{article}{
      author={Ducrot, A.},
      author={Liu, Z.},
      author={Magal, P.},
       title={Essential growth rate for bounded linear perturbation of
  non-densely defined {C}auchy problems},
        date={2008},
        ISSN={0022-247X},
     journal={J. Math. Anal. Appl.},
      volume={341},
      number={1},
       pages={501\ndash 518},
         url={https://doi.org/10.1016/j.jmaa.2007.09.074},
      review={\MR{2394101}},
}

\bib{DMP10}{article}{
      author={Ducrot, Arnaud},
      author={Magal, Pierre},
      author={Prevost, Kevin},
       title={Integrated semigroups and parabolic equations. {P}art {I}: linear
  perturbation of almost sectorial operators},
        date={2010},
        ISSN={1424-3199},
     journal={J. Evol. Equ.},
      volume={10},
      number={2},
       pages={263\ndash 291},
         url={https://doi.org/10.1007/s00028-009-0049-z},
      review={\MR{2643797}},
}

\bib{DPS87}{article}{
      author={Da~Prato, Guiseppe},
      author={Sinestrari, Eugenio},
       title={Differential operators with nondense domain},
        date={1987},
        ISSN={0391-173X},
     journal={Ann. Scuola Norm. Sup. Pisa Cl. Sci. (4)},
      volume={14},
      number={2},
       pages={285\ndash 344 (1988)},
         url={http://www.numdam.org/item?id=ASNSP_1987_4_14_2_285_0},
      review={\MR{939631 (89f:47062)}},
}

\bib{EN00}{book}{
      author={Engel, Klaus-Jochen},
      author={Nagel, Rainer},
       title={One-parameter semigroups for linear evolution equations},
      series={Graduate Texts in Mathematics},
   publisher={Springer-Verlag, New York},
        date={2000},
      volume={194},
        ISBN={0-387-98463-1},
        note={With contributions by S. Brendle, M. Campiti, T. Hahn, G.
  Metafune, G. Nickel, D. Pallara, C. Perazzoli, A. Rhandi, S. Romanelli and R.
  Schnaubelt},
      review={\MR{1721989 (2000i:47075)}},
}

\bib{GGK90}{book}{
      author={Gohberg, Israel},
      author={Goldberg, Seymour},
      author={Kaashoek, Marinus~A.},
       title={Classes of linear operators. {V}ol. {I}},
      series={Operator Theory: Advances and Applications},
   publisher={Birkh\"auser Verlag, Basel},
        date={1990},
      volume={49},
        ISBN={3-7643-2531-3},
         url={http://dx.doi.org/10.1007/978-3-0348-7509-7},
      review={\MR{1130394 (93d:47002)}},
}

\bib{Ile07}{article}{
      author={Iley, Philippa~S.},
       title={Perturbations of differentiable semigroups},
        date={2007},
        ISSN={1424-3199},
     journal={J. Evol. Equ.},
      volume={7},
      number={4},
       pages={765\ndash 781},
         url={http://dx.doi.org/10.1007/s00028-007-0349-0},
      review={\MR{2369679 (2008k:47092)}},
}

\bib{KH89}{article}{
      author={Kellerman, Hermann},
      author={Hieber, Matthias},
       title={Integrated semigroups},
        date={1989},
        ISSN={0022-1236},
     journal={J. Funct. Anal.},
      volume={84},
      number={1},
       pages={160\ndash 180},
         url={http://dx.doi.org/10.1016/0022-1236(89)90116-X},
      review={\MR{999494 (90h:47072)}},
}

\bib{KW03}{article}{
      author={Kaiser, Cornelia},
      author={Weis, Lutz},
       title={Perturbation theorems for {$\alpha$}-times integrated
  semigroups},
        date={2003},
        ISSN={0003-889X},
     journal={Arch. Math. (Basel)},
      volume={81},
      number={2},
       pages={215\ndash 228},
         url={http://dx.doi.org/10.1007/s00013-003-0540-7},
      review={\MR{2009564 (2004i:47083)}},
}

\bib{Mat08a}{article}{
      author={M{\'a}trai, Tam{\'a}s},
       title={On perturbations preserving the immediate norm continuity of
  semigroups},
        date={2008},
        ISSN={0022-247X},
     journal={J. Math. Anal. Appl.},
      volume={341},
      number={2},
       pages={961\ndash 974},
         url={http://dx.doi.org/10.1016/j.jmaa.2007.10.048},
      review={\MR{2398262 (2009c:47066)}},
}

\bib{Mon15}{article}{
      author={Monteiro, Giselle~A.},
       title={On functions of bounded semivariation},
        date={2014/15},
        ISSN={0147-1937},
     journal={Real Anal. Exchange},
      volume={40},
      number={2},
       pages={233\ndash 276},
         url={https://doi.org/10.14321/realanalexch.40.2.0233},
      review={\MR{3499764}},
}

\bib{MR07}{article}{
      author={Magal, Pierre},
      author={Ruan, Shigui},
       title={On integrated semigroups and age structured models in {$L^p$}
  spaces},
        date={2007},
        ISSN={0893-4983},
     journal={Differential Integral Equations},
      volume={20},
      number={2},
       pages={197\ndash 239},
      review={\MR{2294465}},
}

\bib{MR09a}{article}{
      author={Magal, Pierre},
      author={Ruan, Shigui},
       title={Center manifolds for semilinear equations with non-dense domain
  and applications to {H}opf bifurcation in age structured models},
        date={2009},
        ISSN={0065-9266},
     journal={Mem. Amer. Math. Soc.},
      volume={202},
      number={951},
       pages={vi+71},
         url={https://doi.org/10.1090/S0065-9266-09-00568-7},
      review={\MR{2559965}},
}

\bib{MR09}{article}{
      author={Magal, Pierre},
      author={Ruan, Shigui},
       title={On semilinear {C}auchy problems with non-dense domain},
        date={2009},
        ISSN={1079-9389},
     journal={Adv. Differential Equations},
      volume={14},
      number={11-12},
       pages={1041\ndash 1084},
         url={http://projecteuclid.org/euclid.ade/1355854784},
      review={\MR{2560868}},
}

\bib{MR18}{book}{
      author={Magal, Pierre},
      author={Ruan, Shigui},
       title={Theory and applications of abstract semilinear {C}auchy
  problems},
      series={Applied Mathematical Sciences},
   publisher={Springer, Cham},
        date={2018},
      volume={201},
        ISBN={978-3-030-01505-3; 978-3-030-01506-0},
         url={https://doi.org/10.1007/978-3-030-01506-0},
        note={With a foreword by Glenn Webb},
      review={\MR{3887640}},
}

\bib{MS16}{article}{
      author={Mischler, S.},
      author={Scher, J.},
       title={Spectral analysis of semigroups and growth-fragmentation
  equations},
        date={2016},
        ISSN={0294-1449},
     journal={Ann. Inst. H. Poincar\'{e} Anal. Non Lin\'{e}aire},
      volume={33},
      number={3},
       pages={849\ndash 898},
         url={https://doi.org/10.1016/j.anihpc.2015.01.007},
      review={\MR{3489637}},
}

\bib{NP00}{article}{
      author={Nagel, Rainer},
      author={Poland, Jan},
       title={The critical spectrum of a strongly continuous semigroup},
        date={2000},
        ISSN={0001-8708},
     journal={Adv. Math.},
      volume={152},
      number={1},
       pages={120\ndash 133},
         url={http://dx.doi.org/10.1006/aima.1998.1893},
      review={\MR{1762122 (2001e:47073)}},
}

\bib{NP98}{article}{
      author={Nagel, Rainer},
      author={Piazzera, Susanna},
       title={On the regularity properties of perturbed semigroups},
        date={1998},
     journal={Rend. Circ. Mat. Palermo (2) Suppl.},
      number={56},
       pages={99\ndash 110},
        note={International Workshop on Operator Theory (Cefal{\`u}, 1997)},
      review={\MR{1710826 (2001h:47059)}},
}

\bib{Paz68}{article}{
      author={Pazy, A.},
       title={On the differentiability and compactness of semigroups of linear
  operators},
        date={1968},
     journal={J. Math. Mech.},
      volume={17},
       pages={1131\ndash 1141},
      review={\MR{0231242}},
}

\bib{Paz83}{book}{
      author={Pazy, A.},
       title={Semigroups of linear operators and applications to partial
  differential equations},
      series={Applied Mathematical Sciences},
   publisher={Springer-Verlag, New York},
        date={1983},
      volume={44},
        ISBN={0-387-90845-5},
         url={https://doi.org/10.1007/978-1-4612-5561-1},
      review={\MR{710486}},
}

\bib{Pia99}{thesis}{
      author={Piazzera, Susanna},
       title={{Q}ualitative properties of perturbed semigroups},
        type={Ph.D. Thesis},
        date={1999},
}

\bib{PS02}{article}{
      author={Periago, F.},
      author={Straub, B.},
       title={A functional calculus for almost sectorial operators and
  applications to abstract evolution equations},
        date={2002},
        ISSN={1424-3199},
     journal={J. Evol. Equ.},
      volume={2},
      number={1},
       pages={41\ndash 68},
         url={http://dx.doi.org/10.1007/s00028-002-8079-9},
      review={\MR{1890881 (2003a:47040)}},
}

\bib{Ren95}{article}{
      author={Renardy, Michael},
       title={On the stability of differentiability of semigroups},
        date={1995},
        ISSN={0037-1912},
     journal={Semigroup Forum},
      volume={51},
      number={3},
       pages={343\ndash 346},
         url={https://doi.org/10.1007/BF02573642},
      review={\MR{1351960}},
}

\bib{Rha98}{article}{
      author={Rhandi, Abdelaziz},
       title={Positivity and stability for a population equation with diffusion
  on {$L^1$}},
        date={1998},
        ISSN={1385-1292},
     journal={Positivity},
      volume={2},
      number={2},
       pages={101\ndash 113},
         url={https://doi.org/10.1023/A:1009721915101},
      review={\MR{1656866}},
}

\bib{RSRV00}{article}{
      author={R\"{a}biger, Frank},
      author={Schnaubelt, Roland},
      author={Rhandi, Abdelaziz},
      author={Voigt, J\"{u}rgen},
       title={Non-autonomous {M}iyadera perturbations},
        date={2000},
        ISSN={0893-4983},
     journal={Differential Integral Equations},
      volume={13},
      number={1-3},
       pages={341\ndash 368},
      review={\MR{1811962}},
}

\bib{Sbi07}{article}{
      author={Sbihi, Mohammed},
       title={A resolvent approach to the stability of essential and critical
  spectra of perturbed {$C_0$}-semigroups on {H}ilbert spaces with applications
  to transport theory},
        date={2007},
        ISSN={1424-3199},
     journal={J. Evol. Equ.},
      volume={7},
      number={1},
       pages={35\ndash 58},
         url={https://doi.org/10.1007/s00028-006-0226-2},
      review={\MR{2305725}},
}

\bib{Sch02}{incollection}{
      author={Schnaubelt, Roland},
       title={Well-posedness and asymptotic behaviour of non-autonomous linear
  evolution equations},
        date={2002},
   booktitle={Evolution equations, semigroups and functional analysis
  ({M}ilano, 2000)},
      series={Progr. Nonlinear Differential Equations Appl.},
      volume={50},
   publisher={Birkh\"{a}user, Basel},
       pages={311\ndash 338},
      review={\MR{1944170}},
}

\bib{Sch96}{thesis}{
      author={Schnaubelt, Roland},
       title={Exponential bounds and hyperbolicity of evolution families},
        type={Ph.D. Thesis},
        date={1996},
}

\bib{Thi08}{article}{
      author={Thieme, Horst~R.},
       title={Differentiability of convolutions, integrated semigroups of
  bounded semi-variation, and the inhomogeneous {C}auchy problem},
        date={2008},
        ISSN={1424-3199},
     journal={J. Evol. Equ.},
      volume={8},
      number={2},
       pages={283\ndash 305},
         url={https://doi.org/10.1007/s00028-007-0355-2},
      review={\MR{2407203}},
}

\bib{Thi91}{incollection}{
      author={Thieme, Horst~R.},
       title={Analysis of age-structured population models with an additional
  structure},
        date={1991},
   booktitle={Mathematical population dynamics ({N}ew {B}runswick, {NJ},
  1989)},
      series={Lecture Notes in Pure and Appl. Math.},
      volume={131},
   publisher={Dekker, New York},
       pages={115\ndash 126},
      review={\MR{1227358}},
}

\bib{Thi97}{incollection}{
      author={Thieme, Horst~R.},
       title={Quasi-compact semigroups via bounded perturbation},
        date={1997},
   booktitle={Advances in mathematical population dynamics---molecules, cells
  and man ({H}ouston, {TX}, 1995)},
      editor={O.~Arino, D.~Axelrod},
      editor={Kimmel, M.},
      series={Ser. Math. Biol. Med.},
      volume={6},
   publisher={World Sci. Publ., River Edge, NJ},
       pages={691\ndash 711},
      review={\MR{1634223 (99i:47070)}},
}

\bib{Thi98}{article}{
      author={Thieme, Horst~R.},
       title={Positive perturbation of operator semigroups: growth bounds,
  essential compactness, and asynchronous exponential growth},
        date={1998},
        ISSN={1078-0947},
     journal={Discrete Contin. Dynam. Systems},
      volume={4},
      number={4},
       pages={735\ndash 764},
         url={https://doi.org/10.3934/dcds.1998.4.735},
      review={\MR{1641201}},
}

\bib{TV09}{article}{
      author={Thieme, Horst~R.},
      author={Voigt, J\"{u}rgen},
       title={Relatively bounded extensions of generator perturbations},
        date={2009},
        ISSN={0035-7596},
     journal={Rocky Mountain J. Math.},
      volume={39},
      number={3},
       pages={947\ndash 969},
         url={https://doi.org/10.1216/RMJ-2009-39-3-947},
      review={\MR{2505783}},
}

\bib{Voi80}{article}{
      author={Voigt, J\"{u}rgen},
       title={A perturbation theorem for the essential spectral radius of
  strongly continuous semigroups},
        date={1980},
        ISSN={0026-9255},
     journal={Monatsh. Math.},
      volume={90},
      number={2},
       pages={153\ndash 161},
         url={https://doi.org/10.1007/BF01303264},
      review={\MR{595321}},
}

\bib{Web08}{incollection}{
      author={Webb, G.~F.},
       title={Population models structured by age, size, and spatial position},
        date={2008},
   booktitle={Structured population models in biology and epidemiology},
      series={Lecture Notes in Math.},
      volume={1936},
   publisher={Springer, Berlin},
       pages={1\ndash 49},
         url={https://doi.org/10.1007/978-3-540-78273-5_1},
      review={\MR{2433574}},
}

\bib{Web85}{book}{
      author={Webb, G.~F.},
       title={Theory of nonlinear age-dependent population dynamics},
      series={Monographs and Textbooks in Pure and Applied Mathematics},
   publisher={Marcel Dekker, Inc., New York},
        date={1985},
      volume={89},
        ISBN={0-8247-7290-3},
      review={\MR{772205}},
}

\end{biblist}
\end{bibdiv}

\end{document}